\newcommand{\red }{\color{red}}
\definecolor{blue}{RGB}{000,000,200}
\definecolor{green}{RGB}{000,150,100}
\definecolor{purple}{RGB}{220,040,250}
\def\red{\color{red}}
\newtheorem{Theorem}{Theorem}
	\theoremstyle{definition}
	\newtheorem{Remark}{Remark}
\newtheorem{Lemma}{Lemma}
\newtheorem{Corollary}{Corollary}
\newtheorem{Proposition}{Proposition}
\newcommand{\be}{\begin{equation}}
\newcommand{\ee}{\end{equation}}
\newcommand{\z}{{\mathbf{z}}}
\newcommand{\D}{{\mathbf{D}}}
\newcommand{\E}{{\mathbb{E}}}
\newcommand{\G}{{\mathbf{G}}}
\renewcommand{\P}{{\mathbb{P}}}
\newcommand{\bbP}{{\mathbb{P}}}
\newcommand{\V}{{\rm V}}
\newcommand{\Var}{{\rm Var}}
\newcommand{\Cov}{{\rm Cov}}
\newcommand{\X}{{\mathbf{X}}}
\newcommand{\Y}{{\mathbf{Y}}}
\newcommand{\Z}{{\mathbf{Z}}}
\newcommand{\rank}{{\rm rank}}
\newcommand{\tr}{{\rm tr}}
\newcommand{\diag}{{\rm diag}}
\newcommand{\sgn}{{\rm sgn}}
\newcommand{\argmin}{\mathop{\rm arg\min}}
\newcommand{\argmax}{\mathop{\rm arg\max}}
\def\mybox#1{\vskip1mm \begin{center} \bf \red
		\hspace{.0\textwidth}\vbox{\hrule\hbox{\vrule\kern6pt
				\parbox{.95\textwidth}{\kern6pt#1\vskip6pt}\kern6pt\vrule}\hrule}
	\end{center} \vskip-5mm}
\numberwithin{equation}{section}
\theoremstyle{plain}
\begin{document}

\begin{frontmatter}
\title{Rate-Optimal Perturbation Bounds for Singular Subspaces with Applications to  High-Dimensional Statistics}
\runtitle{Perturbation Bounds for Singular Subspaces}
\thankstext{T1}{The research of Tony Cai was supported in part by NSF Grant DMS-1208982 and DMS-1403708, and NIH Grant R01 CA127334.}

\begin{aug}
\author{\fnms{T. Tony} \snm{Cai}\thanksref{T1}\ead[label=e1]{tcai@wharton.upenn.edu}
\ead[label=u1,url]{http://www-stat.wharton.upenn.edu/{\raise.17ex\hbox{$\scriptstyle\sim$}}tcai/}}
\and
\author{\fnms{Anru} \snm{Zhang}\thanksref{}\ead[label=e2]{anruzhang@stat.upenn.edu}\ead[label=u2,url]{http://www.stat.wisc.edu/{\raise.17ex\hbox{$\scriptstyle\sim$}}anruzhang/}}
%\and
%\author{\fnms{Third} \snm{Author}\thanksref{t1,m2}
%\ead[label=e3]{third@somewhere.com}
%\ead[label=u1,url]{http://www.foo.com}}

%\thankstext{t1}{Some comment}
%\thankstext{t2}{First supporter of the project}
%\thankstext{t3}{Second supporter of the project}
\runauthor{T. T. Cai and A. Zhang}

\affiliation{University of Pennsylvania and University of Wisconsin-Madison}

\address{Department of Statistics\\
The Wharton School\\
University of Pennsylvania\\
Philadelphia, PA, 19104.\\
\printead{e1}\\
\printead{u1}\\
%\phantom{E-mail:\ }\printead*{e2}\\
%\phantom{URL:\ }\printead*{u2}}
}

\address{Department of Statistics\\
University of Wisconsin-Madison\\
Madison, WI, 53706.\\
\printead{e2}\\
\printead{u2}
}
\end{aug}

\begin{abstract}
Perturbation bounds for singular spaces, in particular Wedin's $\sin \Theta$ theorem, are a fundamental tool in many  fields including high-dimensional statistics, machine learning, and applied mathematics. In this paper, we establish  separate perturbation bounds, measured in both spectral and Frobenius $\sin \Theta$ distances, for the left and right singular subspaces. Lower bounds, which show that the individual  perturbation bounds are rate-optimal, are also given.

The new perturbation bounds are applicable to a wide range of problems. In this paper, we consider in detail applications to low-rank matrix denoising and singular space estimation, high-dimensional clustering,  and canonical correlation analysis (CCA). In particular, separate matching upper and lower bounds are obtained for estimating the left and right singular spaces. To the best of our knowledge, this is the first result that gives different optimal rates for  the left and right singular spaces under the same perturbation. %In addition to these problems, applications to other high-dimensional problems such as community detection in bipartite networks, multidimensional scaling, and cross-covariance matrix estimation are also discussed. 
\end{abstract}

\begin{keyword}[class=MSC]
	\kwd[Primary ]{62H12}
	\kwd{62C20}
	\kwd[; secondary ]{62H25}
\end{keyword}
  
\begin{keyword}
\kwd{canonical correlation analysis}
\kwd{clustering}  
\kwd{high-dimensional statistics} 
\kwd{low-rank matrix denoising}
\kwd{perturbation bound} 
\kwd{singular value decomposition}
\kwd{$\sin \Theta$ distances}
\kwd{spectral method}
\end{keyword}

\end{frontmatter}

\section{Introduction}
\label{sec.intro}
%%%%%%%%%%%%%%%%%%%%%%%%%%%%%%%%%%%%%%%%%

Singular value decomposition (SVD) and spectral methods have been widely used in statistics, probability, machine learning, and applied mathematics as well as many applications. Examples include low-rank matrix denoising \citep{shabalin2013reconstruction,yang2014rate,donoho2014minimax}, matrix completion \citep{candes2009exact,candes2010power,keshavan2010matrix,gross2011recovering,chatterjee2014matrix}, principle component analysis \citep{anderson2003introduction,johnstone2009consistency,cai2013sparse,Cai2015spiked}, canonical correlation analysis \citep{hotelling1936relations,hardoon2004canonical,gao2014sparse,gao2015minimax}, community detection \citep{von2008consistency,rohe2011spectral,balakrishnan2011noise,lei2015consistency}. Specific applications include collaborative filtering (the Netflix problem) \citep{goldberg1992using}, multi-task learning \citep{argyriou2008convex}, system identification \citep{liu2009interior}, and sensor localization \citep{singer2010uniqueness, candes2010matrix}, among many others.
In addition, the SVD is often used to find a ``warm start" for more delicate iterative algorithms, see, e.g., \cite{cai2015optimal,sun2015guaranteed}. 

Perturbation bounds, which concern how the spectrum changes after a small perturbation to a matrix, often play a critical role in the analysis of the SVD and spectral methods. To be more specific, for an approximately low-rank matrix $X$ and a perturbation matrix $Z$, it is crucial in many applications to understand how much the left or right singular spaces of $X$ and $X+Z$ differ from each other.
This problem has been widely studied in the literature \citep{Davis,Wedin, weyl1912asymptotische, Stewart, stewart2006perturbation, Yu2014Davis-Kahan}. %\cite{Davis} proposed the sin $\Theta$ theorem symmetric matrix perturbation. %The result was recently extended by \cite{Yu2014Davis-Kahan} to another user-friendly form by statisticians. 
%Immediately after the result of \cite{Davis}, \cite{Wedin} consider the more general situation when $X$ may be asymmetric.
Among these results, the $\sin \Theta$ theorems, established by \cite{Davis} and \cite{Wedin}, have become fundamental tools and are commonly used in applications. While \cite{Davis} focused on eigenvectors of symmetric matrices, Wedin's $\sin \Theta$ theorem studies the more general singular vectors for asymmetric matrices and provides a uniform perturbation bound for both the left and right singular spaces in terms of the singular value gap and perturbation level. 
%This result, which is presented later in Section \ref{comparison.sec}, has been widely cited.

Several generalizations and extensions have been made in different settings after the seminal work of \cite{Wedin}. For example,  %{\blue \cite{dopico2000note} showed that the left and right singular spaces should be changed in a simultaneous basis, i.e. be right multiplied by similar orthogonal matrices}, after one perturbation given that the perturbation is small and both singular space were changed stably. \cite{stewart2006perturbation} generalized the result and considered second order approximation to the perturbation as well as multiplicative perturbations. 
\cite{vu2011singular}, \cite{shabalin2013reconstruction}, \cite{o2013random}, \cite{wang2015singular} considered the rotations of singular vectors after random perturbations; \cite{fan2016eigenvector} gave an $\ell_{\infty}$ eigenvector perturbation bound and used the result for robust covariance estimation. See also \cite{dopico2000note,stewart2006perturbation}.
%We shall note that along the line, researchers have also been discussed the matrix perturbation theory for symmetric matrices, for example \cite{Davis}...

%However, most of the results give joint upper bounds for both perturbations in the left and right singular spaces. 

Despite its wide applicability, Wedin's perturbation bound is not sufficiently precise for some analyses, as
%First, the upper bound involves $\hat V$, $\hat U$, $\sigma_{\max}(\hat{\Sigma}_2)$ which is typically not given for the statistical analysis, which makes it less convenient to implement the bounding result; second 
the bound is uniform for both the left and right singular spaces. It clearly leads to sub-optimal result if the left and right singular spaces change in different orders of magnitude after the perturbation. 
In a range of applications, especially when the row and column dimensions of the matrix differ significantly, it is even possible that one side of the singular space can be accurately recovered, while the other side cannot. The numerical experiment given in Section \ref{comparison.sec} provides a good illustration for this point.  It can be seen from the experiment that the left and right singular perturbation bounds behave distinctly when the row and column dimensions are significantly different. Furthermore, for a range of applications, the primary interest only lies in one of the singular spaces. For example, in the analysis of bipartite network data, such as the Facebook user-public-page-subscription network, the interest is often focused on grouping the public pages (or grouping the users). This is the case for many clustering problems. See Section \ref{clustering.sec} for further discussions. 

%Examples include, {\red Shall we also mention some other examples that are related to the applications we consider in Sections 3-5?} {\blue It is not easy as the applications in Sections 3-5 are statistical problems but not real practice.}
%\begin{enumerate}
%	\item In bipartite network data, such as the Facebook user-public-page-subscription network, it is often more interesting in grouping the public pages than grouping the users, or the other way around.
%	\item In genetics dataset, where we typically have gene-expression-to-patients table, the general interaction between genes may attracts more attention than the similarity between those limited number of patients in the study.
%	%\item {\blue Suppose there are a number points lying in high-dimensional space, while we can only observe the coordinates of each point with noise. Ideally, the left singular vectors carries the relative positions,  In order to do clustering on these points, }
%\end{enumerate}
%Therefore, it becomes interesting to develop a new technique that can provide the perturbation bound of unilateral singular space. 

In this paper, we establish separate perturbation bounds for the left and right singular subspaces. The bounds are measured in both the spectral and Frobenius $\sin \Theta$ distances, 
which are equivalent to several widely used losses in the literature. We also derive lower bounds that are within a constant factor of the corresponding upper bounds. These results together show that the obtained perturbation bounds are rate-optimal.

The newly established perturbation bounds are applicable to a wide range of problems in high-dimensional statistics. In this paper, we discuss in detail the applications of the perturbation bounds to the following high-dimensional statistical problems. 
\begin{enumerate}
	\item {\it Low-rank matrix denoising and singular space estimation:} Suppose one observes a low-rank matrix with random additive  noise and wishes to estimate the mean matrix or its left or right singular spaces. Such a problem arises in many applications. We apply the obtained perturbation bounds to study this problem. Separate matching upper and lower bounds are given for estimating the left and right singular spaces. These results together establish the optimal rates of convergence. Our analysis shows an interesting phenomenon that in some settings it is possible to accurately estimate the left singular space but not the right one and vice versa. To the best of our knowledge, this is the first result that gives different optimal rates for  the left and right singular spaces under the same perturbation. Another fact we observe is that in certain class of low-rank matrices, one can stably recover the original matrix if and only if one can accurately recover both its left and right singular spaces.
	
	\item {\it High-dimensional clustering:} Unsupervised learning is an important problem in statistics and machine learning with a wide range of applications.  We apply the perturbation bounds to the analysis of clustering for high-dimensional Gaussian mixtures. Particularly in a high-dimensional two-class clustering setting, we propose a simple PCA-based clustering method and use the obtained perturbation bounds to prove matching upper and lower bounds for the misclassification rates.
	
	\item {\it Canonical correlation analysis (CCA):} CCA is a commonly used tools in multivariate analysis to identify and measure the associations among two sets of random variables. The perturbation bounds are also applied to analyze CCA. Specifically, we develop sharper upper bounds for estimating the left and right canonical correlation directions. To the best of our knowledge, this is the first result that captures the phenomenon that in some settings it is possible to accurately estimate one side of canonical correlation directions but not the other side.

\end{enumerate}
In addition to these applications, the perturbation bounds can also be applied to the analysis of {\it community detection in bipartite networks,  multidimensional scaling,  cross-covariance matrix estimation, and singular space estimation for matrix completion}, and other problems to yield better results than what are known in the literature. These applications demonstrate the usefulness of the newly established perturbation bounds.

The rest of the paper is organized as follows. In Section \ref{sec.main}, after basic notation and definitions are introduced, the perturbation bounds  are presented separately for the left and right singular subspaces. Both the upper bounds and lower bounds are provided.  We then apply the newly established perturbation bounds to low-rank matrix denoising and singular space estimation, high-dimensional clustering, and canonical correlation analysis in Sections \ref{sec.denoising}--\ref{sec.CCA}. Section \ref{sec.simulation} presents some numerical results and other potential applications are briefly discussed in Section \ref{sec.discussion}. The  main theorems are proved in Section \ref{sec.proofs} and the proofs of some additional technical results are given in the supplementary materials.

%%%%%%%%%%%%%%%%%%%%%%%%%%%%%%%%%%%%%%%%%
\section{Rate-Optimal Perturbation Bounds for Singular Subspaces}
\label{sec.main}
%%%%%%%%%%%%%%%%%%%%%%%%%%%%%%%%%%%%%%%%%

We establish in this section rate-optimal perturbation bounds for singular subspaces. We begin with basic notation and definitions that will be used in the rest of the paper.

\subsection{Notation and Definitions}\label{sec.notations}

For $a, b \in \mathbb{R}$, let $a\wedge b = \min(a, b)$, $a \vee b = \max(a, b)$. Let $\mathbb{O}_{p, r} = \{V\in \mathbb{R}^{p \times r}: V^{\intercal}V = I_r\}$ be the set of all $p \times r$ orthonormal columns and write $\mathbb{O}_p$ for $\mathbb{O}_{p, p}$, the set of $p$-dimensional orthogonal matrices. For a matrix $A\in \mathbb{R}^{p_1\times p_2}$, write the SVD as $A = U\Sigma V^{\intercal}$, where $\Sigma={\rm diag}\{\sigma_1(A),  \sigma_2(A), \cdots\}$ with the singular values $\sigma_1(A)\geq  \sigma_2(A)\geq \cdots \geq 0$ in descending order. In particular, we use $\sigma_{\min}(A) = \sigma_{\min(p_1, p_2)}(A), \sigma_{\max}(A) = \sigma_1(A)$ as the smallest and largest non-trivial singular values of $A$. Several matrix norms will be used in the paper: $\|A\| = \sigma_1(A)$ is the spectral norm; $\|A\|_F = \sqrt{\sum_{i}\sigma_i^2(A)}$ is the Frobenius norm; and $\|A\|_\ast = \sum_i \sigma_i(A)$ is the nuclear norm. We denote $\bbP_{A}\in \mathbb{R}^{p_1\times p_1}$ as the projection operator onto the column space of $A$, which can be  written as $\bbP_A = A(A^{\intercal}A)^{\dagger}A^{\intercal}$. Here $(\cdot)^\dagger$ represents the Moore-Penrose pseudoinverse. Given the SVD $A = U\Sigma V^{\intercal}$ with $\Sigma$ non-singular, a simpler form for $\bbP_A$ is $\bbP_A = UU^{\intercal}$. We adopt the R convention to denote the submatrix: $A_{[a:b, c:d]}$ represents the $a$-to-$b$-th row, $c$-to-$d$-th column of matrix $A$; we also use $A_{[a:b, :]}$ and $A_{[:, c:d]}$ to represent $a$-to-$b$-th full rows of $A$ and $c$-to-$d$-th full columns of $A$, respectively. We use $C, C_0, c, c_0, \cdots$  to denote generic constants, whose actual values may vary from time to time. 

We use the {\it sin}$\mathit{\Theta}$ {\it distance} to measure the difference between two $p\times r$ orthogonal columns $V$ and $\hat{V}$. Suppose the singular values of $V^{\intercal}\hat {V}$ are $\sigma_1 \geq \sigma_2 \geq\cdots \geq \sigma_r\geq 0$. Then we call 
$$\Theta(V, \hat V) = \diag(\cos^{-1}(\sigma_1), \cos^{-1}(\sigma_2), \cdots, \cos^{-1}(\sigma_r))$$ 
as the principle angles. A quantitative measure of distance between the column spaces of $V$ and $\hat V$ is then $\|\sin\Theta(\hat V, V)\|$ or $\|\sin\Theta(\hat V, V)\|_F$. Some more convenient characterizations and properties of the $\sin \Theta$ distances will be given in Lemma \ref{lm:sin_Theta_distance} in Section \ref{sec.proof.perturbationBD}.

%%%%%%%%%%%%%%%%%%%%%%%%%%%%%%%%%%%%%%%%%
\subsection{Perturbation Upper Bounds and Lower Bounds}
\label{sec.perturbation}
%%%%%%%%%%%%%%%%%%%%%%%%%%%%%%%%%%%%%%%%%

We are now ready to present the perturbation bounds for the singular subspaces. Let $X\in \mathbb{R}^{p_1 \times p_2}$ be an approximately low-rank matrix and let $Z\in \mathbb{R}^{p_1 \times p_2}$ be a ``small" perturbation matrix. Our goal is to provide separate and rate-sharp bounds for the $\sin \Theta$ distances between the left singular subspaces of $X$ and $X+Z$ and between the right singular subspaces of $X$ and $X+Z$. 

Suppose $X$ is approximately rank-$r$ with the SVD $X = U \Sigma V^{\intercal}$, where a significant gap exists between $ \sigma_r(X)$ and $ \sigma_{r+1}(X)$. The leading $r$ left and right singular vectors of $X$ are of particular interest. We decompose $X$ as follows,
\begin{equation}\label{eq:X_decomopsition}
X = \begin{bmatrix}
U & U_{\perp}\\
\end{bmatrix}\cdot\begin{bmatrix}
\Sigma_1 & 0 \\
0 & \Sigma_2\\
\end{bmatrix}\cdot\begin{bmatrix}
V^{\intercal} \\
V_{\perp}^{\intercal} \\
\end{bmatrix},
\end{equation}
where $U\in \mathbb{O}_{p_1, r}, V\in \mathbb{O}_{p_2, r}$, $\Sigma_1 = \diag(\sigma_1(X),\cdots, \sigma_r(X))\in \mathbb{R}^{r\times r}, \Sigma_2 = \diag(\sigma_{r+1}(X), \cdots) \in \mathbb{R}^{(p_1-r)\times (p_2-r)}$, $[U ~ U_{\perp}] \in \mathbb{O}_{p_1}, [V ~ V_{\perp}]\in \mathbb{O}_{p_2}$ are orthogonal matrices. 

Let $Z$ be a perturbation matrix and let $\hat X = X+Z$. Partition the SVD of  $\hat X$ in the same way as in \eqref{eq:X_decomopsition},
\begin{equation}\label{eq:X_hatX}
\hat X = X  + Z = \begin{bmatrix}
\hat U & \hat U_{\perp}\\
\end{bmatrix}\cdot\begin{bmatrix}
\hat\Sigma_1 & 0 \\
0 & \hat\Sigma_2\\
\end{bmatrix}\cdot
\begin{bmatrix}
\hat V^{\intercal} \\
\hat V_{\perp}^{\intercal} \\
\end{bmatrix},
\end{equation}
while $\hat{U}, \hat{U}_{\perp}, \hat{\Sigma}_1, \hat{\Sigma}_2, \hat{V}$ and $\hat{V}_{\perp}$ have the same structures as ${U}, {U}_{\perp}, {\Sigma}_1, {\Sigma}_2, {V}$, and  ${V}_{\perp}$. Decompose the perturbation $Z$ into four blocks
\begin{equation}\label{eq:Z_decomposition}
Z= Z_{11} + Z_{12} + Z_{21} + Z_{22},
\end{equation}
where
\[
Z_{11} = \bbP_{U}Z\bbP_{V},\quad Z_{21} = \bbP_{U_\perp}Z\bbP_{V},\quad Z_{12} = \bbP_{U}Z\bbP_{V_\perp},\quad Z_{22} = \bbP_{U_\perp}Z\bbP_{V_\perp}.
\]

Define
\[
z_{ij} := \|Z_{ij}\| \quad\mbox{for}\quad i,\; j = 1, 2. 
\]

Theorem \ref{th:perturbation} below provides separate perturbation bounds for the left and right singular subspaces in terms of both spectral and Frobenius $\sin \Theta$ distances.

\begin{Theorem}[Perturbation bounds for singular subspaces]
	\label{th:perturbation}
	
	Let $X$, $\hat X$, and $Z$ be given as \eqref{eq:X_decomopsition}-\eqref{eq:Z_decomposition}. Denote 
	$$\alpha := \sigma_{\min}(U^{\intercal}\hat X V)  \quad\mbox{and}\quad \beta := \|U_{\perp}^{\intercal} \hat X V_{\perp}\|.$$ 
	%$$\alpha := \sigma_{\min}(U^{\intercal}\hat X V) \geq \sigma_{\min}(\Sigma_1) -z_{11} \quad\mbox{and}\quad \beta := \|U_{\perp}^{\intercal} \hat X V_{\perp}\| \leq  \|\Sigma_2\| + z_{22},$$ 
	If $\alpha^2 > \beta^2 + z_{12}^2\wedge z_{21}^2$, then
	\begin{equation}\label{ineq:th-main_spectral}
	\begin{split}
	\|\sin \Theta(V, \hat{V})\| & \leq \frac{\alpha z_{12} + \beta z_{21}}{\alpha^2 - \beta^2 - z_{21}^2\wedge z_{12}^2} \wedge 1,\\
	%	\end{equation}
	%	\begin{equation}\label{ineq:th-main_Frobenius}
	\|\sin \Theta(V, \hat{V})\|_F & \leq \frac{\alpha\|Z_{12}\|_F + \beta\|Z_{21}\|_F}{\alpha^2 - \beta^2 - z_{21}^2\wedge z_{12}^2} \wedge \sqrt{r}.
	\end{split}
	\end{equation}
	\begin{equation}
	\begin{split}
	\|\sin \Theta(U, \hat{U})\| & \leq \frac{\alpha z_{21} + \beta z_{12}}{\alpha^2 - \beta^2 - z_{21}^2\wedge z_{12}^2} \wedge 1,\\ 
	\|\sin \Theta(U, \hat{U})\|_F & \leq \frac{\alpha\|Z_{21}\|_F + \beta\|Z_{12}\|_F}{\alpha^2 - \beta^2 - z_{21}^2\wedge z_{12}^2} \wedge \sqrt{r}.
	\end{split}
	\end{equation}
\end{Theorem}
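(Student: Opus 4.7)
The plan is to reduce both bounds to a block SVD perturbation analysis on the rotated matrix $\tilde X := [U \ U_\perp]^{\intercal} \hat X [V \ V_\perp]$. By \eqref{eq:X_decomopsition} and the decomposition of $Z$, $\tilde X$ has the block form $\tilde X = \begin{pmatrix} A & B \\ C & D \end{pmatrix}$, with $A = \Sigma_1 + U^\intercal Z V$ (so $\sigma_{\min}(A) = \alpha$), $B = U^\intercal Z V_\perp$ (so $\|B\| = z_{12}$), $C = U_\perp^\intercal Z V$ (so $\|C\| = z_{21}$), and $D = \Sigma_2 + U_\perp^\intercal Z V_\perp$ (so $\|D\| = \beta$). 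Because orthogonal rotations preserve $\sin\Theta$ distances, the target quantities are $\|P\|$ and $\|Q\|$ where $P := V_\perp^\intercal \hat V$ and $Q := U_\perp^\intercal \hat U$; the trivial estimates $\|P\| \leq 1$ and $\|P\|_F \leq \sqrt{r}$ (and similarly for $Q$) supply the $\wedge\, 1$ and $\wedge \sqrt{r}$ terms.

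Next I would derive a Sylvester equation for each of $P$ and $Q$. Projecting the SVD identities $\hat X \hat V = \hat U \hat\Sigma_1$ and $\hat X^\intercal \hat U = \hat V \hat\Sigma_1$ against the four blocks produces
\begin{align*}
A(V^\intercal\hat V) + B P &= (U^\intercal\hat U)\hat\Sigma_1, & C(V^\intercal\hat V) + D P &= Q\hat\Sigma_1, \\
A^\intercal(U^\intercal\hat U) + C^\intercal Q &= (V^\intercal\hat V)\hat\Sigma_1, & B^\intercal(U^\intercal\hat U) + D^\intercal Q &= P\hat\Sigma_1.
\end{align*}
Multiplying the fourth relation by $\hat\Sigma_1$ on the right and substituting the first and second to eliminate $(U^\intercal\hat U)\hat\Sigma_1$ and $Q\hat\Sigma_1$ yields
\[
P\hat\Sigma_1^2 - (B^\intercal B + D^\intercal D)P = (B^\intercal A + D^\intercal C)(V^\intercal\hat V),
\]
and the symmetric manipulation starting from the second relation gives
\[
Q\hat\Sigma_1^2 - (CC^\intercal + DD^\intercal)Q = (CA^\intercal + DB^\intercal)(U^\intercal\hat U).
\]

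I would then apply the standard Sylvester bound $\|T\| \leq \|R\|/(\lambda_{\min}(M_1) - \lambda_{\max}(M_2))$ (together with its Hilbert--Schmidt analogue for the Frobenius case). The min-max principle applied to $\hat X$ restricted to the column space of $V$ gives $\hat\sigma_r \geq \alpha$ (because $\|\hat X V v'\|^2 = \|Av'\|^2 + \|Cv'\|^2 \geq \alpha^2\|v'\|^2$), so the $P$-equation has denominator at least $\alpha^2 - \beta^2 - z_{12}^2$ and the $Q$-equation has denominator at least $\alpha^2 - \beta^2 - z_{21}^2$. To reach the symmetric $\alpha^2 - \beta^2 - z_{12}^2 \wedge z_{21}^2$ in both bounds, I would use the cross-identities $P\hat\Sigma_1 = B^\intercal(U^\intercal\hat U) + D^\intercal Q$ and $Q\hat\Sigma_1 = C(V^\intercal\hat V) + D P$: combining $\|P\hat\Sigma_1\| \geq \alpha\|P\|$ with a bound on $\|Q\|$ from the $Q$-Sylvester equation transfers to $\|P\| \leq (z_{12} + \beta\|Q\|)/\alpha$, and a direct calculation shows this yields the bound $(\alpha z_{12}+\beta z_{21})/(\alpha^2-\beta^2-z_{21}^2)$; symmetrically for $Q$. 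Taking the minimum of the two valid bounds produces $z_{12}^2\wedge z_{21}^2$ in both denominators.

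The main obstacle is replacing the naive numerator $\|A\|z_{12} + \beta z_{21}$ (arising from $\|B^\intercal A\| \leq \|B\|\|A\|$) by the claimed $\alpha z_{12} + \beta z_{21}$. The key move is to rewrite $B^\intercal A(V^\intercal\hat V) = B^\intercal[(U^\intercal\hat U)\hat\Sigma_1 - BP]$ using the first block equation, trading the factor $A$ for $\hat\Sigma_1$ paired against the cosine block $U^\intercal\hat U$; further use of the third block equation lets the effective scale in the numerator drop from $\|A\|$ to $\alpha$, while the residual $B^\intercal BP$ is absorbed back into the left-hand side of the Sylvester equation, producing precisely the $-z_{12}^2$ (respectively $-z_{21}^2$) correction. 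The Frobenius bounds follow by the same scheme, using $\|MN\|_F \leq \|M\|\,\|N\|_F$ to keep operator-norm factors attached to $A$ and $D$ while the Frobenius norm migrates to the small off-diagonal blocks $B$ and $C$.
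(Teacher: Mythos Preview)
Your block identities and the fact $\hat\sigma_r\ge\alpha$ are correct, and in fact the two ``cross-identities'' you write down already finish the job on their own. Taking norms in
\[
P\hat\Sigma_1=B^\intercal(U^\intercal\hat U)+D^\intercal Q,\qquad Q\hat\Sigma_1=C(V^\intercal\hat V)+DP
\]
gives $\alpha\|P\|\le z_{12}+\beta\|Q\|$ and $\alpha\|Q\|\le z_{21}+\beta\|P\|$; eliminating $\|Q\|$ yields $\|P\|\le(\alpha z_{12}+\beta z_{21})/(\alpha^2-\beta^2)$, which is \emph{stronger} than the stated bound, and the Frobenius case follows identically. The Sylvester route you lead with is where the proposal has a real gap: your numerator fix replaces $A(V^\intercal\hat V)$ by $(U^\intercal\hat U)\hat\Sigma_1-BP$, but this trades $\|A\|$ for $\|\hat\Sigma_1\|=\hat\sigma_1$, which can be just as large; the claim that a further use of the third block equation drops the scale to $\alpha$ is not substantiated, and the ``direct calculation'' combining the $P$ cross-identity with a Sylvester bound on $\|Q\|$ does not actually produce $(\alpha z_{12}+\beta z_{21})/(\alpha^2-\beta^2-z_{21}^2)$ --- a cross term $z_{12}z_{21}^2$ survives.

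The paper's argument is genuinely different. It first proves a reusable proposition bounding $\|\sin\Theta(\hat V,W)\|$ by $\sigma_r(\hat X W)\,\|\bbP_{\hat XW}\hat XW_\perp\|\big/\bigl(\sigma_r^2(\hat XW)-\sigma_{r+1}^2(\hat X)\bigr)$, established via a coordinate-wise eigenvector identity for $\hat X^\intercal\hat X$ together with the monotonicity of $x\mapsto x/(x^2-y^2)$; this monotonicity is precisely what makes $\sigma_r$ rather than $\sigma_1$ appear in the numerator. The theorem then follows by estimating $\|\bbP_{\hat XV}\hat XV_\perp\|$ through the split $\bbP_U+\bbP_{U_\perp}$ and the bound $\|\hat XV[(\hat XV)^\intercal\hat XV]^{-1}\|\le1/\alpha$, and by bounding $\sigma_{r+1}^2(\hat X)$ via both of the rank-$r$ decompositions $\hat X=\bbP_U\hat X+\bbP_{U_\perp}\hat X=\hat X\bbP_V+\hat X\bbP_{V_\perp}$, which is where the $z_{12}^2\wedge z_{21}^2$ enters. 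Your cross-identity argument (once isolated) is shorter and more elementary; the paper's proposition is a standalone tool that they reuse in the denoising and CCA sections.
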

One can see the respective effects of the perturbation on the left and right singular spaces. In particularly, if $z_{12} \geq z_{21}$ (which is typically the case when $p_2 \gg p_1$), then Theorem \ref{th:denoising} gives a smaller bound for $\|\sin\Theta(U, \hat{U})\|$ than for $\|\sin\Theta(V, \hat{V})\|$.

\begin{Remark}
	The assumption $\alpha^2 > \beta^2 + z_{12}^2 \wedge z_{21}^2$ in Theorem \ref{th:perturbation} ensures that  the amplitude of $U^\intercal\hat{X} V = \Sigma_1 + U^\intercal Z V$ dominates those of $U_{\perp}^\intercal\hat{X}V_{\perp} = \Sigma_2 + U_{\perp}^\intercal ZV_{\perp}$, $U^\intercal ZV_{\perp}$, and $ U_{\perp}^\intercal ZV$, so that $\hat{U}$ and $\hat{V}$ can be close to $U$ and $V$, respectively. This assumption essentially means that there exists significant gap between the $r$-th and $(r+1)$-st singular values of $X$ and the perturbation term $Z$ is bounded. We will show in Theorem \ref{th:perturbation_lower} that $\hat{U}$ and $\hat{V}$ might be inconsistent when this condition fails to hold.
\end{Remark}

\begin{Remark}\label{rm:illustration_scenario}
	Consider the setting where $X\in \mathbb{R}^{p_1 \times p_2}$ is a fixed rank-$r$ matrix with $r\le p_1 \ll p_2$, and $Z\in \mathbb{R}^{p_1 \times p_2}$ is a random matrix with i.i.d. standard normal entries. In this case, $Z_{11}, Z_{12}, Z_{21},$ and $Z_{22}$ are all i.i.d. standard normal matrices of dimensions $r\times r$, $r\times (p_2-r)$, $(p_1 - r)\times r$, and $(p_1-r)\times (p_2-r)$, respectively. By random matrix theory (see, e.g. \cite{vershynin2012introduction,tao2012topics}), $\alpha \geq \sigma_r(X) - \|Z_{11}\| \geq \sigma_r(X) - C(\sqrt{p_1} + \sqrt{p_2}) $, $\beta \leq C(\sqrt{p_1}+\sqrt{p_2})$, $z_{12} \leq C\sqrt{p_2},$ and $ z_{21} \leq C\sqrt{p_1}$ for some constant $C>0$ with high probability. When $\sigma_r(X) \geq C_{\rm gap}p_2/\sqrt{p_1}$ for some large constant $C_{\rm gap}$, Theorem \ref{th:denoising} immediately implies
	\begin{equation*}
	\|\sin\Theta(V, \hat{V})\| \leq \frac{C\sqrt{p_2}}{\sigma_r(X)}, \quad \|\sin\Theta(U, \hat{U})\| \leq \frac{C\sqrt{p_1}}{\sigma_r(X)}.
	\end{equation*}	
	Further discussions on perturbation bounds for general sub-Gaussian perturbation matrix with matching lower bounds with be given in Section \ref{sec.denoising}.
\end{Remark}

Theorem  \ref{th:perturbation} gives upper bounds for the perturbation effects. We now establish lower bounds for the differences as measured by the $\sin \Theta$ distances. Theorem \ref{th:perturbation_lower} first states that $\hat{U}$ and $\hat{V}$ might be inconsistent when the condition $\alpha^2 > \beta^2 + z_{12}^2\wedge z_{21}^2$ fails to hold, and then provides the lower bounds that match those in \eqref{ineq:lm_V_W_spectral_norm} and \eqref{ineq:lm_V_W_Frobenius}, proving that the results given in Theorem \ref{th:perturbation} is essentially sharp. Theorem \ref{th:perturbation_lower} also provides the worst-case  matrix pair  $(X, Z)$  that nearly achieves the supremum in \eqref{ineq:perturbation_lower_spectral} and \eqref{ineq:perturbation_lower_Frobenius}. The matrix pair shows where the lower bound is ``close" to the upper bound, which is useful in understanding the fundamentals about singular subspace perturbations.  %For the lower bound, it is sufficient to consider the class of exact rank-$r$ matrices as the approximate low-rank matrices perturbation can be discussed similarly.

Before stating the lower bounds, we define the following class of  $(X, Z)$ pairs of $p_1\times p_2$ matrices and perturbations,
	\begin{equation}
	\begin{split}
	 %\mathcal{F}_{r, \alpha, \beta, z_{21}, z_{12}} = \big\{(X, Z): & \rank(X) = r, \sigma_{\min}(U^{\intercal}\hat X V) \geq \alpha,\\
	%& \|Z_{22}\| \leq \beta, \|Z_{12}\| \leq z_{12}, \|Z_{21}\| \leq z_{21}\big\}.
	&\mathcal{F}_{r, \alpha, \beta, z_{21}, z_{12}}
	= \big\{ (X, Z):  \text{$\hat{X}, U, V$ are given as \eqref{eq:X_decomopsition} and \eqref{eq:X_hatX}}, \\
	& \quad \sigma_{\min}(U^{\intercal}\hat X V) \geq \alpha,  \|U_{\perp}^{\intercal} \hat{X} V_{\perp} \| \leq \beta, \|Z_{12}\| \leq z_{12}, \|Z_{21}\| \leq z_{21}\big\}.
	\end{split}
	\end{equation}
	In addition, we also define
	\begin{equation}\label{ineq:perturbation_lower_Frobenius}
	\begin{split}
	\mathcal{G}_{\alpha, \beta, z_{21}, z_{12}, \tilde{z}_{21}, \tilde{z}_{12}} = \big\{(X, Z): & \|Z_{21}\|_F\leq \tilde{z}_{21}, \|Z_{12}\|_F\leq \tilde{z}_{12},\\
	& (X, Z)\in \mathcal{F}_{r, \alpha, \beta, z_{21}, z_{12}}\big\}.
	\end{split}
	\end{equation}

\begin{Theorem}[Perturbation Lower Bound]\label{th:perturbation_lower}
If $\alpha^2 \leq \beta^2 + z_{12}^2\wedge z_{21}^2$ and $r \leq \frac{p_1\wedge p_2}{2}$, then
	\begin{equation}\label{ineq:perturbation_lower_spectral_1}
	\inf_{\tilde V}\sup_{(X, Z)\in \mathcal{F}} \|\sin\Theta (V, \tilde V)\| \geq \frac{1}{2\sqrt{2}}.
	\end{equation}
	\begin{itemize}[leftmargin=*]
	\item Provided that $\alpha^2 > \beta^2 + z_{12}^2+ z_{21}^2$, $r \leq \frac{p_1\wedge p_2}{2}$ we have the following lower bound for all estimate $\tilde V\in O_{p_2\times r}$ based on the observations $\hat X$,
	\begin{equation}\label{ineq:perturbation_lower_spectral}
	\inf_{\tilde V}\sup_{(X, Z)\in \mathcal{F}} \|\sin\Theta (V, \tilde V)\| \geq \frac{1}{8\sqrt{10}}\left(\frac{\alpha z_{12} + \beta z_{21}}{\alpha^2 - \beta^2 - z_{12}^2\wedge z_{21}^2} \wedge 1\right).
	\end{equation}
	In particular, if $X = \alpha UV^\intercal + \beta U_{\perp}V_{\perp}^\intercal$ and $Z = z_{12} UV_{\perp}^\intercal + z_{21}U_{\perp} V^\intercal$, then $(X, Z)\in \mathcal{F}$ and
	\begin{equation*}
	\frac{1}{\sqrt{10}}\left(\frac{\alpha z_{12} + \beta z_{21}}{\alpha^2 - \beta^2 - z_{21}^2\wedge z_{12}^2} \wedge 1\right) \leq \|\sin \Theta(V, \hat{V})\| \leq \left(\frac{\alpha z_{12} + \beta z_{21}}{\alpha^2 - \beta^2 - z_{21}^2\wedge z_{12}^2} \wedge 1\right),
	\end{equation*}
	when $\hat{U}, \hat{V}$ are the leading $r$ left and right singular vectors of $\hat{X}$.

\item	Provided that $\alpha^2 > \beta^2 + z_{12}^2+ z_{21}^2$, $\tilde{z}^2_{21} \leq rz^2_{21}$, $\tilde{z}^2_{12} \leq rz^2_{12}$, $r\leq \frac{p_1\wedge p_2}{2}$, we have the following lower bound for all estimator $\tilde V_1 \in \mathbb{O}_{p_2\times r}$ based on the observations $\hat X$,
	\begin{equation}
	\inf_{\tilde{V}_1}\sup_{(X, Z)\in \mathcal{G}} \|\sin\Theta (V, \tilde V)\|_F \geq \frac{1}{8\sqrt{10}}\left(\frac{\alpha \tilde{z}_{12} + \beta \tilde{z}_{21}}{\alpha^2 - \beta^2 - z_{12}^2\wedge z_{21}^2} \wedge \sqrt{r}\right).
	\end{equation}
	In particular, if $X = \alpha UV^\intercal + \beta U_{\perp}V_{\perp}^\intercal$, $Z = \tilde{z}_{12} UV_{\perp}^\intercal + \tilde{z}_{21}U_{\perp} V^\intercal$, then $(X, Z) \in \mathcal{G}$ and
	$$\frac{1}{\sqrt{10}} \left(\frac{\alpha \tilde{z}_{12} + \beta \tilde{z}_{21}}{\alpha^2 - \beta^2 - z_{21}^2\wedge z_{12}^2} \wedge \sqrt{r} \right) \leq \|\sin\Theta(V, \hat{V})\| \leq \left(\frac{\alpha \tilde{z}_{12} + \beta \tilde{z}_{21}}{\alpha^2 - \beta^2 - z_{21}^2\wedge z_{12}^2} \wedge \sqrt{r}\right),$$
	where $\hat{U}, \hat{V}$ are respectively the leading $r$ left and right singular vectors of $\hat{X}$.
\end{itemize}
\end{Theorem}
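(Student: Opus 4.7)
The plan is to combine two ingredients: first, a direct SVD computation on the explicit worst-case pair $X = \alpha UV^\intercal + \beta U_\perp V_\perp^\intercal$, $Z = z_{12}UV_\perp^\intercal + z_{21}U_\perp V^\intercal$ (with $z_{ij}$ replaced by $\tilde z_{ij}/\sqrt r$ for the Frobenius variant in $\mathcal G$), which supplies the two ``in particular'' clauses; second, a Le Cam-style two-point argument that lifts these pointwise computations to minimax lower bounds over every estimator $\tilde V$ measurable in $\hat X$, giving \eqref{ineq:perturbation_lower_spectral_1} and \eqref{ineq:perturbation_lower_spectral}.

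For the SVD step, in the orthonormal frames $[U,U_\perp]$ and $[V,V_\perp]$ the perturbed matrix $\hat X = X+Z$ has a $2\times 2$ block form with blocks $\alpha I_r$, $z_{12}I$, $z_{21}I$, and a rectangular $\beta$-identity. Using $r\le (p_1\wedge p_2)/2$ to permute rows and columns, this decomposes into $r$ independent copies of the $2\times 2$ matrix $m=\bigl(\begin{smallmatrix}\alpha & z_{12}\\ z_{21}&\beta\end{smallmatrix}\bigr)$ together with a $\beta$-isotropic leftover, and the gap condition $\alpha^2 > \beta^2 + z_{12}^2+z_{21}^2$ ensures the top singular value of $m$ dominates the leftover. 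The leading right singular vector of $\hat X$ therefore agrees block-by-block with the top right singular vector $(\cos\theta,\sin\theta)^\intercal$ of $m$. Diagonalizing $m^\intercal m$ gives $\tan(2\theta)=2(\alpha z_{12}+\beta z_{21})/(\alpha^2+z_{21}^2-\beta^2-z_{12}^2)$, and a short trigonometric estimate based on $\sin\theta\ge \tfrac12\sin(2\theta)$ on $[0,\pi/4]$ together with the gap condition to control $\cos(2\theta)$ yields
\begin{equation*}
\sin\theta \;\ge\; \frac{1}{\sqrt{10}}\left(\frac{\alpha z_{12}+\beta z_{21}}{\alpha^2-\beta^2-z_{12}^2\wedge z_{21}^2}\wedge 1\right).
\end{equation*}
Stacking $r$ copies gives the spectral bound directly and, by summation after the $\tilde z_{ij}/\sqrt r$ rescaling, the Frobenius bound.

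For the two-point argument I would construct a second admissible pair $(X',Z')\in \mathcal F$ with the same observation $\hat X'=\hat X$ but a different ``true'' subspace $V'$, obtained by rotating $(U,V)$ into the $U_\perp,V_\perp$ planes by the largest angle compatible with the four defining constraints of $\mathcal F_{r,\alpha,\beta,z_{21},z_{12}}$. Since any $\tilde V$ depends only on $\hat X$, indistinguishability and the triangle inequality for the $\sin\Theta$ norm (Lemma \ref{lm:sin_Theta_distance}) give the minimax lower bound as $\tfrac12\|\sin\Theta(V,V')\|$; with the angle chosen on the order of the angle from the SVD step, this produces the $1/(8\sqrt{10})$ constant. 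In the regime $\alpha^2 \le \beta^2 + z_{12}^2\wedge z_{21}^2$, the simple choice $\hat X = \alpha I$ with $\alpha = \beta$ and $z_{12}=z_{21}=0$ admits any $r$-dimensional $V$ by setting $U=V$, so $V'$ can be taken orthogonal to $V$, giving $\|\sin\Theta(V,V')\|=1$ and hence the $1/(2\sqrt 2)$ bound of \eqref{ineq:perturbation_lower_spectral_1}. The main obstacle is the bookkeeping for $(X',Z')$: the four quantities $\sigma_{\min}(U'^\intercal\hat X V')$, $\|U_\perp'^\intercal \hat X V_\perp'\|$, $\|Z'_{12}\|$ and $\|Z'_{21}\|$ all shift simultaneously under rotation, and the angle has to be tuned large enough to guarantee a constant-factor lower bound on $\|\sin\Theta(V,V')\|$ yet small enough to preserve all four inequalities; for the Frobenius bound the rotation must moreover be performed in $r$ independent two-planes at once to attain the $\wedge\sqrt r$ saturation.
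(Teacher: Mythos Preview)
Your $2\times 2$ SVD computation for the worst-case pair and the overall Le Cam two-point strategy are correct and match the paper. There are, however, two concrete gaps.

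First, your construction for the regime $\alpha^2 \le \beta^2 + z_{12}^2\wedge z_{21}^2$ fails in general: the parameters $\alpha,\beta,z_{12},z_{21}$ are \emph{fixed} in the definition of $\mathcal F_{r,\alpha,\beta,z_{21},z_{12}}$, so you are not free to set $\alpha=\beta$ and $z_{12}=z_{21}=0$. Even interpreting your proposal as $\hat X=\gamma I$ with $Z=0$ and $U=V$, you get $\sigma_{\min}(U^\intercal\hat X V)=\gamma$ and $\|U_\perp^\intercal \hat X V_\perp\|=\gamma$, so membership in $\mathcal F$ forces $\alpha\le\gamma\le\beta$; this covers only $\alpha\le\beta$, whereas the hypothesis allows, e.g., $\alpha=2,\ \beta=1,\ z_{12}=z_{21}=2$.

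Second, the rotation-based two-point construction you sketch is exactly the bookkeeping you flag as the obstacle, and the paper sidesteps it entirely with a cleaner device that handles \emph{both} regimes at once. Take $\hat X$ to be the block matrix built from $r$ copies of $B=\bigl(\begin{smallmatrix}\alpha & z_{12}\\ 0 &\beta\end{smallmatrix}\bigr)$ (when $\alpha z_{12}\ge\beta z_{21}$; otherwise use $\bigl(\begin{smallmatrix}\alpha & 0\\ z_{21} &\beta\end{smallmatrix}\bigr)$), and let the two indistinguishable pairs be: (i) $X_2=\alpha I_r$ in the top-left block with $Z_2=\hat X-X_2$, so $V_2$ is canonical; and (ii) $X_1$ equal to the \emph{best rank-$r$ approximation of $\hat X$}, i.e.\ $r$ copies of $\sigma_1 u_1 v_1^\intercal$ from the SVD of $B$, with $Z_1=\hat X-X_1$. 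The point of (ii) is that $Z_1$ is then orthogonal to both $U_1$ and $V_1$, so $\|(Z_1)_{12}\|=\|(Z_1)_{21}\|=0$ trivially, while $\sigma_{\min}(U_1^\intercal\hat X V_1)=\sigma_1\ge\alpha$ and $\|U_{1\perp}^\intercal\hat X V_{1\perp}\|=\sigma_2\le\beta$ follow from $\sigma_1\sigma_2=|\det B|=\alpha\beta$ together with $\sigma_1\ge\alpha$. No angle needs to be tuned against four simultaneous constraints; $\|\sin\Theta(V_1,V_2)\|=|v_{21}|$ is read off directly from the paper's Lemma~\ref{lm:2-by-2}, which gives $|v_{21}|\ge 1/\sqrt2$ in the first regime and $|v_{21}|\ge \tfrac{1}{\sqrt{10}}\bigl(\tfrac{\alpha z_{12}}{\alpha^2-\beta^2}\wedge 1\bigr)$ in the second, after which the triangle inequality and the case split $\alpha z_{12}\gtrless\beta z_{21}$ supply the stated constants. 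The Frobenius version is the same construction with $z_{12},z_{21}$ replaced by $\tilde z_{12}/\sqrt r,\ \tilde z_{21}/\sqrt r$.
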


%{\red
%
%\begin{Remark}
%	Theorem \ref{th:perturbation_lower} not only provides a matching lower bound for Theorem \ref{th:perturbation}, but also introduces the worst case matrices for Theorem \ref{th:perturbation}, i.e. 
%	
%	One related question to Theorems \ref{th:perturbation} and \ref{th:perturbation_lower} is that, when the supremum of  namely, what are the worst-case matrices that lead to the largest perturbation bounds.  For given $\alpha, \beta, z_{12}, z_{21}$ satisfying $\alpha^2 > \beta^2 + z_{12}^2 + z_{21}^2$, let $U \in\mathbb{O}_{p_1, r}, V\in\mathbb{O}_{p_2, r}$ be any orthogonal matrices. When $X = \alpha\cdot U V^\top$, $Z = z_{12}  $, we have
%	$$\frac{1}{2}\frac{\alpha z_{12} + \beta z_{21}}{\alpha^2 - \beta^2 - z_{21}^2\wedge z_{12}^2} \wedge 1 \leq \|\sin \Theta(V, \hat{V})\| \leq \frac{\alpha z_{12} + \beta z_{21}}{\alpha^2 - \beta^2 - z_{21}^2\wedge z_{12}^2} \wedge 1,$$
%	$$\frac{1}{2} \frac{\alpha z_{12} + \beta z_{21}}{\alpha^2 - \beta^2 - z_{21}^2\wedge z_{12}^2} \wedge 1 \leq \|\sin\Theta(U, \hat{U})\| \leq \frac{\alpha z_{21} + \beta z_{12}}{\alpha^2 - \beta^2 - z_{21}^2\wedge z_{12}^2} \wedge 1,\\$$
%	These matrices realize a lower bound “close” to the upper bound, so there is something fundamental about singular subspaces to be learned from the worst-case matrices.
%\end{Remark}
%
%}

The following Proposition \ref{lm:perturbation}, which provides upper bounds for the $\sin \Theta$ distances between leading singular vectors of a matrix $A$ and arbitrary orthogonal columns $W$, can be viewed as another version of Theorem \ref{th:perturbation}. For some applications,  applying Proposition \ref{lm:perturbation} might be more convenient than using Theorem \ref{th:perturbation} directly.
\begin{Proposition}\label{lm:perturbation}
	Suppose $A\in \mathbb{R}^{p_1\times p_2}$, $ \tilde{V} = [V ~ V_\perp]\in\mathbb{O}_{p_2}$ are right singular vectors of $A$, $V\in \mathbb{O}_{p_2, r}$, $V_\perp \in \mathbb{O}_{p_2, p_2-r}$ correspond to the first $r$ and last $(p_2-r)$ singular vectors respectively. $\tilde{W} = [W ~ W_\perp]\in\mathbb{O}_{p_2}$ is any orthogonal matrix with $W\in \mathbb{O}_{p_2, r}, W_\perp \in \mathbb{O}_{p_2, p_2-r}$. Given that $\sigma_r (AW) > \sigma_{r+1}(A)$, we have
	\begin{equation}\label{ineq:lm_V_W_spectral_norm}
	\left\|\sin\Theta(V, W)\right\| \leq \frac{\sigma_{r}(AW) \|\bbP_{(AW)} A W_\perp\|}{\sigma^2_r(AW) - \sigma^2_{r+1}(A)} \wedge 1,
	\end{equation}
	\begin{equation}\label{ineq:lm_V_W_Frobenius}
	\left\|\sin\Theta(V, W)\right\|_F \leq \frac{\sigma_{r}(AW) \|\bbP_{(AW)} A W_\perp \|_F}{\sigma^2_r(AW) - \sigma^2_{r+1}(A)} \wedge \sqrt{r}.
	\end{equation}
	%		\item (Symmetric Version) Suppose $\Sigma \in \mathbb{R}^{p\times p}$ is some symmetric matrix, $V\in \mathbb{O}_{n, r}$ is any orthonormal columns. Given that $\lambda_r(V^{\intercal}\Sigma V)>\lambda_{r+1}(\Sigma) \geq 0$, suppose $W$ is the first $r$ eigen-space of $\Sigma$. Then one has
	%		\begin{equation}
	%		\|\sin\Theta(V, W)\| \leq \left\|(V\Sigma V^{\intercal} - \lambda_{r+1}(\Sigma)I_r)^{-1} V\Sigma V_{\perp}^{\intercal} \right\|\wedge 1,
	%		\end{equation}
	%		\begin{equation}
	%		\|\sin\Theta(V, W)\|_F \leq \left\|(V\Sigma V^{\intercal} - \lambda_{r+1}(\Sigma)I_r)^{-1} V\Sigma V_{\perp}^{\intercal} \right\|_F\wedge \sqrt{r}.
	%		\end{equation}
\end{Proposition}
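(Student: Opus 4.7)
The plan is to apply Theorem~\ref{th:perturbation} to the pair $(X, \hat{X}) = (AWW^\intercal, A)$, which is engineered so that the top-$r$ right singular subspace of $X$ coincides with $\mathrm{col}(W)$. The first step is to identify the SVD of $X$: if $AW = U_W D\tilde{V}^\intercal$ is the compact SVD of $AW$, then $X = AWW^\intercal = U_W D (W\tilde{V})^\intercal$, so the top-$r$ left and right singular vectors of $X$ may be taken as $U = U_W$ and $V = W\tilde{V}$ (whose column span is exactly $\mathrm{col}(W)$), with singular values $\sigma_1(AW),\ldots,\sigma_r(AW)$. Hence $\|\sin\Theta(V, \hat{V})\|$ computed in the theorem equals $\|\sin\Theta(W, V_{\text{true}})\|$, the quantity we want to bound.

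Next I would compute the block quantities. Using $U_W^\intercal AW = D\tilde{V}^\intercal$ we get
$$\alpha = \sigma_{\min}(U^\intercal \hat{X} V) = \sigma_{\min}\bigl(U_W^\intercal A W \tilde{V}\bigr) = \sigma_{\min}(D) = \sigma_r(AW),$$
matching the numerator factor in the Proposition. For the perturbation $Z = \hat{X} - X = AW_\perp W_\perp^\intercal$, the identity $Z\mathbb{P}_V = AW_\perp W_\perp^\intercal WW^\intercal = 0$ gives $z_{11} = z_{21} = 0$. The only surviving cross term is $z_{12} = \|\mathbb{P}_U Z\mathbb{P}_{V_\perp}\| = \|\mathbb{P}_{AW} AW_\perp W_\perp^\intercal\| = \|\mathbb{P}_{AW} AW_\perp\|$, since right multiplication by the semi-orthogonal $W_\perp^\intercal$ preserves spectral norm. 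Plugging into Theorem~\ref{th:perturbation} with $z_{21}=0$ yields
$$\|\sin\Theta(V, W)\| \;\le\; \frac{\sigma_r(AW)\, \|\mathbb{P}_{AW} A W_\perp\|}{\sigma_r^2(AW) - \beta^2},$$
with $\beta = \|U_\perp^\intercal \hat{X} V_\perp\| = \|(I - \mathbb{P}_{AW})AW_\perp\|$.

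The main obstacle is upgrading the denominator to $\sigma_r^2(AW) - \sigma_{r+1}^2(A)$, because naively $\beta = \|(I - \mathbb{P}_{AW})A\| \ge \sigma_{r+1}(A)$ by Eckart--Young. To close this gap one retraces the proof of Theorem~\ref{th:perturbation} in the special case $z_{21}=0$: there the relevant quantity is not the full block norm $\beta$ but only the behaviour of $\hat X$ on directions orthogonal to $V$, which is controlled by the $(r+1)$-th singular value of $\hat X = A$ via an SVD interlacing argument. This is what replaces $\beta^2$ by $\sigma_{r+1}^2(A)$ in the denominator. The Frobenius bound \eqref{ineq:lm_V_W_Frobenius} then follows from the identical setup, replacing the spectral norm by the Frobenius norm in $z_{12}$ and invoking the Frobenius branch of Theorem~\ref{th:perturbation}.
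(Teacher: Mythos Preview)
Your reduction is set up correctly—the identification $X = AWW^\intercal$, $\hat X = A$ gives $\alpha = \sigma_r(AW)$, $z_{21}=0$, and $z_{12} = \|\bbP_{AW}AW_\perp\|$ exactly as you say—but the approach has two fatal problems.

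First, it is circular in this paper: Theorem~\ref{th:perturbation} is \emph{derived from} Proposition~\ref{lm:perturbation}, not the other way around. The proof of Theorem~\ref{th:perturbation} ends by ``applying Proposition~\ref{lm:perturbation} with $A=\hat X$, $\tilde W=[V~V_\perp]$''; so invoking Theorem~\ref{th:perturbation} here assumes what you are trying to prove.

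Second, even if Theorem~\ref{th:perturbation} were available as a black box, the obstacle you flag is a genuine gap that your final paragraph does not close. With $z_{21}=0$ the denominator from Theorem~\ref{th:perturbation} is $\alpha^2-\beta^2 = \sigma_r^2(AW)-\|(I-\bbP_{AW})AW_\perp\|^2$, and since $(I-\bbP_{AW})A = (I-\bbP_{AW})AW_\perp W_\perp^\intercal$ has rank-$r$ complement, Eckart--Young gives $\|(I-\bbP_{AW})AW_\perp\|\ge\sigma_{r+1}(A)$, with strict inequality whenever $\mathrm{col}(AW)$ is not the top-$r$ left singular space of $A$. So the denominator you obtain is \emph{smaller} than the target $\sigma_r^2(AW)-\sigma_{r+1}^2(A)$, and the hypothesis $\sigma_r(AW)>\sigma_{r+1}(A)$ does not even guarantee $\alpha^2>\beta^2$. ``Retracing the proof of Theorem~\ref{th:perturbation}'' cannot help, because that proof is just an application of the present proposition.

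The paper's proof avoids this entirely by working directly with the eigenvector equations of $A^\intercal A$. After rotating so that $[W~W_\perp]=I_{p_2}$ and diagonalizing $AW$, one reads off from the $i$-th coordinate of $A^\intercal A v^{(k)}=\sigma_k^2(A)v^{(k)}$ (for $i\le r$, $k\ge r+1$) the exact identity
\[
\bigl(\sigma_i^2(AW)-\sigma_k^2(A)\bigr)\,\alpha_i^{(k)} \;=\; -\,\sigma_i(AW)\,y^{(i)\intercal}\beta^{(k)},
\]
in which the actual singular values $\sigma_k(A)$, $k\ge r+1$, appear explicitly. Bounding $\sigma_k(A)\le\sigma_{r+1}(A)$ and summing (with a power-series trick for the spectral norm) yields the sharp denominator $\sigma_r^2(AW)-\sigma_{r+1}^2(A)$ directly. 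This eigenvector-level argument is the missing idea; the block-level quantities $\alpha,\beta,z_{ij}$ of Theorem~\ref{th:perturbation} are too coarse to recover it.
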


	It is also of practical interest to provide perturbation bounds for a given subset of singular vectors and in particular for a given singular vector. The following Corollary \ref{cr:perturbation_single_singular_vector} provides the one-sided perturbation bound for $\hat{U}_{[:, i:j]}$ and $\hat{V}_{[:, i:j]}$ when there are significant  gaps between the $(i-1)$-st and $i$-th and between the $j$-th and $(j+1)$-st  singular values and the perturbation is bounded. Particularly when $i = j$, Corollary \ref{cr:perturbation_single_singular_vector} provides the upper bound for the perturbation of the $i$-th left and right singular vectors of $\hat{X}$, $\hat{u}_i$ and $\hat{v}_i$.
	
	\begin{Corollary}[Perturbation bounds for individual singular vectors]\label{cr:perturbation_single_singular_vector}
		Suppose $X, \hat{X}$, and $Z$ are given as \eqref{eq:X_decomopsition}-\eqref{eq:Z_decomposition}. For any $k \geq 1$, let $U_{(k)} = U_{[:, 1:k]}\in \mathbb{O}_{p_1, k}$, $V_{(k)} = V_{[:, 1:k]} \in \mathbb{O}_{p_2, k}$, and $U_{(k)\perp} \in \mathbb{O}_{p_1, p_1-k}, V_{(k)\perp} \in \mathbb{O}_{p_2, p_2-k}$ be the orthogonal complements. Denote
		\begin{equation*}
		\alpha^{(k)} = \sigma_{\min}(U_{(k)}^{\intercal}\hat{X}V_{(k)}),\quad \beta^{(k)} =  \|U^\intercal_{(k)\perp}\hat{X}V_{(k)\perp}\|,
		\end{equation*}
		\begin{equation*}
		z_{12}^{(k)} = \left\|U_{(k)}^{\intercal}ZV_{(k)\perp}\right\|, \quad z_{21}^{(k)} = \left\|U_{(k)\perp}^{\intercal}ZV_{(k)}\right\|,
		\end{equation*}
		for $k=1,\ldots, p_1\wedge p_2$. We further define $\alpha^{(0)} = \infty, \beta^{(0)} = \|\hat{X}\|, z_{12}^{(0)} = z_{21}^{(0)} = 0$. For $1\leq i \leq j \leq p_1\wedge p_2$, provided that $(\alpha^{(i-1)})^2 > (\beta^{(i-1)})^2 + (z_{12}^{(i-1)})^2 \wedge (z_{21}^{(i-1)})^2$ and $(\alpha^{(j)})^2 > (\beta^{(j)})^2 + (z_{12}^{(j)})^2 \wedge (z_{21}^{(j)})^2$,	
		we have
		\begin{equation*}
		\left\|\sin\Theta(\hat{V}_{[:, i:j]}, V_{[:, i:j]})\right\| \leq \left\{\sum_{k\in\{i-1, j\}}\left(\frac{\left(\alpha^{(k)} z_{12}^{(k)} + \beta^{(k)} z_{21}^{(k)}\right)}{(\alpha^{(k)})^2 - (\beta^{(k)})^2 - (z_{21}^{(k)})^2\wedge (z_{12}^{(k)})^2}\right)^2\right\}^{1/2} \wedge 1,
		\end{equation*}
		\begin{equation*}
		\left\|\sin\Theta(\hat{U}_{[:, i:j]}, U_{[:, i:j]})\right\| \leq \left\{\sum_{k\in\{i-1, j\}}\left(\frac{\left(\alpha^{(k)} z_{21}^{(k)} + \beta^{(k)} z_{12}^{(k)}\right)}{(\alpha^{(k)})^2 - (\beta^{(k)})^2 - (z_{21}^{(k)})^2\wedge (z_{12}^{(k)})^2}\right)^2\right\}^{1/2} \wedge 1.
		\end{equation*}
		In particular, for any integer $1\leq i \leq p_1\wedge p_2$, if $(\alpha^{(i-1)})^2 > (\beta^{(i-1)})^2 + (z_{12}^{(i-1)})^2 \wedge (z_{21}^{(i-1)})^2$ and $(\alpha^{(i)})^2 > (\beta^{(i)})^2 + (z_{12}^{(i)})^2 \wedge (z_{21}^{(i)})^2$, $u_i ,\hat{u}_i, v_i, \hat{v}_i$, i.e. the $i$-th singular vectors of $X$ and $\hat{X}$, are different by
		\begin{equation*}
		\sqrt{1 - \left(v_i^{\intercal}\hat{v}_i\right)^2} \leq \left\{\sum_{k=i-1}^{i}\left(\frac{\left(\alpha^{(k)} z_{12}^{(k)} + \beta^{(k)} z_{21}^{(k)}\right)}{(\alpha^{(k)})^2 - (\beta^{(k)})^2 - (z_{21}^{(k)})^2\wedge (z_{12}^{(k)})^2}\right)^2\right\}^{1/2} \wedge 1,
		\end{equation*}
		\begin{equation*}
		\sqrt{1 - \left(u_i^{\intercal}\hat{u}_i\right)^2} \leq \left\{\sum_{k=i-1}^{i}\left(\frac{\left(\alpha^{(k)} z_{21}^{(k)} + \beta^{(k)} z_{12}^{(k)}\right)}{(\alpha^{(k)})^2 - (\beta^{(k)})^2 - (z_{21}^{(k)})^2\wedge (z_{12}^{(k)})^2}\right)^2\right\}^{1/2} \wedge 1.
		\end{equation*}

	\end{Corollary}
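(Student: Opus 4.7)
My plan is to reduce Corollary \ref{cr:perturbation_single_singular_vector} to two applications of Theorem \ref{th:perturbation}, one at rank $k = i - 1$ and one at rank $k = j$, combined through a stacked-matrix inequality that yields an $\ell_2$ (rather than $\ell_1$) aggregation. The key geometric observation is that $V_{[:, i:j]}$ and $[V_{(i-1)}\ V_{[:, j+1:]}]$ together exhaust the columns of the full orthogonal right-singular matrix, so $A_\perp := [V_{(i-1)}\ V_{[:, j+1:]}]$ is an orthonormal basis for $\mathrm{span}(V_{[:, i:j]})^\perp$. I would then invoke the standard identity $\|\sin\Theta(A, \hat A)\| = \|A_\perp^{\intercal}\hat A\|$ (valid for orthonormal $A, \hat A$ with the same column count) with $A = V_{[:,i:j]}$ and $\hat A = \hat V_{[:,i:j]}$.

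The crucial combination step is as follows. Using $A_\perp A_\perp^{\intercal} = V_{(i-1)}V_{(i-1)}^{\intercal} + V_{[:,j+1:]}V_{[:,j+1:]}^{\intercal}$ (a sum of projections onto orthogonal subspaces) and squaring,
\begin{equation*}
\|\sin\Theta(V_{[:,i:j]}, \hat V_{[:,i:j]})\|^2 = \bigl\|\hat V_{[:,i:j]}^{\intercal}\bigl(V_{(i-1)}V_{(i-1)}^{\intercal} + V_{[:,j+1:]}V_{[:,j+1:]}^{\intercal}\bigr)\hat V_{[:,i:j]}\bigr\|.
\end{equation*}
Since the two terms inside are positive semidefinite, Weyl's inequality bounds this by $\|V_{(i-1)}^{\intercal}\hat V_{[:,i:j]}\|^2 + \|V_{[:,j+1:]}^{\intercal}\hat V_{[:,i:j]}\|^2$. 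Next I would bound each summand by the corresponding two-block $\sin\Theta$ distance. Because the columns of $\hat V_{[:,i:j]}$ lie in the orthogonal complement of $\hat V_{(i-1)}$, one can write $\hat V_{[:,i:j]} = \hat V_{(i-1)\perp}W$ for some $W$ with orthonormal columns, whence
\begin{equation*}
\|V_{(i-1)}^{\intercal}\hat V_{[:,i:j]}\| \le \|V_{(i-1)}^{\intercal}\hat V_{(i-1)\perp}\|\,\|W\| = \|\sin\Theta(V_{(i-1)}, \hat V_{(i-1)})\|.
\end{equation*}
A symmetric argument (using that $\hat V_{[:,i:j]}$ lies in $\mathrm{span}(\hat V_{(j)})$ and that $V_{[:, j+1:]}$ spans $\mathrm{span}(V_{(j)})^\perp$) yields $\|V_{[:,j+1:]}^{\intercal}\hat V_{[:,i:j]}\| \le \|\sin\Theta(V_{(j)}, \hat V_{(j)})\|$.

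Applying Theorem \ref{th:perturbation} at ranks $k = i-1$ and $k = j$ (the hypotheses $(\alpha^{(k)})^2 > (\beta^{(k)})^2 + (z_{12}^{(k)})^2 \wedge (z_{21}^{(k)})^2$ are assumed) then bounds each $\|\sin\Theta(V_{(k)}, \hat V_{(k)})\|$ by the ratio inside the sum, yielding the stated $\sin\Theta$ bound for $V_{[:,i:j]}$. The bound for $\hat U_{[:,i:j]}$ follows by interchanging the roles of $U$ and $V$, which swaps $z_{12}^{(k)}$ with $z_{21}^{(k)}$ in accordance with the parallel statements of Theorem \ref{th:perturbation}. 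The boundary case $i = 1$ is absorbed by the conventions $\alpha^{(0)} = \infty,\ z_{12}^{(0)} = z_{21}^{(0)} = 0$, which make $\|\sin\Theta(V_{(0)}, \hat V_{(0)})\| = 0$ trivially; the case $j = p_1 \wedge p_2$ is analogous. The single-vector statement is the specialization $i = j$ together with the observation that for one-dimensional subspaces $\|\sin\Theta(\hat v_i, v_i)\|$ is the scalar $\sqrt{1 - (v_i^{\intercal}\hat v_i)^2}$. The main obstacle is obtaining the $\ell_2$ combination of the two $\sin\Theta$ distances; the naive identity $\bbP_{V_{[:,i:j]}} = \bbP_{V_{(j)}} - \bbP_{V_{(i-1)}}$ only yields the weaker $\ell_1$ triangle inequality, so the argument must operate at the level of $A_\perp^{\intercal}\hat A$ and invoke Weyl's inequality on the PSD decomposition of $A_\perp A_\perp^{\intercal}$.
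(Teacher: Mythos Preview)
Your proposal is correct and follows essentially the same approach as the paper. The only cosmetic difference is that the paper writes $\|\sin\Theta(\hat V_{[:,i:j]},V_{[:,i:j]})\| = \|V_{[:,i:j]}^{\intercal}\hat V_{[:,i:j]\perp}\|$ (taking the complement on the $\hat V$ side) and then stacks $\hat V_{[:,i:j]\perp} = [\hat V_{(i-1)}\ \hat V_{[:,(j+1):p_2]}]$, whereas you take the complement on the $V$ side; by the symmetry $\|A_{\perp}^{\intercal}\hat A\| = \|A^{\intercal}\hat A_{\perp}\|$ (Lemma~\ref{lm:sin_Theta_distance}) these are identical, and your Weyl/PSD step is exactly the paper's stacked-matrix bound $\bigl\|\bigl[\begin{smallmatrix}B_1\\B_2\end{smallmatrix}\bigr]\bigr\|^2 \le \|B_1\|^2 + \|B_2\|^2$.
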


\begin{Remark}
	The upper bound given in Corollary \ref{cr:perturbation_single_singular_vector} is rate-optimal over the following set of $(X, Z)$ pairs,
	\begin{equation*}
	\begin{split}
	& \mathcal{H}_{\alpha^{(i-1)}, \beta^{(i-1)}, z_{12}^{(i-1)}, z_{21}^{(i-1)}, \alpha^{(j)}, \beta^{(j)}, z_{12}^{(j)}, z_{21}^{(j)}}\\
	= & \left\{(X, Z): \begin{array}{l}
	\sigma_{\min}(U_{(k)}^{\intercal}\hat{X}V_{(k)}) \geq \alpha^{(k)}, \|U^\intercal_{(k)\perp}\hat{X}V_{(k)\perp}\| \leq \beta^{(k)},\\
	\left\|U_{(k)}^{\intercal}ZV_{(k)\perp}\right\| \leq z_{12}^{(k)},  \left\|U_{(k)\perp}^{\intercal}ZV_{(k)}\right\| \leq z_{21}^{(k)}
	\end{array}, k \in \{i-1, j\} \right\}.
	\end{split}
	\end{equation*}
	The detailed analysis can be carried out similarly to the one for Theorem \ref{th:perturbation_lower}.
	%provided that $(\alpha^{(i-1)})^2 > (\beta^{(i-1)})^2 + (z_{12}^{(i-1)})^2 + (z_{21}^{(i-1)})^2$ and $(\alpha^{(j)})^2 > (\beta^{(j)})^2 + (z_{12}^{(j)})^2 + (z_{21}^{(j)})^2$. 
\end{Remark}

%%%%%%%%%%%%%%%%%%%%%%%%%%%%%%%%%%%%%%%%%%%
\subsection{Comparisons with Wedin's sin $\mathbf{\Theta}$ Theorem}
\label{comparison.sec}
%%%%%%%%%%%%%%%%%%%%%%%%%%%%%%%%%%%%%%%%%%%

Theorems  \ref{th:perturbation} and \ref{th:perturbation_lower} together establish separate rate-optimal perturbation bounds for the left and right singular subspaces. We now compare the results with the well-known Wedin's $\sin \Theta$ Theorem, which gives uniform upper bounds for the singular subspaces on both sides.  
Specifically, using the same notation as in Section \ref{sec.perturbation}, Wedin's $\sin \Theta$ Theorem  states that
if $\sigma_{\min} (\hat\Sigma_1) - \sigma_{\max}(\Sigma_2) = \delta > 0$, then
\begin{equation*}
\max\left\{\|\sin\Theta(V, \hat V) \|, \|\sin\Theta(U, \hat U)\|\right\} \leq \frac{\max\left\{\|Z\hat V\|, \|\hat U^{\intercal}Z\|\right\}}{\delta},
\end{equation*}
\begin{equation*}
\max\left\{\|\sin\Theta(V, \hat V) \|_F, \|\sin\Theta(U, \hat U)\|_F\right\} \leq \frac{\max\left\{\|Z\hat V\|_F, \|\hat U^{\intercal}Z\|_F\right\}}{\delta}.
\end{equation*}
When $X$ are $Z$ are symmetric, Theorem \ref{th:perturbation}, Proposition \ref{lm:perturbation}, %{\red $\leftarrow$ Anru, let's use labels to avoid any problem in the future, in case we move the theorems around.}
 and Wedin's $\sin\Theta$ theorem provide similar upper bound for singular subspace perturbation.

%{\blue In the typical statistical analysis, $X, Z$ are respectively assigned for fixed population parameter and some random perturbation, which causes a drawback of implementation of Wedin's $\sin\Theta$ that $\sigma_{\max}(\hat{\Sigma}_2)$, $\hat V$, $\hat{U}$ in the upper bound are not deterministic. } %{\red $\leftarrow$ Need to explain. Why these are not as good as those without hat?}  
%A simple idea to circumvent this difficulty might involve \cite{weyl1912asymptotische}'s singular value perturbation bound on $\sigma_{\max}(\hat{\Sigma})$, which will in fact further loosen the perturbation bound. In addition, 
As mentioned in the introduction, the uniform bound on both left and right singular subspaces in Wedin's $\sin \Theta$ Theorem might be sub-optimal in some cases when $X$ or $Z$ are asymmetric. For example, in the setting discussed in Remark \ref{rm:illustration_scenario}, applying Wedin's theorem leads to 
$$\max\left\{\|\sin\Theta(V, \hat{V})\|, \|\sin\Theta(U, \hat{U})\|\right\} \leq \frac{C\max\{\sqrt{p_1}, \sqrt{p_2}\}}{\sigma_r(X)}, $$
which is sub-optimal for $\|\sin\Theta(U, \hat{U})\|$ if $p_2 \gg p_1$.

\section{Low-rank Matrix Denoising and Singular-Space Estimation}
\label{sec.denoising}
%%%%%%%%%%%%%%%%%%%%%%%%%%%%%%%%%%%%%%%%%

In this section, we apply the perturbation bounds given in Theorem \ref{th:perturbation} for low-rank matrix denoising. It can be seen that the new perturbation bounds are particularly powerful when the matrix dimensions differ significantly. We also establish a matching lower bound for low-rank matrix denoising which shows that the results are rate-optimal. 

As mentioned in the introduction, accurate recovery of  a low-rank matrix based on noisy observations  has a wide range of applications, including  magnetic resonance imaging (MRI) and relaxometry. See, e.g.,  \cite{candes2013unbiased,shabalin2013reconstruction} and the reference therein.  This problem is also important in the context of dimensional reduction. Suppose one observes a low-rank matrix with additive noise,
$$Y = X + Z,$$
where $X = U\Sigma V^{\intercal} \in \mathbb{R}^{p_1\times p_2}$ is a low-rank matrix with $U\in \mathbb{O}_{p_1, r}, V\in \mathbb{O}_{p_2, r}, $ and $\Sigma = \diag\{\sigma_1(X), \ldots,  \sigma_r(X)\} \in \mathbb{R}^{r\times r}$, and $Z\in \mathbb{R}^{p_1\times p_2}$ is an i.i.d. mean-zero sub-Gaussian matrix. The goal is to estimate the underlying low-rank matrix $X$ or its singular values or singular vectors. 

This problem has been actively studied. For example, \cite{bura2008distribution}, \cite{capitaine2009largest}, \cite{benaych2012singular}, \cite{shabalin2013reconstruction} focused on the asymptotic distributions of single singular value and vector when $p_1$, $p_2$ and the singular values grows proportionally. \cite{vu2011singular} discussed the squared matrix perturbed by i.i.d Bernoulli matrix and derived an upper bound on the rotation angle of singular vectors. \cite{o2013random} further generalized the results in \cite{vu2011singular} and proposed a trio-concentrated random matrix perturbation setting. Recently, \cite{wang2015singular} provides the $\ell_{\infty}$ distance under relatively complicated settings when matrix is perturbed by i.i.d. Gaussian noise. \cite{donoho2014minimax,gavish2014optimal,candes2013unbiased} studied the algorithm for recovering $X$, where singular value thresholding (SVT) and hard singular value thresholding (HSVT), stated as
\begin{equation}
\begin{split}
& SVT(Y)_\lambda = \argmin_{X} \left\{\frac{1}{2}\|Y - X\|_F^2 + \lambda \|X\|_\ast\right\},\\
& HSVT(Y)_{\lambda} = \argmin_{X} \left\{\frac{1}{2}\|Y-X\|_F^2 + \lambda \rank(X)\right\}
\end{split}
\end{equation}
were proposed. The optimal choice of thresholding level $\lambda^\ast$ was further discussed in \cite{donoho2014minimax} and \cite{gavish2014optimal}. Especially, \cite{donoho2014minimax} proves that
$$\inf_{\hat{X}} \sup_{\substack{X\in \mathbb{R}^{p_1\times p_2}\\\rank(X) \leq r}} \E\left\|\hat{X} - X\right\|_F^2 \asymp r (p_1 + p_2),$$
when $Z$ is i.i.d. standard normal random matrix. If one defines the class of rank-$r$ matrices, $\mathcal{F}_{r, t} = \left\{X\in \mathbb{R}^{p_1\times p_2}: \sigma_r(X) \geq t\right\}$, the following upper bound for the relative error is an immediate consequence of our results,
\begin{equation}\label{ineq:upper_relative_error}
%\sup_{X\in \mathcal{F}_{r, t}}\E\frac{\|\hat{X} - X\|^2}{\|X\|^2} \leq  \frac{C(p_1+p_2)}{t^2}\wedge 1, \quad	
\sup_{X\in \mathcal{F}_{r, t}}\E\frac{\|\hat{X} - X\|_F^2}{\|X\|_F^2} \leq  \frac{C(p_1+p_2)}{t^2}\wedge 1,
\end{equation}	
where 
$$\hat{X} = \left\{\begin{array}{ll}
SVT(Y)_{\lambda^\ast}, & \text{if } t^2 \geq C(p_1+p_2),\\
0, & \text{if } t^2 < C(p_1+p_2).
\end{array}\right.$$

In the following discussion, we assume that the entries of $Z=(Z_{ij})$ have unit variance (which can be simply achieved by normalization). To be more precise, we define the class of distributions $\mathcal{G}_{\tau}$ for some $\tau>0$ as follows.
\begin{equation}
\text{If } Z\sim \mathcal{G}_\tau, \text{ then } \E Z = 0, \Var(Z) = 1, \E\exp(tZ) \leq \exp(\tau t), ~ \forall t\in \mathbb{R}.
\end{equation}
The distribution of the entries of $Z$, $Z_{ij}$, is assumed to satisfy 
$$Z_{ij} \overset{iid}{\sim}\mathcal{G}_{\tau}, \quad 1\leq i \leq p_1, 1\leq j \leq p_2.$$

Suppose $\hat{U}$ and $\hat{V}$ are respectively the first $r$ left and right singular vectors of $Y$. We use $\hat{U}$ and $\hat{V}$ as the estimators of $U$ and $V$  respectively. Then the perturbation bounds for singular spaces 
yield the following results.

\begin{Theorem}[Upper Bound]\label{th:denoising}
	Suppose $X= U\Sigma V^{\intercal}\in \mathbb{R}^{p_1\times p_2}$ is of rank-$r$. There exists constants $C>0$ that only depends on $\tau$ such that %when $\sigma_r^2(X)> C_{\rm gap}[(p_1p_2)^{1\over 2} + p_2]$. $Z \overset{iid}{\sim} \mathcal{G}_{1,\tau}$, we have
	\begin{equation*}
	\begin{split}
	& \E\|\sin\Theta(V, \hat V)\|^2 \leq \frac{Cp_2(\sigma_r^2(X)+p_1)}{\sigma_r^4(X)}\wedge 1,\\
	& \E\|\sin\Theta(V, \hat V)\|_F^2 \leq \frac{Cp_2r(\sigma_r^2(X) + p_1)}{\sigma_r^4(X)}\wedge r.
	\end{split}
	\end{equation*}
	\begin{equation*}
	\begin{split}
	&\E\|\sin\Theta(U, \hat U)\|^2 \leq \frac{Cp_1(\sigma_r^2(X)+p_2)}{\sigma_r^4(X)}\wedge 1,\\
	&\E\|\sin\Theta(U, \hat U)\|_F^2 \leq \frac{Cp_1r(\sigma_r^2(X) + p_2)}{\sigma_r^4(X)}\wedge r.
	\end{split}
	\end{equation*}
\end{Theorem}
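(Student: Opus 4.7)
The plan is to deploy the deterministic perturbation bound of Theorem \ref{th:perturbation} (or, when sharper, Proposition \ref{lm:perturbation}) with $\hat X = X+Z$, and to estimate each block-norm quantity that enters it via sub-Gaussian random-matrix concentration. Since $X=U\Sigma V^{\intercal}$ is supported on $\mathrm{span}(U),\mathrm{span}(V)$, in the basis $[U\ U_\perp]\otimes[V\ V_\perp]$ only the top-left block $U^{\intercal}\hat X V=\Sigma_1+U^{\intercal}ZV$ carries signal, while the other three blocks equal $U^{\intercal}ZV_\perp$, $U_\perp^{\intercal}ZV$, $U_\perp^{\intercal}ZV_\perp$. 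Weyl's inequality gives $|\alpha-\sigma_r(X)|\le\|U^{\intercal}ZV\|$, so $\alpha,\beta,z_{12},z_{21}$ and their Frobenius versions can all be read off from norms of sub-Gaussian rectangular matrices.

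For the probabilistic step, I would use the standard non-asymptotic estimate: for any fixed $P\in\mathbb{O}_{p_1,m_1}$, $Q\in\mathbb{O}_{p_2,m_2}$ and $Z_{ij}\overset{\text{iid}}{\sim}\mathcal{G}_\tau$,
\[
\mathbb{P}\bigl(\|P^{\intercal}ZQ\|\ge C_\tau(\sqrt{m_1}+\sqrt{m_2}+t)\bigr)\le 2e^{-ct^2},
\]
which follows from an $\epsilon$-net on $S^{m_1-1}\times S^{m_2-1}$ combined with a sub-Gaussian Bernstein bound, paired with Hanson--Wright to control $\|P^{\intercal}ZQ\|_F^2$ around its mean $m_1 m_2$. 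Applying these to $(P,Q)\in\{(U,V),(U,V_\perp),(U_\perp,V),(U_\perp,V_\perp)\}$ and taking $t\asymp\sqrt{p_1+p_2}$ yields an event $\mathcal{E}$ with $\mathbb{P}(\mathcal{E}^c)\le 2e^{-c(p_1+p_2)}$ on which
\[
\|U^{\intercal}ZV\|\le C\sqrt{r},\quad z_{12}\le C\sqrt{p_2},\quad z_{21}\le C\sqrt{p_1},\quad \beta\le C(\sqrt{p_1}+\sqrt{p_2}),
\]
together with $\|Z_{12}\|_F\le C\sqrt{rp_2}$ and $\|Z_{21}\|_F\le C\sqrt{rp_1}$, with constants depending only on $\tau$.

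In the strong-signal regime $\sigma_r^2(X)\ge C_0(p_1+p_2)$, the gap condition of Theorem \ref{th:perturbation} is satisfied on $\mathcal{E}$ with $\alpha^2-\beta^2-z_{12}^2\wedge z_{21}^2\gtrsim\sigma_r^2(X)$, and substituting into \eqref{ineq:th-main_spectral} gives $\|\sin\Theta(V,\hat V)\|^2\lesssim(\alpha^2 z_{12}^2+\beta^2 z_{21}^2)/\sigma_r^4(X)\lesssim p_2(\sigma_r^2(X)+p_1)/\sigma_r^4(X)$; the bound for $U$ follows from the symmetric computation $(p_1,z_{12})\leftrightarrow(p_2,z_{21})$, and the Frobenius versions follow from the same calculation with the Frobenius-block inputs. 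In the complementary weak-signal regime $\sigma_r^2(X)<C_0(p_1+p_2)$ the stated upper bound is already of order one (of order $r$ for Frobenius), so the trivial estimates $\|\sin\Theta\|\le 1$, $\|\sin\Theta\|_F^2\le r$ close the argument. To pass from high-probability to expectation, I would integrate the sub-Gaussian and $\chi^2$-type tails governing $\mathcal{E}^c$; their contribution is $O(e^{-c(p_1+p_2)})$ and is absorbed by the leading rate.

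The main technical obstacle is the sub-Gaussian operator-norm concentration for $P^{\intercal}ZQ$ with constants depending only on $\tau$; because rotational invariance is unavailable outside the Gaussian case, the $\epsilon$-net/Bernstein argument must be run so that no factor of $r$ or of the specific frames $P,Q$ enters the constant. A subtler issue is that when $p_1\gg p_2$ the naive estimate $\beta z_{21}\lesssim p_1$ would contribute a $p_1^2/\sigma_r^4(X)$ term not appearing in the stated rate; closing this gap cleanly requires invoking Proposition \ref{lm:perturbation} with $W=V$, so that the key quantity becomes $\|\bbP_{\hat X V}ZV_\perp\|$, which is $\lesssim\sqrt{p_2}$ (immediate in the Gaussian case from the independence of $ZV$ and $ZV_\perp$, and recovered for general sub-Gaussian $Z$ either by a decoupling argument or by absorbing the surplus into the $\wedge 1$ truncation).
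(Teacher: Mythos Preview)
Your skeleton is the paper's: apply Proposition~\ref{lm:perturbation} with $A=\hat X$, $W=V$ and feed in random-matrix estimates, split into strong/weak signal, then integrate the tails. The gap is in the random-matrix inputs and in your regime split, and it is precisely the interesting case $p_1\gg p_2$ that breaks. Your split at $\sigma_r^2(X)\ge C_0(p_1+p_2)$ is too coarse: the target $p_2(\sigma_r^2+p_1)/\sigma_r^4$ is $\ll 1$ as soon as $\sigma_r^2(X)\gg\sqrt{p_1p_2}+p_2$, so in the window $\sqrt{p_1p_2}+p_2\ll\sigma_r^2(X)\ll p_1+p_2$ (nonempty when $p_1\gg p_2$) the claimed bound is nontrivial while your ``weak-signal'' branch returns only the trivial $1$. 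Neither of your proposed patches closes this: the ``$\wedge 1$ truncation'' fails exactly here, and a decoupling reduction for $\bbP_{YV}ZV_\perp$ is not available for general sub-Gaussian $Z$ since $ZV$ and $ZV_\perp$ are merely uncorrelated, not independent.

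What is actually needed---and what the paper does in Lemma~\ref{lm:denoising}---is concentration of the \emph{squares} around their means, not Weyl-type one-sided bounds. Weyl gives $\sigma_r(YV)\ge\sigma_r(X)-C\sqrt{p_1}$, which is useless once $\sigma_r^2(X)\ll p_1$; the right statement is $\sigma_r^2(YV)\approx\sigma_r^2(X)+p_1$ and $\sigma_{r+1}^2(Y)\approx p_1$, so that their difference is $\approx\sigma_r^2(X)$ already when $\sigma_r^2(X)\gtrsim\sqrt{p_1p_2}+p_2$. This is obtained by normalizing $YV$ via $M=\diag((\sigma_i^2+p_1)^{-1/2})$, writing $M^\intercal V^\intercal Y^\intercal YVM-I_r$ and $M^\intercal V^\intercal Y^\intercal YV_\perp$ as centered quadratic/bilinear forms in the entries of $Z$, and applying Hanson--Wright together with an $\varepsilon$-net. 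The same device yields $\|\bbP_{YV}YV_\perp\|\lesssim\sqrt{p_2}$ for arbitrary sub-Gaussian $Z$ via $\|\bbP_{YV}YV_\perp\|\le\sigma_{\min}^{-1}(YVM)\,\|M^\intercal V^\intercal Y^\intercal YV_\perp\|$, with no appeal to independence of $ZV$ and $ZV_\perp$. With these three estimates in hand, Proposition~\ref{lm:perturbation} gives the rate $p_2(\sigma_r^2+p_1)/\sigma_r^4$ on the correct event, and the tail integration proceeds as you outlined.
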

Theorem \ref{th:denoising} provides a non-trivial perturbation upper bound for $\sin\Theta(V, \hat V)$ (or $\sin\Theta(U, \hat U)$) if there exists a constant $C_{\rm gap}>0$ such that 
\[
\sigma_r^2 \geq C_{\rm gap}((p_1p_2)^{1\over 2} + p_2)
\]
(or $\sigma_r^2 \geq C_{\rm gap}((p_1p_2)^{1\over 2} + p_1)$). In contrast, Wedin's $\sin \Theta$ Theorem requires the singular value gap $\sigma_r^2(X) \geq C_{\rm gap} (p_1 + p_2) $, which shows the power of the proposed unilateral perturbation bound. 

Furthermore, the upper bounds in Theorem \ref{th:denoising} are rate-sharp in the sense that the following matching lower bounds hold. To the best of our knowledge, this is the first result that gives different optimal rates for  the left and right singular spaces under the same perturbation. 

\begin{Theorem}[Lower Bound]\label{th:denoising_lower}
	Define the following class of low-rank matrices
	\begin{equation}
	\mathcal{F}_{r, t} = \left\{X\in \mathbb{R}^{p_1\times p_2}: \sigma_r(X) \geq t\right\}.
	\end{equation}
	If $r \leq \frac{p_1}{16}\wedge \frac{p_2}{2}$, then
	\begin{equation*}\label{ineq:denoising_lower_spectral}
	\inf_{\tilde V} \sup_{X\in \mathcal{F}_{r, t}} \E\|\sin\Theta(V, \tilde V)\|^2 \geq 
	c \left(\frac{p_2(t^2+p_1)}{t^4}\wedge 1\right), 
	\end{equation*}
	\begin{equation*}\label{ineq:denoising_lower_Frobenius}
	\inf_{\tilde V} \sup_{X\in \mathcal{F}_{r, t}} \E\|\sin\Theta(V, \tilde V)\|_F^2 \geq 
	c \left(\frac{p_2r(t^2+p_1)}{t^4}\wedge r\right). 
	\end{equation*}
	\begin{equation*}
	\inf_{\tilde V} \sup_{X\in \mathcal{F}_{r, t}} \E\|\sin\Theta(U, \tilde U)\|^2 \geq 
	c \left(\frac{p_1(t^2+p_2)}{t^4}\wedge 1\right), 
	\end{equation*}
	\begin{equation*}
	\inf_{\tilde V} \sup_{X\in \mathcal{F}_{r, t}} \E\|\sin\Theta(U, \tilde U)\|_F^2 \geq 
	c \left(\frac{p_1r(t^2+p_2)}{t^4}\wedge r\right). 
	\end{equation*}
	%The lower bounds for $\|\sin\Theta(U, \hat{U})\|$ and $\|\sin\Theta(U, \hat{U})\|_F$ hold similarly. 
	%{\red Write down the bounds explicitly.}
\end{Theorem}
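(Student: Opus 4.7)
First I would reduce to the Gaussian case: since $N(0,1)\in\mathcal{G}_\tau$ for every sufficiently large $\tau$, any lower bound established under $Z_{ij}\overset{iid}{\sim}N(0,1)$ is automatically a lower bound over $\mathcal{G}_\tau$, and in the Gaussian case $\mathrm{KL}(P_{X_1}\|P_{X_2})=\tfrac12\|X_1-X_2\|_F^2$. By transposing $X$ (which interchanges $U\leftrightarrow V$ and $p_1\leftrightarrow p_2$) it is enough to prove the two $V$-inequalities. The target rate $p_2(t^2+p_1)/t^4\wedge 1=(p_2/t^2+p_1p_2/t^4)\wedge 1$ decomposes into two additive pieces plus a saturation regime, and each piece is handled by its own packing construction combined with Fano's or Le~Cam's inequality.

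For the first summand $p_2/t^2$ (spectral) and $rp_2/t^2$ (Frobenius), the plan is a standard local Grassmannian packing with $U$ held fixed. Fix $U_0\in\mathbb{O}_{p_1,r}$ and $V_0\in\mathbb{O}_{p_2,r}$, parameterize a neighbourhood of $V_0$ by the Stiefel retraction $V(A)=(V_0+V_{0\perp}A)(I+A^\intercal A)^{-1/2}$ with $A\in\mathbb{R}^{(p_2-r)\times r}$, and apply a Varshamov--Gilbert volume argument in the tangent space to produce a packing $\{V_i\}_{i=1}^N$ with $\log N\ge c_0 r(p_2-r)$ and pairwise Frobenius $\sin\Theta$-distance $\asymp\delta$. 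The matrices $X_i=tU_0 V_i^\intercal$ lie in $\mathcal{F}_{r,t}$ and satisfy $\|X_i-X_j\|_F\lesssim t\delta$, so Fano's lemma gives $\inf_{\tilde V}\sup_i\mathbb{E}\|\sin\Theta(V_i,\tilde V)\|_F^2\gtrsim\delta^2\bigl(1-C(t^2\delta^2+\log 2)/(r(p_2-r))\bigr)$, and choosing $\delta^2\asymp r(p_2-r)/t^2$ completes the Frobenius bound. For the spectral bound, a one-dimensional sub-packing in which only one column of $V$ varies on a sphere in $V_{0\perp}$ produces $N\ge\exp(c(p_2-r))$ hypotheses with pairwise spectral distance $\asymp\delta$, and the same calculation yields $cp_2/t^2$.

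The term $p_1p_2/t^4$ and the saturation $\wedge 1$ (respectively $\wedge r$) are subtler, and I expect this to be the main obstacle. Here the plan is to exploit the extra nuisance $U$: place a Haar prior $\mu$ on $U\in\mathbb{O}_{p_1,r}$ and consider, for each $V$ in the Grassmannian packing above, the mixture density $\bar P_V(Y)=\int P(Y\mid U,V)\,\mu(\mathrm{d}U)$. Equivalently, since the Bayesian sufficient statistic for $V$ under this prior is essentially $Y^\intercal Y$, the estimation of $V$ reduces to estimation of the projection $VV^\intercal$ from a noisy matrix with mean $t^2 VV^\intercal+p_1 I_{p_2}$ and Wishart-type noise of Frobenius order $\sqrt{p_1p_2}$; a Fano argument with a Grassmannian packing of $VV^\intercal$ then produces the rate $\delta^2\asymp rp_1(p_2-r)/t^4$ through the effective signal-to-noise $t^2/\sqrt{p_1p_2}$. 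The saturation in the regime $t^4\lesssim p_1p_2$ is established by a Le~Cam two-point test between $H_0:X=0$ and $H_1:X=tUV^\intercal$ with Haar-random $(U,V)$: by the Ingster--Suslina identity the chi-square divergence against the null is $\mathbb{E}\exp(t^2\operatorname{tr}(U_1^\intercal U_2)\operatorname{tr}(V_1^\intercal V_2))-1$, which Haar concentration bounds by $O(t^4/(p_1p_2))$, so total variation stays below $1$ when $t^4\le c_0 p_1p_2$ and the minimax risk is $\Omega(1)$ (respectively $\Omega(r)$). The hard part is obtaining a sharp, uniform chi-square / KL bound between the marginal laws of $Y^\intercal Y$ for two packing points in a way that cleanly captures the $\sqrt{p_1p_2}$ effective noise scale; once that bound is in hand, optimizing $\delta$ and assembling the two regimes completes the proof, and the $U$-inequalities follow by the transposition symmetry noted at the outset.
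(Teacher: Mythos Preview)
Your overall architecture---fix $U$ and pack $V$ for the $p_2/t^2$ part, then integrate out $U$ via a prior and run Fano for the $p_1p_2/t^4$ part---matches the paper's strategy, and your reduction to the Gaussian case and the $U\leftrightarrow V$ symmetry are exactly what the paper does. The organization differs: you decompose the rate additively as $p_2/t^2+p_1p_2/t^4$ and treat saturation separately, whereas the paper splits into two cases $t^2\le p_1/4$ and $t^2>p_1/4$, handling saturation simply by capping the packing radius at $\varepsilon=\sqrt{2r}$ inside the same Fano inequality.

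The main technical divergence is the choice of prior on the nuisance. You propose a Haar prior on $U\in\mathbb{O}_{p_1,r}$ and then argue via the sufficient statistic $Y^\intercal Y$; you correctly flag that bounding the KL (or $\chi^2$) between the resulting marginals is the hard step. The paper sidesteps this: instead of Haar on the Stiefel manifold it puts an i.i.d.\ Gaussian prior on $W\in\mathbb{R}^{p_1\times r}$ with variance $1/p_1$, restricted to the high-probability event $\{\sigma_{\min}(W)\ge 1/2\}$ so that $X=2tWV^\intercal\in\mathcal{F}_{r,t}$. The unrestricted mixture $\tilde P_{V,t}$ is then \emph{exactly} a product Gaussian with row covariance $I_{p_2}+(4t^2/p_1)VV^\intercal$, so the KL between two such mixtures is computed in closed form as $\frac{8t^4}{4t^2+p_1}\|\sin\Theta(V,V')\|_F^2$, which is precisely the effective SNR you were aiming for heuristically. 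A short random-matrix argument (using $r\le p_1/16$) shows the restriction changes the density ratio by at most $1\pm e^{-cp_1}$, so the KL bound carries over to the constrained mixture $\bar P_{V,t}$ up to an additive constant. This Gaussian-prior trick is what makes the ``hard part'' you identified completely explicit.

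One minor issue with your saturation argument: your Le~Cam two-point test uses $H_0:X=0$, but $0\notin\mathcal{F}_{r,t}$, so strictly speaking it does not bound the minimax risk over that class. This is easily repaired (e.g.\ test between two Haar-mixed alternatives with far-apart $V$'s, or simply absorb saturation into Fano by capping $\varepsilon$ as the paper does), but it is worth noting.
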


\begin{Remark}
	Using similar technical arguments, we can also obtain the following lower bound for estimating the low-rank matrix $X$ over $\mathcal{F}_{r, t}$ under a relative error loss,
	\begin{equation}\label{eq:mathcal_F}
	\inf_{\tilde{X}} \sup_{X\in \mathcal{F}_{r, t}} \E\frac{\|\tilde{X} - X\|^2_F}{\|X\|_F^2} \geq c\left(\frac{p_1+ p_2}{t^2} \wedge 1\right).
	\end{equation}
	Combining equations \eqref{ineq:upper_relative_error} and \eqref{eq:mathcal_F} yields the minimax optimal rate for relative error in matrix denoising,
	\begin{equation*}
	\inf_{\tilde{X}} \sup_{X\in \mathcal{F}_{r, t}} \E\frac{\|\tilde{X} - X\|^2_F}{\|X\|_F^2} \asymp c\left(\frac{p_1+ p_2}{t^2} \wedge 1\right).
	\end{equation*}
	An interesting fact is that
	$$ c\left(\frac{p_1+p_2}{t^2}\wedge 1\right) \asymp c\left(\frac{p_2(t^2 + p_1)}{t^4}\wedge 1\right) + c\left(\frac{p_1(t^2 + p_2)}{t^4}\wedge 1\right), $$
	which yields directly 
	$$\inf_{\tilde U} \sup_{X\in \mathcal{F}_{r, t}} \E\|\sin\Theta(U, \tilde U)\|^2 + \inf_{\tilde V} \sup_{X\in \mathcal{F}_{r, t}} \E\|\sin\Theta(V, \tilde V)\|^2 \asymp \inf_{\tilde{X}} \sup_{X\in \mathcal{F}_{r, t}} \E\frac{\|\tilde{X} - X\|^2_F}{\|X\|_F}. $$
	Hence, for the class of $\mathcal{F}_{r,t}$, one can stably recover $X$ in relative Frobenius norm loss if and only if one can stably recover both $U$ and $V$ in spectral $\sin\Theta$ norm.
	
\end{Remark}
%\begin{Remark}[Comparison with Previous Results]
%{\red This remark still needs to be polished.}	In contrast, to the best of our knowledge, we are among the first to provide a matching non-asymptotic upper and lower bound on the perturbation of unilateral singular vectors. %Our result is sharper than the previous results including \cite{vu2011singular,o2013random}. 
%	The other widely used conditions such as all singular values are not too far apart, i.e. $\sigma_1(X) \leq C \sigma_r(X)$, are not necessary in our analysis. 
%\end{Remark}

Another interesting aspect of Theorems \ref{th:denoising} and \ref{th:denoising_lower} is that, when $p_1\gg p_2$, $(p_1p_2)^{1\over 2} \ll t^2 \ll p_1$, there is no stable algorithm for recovery of either the left singular space $U$ or whole matrix $X$ in the sense that there exists uniform constant $c>0$ such that
\begin{equation*}
\inf_{\tilde{U}} \sup_{X\in \mathcal{F}_{r, t}} \E\left\|\sin\Theta(U, \tilde{U})\right\|^2 \geq c, \quad \inf_{\tilde{X}} \sup_{X\in \mathcal{F}_{r, t}} \E\frac{\|\tilde{X} - X\|_F^2}{\|X\|_F^2} \geq c.
\end{equation*}
In fact, for $X = tUV^{\intercal} \in \mathcal{F}_{r, t}$, if we simply apply SVT or HSVT algorithms with optimal choice of $\lambda$ as proposed in \cite{donoho2014minimax} and \cite{gavish2014optimal}, with high probability, $SVT_\lambda(\hat{X}) = HSVT_\lambda(\hat{X}) = 0.$
On the other hand, the spectral method does provide a consistent recovery of the right singular-space according to Theorem \ref{th:denoising}. 
\begin{equation*}
\E\left\|\sin\Theta(V, \hat{V})\right\|^2 \to 0.
\end{equation*}	
This phenomenon is well demonstrated by the simulation result (Table \ref{tb:simu_U_V}) provided in Section \ref{sec.intro}.

%%%%%%%%%%%%%%%%%%%%%%%%%%%%%%%%%%%%%%%%%
\section{High-dimensional Clustering}
\label{clustering.sec}
%%%%%%%%%%%%%%%%%%%%%%%%%%%%%%%%%%%%%%%%%

Unsupervised learning, or clustering, is an ubiquitous problem in statistics and machine learning \citep{hastie2009elements}. The perturbation bounds given in Theorem \ref{th:perturbation} as well as the results in Theorems \ref{th:denoising} and \ref{th:denoising_lower} have a direct implication  in high-dimensional clustering.  Suppose the locations of $n$ points, $X = [X_1\cdots X_n]\in \mathbb{R}^{p\times n}$, which lie in a certain $r$-dimensional subspace $\mathcal{S}$ in $\mathbb{R}^p$,  are observed with noise  
$$Y_{i} = X_{i} + \varepsilon_{i},  \quad i=1,\cdots, n. $$
Here $X_i\in \mathcal{S}\subseteq \mathbb{R}^{p}$ are fixed coordinates, $\varepsilon_i\in \mathbb{R}^p$ are random noises. The goal is to cluster the observations $Y$. Let the SVD of $X$ be given by $X = U\Sigma V^{\intercal}$, where $U\in \mathbb{O}_{p, r}$, $V\in \mathbb{O}_{n, r}$, and $\Sigma \in \mathbb{R}^{r\times r}$.  When $p\gg n$, applying the standard algorithms (e.g. $k$-means) directly to the coordinates $Y$ may lead to sub-optimal results with expensive computational costs due to the high-dimensionality. A better approach is to first perform dimension reduction by computing the SVD of $Y$ directly or on its random projections, and then carry out clustering based on the first $r$ right singular vectors $\hat{V}\in \mathbb{O}_{n, r}$. See, e.g. \cite{feldman2013turning} and \cite{boutsidis2015randomized}, and the reference therein. It is important to note that the left singular space $U$ are not directly used in the clustering procedure. Thus Theorem \ref{th:denoising} is more suitable for the analysis of the clustering method than Wedin's $\sin \Theta$ theorem as the method main depends on the accuracy of $\hat{V}$ as an estimate of $V$. 

Let us consider the following two-class clustering problem in more detail (see \cite{hastie2009elements,azizyan2013minimax,jin2014important,jin2015phase}).  Suppose $l_i \in \{-1, 1\}$, $i=1, ..., n$, are indicators representing the class label of the $n$-th nodes and let $\mu\in \mathbb{R}^p$ be a fixed vector. Suppose one observes $Y = [Y_1, \cdots, Y_n]$ where
\begin{equation*}
Y_i = l_i \mu + Z_i, \quad Z_i \overset{iid}{\sim} N(0, I_p), \quad 1\leq i \leq n,
\end{equation*}
where neither the labels $l_i$ nor the mean vector $\mu$ are observable. The goal is to cluster the data into two groups. The accuracy of any clustering algorithm is measured by the misclassification rate
\begin{equation}\label{eq:misclassification_rate}
\mathcal{M}(l, \hat{l}) := \frac{1}{n}\min_{\pi }\left|\left\{i: l_i \neq \pi(\hat{l}_i)\right\}\right|.
\end{equation}
Here $\pi$ is any permutations on $\{-1, 1\}$, as any permutation of the labels $\{-1, 1\}$ does not change the clustering outcome. %

In this case, $EY_i$ is either $\mu$ or $-\mu$, which lies on a straight line.  A simple PCA-based clustering method is to set 
\begin{equation}
\label{PCA-clustering}
\hat{l} = \sgn(\hat{v}), 
\end{equation}
where $\hat{v}\in \mathbb{R}^n$ is the first right singular vector of $Y$. We now apply the $\sin\Theta$ upper bound in Theorem \ref{th:denoising} to analyze  the performance guarantees of this clustering method. We are particularly interested in the high-dimensional case where $ p \geq n$. The case where $p < n$ can be handled similarly.

\begin{Theorem}\label{th:cluster_upper}
	Suppose $p \geq n$, $\pi$ is any permutation on $\{-1, 1\}$.
	When $\|\mu\|_2 \geq C_{\rm gap}(p/n)^{1\over 4}$ for some large constant $C_{\rm gap} >0$, then for some other constant $C>0$ the mis-classification rate for the PCA-based clustering method $\hat l$ given in \eqref{PCA-clustering} satisfies
	\begin{equation*}
	\E \mathcal{M}(\hat{l}, l) \leq C \frac{n\|\mu\|_2^2 + p}{n\|\mu\|_2^4}.
	\end{equation*}
\end{Theorem}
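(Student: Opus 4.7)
The plan is to identify the clustering problem as rank-one matrix denoising on the $p\times n$ matrix $Y = \mu l^\intercal + Z$, where $l = (l_1,\ldots,l_n)^\intercal\in\{\pm 1\}^n$ and $Z$ has i.i.d.\ $N(0,1)$ entries. The signal $X := \mu l^\intercal$ is rank one, with SVD $X = u\,\sigma_1\,v^\intercal$ where $u = \mu/\|\mu\|_2$, $v = l/\sqrt n$ and $\sigma_1(X) = \sqrt n\,\|\mu\|_2$. The key observation is that $\sgn(v_i) = l_i$ for every $i$, so the estimator $\hat l_i = \sgn(\hat v_i)$ recovers $l_i$ correctly on every coordinate where $\hat v$ is sign-aligned with $v$. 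Hence it suffices to convert a $\sin\Theta$ bound on $(v,\hat v)$ into a coordinatewise sign-error bound.

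First I would apply Theorem \ref{th:denoising} with $p_1 = p$, $p_2 = n$, and $r=1$. The hypothesis $\|\mu\|_2 \geq C_{\rm gap}(p/n)^{1/4}$ together with $p\geq n$ forces $\sigma_r^2(X) = n\|\mu\|_2^2 \gtrsim \sqrt{p_1p_2} + p_2$, which is precisely the singular-value-gap regime under which the Frobenius $\sin\Theta$ bound of Theorem \ref{th:denoising} is non-trivial. This gives
\[
\E\|\sin\Theta(v,\hat v)\|_F^2 \;\leq\; \frac{C\,n\,(n\|\mu\|_2^2 + p)}{(n\|\mu\|_2^2)^2} \;=\; \frac{C(n\|\mu\|_2^2 + p)}{n\|\mu\|_2^4}.
\]

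Next, I would pass from $\sin\Theta$ to misclassification. Because $v$ and $\hat v$ are unit vectors, $\|\sin\Theta(v,\hat v)\|_F^2 = 1 - (v^\intercal\hat v)^2$. Set $s := \sgn(v^\intercal\hat v)$ and $\tilde v := s\hat v$, so $v^\intercal\tilde v \geq 0$ and
\[
\|v - \tilde v\|_2^2 \;=\; 2(1 - v^\intercal\tilde v) \;\leq\; 2\bigl(1 - (v^\intercal\tilde v)^2\bigr) \;=\; 2\|\sin\Theta(v,\hat v)\|_F^2.
\]
Since $|v_i| = 1/\sqrt n$ at every coordinate, each sign disagreement $\sgn(\tilde v_i)\neq l_i$ forces $|v_i - \tilde v_i| \geq 1/\sqrt n$, so
\[
\tfrac{1}{n}\bigl|\{i:\sgn(\tilde v_i)\neq l_i\}\bigr| \;\leq\; \|v-\tilde v\|_2^2 \;\leq\; 2\|\sin\Theta(v,\hat v)\|_F^2.
\]
Taking $\pi$ to be the identity when $s=+1$ and the label-swap on $\{\pm 1\}$ when $s=-1$ makes $\{i:\sgn(\tilde v_i)\neq l_i\} = \{i:l_i \neq \pi(\hat l_i)\}$, so the definition \eqref{eq:misclassification_rate} gives $\mathcal M(\hat l, l) \leq 2\|\sin\Theta(v,\hat v)\|_F^2$. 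Combining with the Theorem~\ref{th:denoising} bound from Step~1 delivers the claimed inequality.

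The only real obstacle is the sign bookkeeping: the singular vector $\hat v$ is defined only up to an overall $\pm 1$ flip, and one must absorb this flip into the permutation $\pi$ in \eqref{eq:misclassification_rate} so that the $\sin\Theta$-closeness of $v$ and $\hat v$ translates cleanly into correct sign recovery on most coordinates. Beyond this, the argument is a short chain of implications\,---\,rank-one signal with explicit right singular vector $v = l/\sqrt n$, $\sin\Theta$ control via Theorem~\ref{th:denoising}, a standard $\sin\Theta$-to-$\ell_2$ inequality, and the pigeonhole bound $|v_i|=1/\sqrt n$\,---\,none of which introduces further difficulty.
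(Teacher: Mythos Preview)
Your proof is correct and follows essentially the same route as the paper's: identify $Y=\mu l^\intercal+Z$ as rank-one denoising with right singular vector $v=l/\sqrt n$ and $\sigma_1(X)=\sqrt n\,\|\mu\|_2$, apply Theorem~\ref{th:denoising} to bound $\E\|\sin\Theta(v,\hat v)\|^2$, convert $\sin\Theta$ to an $\ell_2$ distance between $v$ and a sign-corrected $\hat v$, and then use $|v_i|=1/\sqrt n$ to count sign errors. The only cosmetic difference is that the paper invokes Lemma~\ref{lm:sin_Theta_distance}(3) for the $\sin\Theta$-to-$\ell_2$ step, whereas you carry it out by hand via $s=\sgn(v^\intercal\hat v)$; for $r=1$ the spectral and Frobenius $\sin\Theta$ distances coincide, so the two arguments are identical.
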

It is intuitively clear that the clustering accuracy depends on the signal strength $\|\mu\|_2$.  The stronger the signal, the easier to cluster. In particular, Theorem \ref{th:cluster_upper} requires the minimum signal strength condition $\|\mu\|_2 \geq C_{\rm gap}(p/n)^{1\over 4}$. The following lower bound result shows that this condition is necessary for consistent clustering:  When the condition $\|\mu\|_2 \geq C_{\rm gap}(p/n)^{1\over 4}$ does not hold, it is not possible to essentially do better than random guessing.

\begin{Theorem}\label{th:cluster_lower}
	Suppose $p\geq n$, there exists $c_{gap}, C_n>0$ such that if $n\geq C_n$,
	\begin{equation*}
	\inf_{\tilde{l}} \sup_{\substack{\mu: \|\mu\|_2 \leq c_{gap}(p/n)^{1\over 4}\\ l\in \{-1, 1\}^n}}  \E \mathcal{M}(\tilde{l}, l) \geq \frac{1}{8}.
	\end{equation*}
\end{Theorem}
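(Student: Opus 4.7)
The plan is to establish the lower bound by exhibiting a single degenerate configuration in which the observations carry no information about the labels. Specifically, I would take $\mu = 0$, which satisfies $\|\mu\|_2 = 0 \leq c_{gap}(p/n)^{1/4}$ for any positive $c_{gap}$ and is therefore always admissible inside the supremum. Under $\mu = 0$ the data are $Y_i \overset{iid}{\sim} N(0, I_p)$ and the joint law of $Y$ is independent of the label vector $l$, so any (possibly randomized) estimator $\tilde l(Y)$ is distributionally independent of $l$.

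Next I would apply the standard minimax-vs-Bayes inequality with the uniform prior $\pi$ on $\{-1,1\}^n$: $\sup_l \E_Y \mathcal{M}(\tilde l, l) \geq \E_{l\sim\pi}\E_Y \mathcal{M}(\tilde l, l) = \E_Y \E_{l\sim\pi}\mathcal{M}(\tilde l(Y), l)$, where the interchange of expectations is a Fubini step justified by the independence of $\tilde l(Y)$ and $l$. Conditioning on $Y$ reduces the problem to analyzing $\E_{l\sim\pi}\mathcal{M}(\hat l, l)$ for each fixed realization $\hat l \in \{-1,1\}^n$. Under $l \sim \pi$ the Hamming distance $d(\hat l, l) = \sum_i \mathbf{1}\{l_i \neq \hat l_i\}$ follows $\text{Bin}(n, 1/2)$ irrespective of $\hat l$, and $\mathcal{M}(\hat l, l) = \min(d, n-d)/n = 1/2 - |d - n/2|/n$, by definition of the permutation-symmetric loss $\mathcal{M}$.

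The remaining estimate is elementary: by Cauchy--Schwarz, $\E|d - n/2| \leq \sqrt{\Var(d)} = \sqrt{n}/2$, so $\E_\pi \mathcal{M}(\hat l, l) \geq 1/2 - 1/(2\sqrt n)$. Taking expectation over $Y$ preserves this lower bound, so choosing $C_n$ large enough that $1/(2\sqrt n) \leq 1/4$ for all $n \geq C_n$ (any $C_n \geq 4$ suffices) yields $\sup_l \E_Y \mathcal{M}(\tilde l, l) \geq 1/4$, and taking $c_{gap}$ to be any fixed positive constant completes the argument.

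There is no serious technical obstacle here: the proof is short because the admissible set $\{\mu : \|\mu\|_2 \leq c_{gap}(p/n)^{1/4}\}$ already contains $\mu = 0$, reducing the question to a pure no-signal exercise whose analysis boils down to a one-line binomial variance bound. A sharper argument, going via Theorem \ref{th:denoising_lower} together with the identity $\|\sin\Theta(\tilde l/\sqrt n, l/\sqrt n)\|^2 = 4\mathcal{M}(1-\mathcal{M})$ and the pointwise bound $\mathcal{M} \geq \|\sin\Theta\|^2/4$, would be required to extend the impossibility statement to cover nonzero $\mu$ all the way up to the sharp $(p/n)^{1/4}$ threshold matching Theorem \ref{th:cluster_upper}; but that stronger quantitative statement is not what the present theorem asserts.
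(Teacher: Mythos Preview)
Your proof is correct and considerably simpler than the paper's. The observation that $\mu = 0$ lies in the constraint set $\{\mu : \|\mu\|_2 \leq c_{gap}(p/n)^{1/4}\}$ reduces the theorem to a pure no-signal problem, and your binomial-variance bound handles that cleanly; $C_n \geq 4$ indeed suffices, and any $c_{gap} > 0$ works.

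The paper takes a substantially different route. It first builds a packing of $(\{-1,1\}^n, \mathcal{M})$ of cardinality $\exp(c_0 n)$ with pairwise $\mathcal{M}$-distance at least $1/3$, then applies the generalized Fano lemma. For the KL bound it distinguishes two regimes. When $nt^2 \leq p/4$ (with $t = c_{gap}(p/n)^{1/4}$) it places a Gaussian mixture prior on $\mu$ concentrated near $\|\mu\|_2 \approx t$ and invokes Lemma~\ref{lm:D_KL_upper} to control the KL divergence between the resulting mixtures $\bar P_{l,t}$; when $nt^2 > p/4$ it fixes $\mu = (t,0,\ldots,0)^\intercal$ and bounds the KL directly. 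In both cases the construction places $\mu$ at the \emph{boundary} of the ball rather than at the origin.

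What the paper's argument buys is exactly the content you flag in your last paragraph: it shows that the minimax risk stays bounded away from zero even when $\|\mu\|_2$ is pinned at $c_{gap}(p/n)^{1/4}$, which is the statement needed to certify that $(p/n)^{1/4}$ is the genuine threshold matching Theorem~\ref{th:cluster_upper}. Your proof establishes the theorem as literally stated but, as you correctly note, says nothing about nonzero $\mu$. (The route you sketch via Theorem~\ref{th:denoising_lower} and the $\sin\Theta$ identity is not how the paper proceeds; it works with Fano directly on the label space.)
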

\begin{Remark}
	\cite{azizyan2013minimax} considered a similar setting when $n\geq p$, $l_i$'s are i.i.d. Rademacher variables and derived rates of convergence for both the upper and lower bounds with a logarithmic gap between the upper and lower bounds. In contrast, with the help of the newly obtained perturbation bounds, we are able to establish the optimal misclassification rate for high-dimensional setting when $n\leq p$.
	
	Moreover, \cite{jin2014important} and \cite{jin2015phase} considered the sparse and highly structured setting, where the contrast mean vector $\mu$ is assumed to be sparse and the nonzero coordinates are all equal. Their method is based on feature selection and PCA. Our setting is close to the  ``less sparse/weak signal" case in \cite{jin2015phase}. In this case, they introduced a simple aggregation method with 
	$$\hat{l}^{(sa)} = {\rm sgn}(X\hat{\mu}), $$
	where $\hat{\mu} = \argmax_{\mu \in \{-1, 0, 1\}^{p}} \|X\mu\|_q$ for some $q>0$. 	
	The statistical limit, i.e. the necessary condition for obtaining correct labels for most of the points, is $\|\mu\|_2 > C$ in their setting, %{\red $\leftarrow$ double check. Any sparsity or other conditions?} 
	which is smaller than the boundary $\|\mu\|_2 > C(p/n)^{1\over 4}$ in Theorem \ref{th:cluster_upper}. As shown in Theorem \ref{th:cluster_lower} the bound $\|\mu\|_2 > C(p/n)^{1\over 4}$ is necessary.
	The reason for this difference is that they focused on  highly structured contrast mean vector $\mu$ which only takes two values $\{0, \nu\}$. In contrast, we considered the general $\mu \in \mathbb{R}^{p}$, which leads to stronger condition and larger statistical limit. Moreover, the simple aggregation algorithm is computational difficult for a general signal $\mu$, thus the PCA-based method considered in this paper is preferred under the general dense $\mu$ setting.
\end{Remark}

%%%%%%%%%%%%%%%%%%%%%%%%%%%%%%%%%%%%%%%%%
\section{Canonical Correlation Analysis}
\label{sec.CCA}
%%%%%%%%%%%%%%%%%%%%%%%%%%%%%%%%%%%%%%%%%

In this section, we consider an application of the perturbation bounds given in Theorem \ref{th:perturbation}  to the canonical correlation analysis (CCA), which is one of the most important tools in multivariate analysis in exploring the relationship between two sets of variables \citep{hotelling1936relations, anderson2003introduction,witten2009penalized,gao2014sparse, gao2015minimax,ma2016subspace}.  Given two random vectors $X$ and $Y$ with a certain joint distribution, the CCA first looks for the pair of vectors $\alpha^{(1)}\in \mathbb{R}^{p_1}, \beta^{(2)}\in \mathbb{R}^{p_2}$ that maximize ${\rm corr}((\alpha^{(1)})^{\intercal}X, (\beta^{(1)})^{\intercal}Y)$. After obtaining the first pair of canonical directions, one can further obtain the second pair $\alpha^{(2)}\in \mathbb{R}^{p_1}, \beta^{(2)}\in \mathbb{R}^{p_2}$ such that $\Cov((\alpha^{(1)})^{\intercal}X, (\alpha^{(2)})^{\intercal}X) = \Cov((\beta^{(1)})^{\intercal}Y, (\beta^{(2)})^{\intercal}Y) =0$, and\\ ${\rm Corr}((\alpha^{(2)})^{\intercal}X, (\beta^{(2)})^{\intercal}Y)$ is maximized. The higher order canonical directions can be obtained by repeating this process. If $(X, Y)$ is further assumed to have joint covariance, say
$$\Cov\begin{pmatrix}
X\\
Y
\end{pmatrix} = \Sigma = \begin{bmatrix}
\Sigma_{X} & \Sigma_{XY}\\
\Sigma_{YX} & \Sigma_{YY}
\end{bmatrix}, $$
the population canonical correlation directions can be inductively defined as the following optimization problem. For $k= 1, 2, \cdots$,
\begin{equation*}
\begin{split}
(\alpha^{(k)}, \beta^{(k)}) = \argmax_{a\in \mathbb{R}^{p_1}, b\in \mathbb{R}^{p_2}} &\quad a^{\intercal}\Sigma_{XY}b,\\
\text{subject to} & \quad a^{\intercal}\Sigma_Xa = b^{\intercal}\Sigma_Y b = 1,\\
&\quad a^{\intercal}\Sigma_X \alpha^{(l)} = b^{\intercal}\Sigma_Y \beta^{(l)} = 0, \forall 1\leq l\leq k-1.
\end{split}
\end{equation*}
A more explicit form for the canonical correlation directions is given in \cite{hotelling1936relations}: $(\Sigma_X^{1\over 2}\alpha^{(k)}, \Sigma_Y^{1\over 2}\beta^{(k)})$ is the $k$-th pair of singular vectors of $\Sigma_X^{-{1\over 2}} \Sigma_{XY}\Sigma_Y^{-{1\over 2}}$. We combine the leading $r$ population canonical correlation directions and write 
$$A = \left[\alpha^{(1)} \cdots \alpha^{(r)}\right],\quad B = \left[\beta^{(1)}\cdots \beta^{(r)}\right].$$

Suppose one observes i.i.d.samples $(X_i^{\intercal}, Y_i^{\intercal})^{\intercal} \sim N(0, \Sigma)$. Then the sample covariance and cross-covariance for $X$ and $Y$ can be calculated as
$$\hat{\Sigma}_X = \frac{1}{n} \sum_{i=1}^n X_i X_i^{\intercal},\quad \hat{\Sigma}_Y = \frac{1}{n} \sum_{i=1}^n Y_iY_i^{\intercal}, \quad \hat{\Sigma}_{XY} = \frac{1}{n}\sum_{i=1}^n X_iY_i^{\intercal}. $$
The standard approach to estimate the canonical correlation directions $\alpha^{(k)}$, $\beta^{(k)}$ is via the SVD of $\hat{\Sigma}_X^{-{1\over 2}} \hat{\Sigma}_{XY} \hat{\Sigma}_Y^{-{1\over 2}}$
$$\hat{\Sigma}_X^{-{1\over 2}} \hat{\Sigma}_{XY} \hat{\Sigma}_Y^{-{1\over 2}} = \hat{U} \hat{S}\hat{\V} = \sum_{k=1}^{p_1\wedge p_2} \hat U_{[:, k]} \hat S_{kk} \hat U_{[:, k]}^{\intercal}. $$
Then the leading $r$ sample canonical correlation directions can be calculated as
\begin{equation}\label{eq:CCA-hatA-hatB}
\begin{split}
&\hat{A} = \hat{\Sigma}_X^{-{1\over 2}}\hat U_{[:, 1:r]},\quad \hat A = [\hat{\alpha}^{(1)}, \hat{\alpha}^{(2)},\cdots, \hat{\alpha}^{(r)}], \\
& \hat {B}  = \hat{\Sigma}_Y^{-{1\over 2}}\hat V_{[:, 1:r]}, \quad \hat B = [\hat{\beta}^{(1)}, \hat{\beta}^{(2)},\cdots, \hat{\beta}^{(r)}].
\end{split}
\end{equation}
$\hat{A}, \hat{B}$ are consistent estimators for the first $r$ left and right canonical directions in the classical fixed dimension case. %{\red Do we need to state the conditions on the dimensions and the sample size?}

%By symmetry, we focus on the estimation of left canonical correlations. 
Let $X^\ast \in \mathbb{R}^{p_1}$ be an independent copy of the original sample $X$, we define  the following two losses to measure the accuracy of the estimator of the canonical correlation directions,
\begin{align}\label{eq:L_sp_A}
L_{\rm sp}(\hat A, A) &= \min_{O\in \mathbb{O}_r}\max_{v\in \mathbb{R}^{r}, \|v\|_2=1} \E_{X^\ast} \left((\hat{A}O v)^{\intercal} X^\ast - (Av)^{\intercal} X^\ast \right)^2, \\
L_F (\hat{A}, A) &= \min_{O\in \mathbb{O}_r} \E_{X^\ast}\|(\hat{A}O)^{\intercal} X^\ast - A^{\intercal} X^\ast\|^2_2.
\end{align}
These two losses quantify how well the estimator $(\hat{A} O)^{\intercal} X^\ast$ can predict the values of the canonical variables $A^{\intercal} X^\ast$, where $O\in \mathbb{O}_{r}$ is a rotation matrix as the objects of interest here are the directions. %$L_{\rm sp}, L_F$ characterize the prediction error of the estimator $\hat{A}$ and $\hat{B}$. %We shall mention that $L_F$ is previously used as $L(\cdot, \cdot)$ in \cite{gao2014sparse}.

The following theorem gives the upper bound for one side of the canonical correlation directions. The main technical tool is the perturbation bounds given in Section \ref{sec.main}.
\begin{Theorem}\label{th:CCA}
	Suppose $(X_i, Y_i) \sim N(0, \Sigma), i=1,\cdots, n$, where $S = \Sigma_X^{-{1\over 2}}\Sigma_{XY}\Sigma_Y^{-{1\over 2}}$ is of rank-$r$. Suppose $\hat A\in \mathbb{R}^{p_1\times r}$ is given by \eqref{eq:CCA-hatA-hatB}.  Then there exist uniform constants $C_{\rm gap}, C, c>0$ such that whenever $\sigma_r(S)^2 \geq \frac{C_{\rm gap}((p_1p_2)^{1\over 2} + p_1+ p_2^{3/2}n^{-{1\over 2}})}{n}$,
	\begin{equation*}
	\P\left(L_{\rm sp}(\hat A, A) \leq \frac{C p_1(n\sigma_r^2(S)+p_2)}{n^2\sigma_r^4(S)} \right)\geq 1 - C\exp(-cp_1\wedge p_2),
	\end{equation*}
	\begin{equation*}
	\P\left(L_F(\hat A, A) \leq \frac{C p_1r(n\sigma_r^2(S)+p_2)}{n^2\sigma_r^4(S)} \right)\geq 1 - C\exp(-cp_1\wedge p_2).	
	\end{equation*}
	The results for $\hat{B}$ can be stated similarly.
\end{Theorem}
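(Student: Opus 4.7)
My plan is to reduce the CCA loss to a singular subspace perturbation problem and then invoke Theorem~\ref{th:perturbation}. Set $S=\Sigma_X^{-1/2}\Sigma_{XY}\Sigma_Y^{-1/2}$ and $\hat S=\hat\Sigma_X^{-1/2}\hat\Sigma_{XY}\hat\Sigma_Y^{-1/2}$, and let $U,\hat U\in\mathbb{O}_{p_1,r}$ collect the leading $r$ left singular vectors of $S$ and $\hat S$, with corresponding $V,\hat V\in\mathbb{O}_{p_2,r}$. Since $\Sigma_X^{1/2}A=U$ and $\Sigma_X^{1/2}\hat A=\Sigma_X^{1/2}\hat\Sigma_X^{-1/2}\hat U$, for any rotation $O\in\mathbb{O}_r$ the triangle inequality gives
\begin{equation*}
\bigl\|\Sigma_X^{1/2}(\hat A O-A)\bigr\| \;\leq\; \bigl\|\Sigma_X^{1/2}\hat\Sigma_X^{-1/2}-I_{p_1}\bigr\| \;+\; \|\hat U O-U\|,
\end{equation*}
and an analogous inequality holds in Frobenius norm. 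Choosing $O$ to be the optimal rotation aligning $\hat U$ to $U$, the second summand is at most $\sqrt{2}\,\|\sin\Theta(U,\hat U)\|$ (respectively $\sqrt{2}\,\|\sin\Theta(U,\hat U)\|_F$) by a standard identity, so it suffices to control the two resulting pieces.

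The covariance-mismatch piece $\|\Sigma_X^{1/2}\hat\Sigma_X^{-1/2}-I_{p_1}\|$ is handled by standard Gaussian concentration: with probability at least $1-e^{-cp_1}$ one has $\|\hat\Sigma_X-\Sigma_X\|\lesssim\|\Sigma_X\|\sqrt{p_1/n}$, and a matrix square-root perturbation inequality then yields $\|\Sigma_X^{1/2}\hat\Sigma_X^{-1/2}-I_{p_1}\|^{2}\lesssim p_1/n$. Since the canonical correlations satisfy $\sigma_r(S)\leq 1$, this contribution is dominated by $p_1/(n\sigma_r^2(S))$, hence by the target rate.

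The main work is bounding $\|\sin\Theta(U,\hat U)\|$, for which I apply Theorem~\ref{th:perturbation} to $X=S$ (exactly rank $r$) and $Z=\hat S-S$. The quantities of interest are $\alpha=\sigma_{\min}(U^{\intercal}\hat S V)$, $\beta=\|U_\perp^{\intercal}\hat S V_\perp\|=\|U_\perp^{\intercal} Z V_\perp\|$, $z_{12}=\|U^{\intercal} Z V_\perp\|$, $z_{21}=\|U_\perp^{\intercal} Z V\|$, together with their Frobenius analogues. To estimate them I decompose
\begin{equation*}
\hat S-S \;=\; \hat\Sigma_X^{-1/2}(\hat\Sigma_{XY}-\Sigma_{XY})\hat\Sigma_Y^{-1/2} + (\hat\Sigma_X^{-1/2}-\Sigma_X^{-1/2})\Sigma_{XY}\hat\Sigma_Y^{-1/2} + \Sigma_X^{-1/2}\Sigma_{XY}(\hat\Sigma_Y^{-1/2}-\Sigma_Y^{-1/2})
\end{equation*}
and handle each block of each summand separately. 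For the leading Gaussian-bilinear-form summand, Wishart-type concentration yields $\|P(\hat\Sigma_{XY}-\Sigma_{XY})Q\|\lesssim(\sqrt{\mathrm{rank}(P)}+\sqrt{\mathrm{rank}(Q)})/\sqrt{n}$ with high probability, which delivers $z_{12}\lesssim\sqrt{p_2/n}$, $z_{21}\lesssim\sqrt{p_1/n}$, $\beta\lesssim\sqrt{(p_1+p_2)/n}$, and the Frobenius analogues $\|Z_{12}\|_F^2\lesssim rp_2/n$, $\|Z_{21}\|_F^2\lesssim rp_1/n$. The other two summands, arising from the inverse-square-root mismatches, are handled via Neumann expansions of $\hat\Sigma_X^{-1/2}$ and $\hat\Sigma_Y^{-1/2}$ and shown to be of smaller order under the hypothesis; the third term $p_2^{3/2}n^{-1/2}$ in the gap condition is calibrated precisely to absorb the higher-order cross contribution from these expansions.

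The hardest step will be obtaining the block norms at their natural dimensions $\sqrt{p_1/n}$ and $\sqrt{p_2/n}$ rather than the cruder overall $\sqrt{(p_1+p_2)/n}$, which requires projecting \emph{before} invoking Gaussian concentration while simultaneously verifying that the random factors $\hat\Sigma_X^{-1/2}$ and $\hat\Sigma_Y^{-1/2}$ do not inflate the projected block. Once these block estimates are in hand, the assumed lower bound on $\sigma_r^2(S)$ ensures $\alpha\asymp\sigma_r(S)$ and $\alpha^2-\beta^2-z_{12}^2\wedge z_{21}^2\gtrsim\sigma_r^2(S)$, and Theorem~\ref{th:perturbation} gives
\begin{equation*}
\|\sin\Theta(U,\hat U)\|^2 \;\lesssim\; \frac{(\alpha z_{21}+\beta z_{12})^2}{\sigma_r^4(S)} \;\lesssim\; \frac{p_1(n\sigma_r^2(S)+p_2)}{n^2\sigma_r^4(S)},
\end{equation*}
with the Frobenius version obtained identically from the Frobenius block estimates. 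Combining with the covariance-mismatch contribution and a union bound over the concentration events yields the stated high-probability bounds for $L_{\rm sp}(\hat A,A)$ and $L_F(\hat A,A)$; the statement for $\hat B$ follows by swapping the roles of $X$ and $Y$.
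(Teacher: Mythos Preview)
Your reduction in Step~1 (covariance mismatch plus $\sin\Theta$ distance between $U$ and $\hat U$) matches the paper's opening move. The difficulty is in how you bound $\|\sin\Theta(U,\hat U)\|$: applying Theorem~\ref{th:perturbation} directly to $X=S$, $Z=\hat S-S$ does not deliver the stated result, and the failure is precisely the block you did \emph{not} try to sharpen, namely $\beta=\|U_\perp^{\intercal}(\hat S-S)V_\perp\|$.

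Concretely, the leading contribution to $U_\perp^{\intercal}(\hat S-S)V_\perp$ is (after the natural normalizations) a sample cross-covariance between a $(p_1{-}r)$-dimensional and a $(p_2{-}r)$-dimensional Gaussian, whose spectral norm is genuinely of order $\sqrt{(p_1+p_2)/n}$; this cannot be improved. With $\alpha\asymp\sigma_r(S)$, the prerequisite $\alpha^2>\beta^2+z_{12}^2\wedge z_{21}^2$ of Theorem~\ref{th:perturbation} then forces $\sigma_r^2(S)\gtrsim (p_1+p_2)/n$, which is strictly stronger than the hypothesis $\sigma_r^2(S)\gtrsim\bigl((p_1p_2)^{1/2}+p_1+p_2^{3/2}n^{-1/2}\bigr)/n$ in the regime $p_1\ll p_2\ll n$. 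Even if you impose the stronger gap, the numerator $\alpha z_{21}+\beta z_{12}$ picks up a term of order $\sqrt{(p_1+p_2)p_2}/n$, yielding a squared bound $\frac{p_1}{n\sigma_r^2(S)}+\frac{(p_1+p_2)p_2}{n^2\sigma_r^4(S)}$ rather than the claimed $\frac{p_1}{n\sigma_r^2(S)}+\frac{p_1p_2}{n^2\sigma_r^4(S)}$. So both the hypothesis and the conclusion are missed.

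The paper avoids this obstruction by \emph{not} comparing $\hat S$ to $S$ at all. It first shows (via a rotation that makes the residual $Z$ i.i.d.\ Gaussian and independent of the signal) that the left singular vectors of $\hat S$ coincide with the left singular vectors of the first $p_1$ rows of the left singular vectors of a matrix $\tilde{\mathbf Y}^{\intercal}=G+\tilde{\mathbf Z}^{\intercal}$, where $G$ has rank $r$ supported in the top-left $p_1\times r$ block and $\tilde{\mathbf Z}$ is i.i.d.\ standard normal. This two-layer structure (singular vectors of a submatrix of singular vectors) is then handled by a dedicated lemma that combines Proposition~\ref{lm:perturbation} with a random-projection argument; the latter is what produces the $\sqrt{p_1/n}$ scale where your $\beta$ only gives $\sqrt{(p_1+p_2)/n}$, and is also where the $p_2^{3/2}n^{-1/2}$ term in the gap condition actually originates.
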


\begin{Remark}
	\cite{chen2013sparse} and \cite{gao2014sparse,gao2015minimax} considered sparse CCA, where the canonical correlation directions $A$ and $B$ are assumed to be jointly sparse. In particular, \cite{chen2013sparse} and \cite{gao2015minimax} proposed estimators under different settings and provided a unified rate-optimal bound for jointly  estimating left and right canonical correlations. \cite{gao2014sparse} proposed another computationally feasible estimators $\hat{A}^\ast$ and $ \hat{B}^\ast$ and provided a minimax rate-optimal bound for $L_F(\hat{A}^\ast, A)$ under regularity conditions that can also be used to prove the consistency of $\hat{B}^\ast$.
	
	Now consider the setting where $p_2 \gg p_1$, $\frac{p_2}{n} \gg \sigma_{r}^2(S) = t^2 \gg \frac{(p_1p_2)^{1\over 2}}{n}$. The lower bound result in Theorem 3.3 by \cite{gao2014sparse} implies that there is no consistent estimator for the right canonical correlation directions $B$. While Theorem \ref{th:CCA} given above shows that the left sample canonical correlation directions $\hat A$ are a consistent estimator of $A$. This interesting phenomena again shows the merit of our proposed unilateral perturbation bound.
	
	It is also interesting to develop the lower bounds for $\hat{A}$ and $\hat{B}$. The best known result,  given in Theorem 3.2 in \cite{gao2014sparse}, is the following two-sided lower bound for both $\hat{A}$ and $\hat{B}$ in Frobenius norm loss:
	\begin{equation*}
	\begin{split}
	\inf_{\hat{A}, \hat{B}} \sup_{A, B} \P\left\{\max\left\{L_F\left(\hat{A}, A\right), L_F\left(\hat{B}, B\right)\right\} \geq c\left(\frac{r(p_1+p_2)}{n\sigma_{\min}^2(S)}\wedge 1\right)\right\} \geq 0.8.
	\end{split}
	\end{equation*}	
	Establishing the matching one-sided  lower bound for Theorem \ref{th:CCA} is technical challenging. We leave it for future research.
\end{Remark}

%%%%%%%%%%%%%%%%%%%%%%%%%%%%%%%%%%%%%%%%%%%%%%%%%%%%%%%%%%%%%%%%%%%
\section{Simulations}
\label{sec.simulation}
%%%%%%%%%%%%%%%%%%%%%%%%%%%%%%%%%%%%%%%%%%%%%%%%%%%%%%%%%%%%%%%%%%%

%In this section, we perform some simulation studies for matrix denoising, high-dimensional clustering, and canonical correlation analysis. We particularly aim to verify the distinct error bound of the left and right singular subspaces -- the phenomenon that was newly captured by the proposed perturbation bounds.

In this section, we carry out numerical experiments to further illustrate the advantages of the separate bounds for the left and right singular subspaces over the uniform bounds. As mentioned earlier, in a range of cases, especially when the numbers of rows and columns of the matrix differ significantly, it is even possible that the singular space on one side can be stably recovered, while the other side cannot. To illustrates this point, we specifically perform simulation studies in matrix denoising, high-dimensional clustering, and canonical correlation analysis.

We first consider the matrix denoising model discussed in Section \ref{sec.denoising}. Let $X = tUV^{\intercal}\in \mathbb{R}^{p_1\times p_2}$, where $t\in \mathbb{R}$,  $U$ and $V$ are $p_1\times r$ and $p_2\times r$ random uniform orthonormal columns with respect to the Haar measure. Let  the perturbation $Z=(Z_{ij})_{p_1\times p_2}$  be randomly generated with $Z_{ij}\overset{iid}{\sim}N(0, 1)$. We calculate the SVD of $X+Z$ and form the first $r$ left and right singular vectors as $\hat{U}$ and $\hat{V}$. The average losses in Frobenius and spectral $\sin \Theta$ distances for both the left and right singular space estimates with 1,000 repetitions are given in Table \ref{tb:simu_U_V} for various values of $(p_1, p_2, r, t)$. It can be easily seen from this experiment that the left and right singular perturbation bounds behave very distinctly when $p_1 \gg p_2$.
 
\begin{table}[htbp]
	\label{tb:simu_U_V}
	\begin{center}
		\begin{tabular}{rcccc}\hline
			$(p_1, p_2, r, t)$ & $\|\sin\Theta(\hat{U}, U)\|^2$ & $\|\sin\Theta(\hat{V}, V)\|^2$ & $\|\sin\Theta(\hat{U}, U)\|_F^2$ & $\|\sin\Theta(\hat{V}, V)\|_F^2$\\\hline
			(100, 10, 2, 15) & 0.3512  &  0.0669 &   0.6252 &  0.0934\\
			(100, 10, 2, 30) & 0.1120  &  0.0139 &   0.1984 &   0.0196\\
			(100, 20, 5, 20) & 0.2711 &   0.0930 &   0.9993 &  0.2347\\
			(100, 20, 5, 40) & 0.0770 &   0.0195 &   0.2835 &  0.0508\\
			(1000, 20, 5, 30) & 0.5838  &  0.0699  &  2.6693  &  0.1786\\
			(1000, 20, 10, 100) & 0.1060 &   0.0036 &   0.9007  &  0.0109\\
			(1000, 200, 10, 50) & 0.3456 &   0.0797 &   2.9430 &   0.4863\\
			(1000, 200, 50, 100) & 0.1289 &   0.0205  &  4.3614  &  0.2731 \\
			\hline
		\end{tabular}
		\caption{Average losses in Frobenius and spectral $\sin \Theta$ distances for both the left and right singular space changes after Gaussian noise perturbations.}
	\end{center}
\end{table}

We then consider the high-dimensional clustering model studied in Section \ref{clustering.sec}. Let $\tilde{\mu} {\sim} N(0, I_p)$ and $\mu = t(p/n)^{1/4}\cdot \tilde{\mu} / \|\tilde{\mu}\|_2 \in \mathbb{R}^p$, where $t = \|\mu\|_2$ essentially represents the signal strength. The group label $l\in \mathbb{R}^{n}$ is randomly generated as
$$l_i \overset{iid}{\sim} \left\{\begin{array}{ll}
1, & \text{with probability } \rho,\\
-1, & \text{with probability } 1 - \rho.
\end{array}\right.$$
Based on $n$ i.i.d. observations: $Y_i = l_i\mu + Z_i, Z_i\overset{iid}{\sim}N(0, I_p), i =1,\ldots, n$, we apply the proposed estimator \eqref{PCA-clustering} to estimate $l$. The results for different values of $(n, p, t, \rho)$ are provided in Table \ref{tb:clustering}. It can be seen that the numerical results match our theoretical analysis -- the proposed $\hat{l}$ achieves good performance roughly when $t\geq C(p/n)^{1/4}$.
\begin{table}[htbp]
	\label{tb:clustering}
	\begin{center}
		\begin{tabular}{@{}r|cccccc@{}}\hline
			\backslashbox{$(p, t, \rho)$}{$n$} & 5 & 10 & 20  & 50 & 100 & 200 \\\hline
			(100, 1, 1/2) &     0.2100  &  0.1485 &   0.0690 &   0.0494 &   0.0440 &   0.0333\\
			(100, 1, 3/4) &     0.2150  &  0.1590 &   0.0680 &   0.0468 &   0.0422 &   0.0290\\
			(100, 3, 1/2) &    0.0019  &  0.0005   &   0.0000  &  0.0000 &  0.0000  & 0.0000 \\
			(100, 3, 3/4) &   0.0020  &   0.0005  &    0.0000  & 0.0000  & 0.0000 & 0.0000\\
			(1000, 1, 1/2) &   0.3260  &   0.3510  &  0.3594 &   0.2855  &  0.2691 &   0.1364 \\
			(1000, 1, 3/4) &   0.3610 &   0.3610  &  0.3462  &  0.3057  &  0.2696   & 0.1410 \\
			(1000, 3, 1/2) &  0.1370  &  0.0485  &  0.0066 &   0.0019  &  0.0013 &   0.0003\\
			(1000, 3, 3/4) &  0.1160   & 0.0425  &  0.0046 &   0.0019  &  0.0018   & 0.0006 \\			
			\hline
		\end{tabular}
		\caption{Average misclassification rate for different settings.}
	\end{center}
\end{table}

We finally investigate the numerical performance of canonical correlation analysis particularly when the dimensions of two samples differ significantly. Suppose $\Sigma_X = I_{p_1} + \frac{1}{2\|Z_{p_1} + Z_{p_1}^{\intercal}\|}\left(Z_{p_1} + Z_{p_1}^{\intercal}\right), \Sigma_Y = I_{p_2} + \frac{1}{2\|Z_{p_2} + Z_{p_2}^{\intercal}\|}\left(Z_{p_2} + Z_{p_2}^{\intercal}\right)$, $\Sigma_{XY} = \Sigma_X^{1/2}\cdot \left(t UV^{\intercal}\right) \Sigma_Y^{1/2}$, where $Z_{p_1}$ and $Z_{p_2}$ are i.i.d. Gaussian matrices; $U\in \mathbb{O}_{p_1, r}, V\in \mathbb{O}_{p_2, r}$ are random orthogonal matrices. With $n$ pairs of observations
$$\begin{pmatrix}
X_i\\
Y_i
\end{pmatrix} \overset{iid}{\sim} N(0, \Sigma), \quad \Sigma = \begin{bmatrix}
\Sigma_X & \Sigma_{XY}\\
\Sigma_{XY}^{\intercal} & \Sigma_{Y}
\end{bmatrix},\quad i=1,\ldots, n, $$
we apply the procedure discussed in Section \ref{sec.CCA} to obtain $\hat{A}$ and $\hat{B}$, i.e. the estimates for left and right canonical correlation directions. Since the exact losses in $L_{\rm sp}(\cdot, \cdot)$, $L_{F}(\cdot, \cdot)$ metrics \eqref{eq:L_sp_A} involves difficult optimization, we instead measure the losses in 
$$\left\|\sin\Theta(\hat{U}, U)\right\|, \left\|\sin\Theta(\hat{U}, U)\right\|_F, \left\|\sin\Theta(\hat{V}, V)\right\|,\text{ and }\left\|\sin\Theta(\hat{V}, V)\right\|_F.$$ 
Here $U, V, \hat{U}, \hat{V}$ are the first $r$ left and right singular vectors of $\Sigma_X^{-1/2}\Sigma_{XY}\Sigma_Y^{-1/2}$ and $\hat{\Sigma}_X^{-1/2}\hat{\Sigma}_{XY}\hat{\Sigma}_Y^{-1/2}$, respectively. It is shown in Step 1 of the proof for Theorem \ref{th:CCA} that these measures are equivalent to $L_{\rm sp}$ and $L_{F}$. The results under various choices of $(p_1, p_2, n, t)$ are collected in Table \ref{tb:CCA}. It can be easily seen that the performance of the right canonical direction estimation is much better than the left ones when $p_1$ is much larger than $p_2$, which is consistent with the theoretical results in Theorem \ref{th:CCA} and illustrates the power of the newly proposed perturbation bound results.
\begin{table}[htbp]
	\begin{center}
		\begin{tabular}{rcccc}\hline
			$(p_1, p_2, r, t)$ & $\|\sin\Theta(\hat{U}_S, U_S)\|$ & $\|\sin\Theta(\hat{U}_S, U_S)\|_F$ & $\|\sin\Theta(\hat{V}_S, V_S)\|$ & $\|\sin\Theta(\hat{V}_S, V_S)\|_F$ \\\hline
			(30, 10, 100, .8) & 0.3194  &  0.6609  &  0.1571 &   0.2530\\
			(30, 10, 200, .5) & 0.5348  &  1.1111  &  0.3343  &  0.5256\\
			(100, 10, 200, .8) & 0.4103  &  1.0145 &   0.1120 &   0.1825\\
			(100, 10, 500, .5) & 0.5183  &  1.2821  &  0.1614 &   0.2606\\
			(200, 20, 500, .8) & 0.3239 &   0.8428 &   0.0746 &   0.1442\\
			(200, 20, 800, .5) & 0.5834 &   1.5155 &   0.2423 &   0.4605\\
			(500, 50, 1000, .8) & 0.3875 &   1.0515 &   0.1091 &   0.2472\\
			(500, 50, 2000, .5) & 0.5677 &   1.5467 &   0.2216  &  0.4910\\
			\hline
		\end{tabular}
	\end{center}
\caption{Average losses in $L_{\rm sp}(\cdot, \cdot)$ and $L_{F}(\cdot, \cdot)$ metrics for the left and right canonical directions.}
	\label{tb:CCA}
\end{table}

%%%%%%%%%%%%%%%%%%%%%%%%%%%%%%%%%%%%%%%%%
\section{Discussions}
\label{sec.discussion}
%%%%%%%%%%%%%%%%%%%%%%%%%%%%%%%%%%%%%%%%%

We have established in the present paper new and rate-optimal perturbation bounds, measured in both spectral and Frobenius $\sin \Theta$ distances,  for the left and right singular subspaces separately. % The obtained perturbation bounds were developed based on a careful treatment in {\blue algebra to the perturbation matrices.} {\red more specific?}  
These perturbation bounds are widely applicable to the analysis of many high-dimensional problems. In particular, we applied the perturbation bounds to study three important problems in high-dimensional statistics: low-rank matrix denoising and singular space estimation, high-dimensional clustering and CCA.
As mentioned in the introduction, in addition to these problems and possible extensions discussed in the previous sections, the obtained perturbation bounds can be used in a range of other applications including {\it community detection in bipartite networks,  multidimensional scaling,  cross-covariance matrix estimation, and singular space estimation for matrix completion}. We briefly discuss these problems here.

An interesting application of the perturbation bounds given in Section \ref{sec.main} is {\it community detection in bipartite graphs}. Community detection in networks has attracted much recent attention. The focus of the current community detection literature has been mainly  on unipartite graph (i.e.,  there are only one type of nodes). However in some applications, the nodes can be divided into different types and only the interactions between the different types of nodes are available or of interest, such as people vs. committees, Facebook users vs. public pages (see \cite{melamed2014community,alzahrani2016community}).
%One motivating example comes from the subscription of Facebook public pages. Aside from regular Facebook users, there are also a number of Facebook public pages that the users can choose to subscribe to. Whether a subscription exists between a regular user and a public page is related to the characteristics of both parties. Given the subscription network, our aim is to partition the public pages or regular users into community groups. Note that typically the total number and number of subgroups of public pages is much less than the users, intuition tells us that partitioning the public pages and users may not be equally easy. 
The observations on the connectivity of the network between two types of nodes can be described by an adjacency matrix $A$, where  $A_{ij} = 1$ if the $i$-th Type 1 node and $j$-th Type 2 node are connected, and $A_{ij} = 0$ otherwise. 
%Given the correspondence between community structure and singular vectors of $\E A$, 
The spectral method is one of the most commonly used approaches in the literature with theoretical guarantees \citep{rohe2011spectral,lei2015consistency}. In a bipartite network, the left and right singular subspaces could behave very differently from each other. Our perturbation bounds can be used for community detection in bipartite graph and potentially lead to sharper results in some settings.
 
Another possible application lies in {\it multidimensional scaling (MDS)} with distance matrix between two sets of points. MDS is a popular method of visualizing the data points embedded in low-dimensional space based on the distance matrices \citep{borg2005modern}. Traditionally MDS deals with unipartite distance matrix, where all distances between any pairs of points are observed. In some applications, the data points are from two groups and one is only able to observe its biparitite distance matrix formed by the pairwise distances between points from different groups.  As the SVD is a commonly used technique for dimension reduction in MDS, the  perturbation bounds developed in this paper can be potentially used for the analysis of MDS with bipartite distance matrix. 

In some applications, the {\it cross-covariance matrix}, not the overall covariance matrix,  is of particular interest.  \citep{cai2015large} considered multiple testing of cross-covariances in the context of the phenome-wide association studies (PheWAS). Suppose $X\in \mathbb{R}^{p_1}$ and $Y\in \mathbb{R}^{p_2}$ are jointly distributed with covariance matrix $\Sigma$. Given $n$ i.i.d. samples $(X_i, Y_i)$, $i=1,\cdots, n$, from the joint distribution, one wishes to make statistical inference for the cross-covariance matrix $\Sigma_{XY}$.  If $\Sigma_{XY}$ has low-rank structure, the perturbation bounds established in Section \ref{sec.main} could be potentially applied to make statistical inference for $\Sigma_{XY}$.

{\it Matrix completion,} whose central goal is to recover a large low-rank matrix based on a limited number of observable entries, has been widely studied in the last decade. Among various methods for matrix completion, spectral method is fast, easy to implement and achieves good performance \citep{keshavan2010matrix,chatterjee2014matrix,cho2015asymptotic}. The new perturbation bounds can be potentially used for singular space estimation under the matrix completion setting to yield better results. 
%It is worth mentioning that in a special case of structured matrix completion, where the missing structure is in full sub-matrix, \cite{cai2015structured} applied the proposed unilateral perturbation bound result to obtain the minimax rate-optimal results.

In addition to the aforementioned problems, {\it high-dimensional clustering with correlated features} is an important extension of the  problem of clustering with independent features considered in the present paper. Specifically, based on $n$ observations $Y_i = l_i\mu + Z_i \in \mathbb{R}^p$, $i = 1, \ldots, n$, where $Z_i \overset{iid}{\sim} N(0, \Sigma)$, one aims to recover the unknown labels $\{l_i\}_{i=1}^n$. When $\Sigma$ is known or can be well estimated, one can transform $\tilde{Y}_i = \Sigma^{-1/2} Y_i, i=1,\ldots, n$ and perform the spectral method on $\left\{\tilde{Y}_i\right\}_{i=1}^n$. It would be an interesting and challenging problem to consider the general setting where $\Sigma$ is unknown. We leave this for future research.
	%{\red (Should we discuss this here or in the discussion section? I am not very confident.)}

%%%%%%%%%%%%%%%%%%%%%%%%%%%%%%%%%%%%%%%%%
\section{Proofs}
\label{sec.proofs}
%%%%%%%%%%%%%%%%%%%%%%%%%%%%%%%%%%%%%%%%%

We prove the main results in Sections \ref{sec.main}, \ref{sec.denoising} and \ref{clustering.sec} in this section. The proofs for CCA and the additional technical results are given in the supplementary materials.

%%%%%%%%%%%%%%%%%%%%%%%%%%%%%%%%%%%%%%%%%
\subsection{Proofs of General Unilateral Perturbation Bounds}
\label{sec.proof.perturbationBD}
%%%%%%%%%%%%%%%%%%%%%%%%%%%%%%%%%%%%%%%%%

Some technical tools are needed to prove Theorems \ref{th:perturbation}, \ref{th:perturbation_lower} and Proposition \ref{lm:perturbation}. In particular, we need a few useful properties of $\sin \Theta$ distances given below in Lemma \ref{lm:sin_Theta_distance}. Specifically, Lemma \ref{lm:sin_Theta_distance} provides some more convenient expressions than the definitions for the $\sin \Theta$ distances. It also shows that they are indeed distances as they satisfy triangle inequality. Some other widely used metrics for orthogonal spaces, including 
\begin{equation}\label{eq:D_sp_D_F}
D_{\rm sp}(\hat V, V) = \inf_{O\in \mathbb{O}_{r}}\|\hat V - VO \|, \quad D_{F}(\hat V, V) = \inf_{O\in \mathbb{O}_r} \|\hat V - VO\|_F, 
\end{equation}
\begin{equation}\label{eq:VVT-VV}
\left\|\hat{V}\hat{V}^{\intercal} - VV^{\intercal}\right\|, \quad \left\|\hat{V}\hat{V}^{\intercal} - VV^{\intercal}\right\|_F. 
\end{equation}
are shown to be equivalent to the $\sin \Theta$ distances. 

\begin{Lemma}[Properties of the $\sin \Theta$ Distances]
	\label{lm:sin_Theta_distance} 
	The following properties hold for the $\sin \Theta$ distances.	
	\begin{enumerate}
		\item {\it (Equivalent Expressions)} Suppose $V, \hat V\in \mathbb{O}_{p, r}$. If $V_{\perp}$ is an orthogonal extension of $V$, namely $[V~V_{\perp}] \in \mathbb{O}_p$, we have the following equivalent forms for $\|\sin\Theta(\hat V, V)\|$ and $\|\sin\Theta(\hat V, V)\|_F$,
		\begin{equation}\label{eq:def_sin_Theta_spectral}
		\|\sin\Theta(\hat V, V)\| = \sqrt{1 - \sigma_{\min}^2(\hat V^{\intercal} V)} = \|\hat V^{\intercal} V_{\perp}\|,
		\end{equation}
		\begin{equation}\label{eq:def_sin_Theta_Frobenius}
		\|\sin\Theta(\hat V, V)\|_F = \sqrt{r - \|V^{\intercal}\hat V\|_F^2} = \|\hat V^{\intercal} V_{\perp}\|_F.
		\end{equation}
		\item {\it (Triangle Inequality)} For any $V_1, V_2, V_3\in \mathbb{O}_{p, r}$,
		\begin{equation}\label{ineq:triangle_spectral}
		\|\sin\Theta(V_2, V_3)\| \leq \|\sin\Theta(V_1, V_2)\| + \|\sin\Theta(V_1, V_3)\|,
		\end{equation}
		\begin{equation}\label{ineq:triangle_Frobenius}
		\|\sin\Theta(V_2, V_3)\|_F \leq \|\sin\Theta(V_1, V_2)\|_F + \|\sin\Theta(V_1, V_3)\|_F.
		\end{equation}
		\item {\it (Equivalence with Other Metrics)} The metrics defined as \eqref{eq:D_sp_D_F} and \eqref{eq:VVT-VV} are equivalent to $\sin \Theta$ distances as the following inequalities hold
		\begin{equation*}
		\|\sin\Theta(\hat V, V)\| \leq D_{\rm sp}(\hat V, V) \leq \sqrt{2}\|\sin\Theta(\hat V, V)\|,
		\end{equation*}
		\begin{equation*}
		\|\sin\Theta(\hat V, V)\|_F \leq D_{F}(\hat V, V) \leq \sqrt{2}\|\sin\Theta(\hat V, V)\|_F,
		\end{equation*}	
		\begin{equation*}
		\left\|\sin\Theta(\hat{V}, V)\right\|\leq \left\|\hat{V}\hat{V}^{\intercal} - VV^{\intercal}\right\| \leq 2\left\|\sin\Theta(\hat{V}, V)\right\|,
		\end{equation*}
		\begin{equation*}
		\left\|\hat{V}\hat{V}^{\intercal} - VV^{\intercal}\right\|_F = \sqrt{2}\left\|\sin\Theta(\hat{V}, V)\right\|_F.
		\end{equation*}
	\end{enumerate}
\end{Lemma}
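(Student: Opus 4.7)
The plan is to prove Lemma \ref{lm:sin_Theta_distance} in the order 1, 3, 2, since Part 2 is most naturally derived after the equivalent expressions of Part 1 are in hand. The single algebraic identity driving everything is the resolution of identity $VV^{\intercal}+V_{\perp}V_{\perp}^{\intercal}=I_p$, combined with standard submultiplicativity of the spectral and Frobenius norms.

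\textbf{Part 1 (equivalent expressions).} By definition $\sin\Theta(\hat V,V)=\operatorname{diag}(\sqrt{1-\sigma_i^2})$, where $\sigma_1\geq \cdots\geq \sigma_r\geq 0$ are the singular values of $V^{\intercal}\hat V$. The left-hand equalities in \eqref{eq:def_sin_Theta_spectral} and \eqref{eq:def_sin_Theta_Frobenius} follow immediately. For the right-hand equalities I would invoke $\hat V^{\intercal}V_{\perp}V_{\perp}^{\intercal}\hat V=I_r-\hat V^{\intercal}VV^{\intercal}\hat V$, whose eigenvalues are $1-\sigma_i^2$; comparing its largest eigenvalue (spectral case) and its trace (Frobenius case) produces the two required expressions.

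\textbf{Part 3 (equivalence with other metrics).} For $\|\hat V\hat V^{\intercal}-VV^{\intercal}\|_F$ I would expand the square to get $\tr(\hat V\hat V^{\intercal})+\tr(VV^{\intercal})-2\tr(\hat V^{\intercal}VV^{\intercal}\hat V)=2r-2\|V^{\intercal}\hat V\|_F^2=2\|\sin\Theta\|_F^2$. The spectral lower bound follows by left-multiplying with $V_{\perp}^{\intercal}$ and invoking Part 1; the factor-of-two upper bound comes from the splitting $\hat V\hat V^{\intercal}-VV^{\intercal}=(I-VV^{\intercal})\hat V\hat V^{\intercal}-VV^{\intercal}(I-\hat V\hat V^{\intercal})$ together with the symmetry $\|V_{\perp}^{\intercal}\hat V\|=\|\hat V_{\perp}^{\intercal}V\|$, which itself follows from Part 1 since both quantities equal $\sqrt{1-\sigma_{\min}^2(V^{\intercal}\hat V)}$. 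For $D_{\rm sp}$ and $D_F$ the lower bound is again a projection bound $\|\hat V-VO\|\geq \|V_{\perp}^{\intercal}\hat V\|$ valid for every $O\in \mathbb{O}_r$. For the upper bound I would take the orthogonal Procrustes choice $O=U_1W_1^{\intercal}$ from the SVD $V^{\intercal}\hat V=U_1\Sigma_1 W_1^{\intercal}$, at which point $(\hat V-VO)^{\intercal}(\hat V-VO)=2I_r-2W_1\Sigma_1 W_1^{\intercal}$ has eigenvalues $2(1-\sigma_i)$; the elementary inequality $1-\sigma_i\leq 1-\sigma_i^2$ on $[0,1]$ then converts this into the stated $\sqrt 2\,\|\sin\Theta\|$ and $\sqrt 2\,\|\sin\Theta\|_F$ upper bounds.

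\textbf{Part 2 (triangle inequality).} This is the step requiring the most care. Using the representation $\|\sin\Theta(V_i,V_j)\|=\|V_j^{\intercal}V_{i\perp}\|$ from Part 1 and inserting $V_1V_1^{\intercal}+V_{1\perp}V_{1\perp}^{\intercal}=I_p$ between $V_2^{\intercal}$ and $V_{3\perp}$, I would split
\begin{equation*}
V_2^{\intercal}V_{3\perp}=(V_2^{\intercal}V_1)(V_1^{\intercal}V_{3\perp})+(V_2^{\intercal}V_{1\perp})(V_{1\perp}^{\intercal}V_{3\perp}).
\end{equation*}
Applying $\|AB\|\leq \|A\|\|B\|$ with the trivial bounds $\|V_2^{\intercal}V_1\|,\|V_{1\perp}^{\intercal}V_{3\perp}\|\leq 1$ yields the spectral triangle inequality \eqref{ineq:triangle_spectral}; for the Frobenius case \eqref{ineq:triangle_Frobenius} I would use the mixed bounds $\|AB\|_F\leq \|A\|\|B\|_F$ on the first summand and $\|AB\|_F\leq \|A\|_F\|B\|$ on the second. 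The only thing to verify carefully is that these mixed bounds are applied to the correct factor in each summand so that the residual operator-norm factors are both bounded by $1$; no deeper obstacle arises.
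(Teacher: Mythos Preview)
Your proposal is correct and in most places parallels the paper's proof: Part 1 is the same eigenvalue/trace argument, the Procrustes choice $O=U_1W_1^{\intercal}$ for $D_{\rm sp}$ and $D_F$ matches the paper's $O=AB^{\intercal}$, and your Frobenius triangle-inequality argument (insert $V_1V_1^{\intercal}+V_{1\perp}V_{1\perp}^{\intercal}$ and use mixed $\|\cdot\|\,/\,\|\cdot\|_F$ submultiplicativity) is exactly what the paper does.

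There are two places where your route is genuinely cleaner than the paper's. First, for the spectral triangle inequality the paper does \emph{not} reuse the insertion trick; instead it bounds $\sigma_{\min}(V_2^{\intercal}V_3)\geq \sqrt{(1-x^2)(1-y^2)}-xy$ and then splits into the cases $\sqrt{(1-x^2)(1-y^2)}\gtrless xy$. Your observation that the same $V_1V_1^{\intercal}+V_{1\perp}V_{1\perp}^{\intercal}$ splitting already handles the spectral norm (via $\|AB\|\leq\|A\|\,\|B\|$ with $\|V_2^{\intercal}V_1\|,\|V_{1\perp}^{\intercal}V_{3\perp}\|\leq 1$) is shorter and avoids the case analysis entirely. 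Second, for the upper bound $\|\hat V\hat V^{\intercal}-VV^{\intercal}\|\leq 2\|\sin\Theta\|$, the paper expands $\hat V\hat V^{\intercal}-VV^{\intercal}$ into four pieces in the $(V,V_{\perp})$ basis and bounds the quadratic form $x^{\intercal}(\hat V\hat V^{\intercal}-VV^{\intercal})x$ term by term; your identity $\hat V\hat V^{\intercal}-VV^{\intercal}=(I-VV^{\intercal})\hat V\hat V^{\intercal}-VV^{\intercal}(I-\hat V\hat V^{\intercal})$ followed by $\|V_{\perp}^{\intercal}\hat V\|=\|\hat V_{\perp}^{\intercal}V\|$ gives the same bound in one line. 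Neither simplification changes the constants or the scope of the result, but both are worth keeping.
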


\ \par

\begin{proof}[Proof of Proposition \ref{lm:perturbation}] First, we can rotate the right singular space by right multiplying the whole matrices $A, V^{\intercal}, W^{\intercal}$ by $[W~W_{\perp}]$ without changing the singular values and left singular vectors. Thus without loss of generality, we assume that 
$$[W~W_{\perp}] = I_{p_2}.$$
Next, we further calculate the SVD: $AW = A_{[:, 1:r]} := \bar{U}\bar{\Sigma}\bar{V}^{\intercal}$, where $\bar{U} \in \mathbb{O}_{p_1, r}, \bar{\Sigma}\in \mathbb{R}^{r\times r}$, $\bar{V} \in \mathbb{O}_r$, and rotate the left singular space by left multiplying the whole matrix $A$ by $[\bar{U} ~ \bar{U}_{\perp}]^{\intercal}$, then rotate the right singular space by right multiplying $A_{[:, 1:r]}$ by $\bar{V}$. After this rotation, the singular structure of $A$, $AW$ are unchanged. Again without loss of generality, we can assume that $[\bar{U} ~ \bar{U}_{\perp}]^{\intercal} = I_{p_1}, \bar{V} = I_r.$
After these two steps of rotations, the formation of $A$ is much simplified,
\begin{equation}
A = \begin{blockarray}{ccccc}
& & r & & p_2-r\\
\begin{block}{c[cccc]}
& \sigma_1(AW) & & & \\
r & & \ddots & & \bar{U}^{\intercal} AW_{\perp}\\
& & & \sigma_r(AW) & \\
p_1 - r & & 0 & & \bar{U}_\perp^{\intercal} A W_{\perp}\\
\end{block}
\end{blockarray},
\end{equation}
while the problem we are considering is still without loss of generality. For convenience, denote 
\begin{equation}\label{eq:tilde U A W = y}
\left(\bar{U}^{\intercal} AW_{\perp}\right)^{\intercal} = [y^{(1)} ~ y^{(2)} \cdots y^{(r)}], \quad y^{(1)},\cdots, y^{(r)}\in \mathbb{R}^{p_2-r}.
\end{equation}
We can further compute that
\begin{equation}
A^{\intercal}A = \begin{blockarray}{ccccc}
& & r & & p_2-r\\
\begin{block}{c[cccc]}
& \sigma_1^2(AW) & & & \sigma_1(AW) y^{(1)\intercal} \\
r & & \ddots & & \vdots \\
& & & \sigma_r^2(AW) & \sigma_r(AW) y^{(r)\intercal} \\
p_2 - r & \sigma_1(AW) y^{(1)} & \cdots  & \sigma_r(AW) y^{(r)} & (A W_{\perp})^{\intercal}A W_{\perp}\\
\end{block}
\end{blockarray}.
\end{equation}
By basic theory in algebra, the $i$-th eigenvalue of $A^{\intercal}A$ is equal to $\sigma_i^2(A)$, and the $i$-th eigenvector of $A^{\intercal}A$ is equal to the $i$-th right singular vector of $A$ (up-to-sign). Suppose the singular vectors of $A$ are $\tilde{V} = [v^{(1)}, v^{(2)}, \cdots, v^{(p_2)}]$, where the singular values can be further decomposed into two parts as
\begin{equation}\label{eq:def_beta_alpha}
v^{(k)} = \begin{blockarray}{cc}
\begin{block}{c[c]}
r & \alpha^{(k)}\\
p_2 - r& \beta^{(k)}\\
\end{block}
\end{blockarray},\quad \text{or equivalently}, \quad \alpha^{(k)} =W^{\intercal}v^{(k)}, \beta^{(k)} = W_{\perp}^{\intercal}v^{(k)}.
\end{equation}
By observing the $i$-th entry of $A^{\intercal}A v^{(k)} = \sigma_k^2(A) v^{(k)}$, we know for $1\leq i\leq r$, $r+1 \leq k\leq p_2$,
\begin{equation}\label{eq:alpha_i^k}
\begin{split}
& \left(\sigma_i^2(AW) - \sigma_k^2(A) \right) \alpha^{(k)}_i + \sigma_i(AW) y^{(i)\intercal} \beta^{(k)} = 0,\\
\Rightarrow \quad & \alpha_i^{(k)} = \frac{-\sigma_i(AW)}{\sigma_i^2(AW) - \sigma_k^2(A)} y^{(i)\intercal}\beta^{(k)}.
\end{split}
\end{equation}
Recall the assumption that 
\begin{equation}\label{ineq:sigma_1(AW)geq}
\sigma_1(AW) \geq \cdots \geq \sigma_r(AW) > \sigma_{r+1}(A) \geq \cdots \sigma_{p_2}(A)\geq 0. 
\end{equation} 
Also $\frac{x}{x^2- y^2} = \frac{1}{x - y^2/x}$ is a decreasing function for $x$ and a increasing function for $y$ when $x>y \geq 0$, so
\begin{equation}\label{ineq:sigma_i(AW)}
\frac{\sigma_i(AW)}{\sigma_i^2(AW) - \sigma_k^2(A)} \leq \frac{\sigma_r(AW)}{\sigma_r^2(AW) - \sigma_{r+1}^2(A)},\quad 1\leq i\leq r, r+1\leq k \leq p_2.
\end{equation} 
Since $[\beta^{(r+1)} ~ \cdots ~ \beta^{(p_2)}]$ is the submatrix of the orthogonal matrix $V$,
\begin{equation}\label{ineq:beta_spectral_norm}
\left\|[\beta^{(r+1)} ~ \cdots ~ \beta^{(p_2)}]\right\| \leq 1.
\end{equation}
Now we can give an upper bound for the Frobenius norm of $[\alpha^{(r+1)} ~ \cdots ~ \alpha^{(p_2)}]$
\begin{equation*}
\begin{split}
& \left\|\left[\alpha^{(r+1)} ~ \cdots ~ \alpha^{(p_2)}\right]\right\|_F^2 = \sum_{i=1}^{r}\sum_{k=r+1}^{p_2} \left(\alpha_i^{(k)}\right)^2\\
\overset{\eqref{ineq:sigma_i(AW)}}{\leq} & \frac{\sigma_r^2(AW)}{\left(\sigma_r^2(AW) - \sigma_{r+1}^2(A)\right)^2} \sum_{i=1}^r \sum_{k=r+1}^{p_2} \left(y^{(i)\intercal}\beta^{(k)}\right)^2\\
\leq  & \frac{\sigma_r^2(AW)}{\left(\sigma_r^2(AW) - \sigma_{r+1}^2(A)\right)^2} \left\|[y_1 ~ \cdots y_r]^{\intercal}\right\|_F^2 \left\|[\beta^{(r+1)} ~ \cdots ~ \beta^{(p_2)}]\right\|^2\\
\overset{\eqref{eq:tilde U A W = y}\eqref{ineq:beta_spectral_norm}}{\leq} & \frac{\sigma_r^2(AW)}{\left(\sigma_r^2(AW) - \sigma_{r+1}^2(A)\right)^2} \left\|\bar{U}^{\intercal} AW_{\perp}\right\|_F^2.
\end{split}
\end{equation*}
It is more complicated to give a upper bound for the spectral norm of $[\alpha^{(r+1)} ~ \cdots ~ \alpha^{(p_2)}]$. Suppose $s = (s_{r+1},\cdots, s_{p_2})\in \mathbb{R}^{p_2-r}$ is any vector with $\|s\|_2 = 1$. Based on \eqref{eq:alpha_i^k}, 
\begin{equation*}
\begin{split}
 & \sum_{k=r+1}^{p_2} s_k\alpha^{(k)}_i\\
 = & \sum_{k=r+1}^{p_2}  \frac{-s_k\sigma_i(AW)y^{(i)\intercal}\beta^{(k)}}{\sigma_i^2(AW) - \sigma_k^2(A)} = \sum_{k=r+1}^{p_2}\frac{-s_k}{\sigma_i(AW)}\frac{1}{1-\sigma_k^2(A)/\sigma_i(AW)^2}y^{(i)\intercal} \beta^{(k)}\\
\overset{\eqref{ineq:sigma_1(AW)geq}}{=} & \sum_{k=r+1}^{p_2}\sum_{l=0}^\infty \frac{-s_k\sigma_k^{2l}(A)}{\sigma_i^{2l+1}(AW)}y^{(i)\intercal}\beta^{(k)} = \sum_{l=0}^\infty\frac{-y^{(i)\intercal}}{\sigma_i^{2l+1}(AW)}\left(\sum_{k= r+1}^{p_2}s_k\sigma_k^{2l}(A)\beta^{(k)}\right).  \\
\end{split}
\end{equation*}
Hence,
\begin{equation*}
\begin{split}
\left\|\sum_{k=r+1}^{p_2}s_k \alpha^{(k)}\right\|_2 \leq & \sum_{l=0}^\infty \left\|\begin{bmatrix}
y^{(1)\intercal}/\sigma_1^{2l+1}(AW)\\
\vdots\\
y^{(r)\intercal}/\sigma_r^{2l+1}(AW)
\end{bmatrix}\cdot\left(\sum_{k= r+1}^{p_2}s_k\sigma_k^{2l}(A)\beta^{(k)}\right)\right\|_2\\
\leq & \sum_{l=0}^\infty \frac{\left\|[y^{(1)} ~ y^{(2)} ~ \cdots ~ y^{(r)}]\right\|}{\sigma_r^{2l+1}(AW)}\cdot \left\|[\beta^{(r+1)} ~ \beta^{(r+2)} ~ \cdots ~ \beta^{(p_2)}]\right\|\\
& \cdot \left\|\left(s_{r+1}\sigma^{2l}_{r+1}(A), \cdots, s_{p_2}\sigma_{p_2}^{2l}(A) \right)\right\|_2\\
\overset{\eqref{eq:tilde U A W = y}\eqref{ineq:beta_spectral_norm}\eqref{ineq:sigma_1(AW)geq}}{\leq}& \sum_{l=0}^\infty \frac{\|\tilde{U}^{\intercal}AW_{\perp}\|}{\sigma_{r}^{2l+1}(AW)} \cdot \sigma_{r+1}^{2l}(A) \|s\|_2\\
= & \frac{\|\tilde{U}^{\intercal}AW_{\perp}\|\sigma_{r}(AW)}{\sigma_{r}^2(AW) - \sigma^2_{r+1}(A)},
\end{split}
\end{equation*}
which implies 
\begin{equation*}
\|[\alpha^{(r+1)}~\cdots~\alpha^{(p_2)}]\| \leq \frac{\|\tilde{U}^{\intercal}AW_{\perp}\|\sigma_{r}(AW)}{\sigma_{r}^2(AW) - \sigma^2_{r+1}(A)}.
\end{equation*}
Note the definition of $\alpha^{(i)}$ in \eqref{eq:def_beta_alpha}, we know 
$$[\alpha^{(r+1)} \alpha^{(r+2)} \cdots \alpha^{(p_2)}] = \tilde{V}_{[1:r, (r+1):p_2]} = (V_{\perp})_{[1:r, :]}.$$ 
Thus,
\begin{equation*}
\begin{split}
\left\|\sin\Theta(V, W)\right\| \overset{\eqref{eq:def_sin_Theta_spectral}}{=} \left\|W^{\intercal}V_{\perp}\right\| = \left\|[\alpha^{(r+1)}\cdots \alpha^{(p_2)}]\right\| \leq \frac{\|\tilde{U}^{\intercal}AW_{\perp}\|\sigma_{r}(AW)}{\sigma_{r}^2(AW) - \sigma^2_{r+1}(A)},
\end{split}
\end{equation*}
%	\begin{equation}
%	\begin{split}
%	& \left\|\sin\Theta(V, W)\right\|^2 \overset{\eqref{eq:def_sin_Theta_spectral}}{=} 1 -  \sigma_{\min}^2 (W^{\intercal}V) = 1 - \sigma_{\min}^2(\tilde{V}_{[1:r, 1:r]})\\
%	 = & \|\tilde{V}_{[1:r, (r+1):p_2]}\|^2 = 	\|[\alpha^{(r+1)}~\cdots~\alpha^{(p_2)}]\| \leq \frac{\|\tilde{U}^{\intercal}AW_{\perp}\|\sigma_{r}(AW)}{\sigma_{r}^2(AW) - \sigma^2_{r+1}(A)}.
%	\end{split}
%	\end{equation}
\begin{equation}
\begin{split}
& \left\|\sin\Theta(V, W)\right\|_F^2 \overset{\eqref{eq:def_sin_Theta_Frobenius}}{=} \|W^{\intercal}V_{\perp}\|_F^2\\
= & \|[\alpha^{(r+1)}~\cdots~\alpha^{(p_2)}]\|_F^2 \leq \frac{\|\tilde{U}^{\intercal}AW_{\perp}\|_F^2\sigma_{r}^2(AW)}{\left(\sigma_{r}^2(AW) - \sigma^2_{r+1}(A)\right)^2}.	
\end{split}
\end{equation}
Finally, since $\bar{U}$ is the left singular vectors of $AW$, 
\begin{equation}\label{eq:P_{(AW)}}
\|\bar{U}^{\intercal}AW_\perp \| = \|\bbP_{(AW)}AW_{\perp}\|, \quad \|\bar{U}^{\intercal}AW_{\perp}\|_F = \|\bbP_{(AW)}AW_{\perp}\|.
\end{equation}
The upper bounds 1 in \eqref{ineq:lm_V_W_spectral_norm} and $\sqrt{r}$ on \eqref{ineq:lm_V_W_Frobenius} are trivial. Therefore, we have finished the proof of Proposition \ref{lm:perturbation}. 
\end{proof}

\ \par

\begin{proof}[Proof of Theorem \ref{th:perturbation}] Before proving this theorem, we introduce the following lemma on the inequalities of the singular values in the perturbed matrix.
\begin{Lemma}\label{lm:X_Y_sv}
	Suppose $X\in \mathbb{R}^{p\times n}$, $Y\in \mathbb{R}^{p\times n}$, $rank(X) = a$, $rank(Y) = b$,
	\begin{enumerate}
		\item $\sigma_{a+b+1-r}(X+Y) \leq \min(\sigma_{a+1-r}(X), \sigma_{b+1-r}(Y))$ for $r\geq 1$;
		\item if we further have $X^{\intercal}Y = 0$ or $XY^{\intercal} = 0$, we must have $a+b\leq n\wedge p$, and
		$$\sigma_r^2(X+Y) \geq \max(\sigma_r^2(X), \sigma_r^2(Y))$$ 
		for any $r\geq 1$. Also, 
		$$\sigma_1^2(X+Y) \leq \sigma_1^2(X) + \sigma_1^2(Y). $$
	\end{enumerate}
\end{Lemma}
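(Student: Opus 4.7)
Both parts follow from Weyl-type inequalities applied with care to how rank constrains the singular values. I would prove Part 1 by a direct appeal to Weyl's sub-additivity for singular values, and Part 2 by exploiting the fact that orthogonality forces the relevant Gram matrix to split as a sum of two positive semidefinite pieces with no cross terms, after which standard Hermitian Weyl inequalities apply.

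For Part 1, I would invoke the classical bound $\sigma_{i+j-1}(A+B)\le\sigma_i(A)+\sigma_j(B)$. Since $\mathrm{rank}(Y)=b$, one has $\sigma_{b+1}(Y)=0$; plugging in $j=b+1$ and $i=a+1-r$ yields $\sigma_{a+b+1-r}(X+Y)\le\sigma_{a+1-r}(X)$. Swapping the roles of $X$ and $Y$ gives $\sigma_{a+b+1-r}(X+Y)\le\sigma_{b+1-r}(Y)$, and taking the minimum completes the part. Minor bookkeeping ensures that the indices are within the admissible range for the inequality to be non-vacuous; outside that range the claim either is trivially $0$ or reduces to a relabeling.

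For Part 2, the key observation is that the hypothesis kills the cross terms in one of the Gram matrices: if $XY^{\intercal}=0$ then $(X+Y)(X+Y)^{\intercal}=XX^{\intercal}+YY^{\intercal}$, while if $X^{\intercal}Y=0$ then $(X+Y)^{\intercal}(X+Y)=X^{\intercal}X+Y^{\intercal}Y$. In the first case, rows of $X$ and rows of $Y$ are mutually orthogonal vectors in $\mathbb{R}^n$, so their row spaces sum to an $(a+b)$-dimensional subspace of $\mathbb{R}^n$, giving $a+b\le n$; symmetrically the second hypothesis yields $a+b\le p$ via orthogonality of column spaces in $\mathbb{R}^p$ (I read the stated bound $a+b\le n\wedge p$ as the combined bookkeeping over the two regimes). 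Now $XX^{\intercal}$ and $YY^{\intercal}$ are both positive semidefinite, so Weyl's inequality for Hermitian matrices yields $\lambda_r(XX^{\intercal}+YY^{\intercal})\ge\lambda_r(XX^{\intercal})=\sigma_r^2(X)$, and symmetrically with $X,Y$ swapped; likewise $\lambda_1(XX^{\intercal}+YY^{\intercal})\le\lambda_1(XX^{\intercal})+\lambda_1(YY^{\intercal})$. These are precisely the two claimed inequalities, and the $X^{\intercal}Y=0$ case is identical after working with $(X+Y)^{\intercal}(X+Y)$ instead.

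The main obstacle is essentially notational rather than mathematical: keeping the index shifts consistent when applying the singular-value Weyl bound in Part 1, and matching the two versions of the orthogonality hypothesis to the appropriate Gram-matrix identity in Part 2. Beyond these care points, no delicate estimate is required.
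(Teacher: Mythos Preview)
Your argument is correct and matches the paper's. Part 2 is identical (expand the Gram matrix under the orthogonality hypothesis, then apply Hermitian Weyl monotonicity for positive semidefinite perturbations); for Part 1 the paper invokes the Eckart--Young identity $\sigma_k(M)=\min_{\mathrm{rank}(B)\le k-1}\|M-B\|$ with the explicit choice $B=X_{\max(a-r)}+Y$, which is precisely the standard derivation of the Weyl inequality you cite, so the two routes coincide. Your cautious reading of the rank claim is also the right one: a single orthogonality hypothesis yields only $a+b\le p$ (from $X^{\intercal}Y=0$) or $a+b\le n$ (from $XY^{\intercal}=0$), not both simultaneously.
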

The proof of Lemma \ref{lm:X_Y_sv} is provided in the supplementary materials. Applying Lemma \ref{lm:X_Y_sv}, we get
\begin{equation}\label{ineq:sigma_min_hat_XV}
\begin{split}
 & \sigma_{\min}^2(\hat{X}V) = \sigma_r^2(\hat X V) = \sigma_{r}^2(UU^{\intercal}\hat{X}V + U_{\perp}U_{\perp}^{\intercal}\hat{X} V)\\
 \geq & \sigma_{r}^2(UU^{\intercal}\hat XV) = \alpha^2, \quad (\text{by Lemma \ref{lm:X_Y_sv} Part 2}.) 
\end{split}
\end{equation}
Since $U, V$ have $r$ columns, $\rank(\hat{X}VV^{\intercal}), \rank(UU^{\intercal}\hat{X}) \leq r$. Also since $\hat{X} = U_{\perp}U_{\perp}^{\intercal}\hat{X} + UU^{\intercal}\hat{X} = \hat{X}V_{\perp}V_{\perp}^{\intercal} + \hat{X}VV^{\intercal}$, we have
\begin{equation*}
\begin{split}
\sigma_{r+1}^2(\hat{X}) \leq & \min\left\{\sigma_1^2(U_{\perp}U_{\perp}^{\intercal} \hat X), \sigma_1^2(\hat X V_{\perp}V_{\perp}^{\intercal})\right\} \quad (\text{by Lemma \ref{lm:X_Y_sv} Part 1.})\\
= & \min\left\{\sigma_1^2(Z_{21} + U_{\perp}^{\intercal}\hat X V_{\perp}), \sigma_1^2 (Z_{12} + U_{\perp}^{\intercal}\hat X V_{\perp})\right\} \\
\leq & (\beta^2 + z_{12}^2)\wedge (\beta^2 + z_{21}^2) \quad (\text{by Lemma \ref{lm:X_Y_sv} Part 2.})\\
= & \beta^2 + z_{12}^2\wedge z_{21}^2.
\end{split}
\end{equation*}
We shall also note the fact that for any matrix $A\in \mathbb{R}^{p\times r}$ with $r\leq p$, denote the SVD as $A = U_A\Sigma_AV_A^{\intercal}$, then
\begin{equation}\label{eq:singular_projection_inverse}
\left\|A\left(A^{\intercal}A\right)^{\dagger}\right\| = \left\|U_A\Sigma_A V_A^{\intercal}\left(V_A\Sigma_A^2V_A^{\intercal}\right)^{\dagger}\right\| = \left\|U_A\Sigma_A^{\dagger}V_A^{\intercal}\right\| \leq \sigma_{\min}^{-1}(A).
\end{equation}
Thus,
%\begin{equation*}
%\begin{split}
%& \|P_{(\hat X V)} \hat X V_{\perp} \| = \| \hat X V(V^{\intercal}\hat X^{\intercal}\hat X V)^{\dagger}(\hat X V)^{\intercal} \hat XV_{\perp}\|\\
%\leq & \| V^{\intercal}\hat X^{\intercal}(V^{\intercal}\hat X^{\intercal}\hat X V)^{\dagger}\| \cdot\|(\hat X V)^{\intercal} \hat XV_{\perp}\| \overset{\eqref{eq:singular_projection_inverse}}{\leq} \sigma_{\min}^{-1}(\hat X V) \|V^{\intercal}\hat X^{\intercal} \hat X V_{\perp}\|\\
%\overset{\eqref{ineq:sigma_min_hat_XV}}{\leq} & \frac{1}{\alpha}\left\| \begin{bmatrix}
%V^{\intercal} \hat X^{\intercal} U & V^{\intercal}\hat X^{\intercal}U_{\perp}
%\end{bmatrix}\cdot \begin{bmatrix}
%U^{\intercal} \hat X V_{\perp}^{\intercal}\\
%U_{\perp}^{\intercal} \hat X V_{\perp}^{\intercal}
%\end{bmatrix}   \right\|\\
%\leq  & \frac{1}{\alpha} \left(\left\|V^{\intercal} \hat X^{\intercal} U \| \cdot \| Z_{12}\| + \|Z_{21}^{\intercal}\|\cdot \| U^{\intercal}_{\perp} \hat X V^{\intercal}_{\perp} \right\|\right) \leq \frac{\alpha z_{12} + \beta z_{21}}{\alpha}.\\
%\end{split}
%\end{equation*}
\begin{equation*}
\begin{split}
& \|\bbP_{(\hat X V)} \hat X V_{\perp} \| = \|\bbP_{(\hat X V)} \bbP_{U} \hat X V_{\perp} + \bbP_{(\hat X V)}\bbP_{U_{\perp}} \hat X V_{\perp}\|\\
\leq & \|\bbP_{(\hat X V)} U U^\intercal \hat X V_{\perp}\| + \|\bbP_{(\hat{X}V)} U_{\perp}U_{\perp}^\intercal \hat X V_{\perp}\|\\
\leq & \|U^\intercal \hat{X}V_\perp\| + \|\hat{X}V[(\hat{X}V)^\intercal (\hat{X}V)]^{-1}(\hat{X}V)^\intercal U_{\perp}U_{\perp}^\intercal \hat{X}V_{\perp}\|\\
\leq & \|U^\intercal \hat{X} V_{\perp}\| + \|\hat{X}V[(\hat{X}V)^\intercal (\hat{X}V)]^{-1}\|\cdot \|U_{\perp}^\intercal \hat{X}V \|\cdot \|U_{\perp}^\intercal \hat{X}V_{\perp}\|\\
\overset{\eqref{eq:singular_projection_inverse}}{\leq} & \|U^\intercal ZV_{\perp}\| + \frac{1}{\sigma_{\min}(\hat{X}V)} \|U^\intercal_{\perp}ZV\|\cdot \|U_{\perp}^\intercal \hat{X}V_{\perp}\|\\
\overset{\eqref{ineq:sigma_min_hat_XV}}{\leq} & z_{12} + \frac{\beta}{\alpha}z_{21} = \frac{\alpha z_{12} + \beta z_{21}}{\alpha}.\\
%& \| \hat X V(V^{\intercal}\hat X^{\intercal}\hat X V)^{\dagger}(\hat X V)^{\intercal} \hat XV_{\perp}\|\\
%\leq & \| V^{\intercal}\hat X^{\intercal}(V^{\intercal}\hat X^{\intercal}\hat X V)^{\dagger}\| \cdot\|(\hat X V)^{\intercal} \hat XV_{\perp}\| \overset{\eqref{eq:singular_projection_inverse}}{\leq} \sigma_{\min}^{-1}(\hat X V) \|V^{\intercal}\hat X^{\intercal} \hat X V_{\perp}\|\\
%\overset{\eqref{ineq:sigma_min_hat_XV}}{\leq} & \frac{1}{\alpha}\left\| \begin{bmatrix}
%V^{\intercal} \hat X^{\intercal} U & V^{\intercal}\hat X^{\intercal}U_{\perp}
%\end{bmatrix}\cdot \begin{bmatrix}
%U^{\intercal} \hat X V_{\perp}^{\intercal}\\
%U_{\perp}^{\intercal} \hat X V_{\perp}^{\intercal}
%\end{bmatrix}   \right\|\\
%\leq  & \frac{1}{\alpha} \left(\left\|V^{\intercal} \hat X^{\intercal} U \| \cdot \| Z_{12}\| + \|Z_{21}^{\intercal}\|\cdot \| U^{\intercal}_{\perp} \hat X V^{\intercal}_{\perp} \right\|\right) \leq \frac{\alpha z_{12} + \beta z_{21}}{\alpha}.\\
\end{split}
\end{equation*}
Similarly,
\begin{equation*}
\begin{split}
\|P_{(\hat X V)} \hat X V_{\perp} \|_F &  \leq \frac{\alpha \|Z_{12}\|_F + \beta \|Z_{21}\|_F}{\alpha}.
\end{split}
\end{equation*}
Next, applying Proposition \ref{lm:perturbation} by setting $A = \hat X$, $\tilde{W} = [V~ V_{\perp}]$, $\tilde{V} = [\hat V ~ \hat V_{\perp}]$, we could obtain \eqref{ineq:th-main_spectral}. 
\end{proof}

%%%%%%%%%%%%%%%%%%%%%%%%%%%%%%%%%%%%%%%%%%%%%%%%%%%%%%%%%%%%%%%%%%
\bibliographystyle{apa}
\bibliography{reference_add}
%%%%%%%%%%%%%%%%%%%%%%%%%%%%%%%%%%%%%%%%%%%%%%%%%%%%%%%%%%%%%%%%%%

\newpage
%%%%%%%%%%%%%%%%%%%%%%%%%%%%%%%%%%%%%%%%%%%%%%%%%%%%%%%%%%%%%%%%%%

\newpage

\setcounter{page}{1}
\setcounter{section}{0}

%\section*{Appendix: Proof of Technical Lemmas}

\centerline{\Large\bf SUPPLEMENT TO ``RATE-OPTIMAL PERTURBATION}\smallskip
\centerline{\Large\bf  BOUNDS FOR SINGULAR SUBSPACES WITH APPLICATIONS}\smallskip
\centerline{\Large\bf TO HIGH-DIMENSIONAL STATISTICS"}
\bigskip

\centerline{\large BY T. TONY CAI \footnote{The research was supported in part by NSF FRG Grant DMS-0854973, NSF Grant DMS-1208982 and NIH Grant R01 CA127334-05.} \, and \, ANRU ZHANG}
\smallskip
\centerline{\large University of Pennsylvania ~ and ~ University of Wisconsin-Madison}

\bigskip\bigskip

%%%%%%%%%%%%%%%%%%%%%%%%%%%%%%%%%%%%%%%%%%%%%%%%%%%%%%%%%%%%%%%%%%

In this supplementary material, we provide the proofs for Theorem \ref{th:perturbation_lower}, Corollary \ref{cr:perturbation_single_singular_vector}, matrix denoising, high-dimensional clustering, canonical correlation analysis and all the technical lemmas.

\section{Additional Proofs}

\ \par

\begin{proof}[Proof of Theorem \ref{th:perturbation_lower}]
	
	The construction of lower bound relies on the following design of 2-by-2 blocks.
	
	\begin{Lemma}[SVD of 2-by-2 matrices]\label{lm:2-by-2} 
		~
		\begin{enumerate}
			\item Suppose 2-by-2 matrix $B$ satisfies 
			\begin{equation*}
			B = \begin{bmatrix}
			a & b\\
			0 & d
			\end{bmatrix}, \quad a, b, d \geq 0, \quad a^2 \leq b^2 + d^2.
			\end{equation*}
			Suppose $V = \begin{bmatrix}
			v_{11} & v_{12}\\
			v_{21} & v_{22}
			\end{bmatrix}$ is the right singular vectors of $B$, then 
			\begin{equation}
			|v_{12}| = |v_{21}| \geq \frac{1}{\sqrt{2}}.
			\end{equation}
			\item Suppose 2-by-2 matrix $A$ satisfies
			\begin{equation}
			A = \begin{bmatrix}
			a & b\\
			c & d
			\end{bmatrix},\quad a,b,c,d \geq 0,\quad  a^2 > d^2 + b^2 +  c^2.
			\end{equation}
			Suppose $V = \begin{bmatrix}
			v_{11} & v_{12}\\
			v_{21} & v_{22}
			\end{bmatrix}$ is the right singular vectors of $A$, then 
			\begin{equation*}
			|v_{12}| = |v_{21}| \geq \frac{1}{\sqrt{10}}\left(\frac{ab+cd}{a^2 - d^2 - b^2\wedge c^2} \wedge 1\right).
			\end{equation*}
		\end{enumerate}
	\end{Lemma}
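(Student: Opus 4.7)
My approach is to first observe that any $V\in\mathbb{O}_2$ satisfies $|v_{12}|=|v_{21}|$, because $VV^\intercal = V^\intercal V = I_2$ gives $v_{11}^2+v_{12}^2 = v_{11}^2+v_{21}^2 = 1$. Parameterizing the right singular vectors as a rotation by angle $\theta$, this common value equals $|\sin\theta|$, so both parts reduce to lower-bounding $|\sin\theta|$. The workhorse identity I would use is the following: for a $2\times 2$ symmetric matrix $M=\bigl(\begin{smallmatrix}p&q\\q&r\end{smallmatrix}\bigr)$ with $q\ge 0$ and $p\ge r$, Rayleigh-quotient maximization gives $\tan(2\theta)=2q/(p-r)$, and the half-angle identity rearranges this to
\[
\sin^2\theta \;=\; \tfrac{1}{2}\Bigl(1-\tfrac{p-r}{D}\Bigr) \;=\; \frac{2q^2}{D\,(D+(p-r))},\qquad D:=\sqrt{(p-r)^2+4q^2}.
\]
When $p<r$ the roles of the axes swap, so the same identity delivers a lower bound on $|\cos\theta|$ instead.

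Part 1 is then essentially immediate. With $M=B^\intercal B$ the diagonal entries are $a^2$ and $b^2+d^2$, and the assumption $a^2\le b^2+d^2$ puts the larger entry at position $(2,2)$. The top eigenvector of such an $M$ tilts toward the second axis, so $\theta\in[\pi/4,\pi/2]$ and $|\sin\theta|\ge 1/\sqrt{2}$; the degenerate cases $a=0$ or $b=0$ are handled by direct inspection.

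For Part 2 I would write $p=a^2+c^2$, $q=ab+cd$, $r=b^2+d^2$; the hypothesis $a^2>b^2+c^2+d^2$ forces $p-r>0$. To bound the denominator in the displayed identity I would use only $\sqrt{x^2+y^2}\le x+y$ for $x,y\ge 0$ and split into two regimes. If $2q\le p-r$, then $D\le(p-r)\sqrt{2}$ and a short computation gives $\sin^2\theta\ge(2-\sqrt{2})q^2/(p-r)^2$. If instead $2q>p-r$, then $D\le 2\sqrt{2}\,q$, and the same computation gives an absolute bound $\sin^2\theta\ge(2-\sqrt{2})/4>1/10$, independent of the other parameters.

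The main technical obstacle is converting the $(p-r)$ in the small-$q$ bound into $\Delta:=a^2-d^2-b^2\wedge c^2$, since $(p-r)-\Delta=c^2$ when $b\le c$ but equals $2c^2-b^2$ (which can be negative) when $b>c$. The strict hypothesis $a^2>b^2+c^2+d^2$ forces $c^2<\Delta$ in both sub-cases, and a three-way split on $b\le c$, $c<b\le c\sqrt{2}$, and $b>c\sqrt{2}$ yields $p-r<2\Delta$ in every case. In the small-$q$ regime this upgrades the earlier estimate to $\sin^2\theta\ge(2-\sqrt{2})q^2/(4\Delta^2)>q^2/(10\Delta^2)$, and the inequality $q\le(p-r)/2<\Delta$ automatically keeps $q/\Delta\le 1$; in the large-$q$ regime the absolute constant $1/10$ already dominates the target $(q/\Delta\wedge 1)^2/10\le 1/10$. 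Combining the two regimes yields $|\sin\theta|\ge \tfrac{1}{\sqrt{10}}\bigl(\tfrac{ab+cd}{a^2-d^2-b^2\wedge c^2}\wedge 1\bigr)$, as required.
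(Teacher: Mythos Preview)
Your proof is correct and rests on the same idea as the paper's: both work with the eigenvectors of $A^\intercal A$, and your closed form $\sin^2\theta = 2q^2/\bigl(D(D+p-r)\bigr)$ is exactly the paper's $|v_{12}|^2 = q^2/\bigl((p-\lambda_2)^2+q^2\bigr)$ after substituting $p-\lambda_2=(D+p-r)/2$. The organization differs in two places. For Part~1 the paper explicitly bounds $\lambda_2\ge a(a-b)$ and then simplifies, whereas your observation that $p\le r$ forces $\sin^2\theta=\tfrac12\bigl(1-(p-r)/D\bigr)\ge\tfrac12$ is a one-line shortcut. For Part~2 the paper compresses everything into a single inequality $a^2+c^2-\lambda_2\le 2\Delta+q$ (the passage from $|ab-cd|$ to $ab+cd$ and then to $b^2\wedge c^2$ hides the same case analysis you make explicit) and finishes with $(2\Delta+q)^2+q^2\le 10\max(\Delta,q)^2$; your regime split on $2q\lessgtr p-r$ followed by the separate bound $p-r<2\Delta$ reaches the same constant $1/\sqrt{10}$ through a different but equivalent chain of inequalities.
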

	The proof of Lemma \ref{lm:2-by-2} is provided later. 
	
	\begin{itemize}[leftmargin=*]
		\item Now we consider the situation when $\alpha^2 \leq \beta^2 + z_{12}^2 \wedge z_{21}^2$.  Clearly, $\alpha^2 \leq \beta^2 + z_{12}^2$ under this setting. We can write down the singular value decomposition for the following matrix,
		$$
		\begin{bmatrix}
		\alpha & z_{12}\\
		0  & \beta
		\end{bmatrix} = \begin{bmatrix}
		u_{11} & u_{12}\\
		u_{21} & u_{22}
		\end{bmatrix}\cdot 
		\begin{bmatrix}
		\sigma_1 & 0\\
		0 & \sigma_2
		\end{bmatrix}\cdot 
		\begin{bmatrix}
		v_{11} & v_{12}\\
		v_{21} & v_{22}
		\end{bmatrix}^\intercal,$$
		By the first part of Lemma \ref{lm:2-by-2}, we have 
		\begin{equation}\label{ineq:v_12_case1}
		|v_{12}| = |v_{21}| \geq \frac{1}{\sqrt{2}}.
		\end{equation} We construct the following matrices
		\begin{equation*}
		X_1 =  \begin{blockarray}{cccc}
		& r & r & p_2 - 2r\\
		\begin{block}{c[ccc]}
		r & \sigma_1 u_{11} v_{11} I_{r} & \sigma_1 u_{11} v_{21} I_{r} & 0\\
		r & \sigma_1 u_{21} v_{11} I_{r} & \sigma_1 u_{21} v_{21} I_{r} & 0 \\
		p_1 - 2r& 0 & 0 & 0\\
		\end{block}
		\end{blockarray},\\
		\end{equation*}
		\begin{equation*}
		Z_1 = \begin{blockarray}{cccc}
		& r & r & p_2 - 2r\\
		\begin{block}{c[ccc]}
		r & \sigma_2 u_{12} v_{12} I_{r} & \sigma_2 u_{12} v_{22} I_{r} & 0\\
		r & \sigma_2 u_{22} v_{12} I_{r} & \sigma_2 u_{22} v_{22} I_{r} & 0 \\
		p_1 - 2r& 0 & 0 & 0\\
		\end{block}
		\end{blockarray};
		\end{equation*}
		\begin{equation}
		X_2 =  \begin{blockarray}{cccc}
		& r & r & p_2 - 2r\\
		\begin{block}{c[ccc]}
		r & \alpha I_{r} & 0 & 0\\
		r & 0 & 0 & 0 \\
		p_1 - 2r& 0 & 0 & 0\\
		\end{block}
		\end{blockarray},\quad Z_2 =  \begin{blockarray}{cccc}
		& r & r & p_2 - 2r\\
		\begin{block}{c[ccc]}
		r & 0 & z_{12} I_r & 0\\
		r & 0 & \beta I_r & 0 \\
		p_1 - 2r& 0 & 0 & 0\\
		\end{block}
		\end{blockarray}.
		\end{equation}
		We can see $\rank(X_1) = \rank(X_2) = r$,
		$$X_1 + Z_1 = X_2 +Z_2 = \hat X = \begin{blockarray}{cccc}
		& r & r & p_2 - 2r\\
		\begin{block}{c[ccc]}
		r & \alpha I_{r} & z_{12} I_{r} & 0\\
		r & 0 & \beta I_r & 0 \\
		p_1 - 2r& 0 & 0 & 0\\
		\end{block}
		\end{blockarray}. $$
		It is easy to check that both $(X_1, Z_1)$ and $(X_2, Z_2)$ are both in $\mathcal{F}_{r, \alpha, \beta, z_{12}, z_{21}}$. Assume $V_1$, $V_2$ are the first $r$ singular vectors of $X_1$ and $X_2$, respectively. Based on the structure of $X_1, X_2$, we know
		$$V_1 = \begin{blockarray}{cc}
		\begin{block}{c[c]}
		r & v_{11} I_r\\
		r & v_{21} I_r\\
		p_2 - 2r & 0\\
		\end{block}
		\end{blockarray},\quad V_2 = \begin{blockarray}{cc}
		\begin{block}{c[c]}
		r & I_r\\
		p_2 - r & 0\\
		\end{block}
		\end{blockarray}. $$
		Now based on the observations $\hat X$, for any estimator $\tilde{V}$ for the right singular space, we have
		\begin{equation}
		\begin{split}
		& \max\left\{\|\sin\Theta(\tilde{V}, V_1)\|, \|\sin\Theta(\tilde{V}, V_{2})\|\right\}\\
		\geq & \frac{1}{2} \left(\|\sin\Theta(\tilde{V}, V_1)\| + \|\sin\Theta(\tilde{V}, V_2)\|\right)\\
		\overset{\text{Lemma \ref{lm:sin_Theta_distance}}}{\geq} & \frac{1}{2} \left\|\sin\Theta({V}_1, V_2)\right\| \overset{\eqref{ineq:v_12_case1}}{\geq} \frac{1}{2\sqrt{2}},
		\end{split}
		\end{equation}
		which gives us \eqref{ineq:perturbation_lower_spectral_1}.
		
		\item Next, we consider the situation when $\alpha^2 > \beta^2 + z_{12}^2 + z_{21}^2$. We first assume $\alpha z_{12} \geq \beta z_{21}$. Since $\alpha^2 > \beta^2 + z_{12}^2 + z_{21}^2$, we have
		$$\alpha z_{12} \geq (\alpha z_{12} + \beta z_{21})/2, \quad \alpha^2  - \beta^2 \leq 2(\alpha^2 - \beta^2 - z_{12}^2 \wedge z_{21}^2). $$
		Suppose we have the following singular value decomposition for 2-by-2 matrix
		$$
		\begin{bmatrix}
		\alpha & z_{12}\\
		0 & \beta
		\end{bmatrix} = \begin{bmatrix}
		u_{11} & u_{12}\\
		u_{21} & u_{22}
		\end{bmatrix}\cdot 
		\begin{bmatrix}
		\sigma_1 & 0\\
		0 & \sigma_2
		\end{bmatrix}\cdot 
		\begin{bmatrix}
		v_{11} & v_{12}\\
		v_{21} & v_{22}
		\end{bmatrix}^\intercal,$$
		by Lemma \ref{lm:2-by-2}, we have
		\begin{equation}\label{ineq:v_12,v_21}
		\begin{split}
		|v_{12}| = |v_{21}| \geq & \frac{1}{\sqrt{10}} \left(\frac{\alpha z_{12}}{\alpha^2 - \beta^2} \wedge 1\right)\geq  \frac{1}{\sqrt{10}}\left(\frac{(\alpha z_{12} + \beta z_{21})/2}{2(\alpha^2 - \beta^2 - z_{12}^2\wedge z_{21}^2)}\wedge 1\right)\\
		\geq & \frac{1}{4\sqrt{10}}\left(\frac{\alpha z_{12} + \beta z_{21}}{\alpha^2 - \beta^2 - z_{12}^2\wedge z_{21}^2}\wedge 1\right).\\ 
		\end{split}
		\end{equation}
		We construct the following matrices
		\begin{equation*}
		X_1 =  \begin{blockarray}{cccc}
		& r & r & p_2 - 2r\\
		\begin{block}{c[ccc]}
		r & \sigma_1 u_{11} v_{11} I_{r} & \sigma_1 u_{11} v_{21} I_{r} & 0\\
		r & \sigma_1 u_{21} v_{11} I_{r} & \sigma_1 u_{21} v_{21} I_{r} & 0 \\
		p_1 - 2r& 0 & 0 & 0\\
		\end{block}
		\end{blockarray},\\
		\end{equation*}
		\begin{equation*}
		Z_1 = \begin{blockarray}{cccc}
		& r & r & p_2 - 2r\\
		\begin{block}{c[ccc]}
		r & \sigma_2 u_{12} v_{12} I_{r} & \sigma_2 u_{12} v_{22} I_{r} & 0\\
		r & \sigma_2 u_{22} v_{12} I_{r} & \sigma_2 u_{22} v_{22} I_{r} & 0 \\
		p_1 - 2r& 0 & 0 & 0\\
		\end{block}
		\end{blockarray};
		\end{equation*}
		\begin{equation}
		X_2 =  \begin{blockarray}{cccc}
		& r & r & p_2 - 2r\\
		\begin{block}{c[ccc]}
		r & \alpha I_{r} & 0 & 0\\
		r & 0 & 0 & 0 \\
		p_1 - 2r& 0 & 0 & 0\\
		\end{block}
		\end{blockarray},\quad Z_2 =  \begin{blockarray}{cccc}
		& r & r & p_2 - 2r\\
		\begin{block}{c[ccc]}
		r & 0 & z_{12} I_r & 0\\
		r & 0 & \beta I_r & 0 \\
		p_1 - 2r& 0 & 0 & 0\\
		\end{block}
		\end{blockarray}.
		\end{equation}
		We can see $\rank(X_1) = \rank(X_2) = r$,
		$$X_1 + Z_1 = X_2 +Z_2 = \hat X = \begin{blockarray}{cccc}
		& r & r & p_2 - 2r\\
		\begin{block}{c[ccc]}
		r & \alpha I_{r} & z_{12} I_{r} & 0\\
		r & 0 & \beta I_r & 0 \\
		p_1 - 2r& 0 & 0 & 0\\
		\end{block}
		\end{blockarray}. $$
		It is easy to check that both $(X_1, Z_1)$ and $(X_2, Z_2)$ are both in $\mathcal{F}_{r, \alpha, \beta, z_{12}, z_{21}}$. Assume $V_1$, $V_2$ are the first $r$ singular vectors of $X_1$ and $X_2$, respectively. Based on the structure of $X_1, X_2$, we know
		$$V_1 = \begin{blockarray}{cc}
		\begin{block}{c[c]}
		r & v_{11} I_r\\
		r & v_{21} I_r\\
		p_2 - 2r & 0\\
		\end{block}
		\end{blockarray},\quad V_2 = \begin{blockarray}{cc}
		\begin{block}{c[c]}
		r & I_r\\
		p_2 - r & 0\\
		\end{block}
		\end{blockarray}. $$
		Now based on the observations $\hat X$, for any estimator $\tilde{V}$ for the right singular space, we have
		\begin{equation}
		\begin{split}
		& \max\left\{\|\sin\Theta(\tilde{V}, V_1)\|, \|\sin\Theta(\tilde{V}, V_{2})\|\right\}\\
		\geq & \frac{1}{2} \left(\|\sin\Theta(\tilde{V}, V_1)\| + \|\sin\Theta(\tilde{V}, V_2)\|\right)\\
		\overset{\text{Lemma \ref{lm:sin_Theta_distance}}}{\geq} & \frac{1}{2} \left\|\sin\Theta({V}_1, V_2)\right\| \overset{\eqref{ineq:v_12,v_21}}{\geq} \frac{1}{8\sqrt{10}} \left(\frac{\alpha z_{12} + \beta z_{21}}{\alpha^2 - \beta^2 - z_{12}^2\wedge z_{21}^2}\wedge 1 \right),
		\end{split}
		\end{equation}
		which gives us \eqref{ineq:perturbation_lower_spectral}. 
		
		For the situation where $\alpha z_{12} \leq \beta z_{21}$, we consider the singular decomposition of the 2-by-2 matrix
		$$
		\begin{bmatrix}
		\alpha & 0\\
		z_{21} & \beta
		\end{bmatrix} = \begin{bmatrix}
		u_{11} & u_{12}\\
		u_{21} & u_{22}
		\end{bmatrix}\cdot 
		\begin{bmatrix}
		\sigma_1 & 0\\
		0 & \sigma_2
		\end{bmatrix}\cdot 
		\begin{bmatrix}
		v_{11} & v_{12}\\
		v_{21} & v_{22}
		\end{bmatrix}^\intercal,$$
		we can construct $X_1, Z_1, X_2, Z_2$ similarly as the previous case, and derive \eqref{ineq:perturbation_lower_spectral}.
		
		Then we consider the worst-case matrices. Define 
		$$X_3 = \begin{blockarray}{cccc}
		& r & r & p_2 - 2r\\
		\begin{block}{c[ccc]}
		r & \alpha I_{r} & 0 & 0\\
		r & 0 & 0 & 0 \\
		p_1 - 2r& 0 & 0 & 0\\
		\end{block}
		\end{blockarray},\quad Z_3 =  \begin{blockarray}{cccc}
		& r & r & p_2 - 2r\\
		\begin{block}{c[ccc]}
		r & 0 & z_{12} I_r & 0\\
		r & z_{21}I_r & \beta I_r & 0 \\
		p_1 - 2r& 0 & 0 & 0\\
		\end{block}
		\end{blockarray},
		$$ 
		and consider the singular value decomposition of $X_3 + Z_3$, which is essentially the singular value decomposition of the following 2-by-2 matrix,
		$$
		\begin{bmatrix}
		\alpha & z_{12}\\
		z_{21} & \beta
		\end{bmatrix} = \begin{bmatrix}
		u_{11} & u_{12}\\
		u_{21} & u_{22}
		\end{bmatrix}\cdot 
		\begin{bmatrix}
		\sigma_1 & 0\\
		0 & \sigma_2
		\end{bmatrix}\cdot 
		\begin{bmatrix}
		v_{11} & v_{12}\\
		v_{21} & v_{22}
		\end{bmatrix}^\intercal,$$
		by Lemma \ref{lm:2-by-2}, we have
		\begin{equation}
		\begin{split}
		|v_{12}| = |v_{21}| \geq \frac{1}{\sqrt{10}}\left(\frac{\alpha z_{12} + \beta z_{21}}{\alpha^2 - \beta^2 - z_{12}^2\wedge z_{21}^2}\wedge 1\right).\\ 
		\end{split}
		\end{equation}
		If $V_3$ and $\hat{V}_3$ are the leading $r$ right singular vectors of $X_3$ and $X_3+Z_3$ respectively, then
		$$\left\|\sin\Theta(\hat{V}_3, V_3)\right\| = |v_{12}| \geq \frac{1}{\sqrt{10}} \left(\frac{\alpha z_{12} + \beta z_{21}}{\alpha^2 - \beta^2 - z_{12}^2\wedge z_{21}^2}\wedge 1 \right), $$
		and by upper bound result in Theorem \ref{th:perturbation}, 
		$$\left\|\sin\Theta(\hat{V}_3, V_3)\right\| \leq \frac{\alpha z_{12} + \beta z_{21}}{\alpha^2 - \beta^2 - z_{12}^2\wedge z_{21}^2}\wedge 1.$$

		\item For the proof of the Frobenius norm loss lower bound \eqref{ineq:perturbation_lower_Frobenius}, the construction of pairs $(X_1, Z_1), (X_2, Z_2)$ is essentially the same. We just need to construct similar pairs of $(X_1, Z_1)$ and $(X_2, Z_2)$ by replacing $z_{12}, z_{21}$ by $\tilde{z}_{12}/\sqrt{r}, \tilde{z}_{21}/\sqrt{r}$.
		%	\begin{equation*}
		%	X_1 =  \begin{blockarray}{cccc}
		%	& r & r & p_2 - 2r\\
		%	\begin{block}{c[ccc]}
		%	r & \alpha I_{r} & \frac{\tilde{z}_{12}}{\sqrt{r}} I_{r} & 0\\
		%	r & \frac{\tilde{z}_{21}}{\sqrt{r}} I_{r} & \beta I_r & 0 \\
		%	p_1 - 2r& 0 & 0 & 0\\
		%	\end{block}
		%	\end{blockarray}, \quad Z_1 = 0;
		%	\end{equation*}
		%	\begin{equation*}
		%	X_2 =  \begin{blockarray}{cccc}
		%	& r & r & p_2 - 2r\\
		%	\begin{block}{c[ccc]}
		%	r & \alpha I_{r} & 0 & 0\\
		%	r & 0 & \beta I_r & 0 \\
		%	p_1 - 2r& 0 & 0 & 0\\
		%	\end{block}
		%	\end{blockarray},\quad Z_2 =  \begin{blockarray}{cccc}
		%	& r & r & p_2 - 2r\\
		%	\begin{block}{c[ccc]}
		%	r & 0 & \frac{\tilde{z}_{12}}{\sqrt{r}} I_r & 0\\
		%	r & \frac{\tilde{z}_{12}}{\sqrt{r}} I_r & 0 & 0 \\
		%	p_1 - 2r& 0 & 0 & 0\\
		%	\end{block}
		%	\end{blockarray}.
		%	\end{equation*}
		Based on the similar calculation, we can finish the proof of Theorem \ref{th:perturbation_lower}. 
	\end{itemize}
\end{proof}

\subsection{Proof of Corollary \ref{cr:perturbation_single_singular_vector}} By Theorem \ref{th:perturbation}, we have
\begin{equation}\label{ineq:hat_V_i-1}
\left\|\sin\Theta(\hat{V}_{(i-1)}, V_{(i-1)})\right\| \leq \frac{\alpha^{(i-1)}z_{12}^{(i-1)} + \beta^{(i-1)}z_{21}^{(i-1)}}{(\alpha^{(i-1)})^2 - (\beta^{(i-1)})^2 - (z_{12}^{(i-1)})^2\wedge (z_{21}^{(i-1)})^2}\wedge 1,
\end{equation}
\begin{equation}\label{ineq:hat_V_j}
\left\|\sin\Theta(\hat{V}_{(j)}, V_{(j)})\right\| \leq \frac{\alpha^{(j)}z_{12}^{(j)} + \beta^{(j)}z_{21}^{(j)}}{(\alpha^{(j)})^2 - (\beta^{(j)})^2 - (z_{12}^{(j)})^2\wedge (z_{21}^{(j)})^2}\wedge 1.
\end{equation}
Next, based on Lemma \ref{lm:sin_Theta_distance}, we know
\begin{equation*}
\begin{split}
& \left\|\sin\Theta(\hat{V}_{[:, i:j]}, V_{[:, i:j]})\right\| = \left\|V_{[:, i:j]}^{\intercal}\hat{V}_{[:, i:j]\perp}\right\|\\
= & \left\|\begin{bmatrix}
V_{[:, i:j]}^{\intercal}\hat{V}_{[:, 1:(i-1)]}\\
V_{[:, i:j]}^{\intercal}\hat{V}_{[:, (j+1):p_2]}
\end{bmatrix}\right\| \leq \left(\|V_{[:, i:j]}^{\intercal}\hat{V}_{[:, 1:(i-1)]}\|^2 + \|V_{[:, i:j]}^{\intercal}\hat{V}_{[:, (j+1):p_2]}\|^2\right)^{1/2}\\
\leq & \left(\|V_{[:, i:p_2]}^{\intercal}\hat{V}_{[:, 1:(i-1)]}\|^2 + \|V_{[:, 1:j]}^{\intercal}\hat{V}_{[:, (j+1):p_2]}\|^2\right)^{1/2}\\
= & \left(\|V_{[:, 1:(i-1)]\perp}^{\intercal}\hat{V}_{[:, 1:(i-1)]}\|^2 + \|V_{[:, 1:j]}^{\intercal}\hat{V}_{[:, 1:j]\perp}\|^2\right)^{1/2}\\
= & \left(\|\sin\Theta(\hat{V}_{(i-1)}, V_{(i-1)})\|^2 + \|\sin\Theta(\hat{V}_{(j)}, V_{(j)})\|^2\right)^{1/2}
\end{split}
\end{equation*}
Particularly when $j=i$, by Lemma \ref{lm:sin_Theta_distance} we have $\|\sin\Theta(v_i, \hat{v}_i)\| = \sqrt{1 - (\hat{v}_i^\intercal v_i)^2}.$ Combining \eqref{ineq:hat_V_i-1}, \eqref{ineq:hat_V_j}, and the inequality above, we have finished the proof for this corollary.

\subsection{Proofs for Matrix Denoising}

We prove all results in Section \ref{sec.denoising} in this section.

\begin{proof}[Proof of Theorem \ref{th:denoising}] 
	We need some technical results for the proof of this theorem. Specifically, the following lemma relating to random matrix theory plays an important part.
	\begin{Lemma}[Properties related to Random Matrix Theory]\label{lm:denoising}
		Suppose $X \in \mathbb{R}^{p_1 \times p_2}$ is a rank-$r$ matrix with right singular space as $V\in \mathbb{O}_{n, r}$, $Z\in \mathbb{R}^{p_1\times p_2}, Z\overset{iid}{\sim} \mathcal{G}_{\tau}$ is an i.i.d. sub-Gaussian random matrix. $Y = X+Z$. Then there exists constants $C, c$ only depending on $\tau$ such that for any $x>0$,
		\begin{equation}\label{ineq:sigma(YV)}
		\P\left(\sigma_r^2(YV) \geq (\sigma_r^2(X) + p_1)(1-x)\right) \leq C\exp\left\{Cr - c\left(\sigma_r^2(X)+p_1\right)x^2\wedge x\right\}, 
		\end{equation}
		\begin{equation}\label{ineq:sigma(Y)}
		\P\left(\sigma_{r+1}^2(Y) \leq p_1(1+x)\right) \leq C\exp\left\{Cp_2 - cp_1\cdot x^2\wedge x\right\}.
		\end{equation} 
		Moreover, there exists $C_{\rm gap}, C, c$ which only depends on $\tau$, such that whenever $\sigma_r^2(X) \geq C_{\rm gap}p_2$, for any $x>0$ we have
		\begin{equation}\label{ineq:P_YV(YV_perp)}
		\begin{split}
		&\P\left(\left\|\bbP_{YV}YV_{\perp}\right\| \leq x\right)\\
		\geq & 1 - C\exp\left\{Cp_2 - c\min(x^2, \sqrt{\sigma_r^2(X)+p_1}x)\right\} - C\exp\left\{-c(\sigma_r^2(X)+p_1)\right\}.
		\end{split}
		\end{equation}
	\end{Lemma}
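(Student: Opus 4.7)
All three estimates are sub-Gaussian concentration statements obtained by algebraically separating the deterministic signal $U\Sigma$ from the noise $Z$ and then applying standard $\varepsilon$-net and matrix-Bernstein bounds.

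For \eqref{ineq:sigma(YV)}, I would expand
\[(YV)^{\intercal}(YV) = \Sigma^{2} + \Sigma U^{\intercal}ZV + V^{\intercal}Z^{\intercal}U\Sigma + V^{\intercal}Z^{\intercal}ZV\]
using $XV = U\Sigma$. Since $Vw$ is a unit vector for every unit $w\in\mathbb{R}^{r}$, the vector $ZVw\in\mathbb{R}^{p_{1}}$ is sub-Gaussian with unit-variance coordinates, and an $\varepsilon$-net on the unit sphere of $\mathbb{R}^{r}$ combined with a sub-Gaussian Bernstein bound gives $\|V^{\intercal}Z^{\intercal}ZV - p_{1}I_{r}\|\leq C(\sqrt{p_{1}r}+r+t)$ and $\|U^{\intercal}ZV\|\leq C(\sqrt{r}+t)$, both with Bernstein-type tails $C\exp(Cr-c(t^{2}\wedge t))$. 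Super-additivity of $\lambda_{\min}$ on PSD matrices yields $\lambda_{\min}(\Sigma^{2}+V^{\intercal}Z^{\intercal}ZV)\geq \sigma_{r}^{2}(X)+\lambda_{\min}(V^{\intercal}Z^{\intercal}ZV)$, and Weyl applied to the off-diagonal cross terms with $\|\Sigma U^{\intercal}ZV\|\leq C\sigma_{1}(X)(\sqrt{r}+t)$ absorbs into the $(1-x)$ slack. Assertion \eqref{ineq:sigma(Y)} is immediate from Weyl's inequality $\sigma_{r+1}(Y)\leq \sigma_{r+1}(X)+\|Z\|=\|Z\|$ (as $\rank(X)=r$), combined with the standard sub-Gaussian operator-norm tail $\mathbb{P}(\|Z\|^{2}\geq p_{1}(1+x))\leq C\exp(Cp_{2}-cp_{1}(x^{2}\wedge x))$ obtained from $\varepsilon$-nets on the unit spheres of $\mathbb{R}^{p_{1}}$ and $\mathbb{R}^{p_{2}}$; the $p_{2}$ in the exponent is the log-cardinality of the $\mathbb{R}^{p_{2}}$ net.

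The main obstacle is \eqref{ineq:P_YV(YV_perp)}, where the target is a $\sqrt{p_{2}}$-sized (not $\sqrt{p_{1}}$-sized) upper bound. Since $XV_{\perp}=0$ one has $YV_{\perp}=ZV_{\perp}$, and writing $\bbP_{YV}=\Psi\Psi^{\intercal}$ for the left singular vectors $\Psi\in\mathbb{O}_{p_{1},r}$ of $YV$ reduces the claim to $\|\Psi^{\intercal}ZV_{\perp}\|\leq x$. A naive split $\bbP_{YV}ZV_{\perp}=UU^{\intercal}ZV_{\perp}+(\bbP_{YV}-UU^{\intercal})ZV_{\perp}$, combined with the Wedin estimate $\|\bbP_{YV}-UU^{\intercal}\|\leq C\|ZV\|/\sigma_{r}(X)\leq C\sqrt{p_{1}}/\sigma_{r}(X)$, yields a product bound of order $\sqrt{p_{1}(p_{1}+p_{2})}/\sigma_{r}(X)$; even under the gap $\sigma_{r}^{2}(X)\geq C_{\mathrm{gap}}p_{2}$ this is only $\sqrt{p_{1}}$ and therefore too loose. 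The sharp bound would exploit that $\Psi$ is a function of $ZV$ alone while the orthogonality $V^{\intercal}V_{\perp}=0$ makes $ZV$ and $ZV_{\perp}$ independent in the Gaussian case (with a Hanson--Wright or decoupling step playing the same role for $Z\sim\mathcal{G}_{\tau}$): conditionally on $\Psi$, $\Psi^{\intercal}ZV_{\perp}$ is a sub-Gaussian $r\times(p_{2}-r)$ matrix and an $\varepsilon$-net gives $\|\Psi^{\intercal}ZV_{\perp}\|\leq C(\sqrt{r}+\sqrt{p_{2}-r}+t)$, exactly the $\sqrt{p_{2}}$ scale. The mixed sub-Gaussian/sub-exponential exponent $\min(x^{2},\sqrt{\sigma_{r}^{2}+p_{1}}\,x)$ then arises after rescaling through $\sigma_{r}(YV)\geq c\sqrt{\sigma_{r}^{2}(X)+p_{1}}$, and the second exponential $\exp(-c(\sigma_{r}^{2}+p_{1}))$ is the failure probability of the part-1 event that licenses this rescaling.
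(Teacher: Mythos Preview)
Your treatment of \eqref{ineq:sigma(Y)} via Weyl and the operator-norm tail for $\|Z\|$ matches the paper's (the paper restricts to $ZV_{\perp}$, which is cosmetic). The real gaps are in \eqref{ineq:sigma(YV)} and \eqref{ineq:P_YV(YV_perp)}, and they share a cause. In part~1 you bound the cross term by $\|\Sigma U^{\intercal}ZV\|\leq C\sigma_{1}(X)(\sqrt{r}+t)$, but the lemma's constants must depend only on $\tau$, while $\sigma_{1}(X)/\sigma_{r}(X)$ is unconstrained. Absorbing this into the $(\sigma_r^2(X)+p_1)x$ slack forces $t\lesssim (\sigma_r^2+p_1)x/\sigma_1(X)$, so the tail you obtain is $\exp\{-c(\sigma_r^2+p_1)^{2}x^{2}/\sigma_1^{2}(X)\}$ rather than the claimed $\exp\{-c(\sigma_r^2+p_1)x^{2}\}$; these differ by an arbitrary factor whenever $\sigma_1^{2}(X)\gg\sigma_r^{2}(X)+p_1$. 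In part~3, treating $\Psi^{\intercal}ZV_{\perp}$ as a fresh $r\times(p_{2}-r)$ sub-Gaussian matrix conditional on $\Psi$ requires $ZV$ and $ZV_{\perp}$ to be independent; for general $\mathcal{G}_{\tau}$ entries they are only uncorrelated, and ``decoupling'' does not restore conditional independence of one orthogonal projection of $Z$ from another.

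The paper fixes both gaps with one device you are missing: the diagonal normalization $M=\diag\bigl((\sigma_i^{2}(X)+p_1)^{-1/2}\bigr)$, chosen so that $\E\,(YVM)^{\intercal}(YVM)=I_r$ and, crucially, $\|XVM\|\leq 1$. For \eqref{ineq:sigma(YV)} one reduces to $\|M^{\intercal}(YV)^{\intercal}(YV)M-I_r\|\leq x$; for a fixed unit $u$ the cross term becomes $(XVMu)^{\intercal}Z(VMu)$ with $\|XVMu\|\leq 1$ and $\|VMu\|\leq(\sigma_r^{2}+p_1)^{-1/2}$, so $\sigma_{1}(X)$ never enters and Hanson--Wright plus an $\varepsilon$-net give the stated exponent. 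For \eqref{ineq:P_YV(YV_perp)} the paper writes $\|\bbP_{YV}YV_{\perp}\|\leq \sigma_{\min}^{-1}(YVM)\,\|M^{\intercal}V^{\intercal}Y^{\intercal}YV_{\perp}\|$ and expands the last factor as $M^{\intercal}V^{\intercal}X^{\intercal}ZV_{\perp}+M^{\intercal}V^{\intercal}Z^{\intercal}ZV_{\perp}$: the first summand is linear in $Z$ with coefficient norm at most $1$ (again via $\|XVM\|\leq 1$), and the second is a mean-zero quadratic handled directly by Hanson--Wright with $\|D\|_F^{2}\leq p_1/(\sigma_r^{2}+p_1)\leq 1$ and $\|D\|\leq(\sigma_r^{2}+p_1)^{-1/2}$, which produces exactly the mixed exponent $\min(x^{2},\sqrt{\sigma_r^{2}+p_1}\,x)$ without any conditioning or decoupling.
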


	The following lemma provides an upper bound for matrix spectral norm based on an $\varepsilon$-net argument.
	\begin{Lemma}[$\varepsilon$-net Argument for Unit Ball]\label{lm:epsilon_net}
		For any $p\geq1$, denote $\mathbb{B}^{p} = \{x: x\in \mathbb{R}^{p}, \|x\|_2\leq 1\}$ as the $p$-dimensional unit ball. Suppose $K\in \mathbb{R}^{p_1\times p_2}$ is a random matrix. Then we have for $t > 0$,
		\begin{equation}
		\P\left(\left\|K\right\| \geq 3t \right) \leq 7^{p_1+p_2}\cdot \max_{u \in \mathbb{B}^{p_1}, v\in \mathbb{B}^{p_2}} \P\left(\left|u^{\intercal}K v\right| \geq t\right).
		\end{equation}
	\end{Lemma}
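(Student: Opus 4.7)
The plan is to prove this via the standard $\varepsilon$-net argument with $\varepsilon = 1/3$. First I would use the variational characterization of the spectral norm
\[
\|K\| = \sup_{u \in \mathbb{B}^{p_1},\, v \in \mathbb{B}^{p_2}} |u^{\intercal} K v|,
\]
so that the problem reduces to controlling the bilinear form $u^{\intercal}Kv$ simultaneously over the two unit balls.

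Next I would invoke the classical volume/packing argument to construct $(1/3)$-nets $\mathcal{N}_1 \subset \mathbb{B}^{p_1}$ and $\mathcal{N}_2 \subset \mathbb{B}^{p_2}$ of sizes $|\mathcal{N}_i| \leq (1 + 2/\varepsilon)^{p_i} = 7^{p_i}$, so that every $u \in \mathbb{B}^{p_1}$ is within Euclidean distance $1/3$ of some $\tilde u \in \mathcal{N}_1$, and similarly for $v$. The approximation step is the key algebraic identity: for any $u \in \mathbb{B}^{p_1}$, $v \in \mathbb{B}^{p_2}$ with approximants $\tilde u, \tilde v$,
\[
u^{\intercal}Kv = \tilde u^{\intercal}K\tilde v + (u-\tilde u)^{\intercal}Kv + \tilde u^{\intercal}K(v-\tilde v),
\]
which, combined with $\|u-\tilde u\|_2, \|v-\tilde v\|_2 \le 1/3$ and $\|\tilde u\|_2 \le 1$, yields
\[
|u^{\intercal}Kv| \le \max_{\tilde u \in \mathcal{N}_1,\, \tilde v \in \mathcal{N}_2}|\tilde u^{\intercal}K\tilde v| + \tfrac{2}{3}\|K\|.
\]
Taking the supremum over $u,v$ on the left gives $\|K\| \le 3\max_{u\in\mathcal{N}_1,\,v\in\mathcal{N}_2}|u^{\intercal}Kv|$.

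Finally, I would finish with a union bound: on the event $\{\|K\|\geq 3t\}$, there must exist some pair $(u,v)\in\mathcal{N}_1\times\mathcal{N}_2$ with $|u^{\intercal}Kv|\geq t$, so
\[
\P(\|K\|\geq 3t) \leq |\mathcal{N}_1|\cdot |\mathcal{N}_2| \cdot \max_{u\in\mathcal{N}_1,\,v\in\mathcal{N}_2}\P(|u^{\intercal}Kv|\geq t) \leq 7^{p_1+p_2}\max_{u\in\mathbb{B}^{p_1},\,v\in\mathbb{B}^{p_2}}\P(|u^{\intercal}Kv|\geq t).
\]
There is no real obstacle here; the lemma is essentially a textbook application of the covering-number bound for the unit ball combined with the $1/(1-2\varepsilon)$ slack factor in the $\varepsilon$-net approximation, with $\varepsilon=1/3$ chosen to balance the final constants into the clean form $(7^{p_1+p_2}, 3t)$.
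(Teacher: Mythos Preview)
Your proposal is correct and matches the paper's proof essentially line for line: construct $(1/3)$-nets of the two unit balls with cardinality at most $7^{p_i}$, use the same two-term telescoping of $u^{\intercal}Kv$ to get $\|K\|\le 3\max_{\text{net}}|\tilde u^{\intercal}K\tilde v|$, and finish by a union bound over the $7^{p_1+p_2}$ net pairs.
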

	
	\ \par
	
	Now we start to prove Theorem \ref{th:denoising}. We only need to focus on the losses of $\hat{V}$ since the results for $\hat{U}$ are symmetric. Besides, we only need to prove the spectral norm loss, as $\sin\Theta(\hat{V}, V)$ is a $r\times r$ matrix $\|\sin\Theta(V, \hat{V})\|_F^2\leq r\|\sin\Theta(V, \hat{V})\|^2$. Throughout the proof, we use $C$ and $c$ to represent generic ``large" and ``small" constants, respectively. These constants $C, c$ are uniform and only relying on $\tau$, while the actual values may vary in different formulas. 
	
	Next, we focus on the scenario that $\sigma_r^2(X)\geq C_{\rm gap}((p_1p_2)^{1\over 2} + p_2)$ for some large constant $C_{\rm gap}>0$ only relying on $\tau$. The other case will be considered later. By Lemma \ref{lm:denoising}, there exists constants $C, c$ only depending on $\tau$ such that
	\begin{equation*}
	\begin{split}
	& \P\left(\sigma_r^2(YV) \leq \sigma_r^2(X) + p_1 - \frac{\sigma_r^2(X)}{3}\right)\\
	\leq & C\exp\left\{Cr - c \min\left(\sigma_r^2(X), \frac{\sigma_r^4(X)}{\sigma_r^2(X) + p_1}\right)\right\}, 
	\end{split}
	\end{equation*}
	\begin{equation*}
	\begin{split}
	\P\left(\sigma_{r+1}^2(Y) \geq p_1 + \frac{1}{3}\sigma_r^2(X)\right) \leq C\exp\left\{Cp_2 - c\min\left(\sigma_r^2(X), \frac{\sigma_r^4(X)}{p_1}\right)\right\},
	\end{split}
	\end{equation*}
	\begin{equation}\label{ineq:tail_probability_PYVYV_perp}
	\begin{split}
	& \P\left\{\left\|\bbP_{YV}YV_{\perp}\right\|\geq x\right\}\\ 
	\leq & C\exp\left\{Cp_2 - c\min\left(x^2, x\sqrt{\sigma_r^2(X)+p_1}\right)\right\} + C\exp\left\{-c(\sigma_r^2(X)+p_1)\right\}.
	\end{split}
	\end{equation}
	When $C_{\rm gap}$ is large enough, it holds that $\sigma_r^4(X)/(\sigma_r^2(X)+p_1) \geq Cp_2$. 
	Then 
	$$ c\min\left(\frac{\sigma_r^4(X)}{\sigma_r^2(X) + p_1}, \sigma_r^2(X)\right) - Cr \geq \frac{\sigma_r^4(X)}{\sigma_r^2(X)+p_1} - Cr \geq \frac{c}{2}\frac{\sigma_r^4(X)}{\sigma_r^2(X)+p_1}; $$
	\begin{equation*}
	\begin{split}
	c\min\left(\sigma_r^2(X), \frac{\sigma_r^4(X)}{p_1}\right) - Cp_2 \geq c\frac{\sigma_r^4(X)}{\sigma_r^2(X)+p_1} - Cp_2 \geq \frac{c}{2} \frac{\sigma_r^4(x)}{\sigma_r^2(X)+p_1}.
	\end{split}
	\end{equation*}
	\begin{equation*}
	\begin{split}
	& c\min(x^2, \sqrt{\sigma_r^2(X)+p_1}x) - Cp_2 = c(\sigma_r^2(X) + p_1) - Cp_2 \geq \frac{c}{2}\left(\sigma_r^2(X)+p_1\right),\\
	\geq & \frac{c}{2}\frac{\sigma_r^4(X)}{\sigma_r^2(X)+p_1}\quad  \text{if $x=\sqrt{\sigma_r^2(X)+p_1}$.}
	\end{split}
	\end{equation*}
	
	To sum up, if we denote the event $Q$ as
	\begin{equation*}
	\begin{split}
	Q = \Big\{ & \sigma_r^2(YV) \geq \sigma_r^2(X) + p_1 - \frac{\sigma_r^2(X)}{3},
	\sigma_{r+1}^2(Y) \leq p_1 + \frac{1}{3}\sigma_r^2(X),\\
	& \left\|\bbP_{YV}YV_{\perp}\right\|\leq \sqrt{\sigma_r^2(X)+p_1}\Big\},
	\end{split}
	\end{equation*} 
	when $\sigma_r^2(X)\geq C_{\rm gap}((p_1p_2)^{1\over 2} + p_2)$ for some large constant $C_{\rm gap}>0$,
	\begin{equation}\label{ineq:th_denoising_Q}
	\begin{split}
	& \P\left(Q^c\right)\leq C\exp\left\{-c\frac{\sigma_r^4(X)}{\sigma_r^2(X)+p_1}\right\}.
	\end{split}
	\end{equation}
	Under event $Q$, we can apply Proposition \ref{lm:perturbation} and obtain
	\begin{equation*}
	\|\sin\Theta(\hat{V}, V)\|^2 \leq \frac{\sigma_r^2(YV)\|\bbP_{YV}YV_{\perp}\|^2}{(\sigma_r^2(YV) - \sigma_{r+1}(Y))^2} \leq C\frac{(\sigma_r^2(X)+p_1)\|\bbP_{YV}YV_{\perp}\|^2}{\sigma_r^4(X)}. 
	\end{equation*}
	Here we used the fact that $\frac{x^2}{(x^2-y^2)^2}$ is a decreasing function of $x$ and increasing function of $y$ when $x>y\geq 0$. 
	
	Next we shall note that $\|\sin\Theta(\hat{V}, V)\| \leq 1$ for any $\hat{V}, V \in \mathbb{O}_{p_2, r}$. Therefore,
	\begin{equation}\label{ineq:th_denoising_risk_decompose}
	\begin{split}
	& \E\left\|\sin\Theta(\hat{V}, V)\right\|^2 = \E\left\|\sin\Theta(\hat{V}, V)\right\|^21_Q + \E\left\|\sin\Theta(\hat{V}, V)\right\|^21_{Q^c}\\
	\leq & \frac{C(\sigma_r^2(X)+p_1)}{\sigma_r^4(X)}\E\|\bbP_{YV}YV_{\perp}\|^21_Q + \P(Q^c).
	\end{split}
	\end{equation}
	By basic property of exponential function, 
	\begin{equation}\label{ineq:th_denoising_term1}
	\P(Q^c) \overset{\eqref{ineq:th_denoising_Q}}{\leq} \exp\left\{-c\frac{\sigma_r^4(X)}{\sigma_r^2(X)+p_1}\right\} \leq C\frac{\sigma_r^2(X)+p_1}{\sigma_r^4(X)} \leq C\frac{p_2(\sigma_r^2(X)+p_1)}{\sigma_r^4(X)}.
	\end{equation}
	It remains to consider $\E\left\|\bbP_{YV}YV_{\perp}\right\|^21_Q$. Denote $T = \left\|\bbP_{YV}YV_{\perp}\right\|$. Applying Lemma \ref{lm:denoising} again, we have for some constant $C_x$ to be determined a little while later that
	\begin{equation*}
	\begin{split}
	\E T^21_{Q} \leq & \E T^2 1_{\{T^2 \leq \sigma_r^2(X)+p_1\}} = \int_{0}^\infty \P\left(T^2 1_{\{T^2 \leq \sigma_r^2(X)+p_1\}}\geq t\right)dt \\
	\leq & C_xp_2 + \int_{C_xp_2}^{\sigma_r^2(X)+p_1} \P\left(T^21_{\{T^2 \leq \sigma_r^2(X)+p_1\}}\geq t\right)dt\\
	\overset{\eqref{ineq:tail_probability_PYVYV_perp}}{\leq} & C_xp_2 + \int_{C_xp_2}^{\sigma_r^2(X)+p_1} C\left(\exp\left\{Cp_2 - ct\right\}dt + \exp\left\{-c(\sigma_r^2(X)+p_1)\right\}\right)dt\\
	\leq & C_xp_2 + C(\sigma_r^2(X)+p_1)\exp\left\{-c(\sigma_r^2(X)+p_1)\right\}\\
	& + C\exp(Cp_2)\cdot \exp\left(-cC_xp_2\right)\frac{1}{c}\\
	\leq & C_xp_2 + C + \frac{C}{c}\exp((C - cC_x)p_2).
	\end{split}
	\end{equation*}
	As we could see we can choose $C_x$ large enough, but only relying on other constants $C,c$ in the inequalities above, to ensure that
	\begin{equation}\label{ineq:th_denoising_term2}
	\E T^21_Q \leq Cp_2
	\end{equation}
	for large constant $C>0$ as long as $p_2 \geq 1$. Now, combining \eqref{ineq:th_denoising_risk_decompose}, \eqref{ineq:th_denoising_term1}, \eqref{ineq:th_denoising_term2} as well as the trivial upper bound 1, we obtain
	$$\E\left\|\sin\Theta(\hat{V}, V)\right\|^2 \leq \frac{Cp_2(\sigma_r^2(X)+p_1)}{\sigma_r^4(X)} \wedge 1. $$
	as long as $\sigma_r^2(X)\geq C_{\rm gap}\left((p_1p_2)^{1\over 2} + p_2\right)$ for some large enough $C_{\rm gap}$. 
	
	Finally when $\sigma_r^2(X)<C_{\rm gap}\left((p_1p_2)^{1\over 2}+p_2\right)$, we have 
	\begin{equation}
	\begin{split}
	& \frac{p_2(\sigma_r^2(X)+p_1)}{\sigma_r^4(X)} \geq \frac{p_2(p_1+C_{\rm gap}(p_1p_2)^{1\over 2}+C_{\rm gap}p_2)}{C_{\rm gap}^2(p_1p_2 + p_2^2 + 2p_1^{1\over 2}p_2^{3/2})}\\
	= & \frac{p_1 + C_{\rm gap}(p_1p_2)^{1\over 2} + C_{\rm gap}p_2}{C_{\rm gap}^2\left(p_1+2(p_1p_2)^{1\over 2} + p_2\right)} \geq \min\left(1, \frac{1}{C_{\rm gap}}\right),
	\end{split}
	\end{equation}
	then
	$$\E\left\|\sin\theta(\hat{V}, V)\right\|^2 \leq 1 \leq \frac{Cp_2(\sigma_r^2(X)+p_1)}{\sigma_r^4(X)}\wedge 1$$
	when $C \geq \min^{-1}\left(1, 1/C_{\rm gap}\right)$. In summary, no matter what value $\sigma_r^2(X)$ takes, we always have
	$$\E\left\|\sin\Theta(\hat{V}, V)\right\|^2 \leq \frac{Cp_2(\sigma_r^2(X)+p_1)}{\sigma_r^4(X)}\wedge 1. $$

\end{proof}

\ \par

\begin{proof}[Proof of Theorem \ref{th:denoising_lower}.] Since $\|\sin\Theta(\hat{V}, V)\| \geq \frac{1}{\sqrt{r}}\|\sin\Theta(\hat{V}, V)\|_F$, we only need to prove the Frobenius norm lower bound. Particularly we focus on the case with Gaussian noise with mean 0 and variance 1, i.e. $Z_{ij}\overset{iid}{\sim}N(0, 1)$. The technical tool we use to develop such lower bound include the generalized Fano's Lemma.
	
	One interesting aspect of this singular space estimation problem is that, multiple sampling distribution $P_X$ correspond to one target parameter $V$. In order to proceed, we select one ``representative", either a distribution $P_V$ or a mixture distribution $\bar{P}_{V, t}$ for each $V$, and consider the estimation with samples from ``representative" distribution. In order to bridge the representative estimation and the original estimation problem, we introduce the following lower bound lemma. 
	\begin{Lemma}\label{lm:lower_bound_transformation}
		Let $\{P_{\theta}: \theta \in \Theta\}$ be a set of probability measures in Euclidean space $\mathbb{R}^p$, let $T: \Theta \to \Phi$ be a function which maps $\theta$ into another metric space $\phi \in (\Phi, d)$. For each $\phi \in \Phi$, denote $co(\mathcal{P}_{\phi})$ as the convex hull of the set of probability measures $\mathcal{P}_{\phi} = \{P_\theta: T(\theta) = \phi\}$. If we choose a representative $P_{\phi}$ in each $co(P_{\phi})$, then for any estimator $\hat{\phi}$ based on the sample generated from $P_{\theta}$ with $\phi = T(\theta)$, and any estimator $\hat{\phi}'$ based on the samples generated from $P_{\theta_{\phi}}$, we have
		\begin{equation}
		\inf_{\hat{\phi}}\sup_{\substack{\theta \in \Theta}} \E_{P_{\theta}}[d^2(T(\theta), \hat{\phi})] \geq 	\inf_{\hat{\phi}}\sup_{\phi \in \Phi} \E_{P_{\phi}}[d^2(\phi, \hat{\phi})]. 
		\end{equation}
	\end{Lemma}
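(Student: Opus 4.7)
The plan is to fix an arbitrary measurable estimator $\hat{\phi}$ (viewed as a function of the observed sample) and bound its risk in the $P_\phi$-model by its risk in the $P_\theta$-model pointwise in $\phi$. First I would fix $\phi\in\Phi$ and invoke the hypothesis that the representative $P_\phi$ lies in $\mathrm{co}(\mathcal{P}_\phi)$: by definition this means $P_\phi=\sum_{j} \lambda_j P_{\theta_j}$ for some finite collection $\theta_1,\ldots,\theta_k$ with $T(\theta_j)=\phi$ and weights $\lambda_j\geq 0$ summing to one (or, if $\mathrm{co}(\mathcal{P}_\phi)$ is taken in the closed/measure-theoretic sense, $P_\phi=\int P_\theta\,d\mu(\theta)$ for some probability measure $\mu$ supported on the level set $T^{-1}(\phi)$).

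By linearity of expectation (and Fubini in the integral case, which is valid because $d^2(\phi,\hat{\phi})\geq 0$), one obtains
\begin{equation*}
\E_{P_\phi}\bigl[d^2(\phi,\hat{\phi})\bigr]
=\sum_{j} \lambda_j \E_{P_{\theta_j}}\bigl[d^2(T(\theta_j),\hat{\phi})\bigr]
\leq \sup_{\theta\in\Theta}\E_{P_\theta}\bigl[d^2(T(\theta),\hat{\phi})\bigr],
\end{equation*}
where the equality uses $\phi=T(\theta_j)$ for every $j$ in the mixture, and the inequality replaces each term by the worst case over all of $\Theta$. Taking the supremum over $\phi\in\Phi$ on the left then gives
\begin{equation*}
\sup_{\phi\in\Phi}\E_{P_\phi}\bigl[d^2(\phi,\hat{\phi})\bigr]\leq \sup_{\theta\in\Theta}\E_{P_\theta}\bigl[d^2(T(\theta),\hat{\phi})\bigr],
\end{equation*}
and taking the infimum over $\hat{\phi}$ on both sides yields the stated inequality.

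There is no genuinely hard step: the lemma is a bookkeeping device that formalises the intuition that mixing source distributions within a single level set of $T$ cannot make the estimation of $\phi=T(\theta)$ any harder, since the same decision rule $\hat{\phi}$ serves both models. The only point requiring a moment of care is the interpretation of $\mathrm{co}(\mathcal{P}_\phi)$, finite convex combinations versus its closed hull; both cases are handled uniformly by the linearity/Fubini argument above. This reduction is then applied in the lower-bound proof by choosing, for each $V$, a single representative mixing distribution over all low-rank $X$ with right singular space $V$, so that a standard Fano/Assouad argument against the representatives yields a lower bound for the original minimax risk.
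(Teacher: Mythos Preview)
Your proposal is correct and follows essentially the same approach as the paper: both write the representative $P_\phi$ as a mixture over $\{P_\theta:T(\theta)=\phi\}$, use linearity of expectation (Fubini in the integral case) to express $\E_{P_\phi}[d^2(\phi,\hat\phi)]$ as an average of $\E_{P_\theta}[d^2(T(\theta),\hat\phi)]$, bound this average by the supremum over $\theta\in\Theta$, and then take $\sup_\phi$ and $\inf_{\hat\phi}$. Your treatment is in fact slightly more careful, explicitly distinguishing the finite-convex-combination and closed-hull interpretations of $\mathrm{co}(\mathcal{P}_\phi)$ and noting that nonnegativity of the integrand justifies Fubini.
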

	We will prove this theorem separately in two cases according to $t$: $t^2\leq p_1/4 $ or $t^2 > p_1/4$. 
	\begin{itemize}
		\item First we consider when $t^2 \leq p_1/4$. For each $V\in \mathbb{O}_{p_2, r}$, we define the following class of density $P_Y$, where $Y = X+Z$ and the right singular space of $X$ is $V$.
		\begin{equation}\label{eq:mathcal_P_V,t}
		\hskip-5mm \mathcal{P}_{V, t} = \left\{P_{Y}: \begin{array}{ll}
		Y\in \mathbb{R}^{p_1\times p_2}, Y= X+Z,  X \text{ is fixed},  Z\overset{iid}{\sim} N(0, 1),\\ X\in \mathcal{F}_{r, t}, \text{the right singular vectors of $X$ is } VO, O\in \mathbb{O}_{r}
		\end{array} \right\}.
		\end{equation}
		For each $V\in \mathbb{O}_{p_2, r}$, we construct the following Gaussian mixture measure $\bar{P}_{V, t} \in \mathbb{R}^{p_1\times p_2}$.
		\begin{equation}\label{eq:bar P_Vt}
		\begin{split}
		\bar{P}_{V, t}(Y) = C_{V, t}\int_{W\in \mathbb{R}^{p_1\times r}: \sigma_{\min}(W) \geq 1/2} &\frac{1}{(2\pi)^{p_1p_2/2}}\exp(-\|Y - 2t WV^{\intercal}\|_F^2/2 )\\
		& \cdot (\frac{p_1}{2\pi})^{p_1r/2}\exp(-p_1\|W\|_F^2/2) dW.
		\end{split}
		\end{equation}
		Here $C_{V, t}$ is the constant which normalizes the integral and makes $\bar{P}_{V, t}$ a valid probability density. To be specific
		$$C_{V, t}^{-1} = \int_Y P_{V, t}(Y)dY = \P\left(\sigma_{\min}(W)\geq 1/2\Big| W\in \mathbb{R}^{p_1\times r}, W\overset{iid}{\sim}N(0, 1/p_1)\right). $$
		Moreover, since $2tWV^{\intercal}$ is rank-$r$ with the least singular value no less than $t$ in the event that $\sigma_{\min}(W) \geq 1/2$, $\bar{P}_{V, t}$ is a mixture density of $\mathcal{P}_{V, t}$, 
		i.e. $\bar{P}_{V, t} \in co (\mathcal{P}_{V, t})$. 
		
		The following lemma, whose proof is provided in the supplementary materials, gives a upper bound for the KL divergence between  $\bar{P}_{V, t}$ and $\bar{P}_{V', t}$.
		\begin{Lemma}\label{lm:D_KL_upper}
			Under the assumption of Theorem \ref{th:denoising_lower} and $t^2 \leq p_1/4$, for any $V, V' \in \mathbb{O}_{p_2, r}$, we have
			\begin{equation}\label{eq:D_KL_upper}
			D(\bar P_{V, t}|| \bar P_{V', t}) \leq \frac{8t^4}{4t^2+p_1}\|\sin\Theta(V, V')\|_F^2 + C_{KL},
			\end{equation}
			where $C_{KL}$ is some uniform constant.
		\end{Lemma}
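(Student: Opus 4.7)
The plan is to compute $D(\bar P_{V,t}\|\bar P_{V',t})$ essentially in closed form by reducing the Gaussian-mixture ratio to an explicit quadratic piece (which carries the $\sin\Theta$ dependence) plus a bounded ``truncation correction''. First I would observe that $C_{V,t}$ is independent of $V$: both the prior $W\sim N(0,p_1^{-1}I)$ and the event $E=\{\sigma_{\min}(W)\ge 1/2\}$ are rotationally invariant and involve no $V$, so $C_{V,t}\equiv 1/\P(E)$. The key calculation then evaluates the $W$-integral defining $\bar P_{V,t}$ by completing the square: using $\|WV^\intercal\|_F^2=\|W\|_F^2$ and $\langle Y,WV^\intercal\rangle=\langle YV,W\rangle$, then translating $W\mapsto W+\tfrac{2tYV}{p_1+4t^2}$, the integral collapses to the Gaussian mass of $E$ under a shifted and rescaled law, yielding
\[
\bar P_{V,t}(Y)=K\,\phi(Y)\,\exp\!\Bigl(\tfrac{2t^2\|YV\|_F^2}{p_1+4t^2}\Bigr)\,q_V(Y),
\]
where $K$ is independent of $V$, $\phi$ is the standard Gaussian density on $\mathbb{R}^{p_1\times p_2}$, and $q_V(Y)\in(0,1]$ equals $\P(\sigma_{\min}(\xi)\ge 1/2)$ for $\xi\sim N\!\bigl(\tfrac{2tYV}{p_1+4t^2},(p_1+4t^2)^{-1}I\bigr)\in\mathbb{R}^{p_1\times r}$. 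Subtracting the analogous identity for $V'$ gives
\[
\log\tfrac{\bar P_{V,t}(Y)}{\bar P_{V',t}(Y)}=\tfrac{2t^2}{p_1+4t^2}\bigl(\|YV\|_F^2-\|YV'\|_F^2\bigr)+\log q_V(Y)-\log q_{V'}(Y).
\]

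For the quadratic piece, I would write $Y=2tWV^\intercal+Z$ under $\bar P_V$ with $W$ drawn from the truncated prior and $Z$ independent standard Gaussian. The cross terms vanish because $Z\perp W$ is centered, and rotational invariance of the truncated $W$-law under right-multiplication by $\mathbb{O}_r$ (together with invariance of $E$) forces $\E[W^\intercal W\mid E]=c\,I_r$ for a scalar $c\ge 1$. This yields $\E_{\bar P_V}[\|YV\|_F^2-\|YV'\|_F^2]=4t^2 c\,\|\sin\Theta(V,V')\|_F^2$. Since $r\le p_1/16$, Davidson--Szarek concentration gives $\P(E^c)\le C\exp(-c'p_1)$, and Cauchy--Schwarz on $\E[\|W\|_F^2\mathbf{1}_{E^c}]$ forces $c-1=O(\exp(-c'p_1))$; combined with $t^2\le p_1/4$ this bounds the quadratic piece by $\tfrac{16t^4}{2(p_1+4t^2)}\|\sin\Theta(V,V')\|_F^2+C_1$ for an absolute constant $C_1$.

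The hard part will be bounding the $\log q$-remainder by an absolute constant uniformly in $V,V'$. The easy half is $\E_{\bar P_V}[\log q_V]$: the identity $q_V(Y)=\E_{Q_V}[\mathbf{1}_E\mid Y]$ (where $Q_V$ denotes the marginal of $Y$ in the un-truncated model) combined with the elementary bound $x\log x\ge -1/e$ gives $|\E_{\bar P_V}[\log q_V]|=|\E_{Q_V}[q_V\log q_V]|/\P(E)\le 1/(e\,\P(E))$. The asymmetric term $\E_{\bar P_V}[\log q_{V'}]$ is the delicate one: the expectation is under the $V$-model while $q_{V'}$ is a posterior probability for $V'$, so no posterior-identity applies directly. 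My strategy is to show that under $\bar P_V$ the mean $\tfrac{2tYV'}{p_1+4t^2}$ of the Gaussian $\xi$ lies in a spectral-norm ball of controlled radius with overwhelming probability, so that an $\epsilon$-net combined with non-central $\chi^2$ lower-tail bounds for $\|(A+\sigma G)v\|$ forces $\sigma_{\min}(\xi)\ge 1/2$ with constant probability. This gives $q_{V'}(Y)\ge \delta_0$ on a set of $\bar P_V$-measure at least $1-\exp(-cp_1)$, where $|\log q_{V'}|\le|\log\delta_0|$; on the complement, a crude polynomial-in-$\|Y\|_F$ bound for $-\log q_{V'}$ against exponentially decaying tails contributes only a constant. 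Summing all pieces yields the lemma with $C_{KL}=C_1+C_2$ uniform.
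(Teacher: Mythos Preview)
Your decomposition $\bar P_{V,t}(Y)=K\,\phi(Y)\exp\bigl(\tfrac{2t^2\|YV\|_F^2}{p_1+4t^2}\bigr)q_V(Y)$ is exactly the backbone of the paper's proof: the first two factors constitute the un-truncated marginal $\tilde P_{V,t}$ (which the paper writes out and recognises as a product Gaussian with row covariance $I_{p_2}+\tfrac{4t^2}{p_1}VV^\intercal$), and $q_V(Y)=\bar P_{V,t}(Y)\P(E)/\tilde P_{V,t}(Y)$ is precisely the ratio the paper studies. The quadratic piece of your log-ratio is $\log(\tilde P_{V,t}/\tilde P_{V',t})$; the paper integrates it against $\tilde P_{V,t}$ (getting the closed-form Gaussian KL $\tfrac{16t^4}{2(4t^2+p_1)}\|\sin\Theta\|_F^2$) and then corrects for the change of measure $\bar P_{V,t}\to\tilde P_{V,t}$, while you integrate directly against $\bar P_{V,t}$ and correct via $\E[W^\intercal W\mid E]=cI_r$ with $|c-1|$ exponentially small. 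These two bookkeeping choices are equivalent.

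Where you diverge from the paper is the ``hard half'' $\E_{\bar P_V}[-\log q_{V'}]$, and here you are working much harder than necessary. The paper's key observation is that $q_{V'}(Y)\ge 1-\exp(-cp_1)$ \emph{uniformly in $Y$}: writing $\xi=A+\sigma G$ with $A=\tfrac{2tYV'}{p_1+4t^2}$ of rank at most $r$, choose $Q\in\mathbb{O}_{p_1,p_1-r}$ orthogonal to the column space of $A$; then $Q^\intercal\xi=\sigma\,Q^\intercal G$ is a centered $(p_1-r)\times r$ Gaussian with variance $\ge 1/(2p_1)$ (using $t^2\le p_1/4$), and $\sigma_{\min}(\xi)\ge\sigma_r(Q^\intercal\xi)$, so Davidson--Szarek with $r\le p_1/16$ gives $\sigma_{\min}(\xi)\ge 1/2$ with probability $\ge 1-\exp(-cp_1)$, independently of $A$ and hence of $Y$. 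This single line makes $-\log q_{V'}(Y)\le 2\exp(-cp_1)$ pointwise, so no high-probability/complement split is needed at all. Your proposed route---controlling the mean, $\varepsilon$-net plus non-central $\chi^2$, then a polynomial tail bound on the complement---can be made to work, but note that an $\varepsilon$-net lower bound on $\sigma_{\min}(\xi)$ requires an accompanying upper bound on $\|\xi\|$, which is exactly what fails on your complement set; the ``crude polynomial-in-$\|Y\|_F$ bound for $-\log q_{V'}$'' you invoke there is not immediate and would in practice force you back to some version of the projection trick. Replace that whole paragraph by the projection argument and the proof is complete.
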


		We then consider the metric $\left(\mathbb{O}_{p_2, r}, \|\sin\Theta(\cdot, \cdot)\|_F\right)$. %Note $\mathcal{M}(\varepsilon)$ as the $\varepsilon$-covering number for $\left(\mathbb{O}_{p_2, r}, \|\sin\Theta(\cdot, \cdot)\|_F\right)$, namely the smallest number of balls of radius $\varepsilon$ whose union contains $\mathcal{O}_{p_2, r}$. Since $\|\sin\Theta(\hat{V}, V)\|_F = \frac{1}{\sqrt{2}}\left\|\hat{V}\hat{V}^{\intercal} - VV^{\intercal}\right\|_F$ (Lemma \ref{lm:sin_Theta_distance} in this paper), Lemma 1 in \cite{cai2013sparse} yields the following estimate for $N(\varepsilon)$ for $\varepsilon \in (0, \sqrt{2(r \wedge (p_2 - r))})$,
		%\begin{equation*}
		%\left(\frac{c_0}{\varepsilon}\right)^{r(p_2-r)} \leq \mathcal{M}(\varepsilon) \leq \left(\frac{c_1}{\varepsilon}\right)^{r(p_2-r)}.
		%\end{equation*}
		%Now the rest of the proof is similar the proof of Proposition 3 in \cite{cai2013sparse} although their condition is not exactly met so that we are not able to apply it directly. 
		Especially we consider the ball with radius $0 < \varepsilon<\sqrt{2r}$ and center $V\in\mathbb{O}_{p_2, r}$
		$$ B(V, \varepsilon) = \left\{V'\in \mathbb{O}_{p_2, r}: \left\|\sin\Theta(V', V)\right\|_F\leq \varepsilon\right\}. $$
		Note that $r \leq p_2/2$, $\|\sin\Theta(V', V)\|_F = \frac{1}{\sqrt{2}} \|V'V^\intercal - VV^\intercal\|_F$, based on Lemma 1 in \cite{cai2013sparse}, one can show for any $\alpha\in (0, 1)$, $\varepsilon \in (0, \sqrt{2r})$, %the packing number of $B(V, \varepsilon)$ satisfies
		%$$\mathcal{M}(B(V, \varepsilon), \alpha\varepsilon) \geq \left(\frac{c_0}{\alpha c_1}\right)^{r(p_2 - r)}.$$
		there exists $V_1,\cdots, V_m \subseteq B(V, \varepsilon)$, such that
		$$m\geq \left(\frac{c}{\alpha} \right)^{r(p_2 - r)}, \quad \min_{1\leq i\neq j\leq m} \|\sin\Theta(V_i, V_j)\|_F \geq \alpha \varepsilon.$$ 
		By $\{V_1,\cdots, V_m\}\subseteq B(V, \varepsilon)$, $\|\sin\Theta(V_i, V_j)\| \leq 2\varepsilon$, along with \eqref{eq:D_KL_upper},
		$$D\left(\bar{P}_{V_i, t}||\bar{P}_{V_j, t}\right) \leq \frac{32\varepsilon^2t^4}{4t^2 + p_1} + C_{KL}.$$
		Fano's Lemma \citep{yu1997assouad} leads to the following lower bound
		\begin{equation}
		\inf_{\hat{V}} \sup_{V \in \Theta} \E_{\bar{P}_{V, t}} \left\|\sin\Theta(\hat{V}, V)\right\|_F^2 \geq \alpha^2 \varepsilon^2 \left(1 - \frac{\frac{32\varepsilon^2t^4}{4t^2 + p_1} + C_{KL} +\log 2}{r(p_2-r)\log \frac{c}{\alpha}}\right).
		\end{equation}
		We particularly select 
		$$\alpha = c\exp(-(1+C_{KL}  + \log(2))), \varepsilon = \sqrt{\frac{r(p_2-r)(4t^2 + p_1)}{32t^4}} \wedge \sqrt{2r}.$$ 
		Note that $r \leq p_2/2$, we further have
		\begin{equation}
		\inf_{\hat{V}} \sup_{V \in \mathbb{O}_{p_2, r}} \E_{\bar{P}_{V, t}} \left\|\sin\Theta(\hat{V}, V)\right\|_F^2 \geq c\left(\frac{rp_2 (t^2 + p_1)}{t^4} \wedge r\right).
		\end{equation}
		Finally, note that $\bar{P}_{V,t}$ is a mixture distribution from $\mathcal{P}_{V, t}$ defined in \eqref{eq:mathcal_P_V,t}. Lemma \ref{lm:lower_bound_transformation} implies
		\begin{equation}
		\inf_{\hat{V}} \sup_{X\in \mathcal{F}_{r, t}} \E\left\|\sin\Theta(\hat{V}, V)\right\|_F^2 \geq \inf_{\hat{V}} \sup_{V \in \Theta} \E_{\bar{P}_{V, t}} \left\|\sin\Theta(\hat{V}, V)\right\|_F^2.
		\end{equation}
		The two inequalities above together imply the desired lower bound.
		
		\item Then we consider when $ t^2 > p_1/4$. This case is simpler than the previous as we do not have to mix the multivariate Gaussian measures. Suppose 
		$$U_0 = \begin{bmatrix}
		I_r\\
		0
		\end{bmatrix} \in \mathbb{O}_{p_1, r}. $$
		We introduce
		\begin{equation}
		X_V = t U_0 V^{\intercal}, \quad V \in \mathbb{O}_{p_2, r},
		\end{equation}
		and denote $P_V$ as the probability measure of $Y$ when $Y = X_V+Z, Z\in \mathbb{R}^{p_1\times p_2}$, $Z\overset{iid}{\sim} N(0, 1)$. Based on \eqref{eq:KL_Gaussian}, we have
		\begin{equation}\label{eq:D(P_V||P_V')}
		D(P_V|| P_{V'}) = \frac{1}{2}\left\|tU_0V^{\intercal} - tU_0(V')^{\intercal}\right\|_F^2 =  \frac{t^2}{2}\left\|V - V'\right\|_F^2. 
		\end{equation}
		Based on the same procedure Step 5 in the case $t^2<p_1/4$, one can construct the ball of radius $\varepsilon$ centered at $V_0\in \mathbb{O}_{p_2, r}$,
		$$B(V_0, \varepsilon) = \left\{V': \|\sin\Theta(V', V_0)\|_F \leq \varepsilon\right\}. $$
		and for $0<\alpha<1$, there exist $\{V_1',\cdots, V_m'\}\subseteq B(V_0, \varepsilon)$ such that
		$$m\geq \left(\frac{c}{\alpha}\right)^{r(p_2-r)}, \quad \max_{1\leq i\neq j\leq m}\left\|\sin\theta(V_i', V_j') \right\|_F \geq \varepsilon\alpha. $$
		By basic property of $\sin \Theta$ distances (Lemma \ref{lm:sin_Theta_distance} in this paper), we can find $O_i\in\mathbb{O}_r$ such that
		$$\left\|V_0 - V_i'O_i\right\|_F \leq \sqrt{2}\left\|\sin\Theta(V_0, V_i')\right\|\leq \sqrt{2}\varepsilon $$
		Denote $V_i = V_i' O_i \in \mathbb{O}_{p_2, r}$, then
		\begin{equation*}
		\begin{split}
		& \max_{1\leq i\neq j\leq m} D\left(P_{V_i}|| P_{V_j}\right) \overset{\eqref{eq:D(P_V||P_V')}}{=}\max_{1\leq i\neq j\leq m} \frac{t^2}{2}\|V_i - V_j\|_F^2\\
		\leq & \frac{t^2}{2} \max_{1\leq i\neq j\leq m} 2\left(\left\|V_0 - V_i\right\|_F^2 + \left\|V_0 - V_i\right\|_F^2\right) \leq 4t^2\varepsilon^2.
		\end{split}
		\end{equation*}
		Follow the same procedure as Step 5 in the case $t^2<p_1/4$, we have
		\begin{equation}
		\inf_{\hat{V}} \sup_{V \in \mathbb{O}_{p_2,r}} \E_{P_{V}} \left\|\sin\Theta(\hat{V}, V)\right\|_F^2 \geq \alpha^2 \varepsilon^2 \left(1 - \frac{4\varepsilon^2t^2 +\log 2}{r(p_2-r)\log \frac{c}{\alpha}}\right).
		\end{equation}
		for any $\alpha \in (0, 1)$, $\varepsilon < \sqrt{2r}$. By selecting $\alpha = \frac{c}{4}$,  $\varepsilon^2 = \sqrt{\frac{r(p_2-r)}{2t^2}}\wedge \sqrt{2r}$, we have
		\begin{equation*}
		\inf_{\hat{V}} \sup_{V \in \Theta} \E_{\bar{P}_{V}} \left\|\sin\Theta(\hat{V}, V)\right\|_F^2 \geq c\left(\frac{p_2r}{t^2}\wedge r\right).
		\end{equation*}
		Based on the assumption that $t^2 > p_1/4$, 
		$$\frac{p_2r}{t^2} \geq \frac{p_2r(t^2/2 + p_1/8)}{t^4} \geq \frac{1}{8}\left(\frac{p_2r(t^2+p_1)}{t^4}\right). $$
		Similarly by Lemma \ref{lm:lower_bound_transformation}, we have
		\begin{equation*}
		\inf_{\hat{V}} \sup_{X \in \mathcal{F}_{r, t}} \E \left\|\sin\Theta(\hat{V}, V)\right\|_F^2 \geq c\left(\frac{p_2r(t^2+p_1)}{t^4}\wedge r\right).
		\end{equation*}
	\end{itemize}
	Finally, combining these two cases for $t^2<p_1/4$ and $t^2>p_1/4$, we have finished the proof of Theorem \ref{th:denoising_lower}. 
\end{proof}

\subsection{Proofs in High-dimensional Clustering}

\ \par

\begin{proof}[Proof of Theorem \ref{th:cluster_upper}] 
	Since $EY = \mu l^\intercal = \sqrt{n}\|\mu\|_2\cdot \frac{l}{\sqrt{n}}$, by Theorem \ref{th:denoising}, we know
	\begin{equation*}
	\E\left(\left\|\sin\Theta\left(\frac{l}{\sqrt{n}}, \hat{v}\right)\right\|^2\right) \leq \frac{Cn\left((\sqrt{n}\|\mu\|_2)^2 + p\right)}{(\sqrt{n}\|\mu\|_2)^4} \leq \frac{C(n\|\mu\|_2^2 + p)}{n\|\mu\|_2^4}.
	\end{equation*}
	In addition, the third part of Lemma \ref{lm:sin_Theta_distance} implies
	$$\E \left(\min_{o\in \{-1, 1\}} \|\hat{v} - o \cdot l/\sqrt{n} \|_2^2\right) \leq 2\E\left(\left\|\sin\Theta\left(\frac{l}{\sqrt{n}}, \hat{v}\right)\right\|^2\Big| l\right) \leq \frac{C(n\|\mu\|_2^2 + p)}{n\|\mu\|_2^4}. $$
	Finally, by definition that $\hat{l} = {\rm sgn}(\hat{v})$ and $l_i/\sqrt{n} = \pm 1/\sqrt{n}$, we can obtain
	\begin{equation*}
	\begin{split}
	& \frac{1}{n}\E\min_{\pi} \left|\left\{i: l_i \neq \pi(\hat{l}_i)\right\}\right| \leq \frac{1}{n}\E\min_{\pi} \left|\left\{i: \frac{l_i}{\sqrt{n}} \neq  \pi({\rm sgn}(\hat{v}_i))/\sqrt{n}\right\}\right|\\
	\leq & \frac{1}{n}\E\min_{o\in \{-1, 1\}}\left|\left\{i: \left|\frac{l_i}{\sqrt{n}} - o\hat{v}_i\right| \geq 1/\sqrt{n}\right\}\right| \leq \frac{1}{n} \E \min_{o\in \{-1, 1\}} \sum_{i=1}^n \left(l_i/\sqrt{n} - o\hat{v}_i\right)^2n\\
	= & \E \left(\min_{o\in \{-1, 1\}} \|\hat{v} - o \cdot l/\sqrt{n} \|_2^2 \right) \leq \frac{C(n\|\mu\|_2^2 + p)}{n\|\mu\|_2^4}.
	\end{split}
	\end{equation*}
	which has finished the proof of this theorem.
\end{proof}

\ \par

\begin{proof}[Proof of Theorem \ref{th:cluster_lower}] We first consider the metric space $\{-1, 1\}^n$, where each pair of $l$ and $-l$ are considered as the same element. For any $l_1, l_2 \in \{-1, 1\}^n$, it is easy to see that
	$$\mathcal{M}(l_1, l_2) = \frac{1}{2n} \min\{\|l_1 - l_2\|_1, \|l_1+l_2\|_1\}, $$
	where $\mathcal{M}$ is defined in \eqref{eq:misclassification_rate}. For any three elements $l_1, l_2, l_3 \in \{-1, 1\}^n$, since
	$$\|l_1 - l_2\|_1 \leq \min\left\{\|l_1 - l_3\|_1 + \|l_3 - l_2\|_1, \|l_1 + l_3\|_1 + \|l_3 + l_2\|_1 \right\}, $$
	$$\|l_1 + l_2\|_1 \leq \min\left\{\|l_1 - l_3\|_1 + \|l_3 + l_2\|_1, \|l_1 + l_3\|_1 + \|l_3 - l_2\|_1 \right\}, $$
	we have
	\begin{equation*}
	\begin{split}
	& \mathcal{M}(l_1, l_2) = \frac{1}{2n} \min\left\{\|l_1 - l_2\|_1, \|l_1 + l_2\|_1\right\}\leq\\
	\leq & \frac{1}{2n} \min\{\|l_1 - l_3\|_1 + \|l_3 - l_2\|_1, \|l_1 + l_3\|_1 + \|l_3 + l_2\|_1,\\
	& + \|l_1 - l_3\|_1 + \|l_3 + l_2\|_1, \|l_1 + l_3\|_1 + \|l_3 - l_2\|_1\} = \mathcal{M}(l_1, l_3 ) +\mathcal{M}(l_2, l_3).
	\end{split}
	\end{equation*}
	In other words, $\mathcal{M}$ is a metric in the space of $\{-1, 1\}^n$. Similarly to Lemma 4 in \cite{yu1997assouad}, we can show that there exists universal positive constant $c_0, C_n'$, such that when $n\geq C_n'$, there exists a subset $A$ of $\{-1, + 1\}^n$ satisfying
	\begin{equation}\label{eq:property_A_net}
	|A| \geq \exp(c_0n), \text{ and } \forall l, l' \in \{-1, + 1\}, l\neq l', \text{ we have } \mathcal{M}(l, l') \geq 1/3.
	\end{equation}
	Let $C_{KL}$ be the uniform constant in Lemma \ref{lm:D_KL_upper}. Now, we set $c_{gap} = (c_0/300)^{1\over 4}, C_n = \frac{36}{c_0\log 2} \vee \frac{12C_{KL}}{c_0} \vee C_n'$ and denote $t^2 = c_{gap} (p/n)^{1\over 4}$. Suppose $n\geq C_n$. Next, we need to discuss separately according to the value of $t^2$. 
	\begin{itemize}[leftmargin=*]
		\item When $nt^2 \leq  p/4$, for each $l\in \{-1, 1\}^n$, similarly to the proof to Theorem \ref{th:cluster_lower}, we define the following class of density $P_Y$, where $Y = X+Z$ and the right singular space of $X$ is $l/\sqrt{n}$,
		\begin{equation*}
		\begin{split}
		\mathcal{P}_{l, t} = \Big\{P_{Y}:
		& Y\in \mathbb{R}^{p\times n}, Y= X+Z,  X \text{ is fixed}, \\
		& Z\overset{iid}{\sim} N(0, 1), X = \mu l^\intercal, \|\mu\|_2 \geq t \Big\}.
		\end{split}
		\end{equation*}
		We construct the following Gaussian mixture measure $\bar{P}_{l, t}$,
		\begin{equation}
		\begin{split}
		\bar{P}_{l, t}(Y) = C_{l, t}\int_{\mu_0\in \mathbb{R}^{p}: \|\mu_0\| \geq 1/2} &\frac{1}{(2\pi)^{pn/2}}\exp(-\|Y - 2t \mu_0l^{\intercal}\|_F^2/2 )\\
		& \cdot \left(\frac{p}{2\pi}\right)^{p_1r/2}\exp(-p\|\mu_0\|_2^2/2) d\mu_0.
		\end{split}
		\end{equation}
		Here $C_{l, t}$ is the constant which normalize the integral and make $\bar{P}_{l, t}$ a valid probability density. To be specific, $C_{l, t}^{-1} = \int_Y P_{l, t}(Y)dY. $
		Moreover, since $\|2t\mu_0\| \geq t$ in the event that $\|\mu_0\|_2 \geq 1/2$, $\bar{P}_{l, t}$ is a mixture density in $\mathcal{P}_{l, t}$, i.e. $\bar{P}_{l, t} \in co (\mathcal{P}_{l, t})$. By Lemma \ref{lm:D_KL_upper}, for any two $l, l'\in \{-1, 1\}^n$, the KL-divergence between $\bar{P}_{l, t}$ and $\bar{P}_{l', t}$ have the following upper bound
		\begin{equation*}
		\begin{split}
		& D(\bar{P}_{l, t}|| \bar{P}_{l', t}) \leq \frac{8n^2t^4}{4nt^2 + p}\left\|\sin\Theta(l/\sqrt{n}, l'/\sqrt{n})\right\|_F^2 + C_{KL}\\
		\leq & \frac{8n^2t^4}{4nt^2+p} + C_{KL} \leq \frac{8n^2c_{gap}^4 (p/n)}{p} + C_{KL} = 8nc_{gap}^4 + C_{KL}
		\end{split}
		\end{equation*}
		%\begin{equation}\label{eq:D_bar_P}
		%D(\bar{P}_{l, t}|| \bar{P}_{l', t}) \leq c_0n/5.
		%\end{equation} 
		%Finally, combining \eqref{eq:property_A_net} and \eqref{eq:D_bar_P}, 
		Then the generalized Fano's lemma (Lemma 3 in \cite{yu1997assouad}) along with inequality \eqref{eq:property_A_net} and Lemma \ref{lm:lower_bound_transformation} lead to
		\begin{equation}
		\begin{split}
		& \inf_{\hat{l}}\max_{\substack{\|\mu\|_2 \leq c_{gap}(p/n)^{1 \over 4}\\ l \in \{-1, 1\}^n}} \E\mathcal{M}(\hat{l}, l) \geq \inf_{\hat{l}}\max_{\substack{\|\mu\|_2 \leq c_{gap}(p/n)^{1 \over 4}\\ l \in A}} \E\mathcal{M}(\hat{l}, l)\\
		\overset{\eqref{eq:property_A_net}}{\geq} & \frac{1}{6}\left(1 - \frac{8nc_{gap}^4 + C_{KL} + \log 2}{c_0n}\right)\geq \frac{1}{8},
		\end{split}
		\end{equation}
		where the last inequality holds since $n\geq 36/(c_0 \log 2), n\geq 36C_{KL}/c_0$, and $c_{gap} \leq (c_0/300)^{1\over 4}$. 
		\item When $nt^2 >  p/4$, we fix $\mu = (t, 0,\ldots, 0)^{\intercal}\in \mathbb{R}^{p},$ introduce 
		$$X_l = \mu l^\intercal, \quad l \in \{-1, 1\}^n,$$
		and denote $P_l$ as the probability of $Y$ if $Y = X_l + Z$ with $Z\overset{iid}{\sim}N(0, 1)$. Based on the calculation in Theorem \ref{th:denoising_lower}, the KL-divergence between $P_l$ and $P_{l'}$ satisfies
		$$D(P_{l}|| P_{l'}) = \frac{t^2}{2}\|l - l'\|_2^2 \leq 2t^2n. $$
		Applying the generalized Fano's lemma on $A$, we have
		$$\inf_{\hat{l}} \max_{l \in A} \E \mathcal{M}(\hat{l}, l) \geq \frac{1}{6} \left(1 - \frac{2t^2n+\log 2}{c_0n}\right). $$
		When $nt^2 > p/4$, $t = c_{gap}(p/n)^{1\over 4}$, we know
		$$\frac{1}{2}(p/n)^{1\over 2} \leq t = c_{gap}(p/n)^{1\over 4}, \quad \text{thus} \quad t \leq \frac{(c_{gap}(p/n)^{1\over 4})^2}{\frac{1}{2}(p/n)^{1\over 2}} \leq 2c_{gap}^2. $$
		Provided that $n \geq 36/(c_0 \log 2), c_{gap}\leq (c_0/300)^{1\over 4}$, we have
		$$\frac{1}{6}\left(1 - \frac{2t^2n + \log 2}{c_0n}\right) \geq \frac{1}{6}\left(1 - \frac{16c_{gap}^4}{c_0} - \frac{\log 2}{c_0n}\right) \geq \frac{1}{8},$$
		which implies
		$$\inf_{\hat{l}} \max_{\substack{\|\mu\|_2 \leq c_{gap}(p/n)^{1 \over 4}\\ l \in \{-1, 1\}^n}} \E \mathcal{M}(\hat{l}, l) \geq \frac{1}{8}.$$
	\end{itemize}
	To sum up, we have finished the proof of this theorem.

\end{proof}

\subsection*{Proofs in Canonical Correlation Analysis}
\begin{proof}[Proof of Theorem \ref{th:CCA}] 
	The proof of Theorem \ref{th:CCA} is relatively complicated, which we shall divide into steps.
\begin{enumerate}
	\item {\it (Analysis of Loss)} Suppose the SVD of $\hat{S}$ and $S$ are
	\begin{equation} 
	\begin{split}
	\hat{S} = & \hat {U}_S\hat{\Sigma}_S \hat{V}_S^{\intercal}, \quad \hat{U}_S \in \mathbb{O}_{p_1}, \hat{\Sigma}_S\in \mathbb{R}^{p_1\times p_2}, \hat{V}_S\in \mathbb{O}_{p_2} \\
	S  = & U_S\Sigma_SV_S^{\intercal}, \quad U_S\in \mathbb{O}_{p_1, r}, \Sigma_S\in \mathbb{R}^{r\times r}, V_S\in \mathbb{O}_{p_2, r}\\ 
	& \text{(we shall note $rank(S) = r$.)}
	\end{split}
	\end{equation}
	respectively. Recall that $A = \Sigma_{X}^{-{1\over 2}}U_S$, $\hat{A} = \hat{\Sigma}_X^{-{1\over 2}}\hat{U}_{S,[:, 1:r]}$. Invertible multiplication to all $X$'s or $Y$'s does not change the loss of the procedure, thus without loss of generality we could assume that $\Sigma_X = I_{p_1}$. Under this assumption, we have following expansions for the loss $L_F$ and $L_{\rm sp}$.
	\begin{equation*}
	\begin{split}
	& L_F(\hat A, A)\\
	 = & \min_{O\in \mathbb{O}_{r}}\E_{X^\ast}\|(\hat{A}O - A)^{\intercal}X^\ast\|_2^2\\
	= & \min_{O\in \mathbb{O}_{r}}\E_{X^\ast}\tr\left((\hat{A}O - A)^{\intercal}X^\ast (X^\ast)^{\intercal}(\hat{A}O - A)\right)\\
	= & \min_{O\in \mathbb{O}^r} \tr\left((\hat{\Sigma}_X^{-{1\over 2}}\hat{U}_{S,[:,1:r]}O - U_S)^{\intercal}I_{p_1}(\hat{\Sigma}_X^{-{1\over 2}}\hat{U}_{S,[:,1:r]}O-U_S)\right)\\
	= & \min_{O\in \mathbb{O}_r}\left\|\hat\Sigma_X^{-{1\over 2}} \hat{U}_{S, [:,1:r]}O - U_S\right\|_F^2\\
	\leq & 2\min_{O\in \mathbb{O}_r}\left\{\left\|\hat{U}_{S, [:, 1:r]} O - U_S\right\|^2_F + \left\|(\hat{\Sigma}^{-{1\over 2}}_X - I)\hat{U}_{S, [:, 1:r]}O\right\|_F^2\right\}\\
	\leq & 4\left\|\sin\Theta(\hat{U}_{S, [:, 1:r]}, U_S)\right\|_F^2 + 2\left\|(\hat{\Sigma}^{-{1\over 2}}_X - I)\hat{U}_{S, [:, 1:r]}O\right\|_F^2 \text{ (by Lemma \ref{lm:sin_Theta_distance}).}
	\end{split}
	\end{equation*}
	Similarly,
	\begin{equation}\label{ineq:CCA_loss}
	\begin{split}
	L_{\rm sp}(\hat A, A) = & \min_{O\in \mathbb{O}_r}\max_{v\in \mathbb{R}^r, \|v\|_2=1} \E_{X^\ast}\left((\hat{A}Ov)^{\intercal}X^\ast - (Av)^{\intercal}X^\ast\right)^2\\
	= & \min_{O\in \mathbb{O}_r}\max_{v\in \mathbb{R}^r, \|v\|_2=1} (\hat{A}Ov - Av)^{\intercal}\Sigma_X (\hat{A}Ov - Av)\\
	= & \min_{O\in \mathbb{O}_r}\left\|\hat{A}O - A\right\|^2 = \min_{O\in \mathbb{O}_{r}}\left\|\hat{\Sigma}_X^{-\frac{1}{2}}\hat{U}_{S, [:, 1:r]}O - U_S\right\|\\
	\leq & 4\left\|\sin\Theta(\hat{U}_{S, [:, 1:r]}, U_S)\right\|^2 + 2\left\|\hat{\Sigma}_X^{-{1\over 2}} - I\right\|^2.
	\end{split}
	\end{equation}
	We use the bold symbols $\X\in \mathbb{R}^{p_1\times n}, \Y\in \mathbb{R}^{p_2\times n}$ to denote those $n$ samples. Since $\hat{\Sigma} = \frac{1}{n}\X\X^{\intercal}$, where $\X\in\mathbb{R}^{p_1\times n}$ is a i.i.d. Gaussian matrix, by random matrix theory (Corollary 5.35 in \cite{vershynin2012introduction}), there exists constant $C, c$ such that
	\begin{equation}
	1 - C\sqrt{p_1/n} \leq \sigma_{\min}(\hat{\Sigma}_X)\leq \sigma_{\max}(\hat{\Sigma}_X) \leq 1 + C\sqrt{p_1/n}.
	\end{equation}
	with probability at least $1-C\exp(cp_1)$. Since 
	$$1 \overset{\text{property of correlation matrix}}{\geq} \sigma_r^2(S)\overset{\text{problem assumption}}{\geq} \frac{C_{\rm gap}p_1}{n},$$ 
	we can find large $C_{\rm gap}>0$ to ensure $n\geq Cp_1$ for any large $C>0$. In addition, we could have
	\begin{equation}\label{ineq:hat_X-I sp}
	\P\left(\left\|\hat{\Sigma}_X^{-{1\over 2}} - I\right\|^2 \leq Cp_1/n \leq \frac{Cp_1}{n\sigma_r^2(S)} \right) \geq 1 - C\exp(-cp_1).
	\end{equation}
	Moreover, as $\hat{U}_{S, [:, 1:r]}O\in \mathbb{O}_{p_1, r}$, we have 
	$$r\left\|\hat{\Sigma}_X^{-{1\over 2}} - I\right\|^2 \geq \left\|(\hat{\Sigma}_X^{-{1\over 2}} - I)\hat{U}_{S, [:, 1:r]}O\right\|_F^2,$$ 
	thus
	\begin{equation}\label{ineq:hat_X-I F}
	\P\left(\left\|(\hat{\Sigma}_X^{-{1\over 2}} - I)\hat{U}_{S, [:, 1:r]}O\right\|_F^2 \leq Cp_1r/n \leq \frac{Cp_1r}{n\sigma_r^2(S)} \right) \geq 1 - C\exp(-cp_1).
	\end{equation}
	Now the central goal in our analysis moves to bound
	$$\left\|\sin\Theta(\hat{U}_{S, [:, 1:r]}, U_S)\right\|_F^2 \text{ and } \left\|\sin\Theta(\hat{U}_{S, [:, 1:r]}, U_S)\right\|^2, $$
	namely to compare the singular spectrum between the population $S = \Sigma_X^{-{1\over 2}}\Sigma_{XY}\Sigma_Y^{-{1\over 2}}$ and the sample version $\hat{S} = \hat{\Sigma}_X^{-{1\over 2}}\hat{\Sigma}_{XY}\hat{\Sigma}_Y^{-{1\over 2}}$
	in $\sin \Theta$ distance. Since $\sin\Theta(\hat{U}_{S, [:, 1:r]}, U_S)$ is $r\times r$, $\left\|\sin\Theta(\hat{U}_{S, [:, 1:r]}, U_S)\right\|_F^2 \leq  \left\|\sin\Theta(\hat{U}_{S, [:, 1:r]}, U_S)\right\|^2$. We only need to consider the $\sin \Theta$ spectral distance below.
	
	\item {\it (Reformulation of the Model Set-up)}  In this step, we transform $X, Y$ to a better formulation to simplify the further analysis. First, any invertible affine transformation on $\X$ and $\Y$ separately will not essentially change the problem. We specifically do transformation as follows
	$$\X \to [U_S~U_{S\perp}]^{\intercal}\Sigma_X^{-{1\over 2}}\X, \quad \Y \to (I_{p_2} - \Sigma_S^2)^{-{1\over 2}}[V_S~V_{S\perp}]^{\intercal} \Sigma_Y^{-{1\over 2}} \Y. $$
	Simple calculation shows that after transformation, $\Var(X)$, $\Var(Y), \Cov(X, Y)$ become $I_{p_1}$, $(I_{p_2} - \Sigma_S^2)^{-1}$, $\Sigma_S(I_{p_2} - \Sigma_S^2)^{-{1\over 2}}$ respectively. Therefore, without loss of generality we can assume that
	%and assume that $S = \Sigma_X^{-{1\over 2}}\Sigma_{XY}\Sigma_Y^{-{1\over 2}}$ has diagonal structure,
	\begin{equation}\label{eq:assumption_Sigma_X_Y}
	\Sigma_X = I_{p_1}, \quad\Sigma_Y = (I_{p_2} - \Sigma_S^2)^{-1}.
	\end{equation}
	\begin{equation}\label{eq:assumption-S-CCA}
	S \in \mathbb{R}^{p_1\times p_2} \text{ is diagonal, such that } S_{ij} = \left\{
	\begin{array}{ll}
	\sigma_i(S), & i =j =1,\cdots, r\\
	0, & \text{otherwise}
	\end{array}
	\right.
	\end{equation} 
	throughout the proof. (It will be explained a while later why we want to transform $\Sigma_Y$ into this form.)
	
	%Next it is easy to see that, $S = \Sigma_X^{-{1\over 2}}\Sigma_{XY}\Sigma_Y^{-{1\over 2}}$ and $\hat S = \hat{\Sigma}_X^{-{1\over 2}}\hat{\Sigma}_{XY}\hat{\Sigma}_Y^{-{1\over 2}}$ will also not be changed by invertible affine transformation on $X$ and $Y$. Since we are essentially comparing the principle components of $S$ and $\hat S$, again without loss of generality, we could assume 
	
	Since $X, Y$ are jointly Gaussian, we can relate them as $Y = W^{\intercal}X + Z$, where $W\in \mathbb{R}^{p_1\times p_2}$ is a fixed matrix, $X\in \mathbb{R}^{p_1}$ and $Z\in \mathbb{R}^{p_2}$ are independent random vectors. Based on simple calculation, $\Sigma_{XY} = \E XY^{\intercal} = \Sigma_XW$, $\Sigma_Y = W^{\intercal}\Sigma_X W + \Var(Z)$. Combining \eqref{eq:assumption_Sigma_X_Y}, we can calculate that 
	\begin{equation}
	\begin{split}
	& W = \Sigma_X^{-1}\Sigma_{XY} = S\Sigma_Y^{1\over 2}, \quad W \text{ is diagonal},\\
	& W_{ij} = \left\{\begin{array}{ll}
	S_{ii}(1 - S_{ii}^2)^{-{1\over 2}}, & \quad i=j\\
	0 & \text{otherwise}
	\end{array}\right.
	\end{split}
	\end{equation} 
	\begin{equation}
	\Var(Z) = \Sigma_Y - \Sigma_Y^{1\over 2} S^{\intercal}S\Sigma_Y^{1\over 2} = \Sigma^{1\over 2}_Y(I_{p_2} - S^{\intercal}S)\Sigma^{1\over 2}_Y = I_{p_2}.
	\end{equation}
	In other words, $Z$ is i.i.d. standard normal and $W$ is diagonal. By rescaling, the analysis could be much more simplified and easier to read.
	
	\item {\it (Expression for $\hat S$)} In this step, we move from the population to the samples and find out a useful expression for $\hat S$. We use bold symbols $\X\in \mathbb{R}^{p_1\times n}$, $\Y\in \mathbb{R}^{p_2\times n}$, $\Z \in \mathbb{R}^{p_2\times n}$ to denote the compiled data such that $\Y = W^{\intercal}\X +\Z$. Denote the singular decomposition for $\X$ and $\Y$ are 
	$$\X=\hat U_X\hat\Sigma_X\hat{V}_X^{\intercal},\quad \Y = \hat{U}_Y\hat{\Sigma}_Y \hat{V}_Y^{\intercal}, $$
	here $\hat{U}_X\in \mathbb{O}_{p_1}, \hat{\Sigma}_X\in \mathbb{R}^{p_1\times p_1}, \hat{V}_X\in \mathbb{O}_{n, p_1}$, $\hat{U}_Y \in \mathbb{O}_{p_2}, \hat{\Sigma}_Y \in \mathbb{R}^{p_2\times p_2}, \hat{V}_Y \in \mathbb{O}_{n, p_2}$. Thus,
	\begin{equation*}
	\hat{\Sigma}_X = \hat{U}_X \hat{\Sigma}_X^2 \hat U_X^{\intercal}, \quad \hat \Sigma_Y = \hat U_Y \hat{\Sigma}_Y^2 \hat U_Y^{\intercal}, \quad \hat{\Sigma}_{XY} = \hat U_X\hat{\Sigma}_X \hat{V}_X^{\intercal} \hat{V}_Y \hat{\Sigma}_Y\hat{U}_Y^{\intercal}.
	\end{equation*}
	Additionally, 
	\begin{equation*}
	\hat S = \hat{\Sigma}_X^{-{1\over 2}} \hat{\Sigma}_{XY}\hat{\Sigma}_Y^{-{1\over 2}} = \hat U_X\hat{V}_X^{\intercal}\hat V_Y \hat U_Y^{\intercal}.
	\end{equation*}
	\item {\it (Useful Characterization of the left Singular Space of $\hat{S}$)} Since $\X$ is a i.i.d. Gaussian matrix at this moment, from the random matrix theory, we know $\hat U_X$, $\hat V_X$ are randomly distributed with Haar measure on $\mathbb{O}_{p_1}$ and $\mathbb{O}_{n, p_1}$ respectively and $\hat U_X, \hat \Sigma_X, \hat V_X, Z$ are independent. We can extend $\hat V_X \hat U_X^{\intercal}\in \mathbb{O}_{n, p_1}$ to orthogonal matrix $\tilde{R}_{X} = [\hat V_X\hat U_X^{\intercal}, \hat R_{X\perp}]\in O_{n}$ such that $\hat R_{X\perp} \in \mathbb{O}_{n, n-p_1}$. Introduce $\tilde{\Z} = \Z\cdot \tilde{R}_X$,  $\tilde{\Y} = \Y\cdot \tilde{R}_X$, $\tilde{\X} = \X \tilde{R}_X$. $\tilde{\Y}^{\intercal}$ can be explicitly written as the following clean form
	\begin{equation}\label{eq:tilde_Y}
	\begin{split}
	\tilde{\Y}^{\intercal} = & \tilde{\X}^{\intercal} W + \tilde{\Z}^{\intercal} = \begin{bmatrix}
	\hat{U}_X\hat{V}_X^{\intercal}\X ^{\intercal}W\\
	\hat{R}_{X\perp}^{\intercal}\X^{\intercal}W
	\end{bmatrix} + \tilde{Z}^{\intercal} = 
	\begin{blockarray}{cc}
	& p_2 \\
	\begin{block}{c[c]}
	p_1 &  \hat U_X \hat\Sigma_X \hat U_X^{\intercal}W\\
	n-p_1 &  0 \\
	\end{block}
	\end{blockarray} + \tilde{\Z}^{\intercal}\\
	 := & \G + \tilde{\Z}^{\intercal},\\
	& \tilde{\Z}\overset{iid}{\sim}N(0, 1),\quad  \tilde{\Z}, \hat{U}_X, \hat{\Sigma}_X \text{ are independent; } \G, \tilde{\Z} \text{ are independent.}
	\end{split}
	\end{equation}
	Meanwhile, since $\hat V_Y$ is the left singular vectors for $\Y^{\intercal}$, we have
	$$[\hat V_X\hat U_X^{\intercal}, \hat R_{X\perp}]^{\intercal}\hat V_Y = \begin{bmatrix}
	\hat U_X \hat V_X^{\intercal} \hat V_Y\\
	\hat R_{X\perp}^{\intercal}\hat V_Y
	\end{bmatrix}$$ 
	is the left singular vectors of $\tilde{\Y}^{\intercal} = [\hat V_X\hat U_X^{\intercal}, \hat R_{X\perp}]^{\intercal} \Y^{\intercal}$, which yields the following important characterization for $\hat S$.
	\begin{equation}\label{eq:hat_S_characterization}
	\hat S \hat U_Y = \hat U_X\hat V_X^{\intercal} \hat{V}_Y \text{ is the first $p_1$ rows of the left singular vectors of $\tilde{\Y}^{\intercal}$.} 
	\end{equation}
	Suppose the SVD for $\tilde{\Y}$ is
	\begin{equation*}
	\tilde{\Y}^{\intercal} = \tilde{U}_Y \tilde{\Sigma}_Y \tilde{V}_Y^{\intercal},\quad \tilde{U}_Y \in \mathbb{O}_{n, p_2}, \tilde{\Sigma}_Y\in \mathbb{R}^{p_2\times p_2}, \tilde{V}_Y \in \mathbb{O}_{p_2}.
	\end{equation*}
	Further assume the SVD for the first $p_1$ rows of $\tilde{U}_Y$ is
	$$\left(\tilde{U}_Y\right)_{[1:p_1, :]} = \tilde{U}_{Y2} \tilde{\Sigma}_{Y2} \tilde{V}_{Y2}^{\intercal}, \quad \tilde{U}_{Y2} \in \mathbb{O}_{p_1}, \tilde{\Sigma}_{Y2}\in \mathbb{R}^{p_1\times p_2}, \tilde{V}_2 \in \mathbb{O}_{p_2}.$$
	By characterization \eqref{eq:hat_S_characterization} and the fact that right multiplication of $\hat{U}_Y\in \mathbb{O}_{p_2}$ does not change left singular vectors, we have
	\begin{equation}
	\begin{split}
	\hat{U}_S = \tilde{U}_{Y2}, & \text{ where}\\ 
	&	\text{$\hat{U}_S$ is the left singular vectors of $\hat{S}$,}\\ 
	&	\tilde{U}_{Y2} \text{ is the left singular space of } \left(\tilde{U}_Y\right)_{[1:p_1, ]}.\\
	\end{split}
	\end{equation}
	The characterization above is the baseline we shall use later to compare the spectrum of $\hat{S}$ and $S$.
	
	%Suppose $\tilde{Y} = U_Y \Sigma_Y V_Y^{\intercal}$ is the SVD. Since $\hat{S}\hat{U}_Y$ and $\hat{S}$ have the same left singular space, according to characterization \eqref{eq:hat_S_characterization}, we only need to compare the first $p_1$ rows of the left singular space of $U_Y$ and $U_S$. In the following steps, we prove by conditioning on $\hat{U}_X$ and $\Sigma_X$. 
	
	\item {\it (Split of $\sin \Theta$ Norm Distance)} Recall Step 1, the central goal of analysis now is to find the $\sin \Theta$ distance between the leading $r$ left singular vectors of $\hat S$ and $S$. It is easy to see that the left singular space of $S$ is
	\begin{equation}
	U_S = \begin{bmatrix}
	1 & & \\
	& \ddots & \\
	&  & 1\\
	0 & \cdots & 0
	\end{bmatrix} \in \mathbb{O}^{p_1, r}
	\end{equation}
	By the traingle inequality of $\sin \Theta$ distance (Lemma \ref{lm:sin_Theta_distance}), 
	\begin{equation}\label{ineq:CCA-sin-theta-split}
	\left\|\sin\Theta\left(\hat{U}_{S, [:, 1:r]}, U_S \right)\right\| \leq \left\|\sin\Theta\left(U_{G}, U_S \right)\right\| + \left\|\sin\Theta\left(U_{G}, \hat{U}_{S, [:, 1:r]} \right)\right\|.
	\end{equation} 
	where $U_G$ is defined a little while later in \eqref{eq:U_G}. For the next Steps 6 and 7, we try to bound the two $\sin \Theta$ distances respectively.
	
	\item {\it (Left Singular Space of $\G_{[1:p_1, 1:r]}$)} Recall the definition of $\G$ in \eqref{eq:tilde_Y}, since $W$ is with only first $r$ diagonal entries non-zero, only the submatrix $\G_{[1:p_1, 1:r]}$ of $\G$ is non-zero. Suppose
	\begin{equation}\label{eq:U_G}
	\G_{[1:p_1, 1:r]} = U_G\Sigma_G V_G^{\intercal}, \quad U_G\in \mathbb{O}_{p_1, r}, \Sigma_G\in \mathbb{R}^{r\times r}, V_G\in \mathbb{O}_{r}.
	\end{equation}
	In this section we aim to show the following bound on the $\sin \Theta$ distance from $U_G$ to $U_S$, i.e. there exists $C_0>0$ such that whenever $n>Cp_1$,
	\begin{equation}\label{ineq:distance_UG_US}
	\P\left(\left\|\sin\Theta(U_G, U_S)\right\| \geq C\sqrt{p_1/n}\right) \leq C\exp(-cp_1)
	\end{equation}
	and also
	\begin{equation}\label{ineq:sigma_min(G)}
	\P\left(\sigma_{\min}^2(\G_{[1:p_1, 1:r]}) \geq \frac{cn\sigma_r^2(S)}{\sqrt{1 - \sigma_r^2(S)}} \right) \geq 1 - C\exp(-cp_1).
	\end{equation}
	Recall
	\begin{equation}
	G_{[1:p_1, 1:r]} = \hat{U}_X\hat{\Sigma}_X \hat{U}_X^{\intercal} W_{[:, 1:r]}, \quad W_{[:, 1:r]}  = \begin{bmatrix}
	W_{11} & & \\
	&\ddots & \\
	& & W_{rr}\\
	& 0 & \\
	\end{bmatrix}.
	\end{equation}
	Thus if we split $\hat{U}_X$ as
	$$\hat{U}_X = \begin{blockarray}{cc}
	p_1 & \\
	\begin{block}{[c]c}
	\hat{U}_{X1} & r \\
	\hat{U}_{X2} & p_1 - r \\
	\end{block}
	\end{blockarray}, \quad\text{then}\quad \hat{U}_X\hat{\Sigma}_X\left(\hat{U}^{\intercal}_X\right)_{[:, 1:r]} = \begin{blockarray}{cc}
	r & \\
	\begin{block}{[c]c}
	\hat{U}_{X1} \hat{\Sigma}_X \hat{U}_{X1}^{\intercal} & r \\
	\hat{U}_{X2} \hat{\Sigma}_X \hat{U}_{X1}^{\intercal} & p_1 - r \\
	\end{block}
	\end{blockarray}.$$ 
	Furthermore, due to the diagonal structure of $W$, $\G_{[1:p_1, 1:r]}$ has the same left singular space as $\hat{U}_X\hat{\Sigma}_X \left(\hat{U}_X^{\intercal}\right)_{[:, 1:r]}$. Since $\hat{\Sigma}_X$ is the singular values of i.i.d. Gaussian matrix $X\in \mathbb{R}^{p_1\times n}$, by random matrix theory \citep{vershynin2012introduction}, 
	\begin{equation}\label{ineq:bound_Sigma_X}
	\P\left(\sqrt{n} + C\sqrt{p_1} \geq \|\hat{\Sigma}_X\| \geq \sigma_{\min}\left(\hat{\Sigma}_X\right) \geq \sqrt{n} - C\sqrt{p_1}\right) \geq 1- C\exp(-cp_1). 
	\end{equation}
	Under the event that $ \sqrt{n} + C\sqrt{p_1} \geq \|\hat{\Sigma}_X\| \geq \sigma_{\min}\left(\hat{\Sigma}_X\right) \geq \sqrt{n} - C\sqrt{p_1}$ hold and $n\geq C_{\rm gap} p_1$ for some large $C_{\rm gap}$, we have
	\begin{equation*}
	\begin{split}
	\sigma_{\min}\left(\hat{U}_{X1} \hat{\Sigma}_X \hat{U}_{X1}^{\intercal}\right) \geq \sigma_{\min}(\hat{\Sigma}_X) \geq \sqrt{n} - C\sqrt{p_1}, \quad\text{(since $\hat{U}_{X1}$ is orthogonal.)}
	\end{split}
	\end{equation*}
	\begin{equation*}
	\begin{split}
	\left\|\hat{U}_{X1} \hat{\Sigma}_X \hat{U}_{X2}^{\intercal}\right\| = \left\|\hat{U}_{X1} \left(\hat{\Sigma}_X - \sqrt{n}I\right) \hat{U}_{X2}^{\intercal}\right\| \leq C\sqrt{p_1}.
	\end{split}
	\end{equation*}
	By Lemma \ref{lm:U_svd}, the left singular space of $\hat{U}_X\hat{\Sigma}_X\left(\hat{U}^{\intercal}_X\right)_{[:, 1:r]}$, which is also the left singular vectors of $G_{[1:p_1, 1:r]}$ satisfies
	$$\sigma_{\max}^2\left(U_{G, [(r+1): n, 1:r]}\right) \leq \frac{\left\|\hat{U}_{X1} \hat{\Sigma}_X \hat{U}_{X2}^{\intercal}\right\|^2}{\sigma^2_{\min}\left(\hat{U}_{X1} \hat{\Sigma}_X \hat{U}_{X1}^{\intercal}\right) + \left\|\hat{U}_{X1} \hat{\Sigma}_X \hat{U}_{X2}^{\intercal}\right\|^2} \leq \frac{Cp_1}{n} $$
	when $n \geq C_{\rm gap}p_1$ for some large $C_{\rm gap}>0$. By the characterization of $\sin \Theta$ distance in Lemma \ref{lm:sin_Theta_distance}, we have finally proved the statement \eqref{ineq:distance_UG_US}.
	
	Since
	\begin{equation}
	\begin{split}
	& \sigma_{\min}(\G_{[1:p_1, 1:r]}) = \sigma_r\left(\hat{U}_X \hat{\Sigma}_X \hat{U}_X^{\intercal} W_{[:, 1:r]}\right)\\
	\geq &  \sigma_r\left(\hat{U}_X\hat{\Sigma}_X\hat{U}_{X1}^{\intercal}\right)\cdot \sigma_{\min}\left(W_{[:, 1:r]}\right)	\geq \sigma_{\min}(\hat{\Sigma}_X)\cdot \frac{\sigma_r(S)}{\sqrt{1-\sigma_r^2(S)}},
	\end{split}
	\end{equation}
	by \eqref{ineq:bound_Sigma_X}, \eqref{ineq:sigma_min(G)} holds.
	
	\item In this step we try to prove the following statement: there exists constant $C_{\rm gap}$, such that whenever 
	\begin{equation}\label{ineq:CCA_step7_condition}
	\sigma_{r}^2(S) \geq C_{\rm gap}\frac{(p_1p_2)^{1\over 2} + p_1 + p_2^{3/2}n^{-{1\over 2}}}{n},
	\end{equation}
	we have
	\begin{equation}\label{ineq:CCA_distance2}
	\P\left(\left\|\sin\Theta(\hat{U}_{S,[1:r, :]}, U_G)\right\|^2 \leq  \frac{Cp_1\left(n\sigma_r^2(S) + p_2\right)}{n^2\sigma_{r}^4(S)}\right) \geq 1 - C\exp(-cp_1\wedge p_2).
	\end{equation}
	We shall again note that by $1 \geq \sigma_r^2(S)$, the condition \eqref{ineq:CCA_step7_condition} also implies $n\geq C_{\rm gap}p_1$. Recall \eqref{ineq:sigma_min(G)}, with probability at least $1 - C\exp(-cp_1)$,
	\begin{equation}\label{ineq:sigma_r^2(G) = t^2}
	%\P\left(\sigma_r^2(G) \geq \frac{cn\sigma_r^2(S)}{1-\sigma_r^2(S)} \right) \leq C\exp(-cp_1).
	\sigma_r^2(\G) \geq \frac{cn\sigma_r^2(S)}{1-\sigma_r^2(S)} \geq cn\sigma_r^2(S) \geq cC_{\rm gap}\left((p_1p_2)^{1\over 2} + p_1 + p_2^{3/2}n^{-{1\over 2}}\right).
	\end{equation}
	Conditioning on $G$ with $\sigma_r^2(G)$ satisfies \eqref{ineq:sigma_r^2(G) = t^2}, set $L=\tilde{Y}^{\intercal}$, $n = n$, $p = p_2$, $d = p_1$, $r = r$, Lemma \ref{lm:CCA} leads to the following result 
	\begin{equation*}
	\begin{split}
	& \P\left(\left\|\sin\Theta(\hat{U}_{S,[:, 1:r]}, U_G)\right\|^2 \leq \frac{Cp_1(\sigma_r^2(\G) + p_2) (1+\sigma_r^2(\G)/n)}{\sigma_r^4(\G)} \Bigg |\G \right)\\
	\geq & 1 - C\exp(-cp_1\wedge p_2) .
	\end{split}
	\end{equation*}
	whenever $\sigma_r^2(\G) \geq C_{\rm gap} \left((p_1p_2)^{1\over 2} + p_1 + p_2^{3/2} n^{-{1\over 2}}\right)$ for uniform constant $C_{\rm gap}$ large enough. Then we shall also note that
	\begin{equation*}
	\begin{split}
	& \|S\|\leq 1, \Rightarrow \sigma_r^2(S) \leq 1\\
	\Rightarrow &  \frac{Cp_1\left(n\sigma_r^2(S) + p_2\right)}{n^2\sigma_{r}^4(S)} \geq \frac{Cp_1\left(n\sigma_r^2(S) + p_2\right)(1 + n\sigma_r^2(S)/n)}{n^2\sigma_{r}^4(S)}\\
	\overset{\eqref{ineq:sigma_r^2(G) = t^2}}{\Rightarrow} &\quad  \frac{Cp_1\left(n\sigma_r^2(S) + p_2\right)}{n^2\sigma_{r}^4(S)} \geq \frac{Cp_1(\sigma_r^2(\G) + p_2) (1+\sigma_r^2(\G)/n)}{\sigma_r^4(\G)}.
	\end{split}
	\end{equation*}
	The two inequalities above together implies the statement \eqref{ineq:CCA_distance2} holds for true.
	
\end{enumerate}
Finally, combining \eqref{ineq:CCA-sin-theta-split}, \eqref{ineq:distance_UG_US}, \eqref{ineq:CCA_distance2}, \eqref{ineq:hat_X-I sp}, \eqref{ineq:hat_X-I F} and \eqref{ineq:CCA_loss}, we have completed the proof of Theorem \ref{th:CCA}.
\end{proof}

\ \par

The following Lemmas \ref{lm:CCA}, \ref{lm:U_svd} and \ref{lm:random_projection} are used in the proof of Theorem \ref{th:CCA}. To be specific, Lemma \ref{lm:CCA} provides a $\sin \Theta$ upper bound for the singular vectors of a sub-matrix; Lemma \ref{lm:random_projection} gives both upper and lower bounds for the singular values of a sub-matrix; Lemma \ref{lm:random_projection} propose a spectral norm upper bound for any matrix after a random projection.

\begin{Lemma}\label{lm:CCA}
	Suppose $L\in \mathbb{R}^{n\times p}$ ($n>p$) is a non-central random matrix such that $L = G+Z$, $Z\overset{iid}{\sim}N(0, 1)$, $G$ is all zero except the top left $d\times r$ block ($d\geq r$). Suppose the SVD for $G_{[1:d, 1:r]}, L$ are 
	$$G_{[1:d, 1:r]} = U_G \Sigma_G V_G^{\intercal},\quad U_G\in \mathbb{O}_{d, r}, \Sigma_G \in \mathbb{R}^{r\times r}, V_G\in \mathbb{O}_{r};$$
	$$L = \hat U\hat\Sigma \hat V^{\intercal},\quad \hat U\in \mathbb{O}_{n, p}, \hat \Sigma \in \mathbb{R}^{p\times p}, \hat V \in \mathbb{O}_{p}.$$ 
	In addition, suppose $r < d < n$, the SVD for $\hat U_{[1:d, :]}$ is $\hat U_{[1:d, :]} = \hat U_2\hat{\Sigma}_2 \hat V_2^{\intercal}$. There exists $C_{\rm gap}, C_0>0$ such that whenever
	\begin{equation}\label{ineq:lm_CCA_assumption}
	\sigma^2_r(G) = t^2 > C_{\rm gap}((pd)^{1\over 2} + d + p^{3/2}n^{-{1\over 2}}),\quad n \geq C_0 p,
	\end{equation} 
	we have
	\begin{equation}
	\|\sin\Theta(\hat U_{2,[:, 1:r]}, U_G)\| \leq \frac{C\sqrt{d(t^2+p)(1+t^2/n)}}{t^2}
	\end{equation}
	with probability at least $1 - C\exp(-cd\wedge p)$.
\end{Lemma}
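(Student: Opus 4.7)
The plan is to apply Proposition \ref{lm:perturbation} to $A = \hat U_{[1:d,:]}^\intercal \in \mathbb{R}^{p\times d}$ with trial subspace $W = U_G$. Since the right singular vectors of $A$ coincide with the left singular vectors of $\hat U_{[1:d,:]}$, Proposition \ref{lm:perturbation} gives
\[
\|\sin\Theta(\hat U_{2,[:,1:r]}, U_G)\| \;\leq\; \frac{\sigma_r(\hat U_{[1:d,:]}^\intercal U_G)\,\|\bbP_{\hat U_{[1:d,:]}^\intercal U_G}\,\hat U_{[1:d,:]}^\intercal U_{G,\perp}\|}{\sigma_r^2(\hat U_{[1:d,:]}^\intercal U_G) - \sigma_{r+1}^2(\hat U_{[1:d,:]})}.
\]
The structural identity to exploit is $H := \hat U_{[1:d,:]}\hat U_{[1:d,:]}^\intercal = L_{[1:d,:]}(L^\intercal L)^{-1}L_{[1:d,:]}^\intercal$, which follows from the SVD $L = \hat U\hat\Sigma\hat V^\intercal$ and rewrites the three quantities in the bound in terms of $L = G+Z$ directly.

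Write $L_{[1:d,:]} = \tilde G + Z_{[1:d,:]}$ with $\tilde G = U_G\Sigma_G V^\intercal$, where $V\in\mathbb{R}^{p\times r}$ is the right singular-vector matrix of $\tilde G$ (obtained by appending $(p-r)$ rows of zeros to $V_G$). Applying Sherman--Morrison--Woodbury to $L^\intercal L = Z^\intercal Z + \tilde G^\intercal\tilde G + (\mathrm{cross})$, together with the Marchenko--Pastur estimate $Z^\intercal Z = nI_p + O(\sqrt{np})$, yields the leading-order expansion
\[
(L^\intercal L)^{-1} \;=\; \tfrac{1}{n}I_p \;-\; \tfrac{1}{n^2}V\bigl(\Sigma_G^{-2} + \tfrac{1}{n}I_r\bigr)^{-1}V^\intercal \;+\; (\text{higher-order noise}).
\]
Substituting this into $H$ shows that the top-$r$ eigenstructure of $H$ is aligned with $U_G$, with principal eigenvalues near $\tfrac{p}{n}+\tfrac{t^2}{n+t^2}$, bulk eigenvalue near $\tfrac{p}{n}$, and hence a spectral gap of order $\tfrac{t^2}{n+t^2}$; the off-diagonal block $U_G^\intercal H U_{G,\perp}$ is small because the pure signal contribution vanishes ($\tilde G^\intercal U_{G,\perp}=0$), so only Gaussian noise and the Woodbury correction enter that block.

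The required fluctuation bounds are obtained via the $\varepsilon$-net argument (Lemma \ref{lm:epsilon_net}) together with sub-Gaussian tail inequalities for Gaussian bilinear and quadratic forms, in the same spirit as Lemma \ref{lm:denoising}. The three summands in the condition $t^2 \geq C_{\rm gap}((pd)^{1/2}+d+p^{3/2}n^{-1/2})$ correspond exactly to ensuring that the signal dominates the three leading fluctuations: $(pd)^{1/2}$ controls the signal--noise cross term $\tilde G(L^\intercal L)^{-1}Z_{[1:d,:]}^\intercal$; $d$ controls the bulk Gaussian contribution $Z_{[1:d,:]}(L^\intercal L)^{-1}Z_{[1:d,:]}^\intercal - \tfrac{p}{n}I_d$; and $p^{3/2}n^{-1/2}$ controls the higher-order Woodbury error in the expansion above. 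Plugging the resulting estimates into Proposition \ref{lm:perturbation} and simplifying yields the claimed bound.

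The hardest step is the sharp control of $\|U_G^\intercal H U_{G,\perp}\|$: a trivial bound via $\|H\|\leq 1$ gives only a constant, far too weak. The sharp estimate requires carefully tracking how the Gaussian noise $Z_{[1:d,:]}$ interacts with the Sherman--Morrison--Woodbury correction to $(L^\intercal L)^{-1}$ along the $r$ signal directions, since a direct approximation $(L^\intercal L)^{-1}\approx \tfrac{1}{n}I_p$ would lose the $(1+t^2/n)$ factor in the final bound. Extracting this exact dependence -- as opposed to a cruder polynomial-in-$n,t$ factor -- is the technical heart of the argument.
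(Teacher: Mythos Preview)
Your overall framework — applying Proposition \ref{lm:perturbation} with $A = \hat U_{[1:d,:]}^{\intercal}$ and $W = U_G$ — is exactly what the paper does (after the WLOG reduction $U_G = [I_r;0]$). The difference is entirely in how the three inputs $\sigma_r(AW)$, $\sigma_{r+1}(A)$, and $\|\bbP_{AW}AW_\perp\|$ are bounded.

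The paper does \emph{not} expand $(L^{\intercal}L)^{-1}$ at all. Its key observation is that rows $(r+1)\!:\!n$ of $L$ are pure i.i.d.\ Gaussian noise, so the distribution of $L$ — and therefore of $\hat U$ — is invariant under left-multiplication of those rows by any orthogonal matrix in $\mathbb{O}_{n-r}$. Consequently $\hat U_{[(r+1):d,:]}$ has the same distribution as $R^{\intercal}\hat U_{[(r+1):n,:]}$ for a Haar-random $R\in\mathbb{O}_{n-r,d-r}$ independent of $\hat U$. This converts the two hard quantities, $\|\hat U_{[(r+1):d,:]}\|$ and $\|\hat U_{[(r+1):d,:]}\bbP_{\hat U_{[1:r,:]}^{\intercal}}\|$, into random-projection bounds (Lemma~\ref{lm:random_projection}), while $\sigma_{\min}(\hat U_{[1:r,:]})$ is handled by the deterministic block identity in Lemma~\ref{lm:U_svd}. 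No Woodbury, no tracking of higher-order corrections, and no dependence on $\sigma_1(G)$ ever enters.

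Your Woodbury route is not obviously wrong, but as written it has a real gap. You apply the rank-$r$ Woodbury update only to $G^{\intercal}G$, pushing the cross terms $G^{\intercal}Z + Z^{\intercal}G$ into ``higher-order noise.'' But $\|G^{\intercal}Z\|$ scales like $\sigma_1(G)\sqrt{p}$, and the lemma imposes no upper bound on $\sigma_1(G)$; so these terms are not perturbative in the stated regime. The clean fix is to decompose $L^{\intercal}L = Z_{[(r+1):n,:]}^{\intercal}Z_{[(r+1):n,:]} + L_{[1:r,:]}^{\intercal}L_{[1:r,:]}$ and apply Woodbury to this rank-$r$ PSD update instead — then all signal-dependent terms are absorbed and the needed cancellations (including the one you flag as ``hardest'') become tractable. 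Even so, carrying this through to the sharp bound with the correct $(1+t^2/n)$ factor is a substantial calculation; the paper's rotational-invariance trick sidesteps the entire computation.
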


\begin{Lemma}[Spectral Bound for Partial Singular Vectors]\label{lm:U_svd}
	Suppose $L\in \mathbb{R}^{n\times p}$ with $n>p$, $L = U\Sigma V^{\intercal}$ is the SVD with $U\in \mathbb{O}_{n\times p}, \Sigma \in \mathbb{R}^{p\times p}, V\in \mathbb{O}_{p}$. Then for any subset $\Omega \subseteq\{1,\cdots, n\}$,
	\begin{equation}
	\sigma_{\max}^2(U_{[\Omega, :]}) \leq \frac{\sigma_{\max}^2(L_{[\Omega, :]})}{\sigma_{\max}^2(L_{[\Omega, :]}) + \sigma_{\min}^2(L_{[\Omega^c, :]})},
	\end{equation}
	\begin{equation}
	\sigma_{\min}^2(U_{[\Omega, :]}) \geq \frac{\sigma_{\min}^2(L_{[\Omega, :]})}{\sigma_{\min}^2(L_{[\Omega, :]}) + \sigma_{\max}^2(L_{[\Omega^c, :]})}.
	\end{equation}
\end{Lemma}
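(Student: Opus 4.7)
The plan is to exploit the orthogonality $U^\intercal U = I_p$, which forces the complementary row blocks of $U$ to have spectra that add to one, and then transfer the spectral information from $U$ to $L$ via a carefully chosen test vector. Set $A := U_{[\Omega, :]}^\intercal U_{[\Omega, :]}$ and $B := U_{[\Omega^c, :]}^\intercal U_{[\Omega^c, :]}$. Since the rows of $U$ decompose as $[U_{[\Omega,:]}; U_{[\Omega^c,:]}]$, the identity $U^\intercal U = I_p$ gives $A + B = I_p$, so $A$ and $B$ are simultaneously diagonalizable and their eigenvalues on each common eigenvector sum to $1$.

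First, let $z \in \mathbb{R}^p$ be a unit eigenvector of $A$ corresponding to its largest eigenvalue $\lambda := \sigma_{\max}^2(U_{[\Omega, :]})$; then automatically $Bz = (1 - \lambda) z$. Assuming $\Sigma$ is invertible (a condition that holds for the perturbed matrix $L = \tilde{\Y}^\intercal$ of interest in the application with probability one; the rank-deficient case can be recovered by replacing $\Sigma$ by $\Sigma + \epsilon I$ and letting $\epsilon \downarrow 0$), define the test vector $x := V \Sigma^{-1} z$, so that $\Sigma V^\intercal x = z$. Then $L_{[\Omega,:]} x = U_{[\Omega,:]} z$ and $L_{[\Omega^c,:]} x = U_{[\Omega^c,:]} z$, yielding the clean identities
\begin{equation*}
\|L_{[\Omega,:]} x\|_2^2 = z^\intercal A z = \lambda, \qquad \|L_{[\Omega^c,:]} x\|_2^2 = z^\intercal B z = 1 - \lambda.
\end{equation*}

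Second, apply the elementary singular value sandwich bounds to the same $x$: $\|L_{[\Omega,:]} x\|_2^2 \leq \sigma_{\max}^2(L_{[\Omega,:]}) \|x\|_2^2$ and $\|L_{[\Omega^c,:]} x\|_2^2 \geq \sigma_{\min}^2(L_{[\Omega^c,:]}) \|x\|_2^2$. Dividing these two inequalities makes the unknown $\|x\|_2^2$ cancel and produces
\begin{equation*}
\frac{\lambda}{1-\lambda} \;\leq\; \frac{\sigma_{\max}^2(L_{[\Omega,:]})}{\sigma_{\min}^2(L_{[\Omega^c,:]})},
\end{equation*}
which rearranges directly to the claimed upper bound $\lambda \leq \sigma_{\max}^2(L_{[\Omega,:]}) / (\sigma_{\max}^2(L_{[\Omega,:]}) + \sigma_{\min}^2(L_{[\Omega^c,:]}))$. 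The lower bound on $\sigma_{\min}^2(U_{[\Omega,:]})$ is obtained by the symmetric argument, taking $z$ to be a unit eigenvector of $A$ for its smallest eigenvalue and flipping the direction of both singular-value estimates so that $\|L_{[\Omega,:]} x\|_2^2 \geq \sigma_{\min}^2(L_{[\Omega,:]}) \|x\|_2^2$ and $\|L_{[\Omega^c,:]} x\|_2^2 \leq \sigma_{\max}^2(L_{[\Omega^c,:]}) \|x\|_2^2$; the same algebraic rearrangement then delivers the second inequality.

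There is essentially no hard step here: the whole argument is a two-line variational computation once one notices the complementary projection identity $A + B = I_p$. The only subtlety is the invertibility of $\Sigma$, which is harmless since the statement is continuous in $L$ and holds trivially (both sides become $\leq 1$ or $\geq 0$) on the null directions.
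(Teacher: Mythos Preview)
Your proof is correct and follows essentially the same approach as the paper's: both arguments use the identity $U_{[\Omega,:]} = L_{[\Omega,:]} V\Sigma^{-1}$ together with the complementary relation $\|U_{[\Omega,:]}v\|_2^2 + \|U_{[\Omega^c,:]}v\|_2^2 = \|v\|_2^2$, then bound the resulting ratio via $\sigma_{\max}$ and $\sigma_{\min}$. The only cosmetic difference is that the paper writes the argument as a single variational maximization of $\|U_{[\Omega,:]}v\|_2^2/\|Uv\|_2^2$ and applies monotonicity of $a/(a+b)$, whereas you pick the extremal eigenvector explicitly and divide the two singular-value inequalities; these are algebraically equivalent, and your treatment of the invertibility of $\Sigma$ is in fact more careful than the paper's.
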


\begin{Lemma}[Spectral Norm Bound of a Random Projection]\label{lm:random_projection}
	Suppose $X \in \mathbb{R}^{n \times m}$ is a fixed matrix $\rank(X) = p$, suppose $R \in \mathbb{O}_{n \times d}$ ($n>d$) is with orthogonal columns with Haar measure. There exists uniform constant $C_0>0$ such that whenever $n \geq C_0 d$, the following bound probability hold for uniform constant $C, c>0$,
	\begin{equation}
	\P\left(\left\|R^{\intercal}X\right\|^2 \geq \frac{p + C\sqrt{pd}+Cd}{n}\|X\|\right) \leq 1- C\exp(-cd).
	\end{equation}
\end{Lemma}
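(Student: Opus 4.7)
\textbf{Proof proposal for Lemma \ref{lm:random_projection}.} My plan is to reduce the Haar-random projection to a Gaussian random matrix calculation, at which point standard concentration inequalities from random matrix theory finish the job. The starting observation is that if $G \in \mathbb{R}^{n \times d}$ has i.i.d.\ $N(0,1)$ entries, then $R := G(G^{\intercal}G)^{-1/2}$ is Haar-distributed on $\mathbb{O}_{n,d}$, so it suffices to bound $\|R^{\intercal}X\|^2$ for $R$ of this form.

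First I would write $R^{\intercal}X = (G^{\intercal}G)^{-1/2} G^{\intercal} X$, giving the submultiplicative decomposition
\begin{equation*}
\|R^{\intercal}X\| \;\leq\; \bigl\|(G^{\intercal}G)^{-1/2}\bigr\|\cdot\|G^{\intercal}X\|.
\end{equation*}
For the first factor, apply Corollary 5.35 of Vershynin (already cited in the paper) to the tall Gaussian matrix $G$: with probability at least $1 - 2\exp(-cd)$ one has $\sigma_{\min}(G) \geq \sqrt{n} - C\sqrt{d}$, so that $\|(G^{\intercal}G)^{-1/2}\|^2 \leq 1/(\sqrt{n}-C\sqrt{d})^2$. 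When $n \geq C_0 d$ for $C_0$ large enough, this is bounded by $4/n$.

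For the second factor, use $\rank(X)=p$ to write the (thin) SVD $X = U_X \Sigma_X V_X^{\intercal}$ with $U_X \in \mathbb{O}_{n,p}$, so that $\|G^{\intercal}X\| \leq \|G^{\intercal}U_X\|\cdot \|X\|$. The key structural point is the rotational invariance of the Gaussian measure: since $U_X$ has orthonormal columns, $G^{\intercal}U_X$ is distributed as a $d \times p$ i.i.d.\ standard Gaussian matrix. Invoking Corollary 5.35 once more (with deviation parameter $t = c'\sqrt{d}$) gives
\begin{equation*}
\bigl\|G^{\intercal}U_X\bigr\| \;\leq\; \sqrt{p} + \sqrt{d} + c'\sqrt{d}
\end{equation*}
with probability at least $1 - 2\exp(-cd)$, and squaring yields $\|G^{\intercal}U_X\|^2 \leq C(p + \sqrt{pd} + d)$.

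Combining the two bounds via a union bound gives
\begin{equation*}
\|R^{\intercal}X\|^2 \;\leq\; \frac{4}{n}\cdot C(p + \sqrt{pd} + d)\cdot \|X\|^2 \;\leq\; \frac{p + C\sqrt{pd} + Cd}{n}\|X\|^2
\end{equation*}
on an event of probability at least $1 - C\exp(-cd)$, matching the stated conclusion (after adjusting the constants $C,c$). There is no real obstacle here; the only care needed is to verify that $n \geq C_0 d$ with $C_0$ large enough ensures $\sqrt{n} - C\sqrt{d} \geq \sqrt{n}/2$, so that the lower tail of $\sigma_{\min}(G)$ does not dominate. Everything else is an invocation of the Gaussian tools that are already used elsewhere in the paper (e.g., in the proof of Theorem \ref{th:CCA}).
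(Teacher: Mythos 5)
Your route is genuinely different from the paper's. The paper reduces to the case of a diagonal $X$, observes that $\|R^{\intercal}X\|\le \|X\|\cdot\|R_{[1:p,:]}\|$, and then controls $\|R_{[1:p,:]}\|$ directly by noting that $\|R_{[1:p,:]}v\|_2^2$ is distributed as a ratio of chi-squared variables, applying chi-squared tail bounds and an $\varepsilon$-net. You instead use the representation $R=G(G^{\intercal}G)^{-1/2}$ with $G$ Gaussian, split $\|R^{\intercal}X\|\le\|(G^{\intercal}G)^{-1/2}\|\cdot\|G^{\intercal}U_X\|\cdot\|X\|$, and invoke off-the-shelf bounds on the extreme singular values of Gaussian matrices. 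Your decomposition is valid (in particular, $G^{\intercal}U_X$ is indeed a $d\times p$ i.i.d.\ Gaussian matrix by rotational invariance, and the union bound over the two events needs no independence), and it avoids re-deriving the $\varepsilon$-net concentration by hand.

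The one place where your write-up does not deliver the stated conclusion is the final constant bookkeeping. The lemma asserts the bound $\frac{p+C\sqrt{pd}+Cd}{n}\|X\|^2$ with coefficient exactly $1$ on the leading term $p/n$, and this sharpness is actually used downstream: in the proof of Lemma \ref{lm:CCA} the bound is compared against $\sigma_{\min}^2(\hat U_{[1:r,:]})\ge \frac{p+t^2/2}{n+t^2}$, and the argument only closes because the leading $p$ terms match and the gap is of order $t^2$. Your chain "$\|(G^{\intercal}G)^{-1/2}\|^2\le 4/n$ and $\|G^{\intercal}U_X\|^2\le C(p+\sqrt{pd}+d)$" yields a leading term of order $Cp/n$ with $C>1$, and "adjusting the constants" cannot repair that, since the constant on $p$ is not adjustable. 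The fix stays entirely within your approach: write $\sigma_{\min}^{-2}(G)\le(\sqrt n-C\sqrt d)^{-2}=n^{-1}\bigl(1+O(\sqrt{d/n})\bigr)$ and $\|G^{\intercal}U_X\|^2\le(\sqrt p+C\sqrt d)^2=p+2C\sqrt{pd}+C^2d$; multiplying these and using $p\le n$ shows that every correction to $p/n$ is of order $\sqrt{pd}/n$ or $d/n$ and hence is absorbed into the $C\sqrt{pd}/n+Cd/n$ terms. With that adjustment your proof is complete.
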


\subsection*{Proofs of Technical Lemmas}

%\subsection{Proofs for $\sin \Theta$ Properties}
\begin{proof}[Proof of Lemma \ref{lm:sin_Theta_distance}]
\ \par
\begin{itemize}
	\item {(Expressions)}
	Suppose $V^{\intercal}\hat V = A \Sigma B^{\intercal}$ is the SVD, where $\Sigma = \diag(\sigma_1, \cdots, \sigma_r)$, $A, B\in \mathbb{O}_r$. By definition of $\sin \Theta$ distances,
	\begin{equation}\label{eq:sin_theta_expression1}
	\begin{split}
	\|\sin\Theta(\hat V, V)\| = \sin (\cos^{-1}(\sigma_r)) = \sqrt{1 - \sigma_r^2} = \sqrt{1 - \sigma_{\min}^2(V^{\intercal}\hat V)},
	\end{split}
	\end{equation}
	\begin{equation}\label{eq:sin_theta_expression_F}
	\begin{split}
	& \|\sin\Theta(\hat V, V)\|_F^2 = \sum_{i=1}^r \sin^2(\cos^{-1}(\sigma_r))\\
	= & \sum_{i=1}^r (1 - \sigma_r^2) = r - \sum_{i=1}^r \sigma_i^2 = r- \|\hat V^{\intercal} V\|_F^2.
	\end{split}
	\end{equation}
	On the other hand, since $\hat{V}$, $V$, $V_{\perp}$ are orthonormal,
	\begin{equation}\label{eq:sigma_min(hat_VV)}
	\begin{split}
	& \sigma_{\min}^2(\hat{V}^{\intercal}V) = \min_{x \in \mathbb{R}^r} \frac{\|\hat{V}^{\intercal} Vx\|_2^2}{\|x\|_2^2}\\
	= & \min_{x \in \mathbb{R}^r} \frac{\|Vx\|_2^2 - \|\hat{V}^{\intercal}_{\perp} Vx\|_2^2}{\|x\|_2^2} = 1 - \max_{x\in \mathbb{R}^r} \frac{\|\hat{V}_{\perp}Vx\|_2^2}{\|x\|_2^2} = 1 - \|\hat{V}_{\perp}^{\intercal}V\|^2,
	\end{split}
	\end{equation}
	\begin{equation}\label{eq:hatVV_F}
	\begin{split}
	& \|\hat{V}^{\intercal}V\|_F^2 = \tr\left(\hat{V}^{\intercal}VV^{\intercal}\hat{V}\right) = \tr\left(\hat{V}\hat{V}^{\intercal}VV^{\intercal}\right)\\
	= & \tr\left(VV^{\intercal} - \hat{V}_{\perp}\hat{V}_{\perp}^{\intercal}VV^{\intercal}\right) = r - \|\hat{V}^{\intercal}_{\perp}V\|_F^2,
	\end{split}
	\end{equation}
	we conclude that $\|\sin\Theta(\hat{V}, V)\| = \|\hat{V}_{\perp}^{\intercal}V\|$, $\|\sin\Theta(\hat{V}, V)\|_F = \|\hat{V}_{\perp}^{\intercal}V\|_F$.
	
	\item {(Triangle Inequality)} Next we consider the triangle inequality under spectral norm \eqref{ineq:triangle_spectral}. Denote 
	$$x= \|\sin\Theta(V_1, V_2)\|,\quad y = \|\sin\Theta(V_1, V_3)\|.$$ 
	For each $i=1,2,3$, we can expand $V_i$ to the full orthogonal matrix as $[V_i~V_{i\perp}]\in \mathbb{O}_{p}$. Thus,
	$$\sigma_{\min}(V_1^{\intercal}V_2) \overset{\eqref{eq:sin_theta_expression1}}{=} \sqrt{1 - x^2}, \quad \sigma_{\max}^2(V_{1\perp}^{\intercal}V_2) \overset{\eqref{eq:sigma_min(hat_VV)}}{=} 1-\sigma_{\min}^2(V_1^{\intercal}V_2) = x^2.$$
	Similarly
	$$\sigma_{\min}(V_1^{\intercal}V_3) = \sqrt{1 - y^2}, \quad \sigma^2_{1}(V_{1\perp}^{\intercal} V_3) =  y^2.$$
	Thus,
	\begin{equation*}
	\begin{split}
	\sigma_{\min}(V_2^{\intercal}V_3) = & \sigma_{\min}(V_2^{\intercal}V_1V_1^{\intercal}V_3 + V_2^{\intercal}V_{1\perp}V_{1\perp}^{\intercal} V_3)\\
	\geq & \sigma_{\min} (V_2^{\intercal}V_1 V_1^{\intercal}V_3) - \sigma_{\max}(V_2^{\intercal}V_{1\perp}V_{1\perp}^{\intercal}V_3)\\
	\geq & \sigma_{\min}(V_2^{\intercal}V_1)\sigma_{\min}(V_1^{\intercal}V_3) - \sigma_{\max}(V_2^{\intercal}V_{1\perp})\cdot\sigma_{\max}(V_{1\perp}^{\intercal} V_3)\\
	\geq & \sqrt{(1-x^2)(1-y^2)} - xy.
	\end{split}
	\end{equation*}
	Therefore, 
	\begin{equation}
	\|\sin\Theta(V_2, V_3)\| \leq \sqrt{1 - \left(\sqrt{(1-x^2)(1-y^2)} - xy\right)_+^2}.
	\end{equation} 
	Now, we discuss under two situations.
	\begin{enumerate}
		\item If $\sqrt{(1-x^2)(1-y^2)} \geq xy$, we have
		\begin{equation*}
		\begin{split}
		& 1 - \left(\sqrt{(1-x^2)(1-y^2)} - xy\right)_+^2\\
		= & x^2 + y^2 - x^2y^2 +2xy\sqrt{(1-x^2)(1-y^2)} - x^2y^2 \leq (x+y)^2. 
		\end{split}
		\end{equation*}
		Thus, $\|\sin\Theta(V_2, V_3)\|\leq x+y$.
		\item If $\sqrt{(1-x^2)(1-y^2)} > xy$, we have $x^2 + y^2 >1$. Provided that $0\leq x, y\leq 1$, this implies $x+ y >1$.
		Thus, $\|\sin\Theta(V_2, V_3)\| \leq 1 \leq x+y$.
	\end{enumerate}
	To sum up, we always have \eqref{ineq:triangle_spectral}. The proof of the triangle inequality under Frobenius norm is slightly simpler. 
	\begin{equation}
	\begin{split}
	& \|\sin\Theta(V_2, V_3)\|_F\\ \overset{\eqref{eq:sin_theta_expression_F}, \eqref{eq:hatVV_F}}{=} & \|V_{2\perp}^{\intercal}V_3\|_F = \|V^{\intercal}_{2\perp} (\mathbb{P}_{V_1} +\mathbb{P}_{V_{1\perp}}) V_{3}\|_F\\
	\leq & \|V_{2\perp}^{\intercal} V_1V_1^{\intercal}V_3\|_F + \|V_{2\perp}^{\intercal} V_{1\perp}V^{\intercal}_{1\perp} V_3\|_F\\
	\leq & \|V_{2\perp}^{\intercal} V_1\|_F \cdot\|V_1^{\intercal}V_3\| + \|V_{2\perp}^{\intercal} V_{1\perp}\| \cdot \|V_{1\perp}^{\intercal} V_3\|_F\\
	\leq & \|V_{2\perp}^{\intercal}V_1\|_F + \| V_{1\perp}^{\intercal} V_3\|_F\leq \|\sin\Theta(V_1, V_3)\|_F + \|\sin\Theta(V_1, V_2)\|_F.
	\end{split}
	\end{equation}
	\item {(Equivalence with Other Metrics)} Since all metrics mentioned in Lemma \ref{lm:sin_Theta_distance} is rotation invariant, i.e. for any $J\in \mathbb{O}_{p}$, $\sin\Theta(\hat{V}, V) = \sin\Theta(J\hat{V}, JV)$, so does the other metrics. Without loss of generality, we can assume that 
	$$V = \begin{bmatrix}
	I_r\\
	0_{(p-r)\times r}
	\end{bmatrix}_{p\times r}$$
	In this case,
	\begin{equation*}
	\begin{split}
	D_{\rm sp}(\hat{V}, V) = \inf_{O\in \mathbb{O}_r} \|\hat{V} - VO\| \geq \|\hat{V}_{[(r+1):p, :]}\| = \|V_{\perp}^{\intercal}\hat{V}\| = \|\sin\Theta(\hat{V}, V)\|.
	\end{split}
	\end{equation*}
	Recall $V^{\intercal}\hat{V} = A\Sigma B^{\intercal}$ is the singular decomposition. 
	\begin{equation*}
	\begin{split}
	& D_{\rm sp}(\hat{V}, V) = \inf_{O\in \mathbb{O}_r} \|\hat{V} - VO\| \leq \|\hat{V} - VAB^{\intercal}\| \\
	\leq & \sqrt{\left\|V^{\intercal}\left(\hat{V} - VAB^{\intercal}\right)\right\|^2 + \left\|V_{\perp}^{\intercal}\left(\hat{V} - VAB^{\intercal}\right)\right\|^2}\\
	= & \sqrt{\|A(\Sigma - I_r)B^{\intercal}\|^2 + \|\hat{V}_{\perp}^{\intercal}\hat{V}\|^2} \leq \sqrt{(1-\sigma_r)^2 + \|\sin\Theta(\hat{V}, V)\|^2}\\
	\leq & \sqrt{1 - \sigma_r^2 + \|\sin\Theta(\hat{V}, V)\|^2}\overset{\eqref{eq:sin_theta_expression1}}{\leq} \sqrt{2}\|\sin\Theta(\hat{V}, V)\|.
	\end{split}
	\end{equation*}
	Similarly, we can show $\|\sin\Theta(\hat{V}, V)\|_F\leq D_{F}(\hat{V}, V) \leq \sqrt{2}\|\sin\Theta(\hat{V}, V)\|_F$.
	
	For $\|\hat{V}\hat{V}^{\intercal} - VV^{\intercal}\|$, one can show
	\begin{equation*}
	\begin{split}
	& \left\|\hat{V}\hat{V}^{\intercal} - VV^{\intercal}\right\| \geq \left\|V_{\perp}^{\intercal}\hat{V}\hat{V}^{\intercal} - V_{\perp}^{\intercal}VV^{\intercal}\right\|\\
	= & \|(V_{\perp}^{\intercal}\hat{V})\hat{V}^{\intercal} \| = \|V_{\perp}^{\intercal}\hat{V}\| = \|\sin\Theta(\hat{V}, V)\| \quad \text{since $V$ is orthonormal}.
	\end{split}
	\end{equation*}
	Besides,
	\begin{equation*}
	\begin{split}
	& \hat{V}\hat{V}^{\intercal} - VV^{\intercal} = \left(\bbP_{V} + \bbP_{V_{\perp}}\right)\hat{V}\hat{V}^{\intercal}\left(\bbP_{V} + \bbP_{V_{\perp}}\right) - VV^{\intercal}\\
	= & V \left((V^{\intercal}\hat{V})(V^{\intercal}\hat{V})^{\intercal} - I_r\right)V^{\intercal}  + V_{\perp} V_{\perp}^{\intercal}\hat{V} \hat{V}^{\intercal}VV^{\intercal}\\
	& +  VV^{\intercal}\hat{V} \hat{V}^{\intercal}V_{\perp}V_{\perp}^{\intercal} + V_{\perp} V_{\perp}^{\intercal}\hat{V} \hat{V}^{\intercal}V_{\perp}V_{\perp}^{\intercal}.
	\end{split}
	\end{equation*}
	For any vector $x\in \mathbb{R}^p$, we denote $x_1 = V^{\intercal}x, x_2 = V_{\perp}^{\intercal}x$. We also denote $t = \|V_{\perp}^{\intercal}\hat{V}\|$. Recall that we have proved in part 1 that $\sigma_{\min}^2(V^{\intercal}\hat{V}) = 1 - \sigma_{\max}(V_{\perp} \hat{V})$, then $1\geq \sigma_{\max}(V^{\intercal}\hat{V}) \geq \sigma_{\min}(V^{\intercal}\hat{V}) \geq \sqrt{1-t^2}$. Thus,
	\begin{equation*}
	\begin{split}
	& \left|x^{\intercal}\left(\hat{V}\hat{V}^{\intercal} - VV^{\intercal}\right)x\right|\\
	= &\Bigg| x_1^{\intercal}\left((V^{\intercal}\hat{V})(V^{\intercal}\hat{V})^{\intercal}- I_r\right)x_1 + x_2^{\intercal}\left(V_{\perp}^{\intercal}\hat{V}\right)\hat{V}^{\intercal}V x_1\\
	& + x_1^{\intercal}\left(V^{\intercal}\hat{V}\right)\hat{V}^{\intercal}V_{\perp} x_2 + x_2^{\intercal}\left(V^{\intercal}_{\perp}\hat{V}\right)\left(\hat{V}^{\intercal}V_{\perp}\right) x_2\Bigg|\\
	\leq & t^2\|x_1\|_2^2 + 2t\|x_2\|_2\|x_1\|_2 + t^2\|x_2\|_2^2\\
	\leq & \left(t^2 + t\right)\left(\|x_1\|_2^2 + \|x_2\|_2^2\right) \leq 2t\|x\|_2^2,
	\end{split}
	\end{equation*}
	%By Cauchy-Schwarz inequality, 
	%$$t\left(t + \sqrt{1 - t^2}\right) \leq t\left(\sqrt{2\left(t^2 + 1 -t^2\right)}\right) = \sqrt{2}t,$$ 
	which implies
	\begin{equation*}
	\left\|\hat{V}\hat{V}^{\intercal} - VV^{\intercal}\right\| \leq 2t  = 2\left\|\sin\Theta(\hat{V}, V)\right\|.
	\end{equation*}
	This has established the equivalence between $\|\sin\theta(\hat{V}, V)\|$ and $\|\hat{V}\hat{V}^{\intercal} - VV^{\intercal}\|$. Finally for $\|\hat{V}\hat{V}^{\intercal} - VV^{\intercal}\|_F$, one has
	\begin{equation*}
	\begin{split}
	\|\hat{V}\hat{V}^{\intercal} - VV^{\intercal}\|_F^2 = & \tr\left((\hat{V}\hat{V}^{\intercal} - VV^{\intercal})^2\right)\\
	= & \tr\left(\hat{V}\hat{V}^{\intercal}\hat{V}\hat{V}^{\intercal} + VV^{\intercal}VV^{\intercal} - \hat{V}\hat{V}^{\intercal}VV^{\intercal} - VV^{\intercal}\hat{V}\hat{V}^{\intercal}\right)\\
	= & r + r - 2\|\hat{V}^{\intercal} V\|_F^2 = 2\|\sin\Theta(\hat{V}, V)\|_F^2.
	\end{split}
	\end{equation*}
\end{itemize}
\end{proof}

\ \par

\begin{proof}[Proof of Lemma \ref{lm:X_Y_sv}] 
	Before starting the proof, we introduce some useful notations. For any matrix $M\in \mathbb{R}^{p_1\times p_2}$ with the SVD $M = \sum_{i=1}^{p_1\wedge p_2}u_i \sigma_i(M) v_i^{\intercal}$ we use $M_{\max(r)}$ to denote its leading $r$ principle components, i.e. $M_{\max(r)} = \sum_{i=1}^{r} u_i \sigma_i(M) v_i^{\intercal}$; $M_{-\max(r)}$ denotes the remainder, i.e. 
	$$M_{-\max(r)} = \sum_{i=r+1}^{p_1\wedge p_2} u_i\sigma_i(M) v_i^{\intercal}.$$

\begin{enumerate}
	\item First, by a well-known fact about best low-rank matrix approximation,
	$$\sigma_{a+b+1-r} (X+Y) = \min_{M\in \mathbb{R}^{p\times n}, \rank(M) \leq a+b-r }\|X+Y - M\|. $$
	Hence,
	\begin{equation*}
	\begin{split}
	& \sigma_{a+b+1-r}(X+Y) \leq \|X+Y - (X_{\max(a-r)} + Y)\|\\
	= & \|X_{-\max(a-r)}\| = \sigma_{a+1-r}(X); 
	\end{split}
	\end{equation*}
	similarly $\sigma_{a+b+1-r}(X+Y) \leq \sigma_{b+1-r}(Y)$. 
	
	\item When we further have $X^{\intercal}Y=0$ or $XY^{\intercal} = 0$, without loss of generality we can assume $X^{\intercal} Y = 0$. Then the column space of $X$ and $Y$ are orthogonal, and $\rank(X+Y) = \rank(X) + \rank(Y) = a+b$, which means $a+b \leq n$. Next, note that
	$$(X+Y)^{\intercal}(X+Y) = X^{\intercal}X+Y^{\intercal}Y + X^{\intercal}Y + Y^{\intercal}X = X^{\intercal}X + Y^{\intercal}Y,$$
	if we note $\lambda_i(\cdot)$ as the $r$-th largest eigenvalue of the matrix, then we have
	\begin{equation*}
	\begin{split}
	& \sigma^2_i(X+Y)\\
	= & \lambda_i((X+Y)^{\intercal}(X+Y)) = \lambda_i(X^{\intercal}X + Y^{\intercal}Y)\\
	\geq & \max(\lambda_i(X^{\intercal}X), \lambda_i(Y^{\intercal}Y)) \quad \text{(since $X^{\intercal}X, Y^{\intercal}Y$ are semi-positive definite)}\\
	= & \max(\sigma_i^2(X), \sigma_i^2(Y)).
	\end{split}
	\end{equation*}
	\begin{equation*}
	\begin{split}
	\sigma_1^2(X+Y)  = & \lambda_1 ((X+Y)^{\intercal} (X+ Y)) = \|X^{\intercal} X + Y^{\intercal} Y\|\\
	\leq & \|X^{\intercal} X\| + \|Y^{\intercal} Y\| = \sigma_1^2(X) + \sigma_1^2(Y).
	\end{split}
	\end{equation*}
\end{enumerate}
\end{proof}

\begin{proof}[Proof of Lemma \ref{lm:2-by-2}]	
	
	\begin{enumerate}
	\item If $A = \begin{bmatrix}
	a & b\\
	0 & d
	\end{bmatrix}$ and $a^2 \leq b^2 + d^2$,
	$$A^\intercal A = \begin{bmatrix}
	a^2 & ab\\
	ab & b^2 + d^2
	\end{bmatrix} $$
	We can solve the two eigenvalues of $A^\intercal A$ are
	$$\{\lambda_1, \lambda_2\} = \frac{a^2 + b^2+d^2 \pm \sqrt{(b^2 + d^2 - a^2)^2 + 4a^2b^2}}{2}. $$
	$$\lambda_2 \geq \frac{a^2 + b^2 + d^2 - (b^2 + d^2 - a^2) - 2ab}{2} = a(a-b). $$
	By the definition of singular vectors, we have $(A^{\intercal}A - \lambda_2I_2)\begin{bmatrix}
	v_{12}\\
	v_{22}
	\end{bmatrix} = 0$. Thus, 
	$$(a^2 -\lambda_2 )v_{12} + ab v_{22}=0 $$
	Given $v_{12}^2 + v_{22}^2 = 1$, we have
	\begin{equation*}
	\begin{split}
	|v_{12}| = & \frac{ab}{\sqrt{(a^2-\lambda_2)^2 + a^2b^2}}\\ 
	\geq & \frac{ab}{\sqrt{(a^2 - a(a-b))^2 + a^2b^2}} = \frac{ab}{\sqrt{2a^2b^2}} = \frac{1}{\sqrt{2}}.
	\end{split}
	\end{equation*}
	\item 
	If $A = \begin{bmatrix}
	a & b\\
	c & d
	\end{bmatrix}$, 
$$A^{\intercal}A = \begin{bmatrix}
a^2+c^2 & ab+cd\\
ab+cd & b^2+d^2
\end{bmatrix}, $$
If its eigenvalues are $\lambda_1 \geq \lambda_2$, clearly $\lambda_1+ \lambda_2 = a^2+b^2+c^2+d^2$, $\lambda_1\lambda_2 = (ad-bc)^2$. We can solve that its eigen-values $\lambda_1, \lambda_2$ of $A^\intercal A$ are
\begin{equation}
\begin{split}
\{\lambda_1, \lambda_2\} = & \frac{a^2+b^2+c^2+d^2 \pm \sqrt{(a^2+b^2+c^2+d^2)^2 - 4(ad-bc)^2}}{2}\\
= & \frac{a^2+b^2+c^2+d^2 \pm \sqrt{(a^2+c^2 - b^2-d^2)^2 + 4(ab-cd)^2}}{2}
\end{split}
\end{equation}
Thus, $\lambda_2 = \frac{a^2+b^2+c^2+d^2 - \sqrt{(a^2+c^2 - b^2-d^2)^2 + 4(ab-cd)^2}}{2} \leq \frac{a^2+b^2+c^2+d^2 - (a^2+c^2-b^2-d^2)}{2} = b^2 + d^2$. Also,
$$\lambda_2 \geq \frac{a^2+b^2+c^2+d^2 -(a^2+c^2-b^2-d^2) - 2|ab-cd|}{2} \geq b^2 + d^2 - |ab-cd|. $$
Thus,
\begin{equation*}
\begin{split} 
& a^2+c^2 - \lambda_2 \leq a^2 + c^2 - b^2-d^2 + |ab-cd|\\
\leq & 2(a^2 - b^2-d^2) -(a^2 - b^2-c^2-d^2) + |ab+cd|\\
\leq & 2(a^2 -d^2 - b^2\wedge c^2) + |ab+cd|
\end{split}
\end{equation*}
By the definition of singular vectors, we have $(A^{\intercal}A - \lambda_2I_2)\begin{bmatrix}
v_{12}\\
v_{22}
\end{bmatrix} = 0$. Thus, 
$$(a^2 +c^2 -\lambda_2 )v_{12} + (ab+cd) v_{22}=0 $$
Given $v_{12}^2 + v_{22}^2 = 1$, we have
\begin{equation*}
\begin{split}
|v_{12}| = & \frac{ab+cd}{\sqrt{(a^2+c^2-\lambda_2)^2 + (ab+cd)^2}}\\ \geq & \frac{ab+cd}{\sqrt{(2(a^2-d^2-b^2\wedge c^2) + |ab+cd|)^2 + (ab+cd)^2}}\\
\geq & \frac{ab+cd}{\sqrt{4(a^2-d^2-b^2\wedge c^2)^2 + 4(a^2 - d^2-b^2\wedge c^2)|ab+cd| + 2(ab+cd)^2}}\\
\geq & \frac{ab+cd}{\sqrt{10\max\left\{(a^2 - d^2 - b^2\wedge c^2)^2, (ab+cd)^2\right\}}}\\
\geq & \frac{1}{\sqrt{10}}\left(\frac{ab+cd}{a^2 - d^2 - b^2\wedge c^2}\wedge 1\right).
\end{split}
\end{equation*}
This finished the proof of the second part of Lemma \ref{lm:2-by-2}.
\end{enumerate}
\end{proof}

\ \par

\begin{proof}[Proof of Lemma \ref{lm:denoising}] Suppose the SVD of $X$ is $X = U\Sigma V^{\intercal}$, where $U\in \mathbb{O}_{p_1, r}, V\in \mathbb{O}_{p_2, r}, \Sigma = \diag(\sigma_1, \cdots, \sigma_r)\in \mathbb{R}^{r\times r}$. We can extend $V\in \mathbb{O}_{p_2, r}$ into the full $p_2\times p_2$ orthogonal matrix 
$$[V~ V_\perp] = V_0\in \mathbb{O}_{p_2}, \quad V_{\perp}\in \mathbb{O}_{p_2, p_2-r}.$$ 
For convenience, denote $YV \triangleq Y_1$. 
Since,
\begin{equation}\label{eq:EYTY}
EY^{\intercal}Y = X^{\intercal}X + p_1I_{p_2} = V\Sigma^2V^{\intercal} + p_1I_{p_2}, \quad EV^{\intercal}Y^{\intercal}YV = \Sigma^2 + p_1I_{p_2},
\end{equation} 
we introduce fixed normalization matrix $M\in \mathbb{R}^{p_2\times r}$ as
$$M = \begin{bmatrix}
(\sigma_1^2(X) + p_1)^{-{1\over 2}} & & \\
& \ddots & \\
&  & (\sigma_r^2(X)+p_1)^{-{1\over 2}}
\end{bmatrix}_{r\times r}.%\quad M_\ast  = p^{-{1\over 2}}_1 I_{p_2-r},
$$ 
%$$M_0 = \begin{bmatrix}
%M & M_\ast
%\end{bmatrix} = \begin{bmatrix}
%(\sigma_1^2(X) + p_1)^{-{1\over 2}} & & &\\
%& \ddots & & \\
%&  & (\sigma_r^2(X)+p_1)^{-{1\over 2}}\\
%& & & p_1^{-{1\over 2}} I_{p_2-r}
%\end{bmatrix}. $$
By \eqref{eq:EYTY}, this design yields 
\begin{equation*}
M^{\intercal}EY_1^{\intercal}Y_1 M = I_r.
\end{equation*}
In other words, by right multiplying $M$ to $Y_1$, we can normalize its second moment. Now we are ready to show \eqref{ineq:sigma(YV)}, \eqref{ineq:sigma(Y)} and \eqref{ineq:P_YV(YV_perp)}.
\begin{enumerate}
	
	\item We target on \eqref{ineq:sigma(YV)} in this step. Note that the maximum diagonal entry of $M$ is $(\sigma_r^2(X) + p_1)^{-{1\over 2}}$, thus 
	\begin{equation*}
	\begin{split}
	\sigma_r^2(Y_1) \geq & (\sigma_r^2(X) + p_1)\sigma_r^2(Y_1M) \geq (\sigma_r^2(X) + p_1)\sigma_r(M^{\intercal}Y_1^{\intercal}Y_1M)\\
	\geq & (\sigma_r^2(X)+p_1) - (\sigma_r^2(X) +p_1)\left\| M^{\intercal}Y_1^{\intercal}Y_1M - I_r \right\|,
	\end{split}
	\end{equation*}
	\eqref{ineq:sigma(YV)} could be implied by 
	\begin{equation}\label{ineq:MVYYVM_probability}
	\P\left(\left\|M^{\intercal}Y_1^{\intercal}Y_1M - I_r\right\| \leq x \right) \geq 1 - C\exp\left\{Cr - c(\sigma_r^2(X)+p_1)x\wedge x^2\right\}.
	\end{equation}
	The main idea to proceed is to use $\varepsilon$-net to split the spectral norm deviation control to single random variable deviation control. Then use Hanson-Wright inequality to control the single random variable. 
	To be specific, for any unit vector $u\in \mathbb{R}^{r}$, by expansion $Y_1 = YV = XV+ZV$, we have
	\begin{equation}\label{eq:YVT_expansion}
	\begin{split}
	& u^{\intercal}M^{\intercal}Y_1^{\intercal}Y_1Mu - u^{\intercal}I_ru\\
	& = u^{\intercal}M^{\intercal}Y_1^{\intercal}Y_1Mu - Eu^{\intercal}M^{\intercal}Y_1^{\intercal}Y_1Mu \\
	= & \left(u^{\intercal}M^{\intercal}V^{\intercal} X^{\intercal}XVMu - Eu^{\intercal}M^{\intercal}V^{\intercal}X^{\intercal}XVMu\right)\\
	& + \left(2u^{\intercal}M^{\intercal}V^{\intercal}X^{\intercal}ZV Mu - E2u^{\intercal}M^{\intercal}V^{\intercal}X^{\intercal}ZVMu\right)\\
	& + \left(u^{\intercal}M^{\intercal}V^{\intercal}Z^{\intercal}ZVMu - Eu^{\intercal}M^{\intercal}V^{\intercal}Z^{\intercal}ZVMu\right)\\
	= & 2(XVMu)^{\intercal} Z V(Mu) + (VMu)^{\intercal}(Z^{\intercal}Z - p_1I_{p_2})(VMu).
	\end{split}
	\end{equation}
	We shall emphasize that the only random variable in the equation above is $Z\in \mathbb{R}^{p_1\times p_2}$. Our plan is to bound the two terms in \eqref{eq:YVT_expansion} separately as follows.
	\begin{itemize}
		\item For fixed unit vector $u\in \mathbb{R}^r$, we vectorize $Z\in \mathbb{R}^{p_1\times p_2}$ into $\vec{\z}\in \mathbb{R}^{p_1p_2}$ as follows,
		$$\vec{\z} = (z_{11}, z_{12}, \cdots, z_{1p_2}, z_{21}, \cdots z_{2p_2}, \cdots, z_{p_11},\cdots, z_{p_1p_2})^{\intercal}.$$
		We also repeat the $(VMu)(VMu)^{\intercal}$ block for $p_1$ times and introduce
		$$\vec{\D}= \begin{bmatrix}
		(VMu)(VMu)^{\intercal} & &\\
		& \ddots & \\
		& & (Mu)(Mu)^{\intercal}
		\end{bmatrix}\in \mathbb{R}^{(p_1p_2)\times (p_1p_2)}. $$
		It is obvious that $(VMu)^{\intercal}(Z^{\intercal}Z-I_{p_1}) (VMu) = \vec{\z}^{\intercal}\vec{\D}\vec{\z} - E\vec{\z}^{\intercal}\vec{\D}\vec{\z}$. Besides,
		$$\|\vec{\D}\| = \|(VMu)(VMu)^{\intercal}\| = \|Mu\|^2_2 \leq \|M\|^2\|u\|_2^2 = (\sigma_r^2(X) + p_1)^{-1}, $$
		$$\|\vec{\D}\|_F^2 = p_1\|(VMu)(VMu)^{\intercal}\|_F^2 = p_1\|Mu\|_2^4 \leq p_1(\sigma_r^2(X) + p_1)^{-2}. $$
		By Hanson-Wright Inequality (Theorem 1 in \cite{rudelson2013hanson}), 
		\begin{equation*}
		\begin{split}
		& \P\left\{\left|(VMu)^{\intercal}(Z^{\intercal}Z - p_1I_{p_2})(VMn)\right| > x\right\}\\
		 = &  \P\left\{\left|\vec{\z}^{\intercal}\vec{\D}\vec{\z} - E\vec{\z}^{\intercal}\vec{\D}\vec{\z}\right| > x\right\}\\
		\leq & 2\exp\left(-c\min\left(\frac{x^2(\sigma^2_r(X)+p_1)^2}{p_1}, x(\sigma^2_r(X) + p_1)\right)\right),
		\end{split}
		\end{equation*}
		where $c$ only depends on $\tau$.
		\item Next, we bound 
		\begin{equation*}
		\begin{split}
		& (XVMu)^{\intercal} Z (VMu) = \tr(Z (VMu) (XVMu)^{\intercal})\\
		= & \vec{\z}^{\intercal}{\rm vec}(VMu(XVMu)^{\intercal}),
		\end{split}
		\end{equation*}
		where $\vec{\z}$, ${\rm vec}(VMu(XVMu)^{\intercal})$ are the vectorized $Z$, $(VMu(XVMu)^{\intercal})$. Since 
		$$XVM = U\begin{bmatrix}
		\sigma_1(X)(\sigma_1^2(X) + p_1)^{-{1\over 2}} & & \\
		& \ddots & \\
		& & \sigma_r(X)(\sigma_r^2(X) + p_1)^{-{1\over 2}}
		\end{bmatrix}, $$
		we know $\|XVM\|\leq 1$, and
		\begin{equation*}
		\begin{split}
		& \|{\rm vec}(VMu(XVMu)^{\intercal})\|_2^2 = \|(MVu)(XVMu)^{\intercal}\|_F^2\\
		= &  \|Mu\|_2^2\cdot \|XVMu\|_2^2 \leq \|M\|^2 \leq (\sigma_r^2(X) + p)^{-1}.
		\end{split}
		\end{equation*}
		By the basic property of i.i.d. sub-Gaussian random variables, we have
		$$\P\left(\left|(XVMu)^{\intercal}Z (VMu)\right| > x \right) \leq C\exp\left(-c x^2(\sigma_r^2(X) + p_1) \right), $$
		where $C, c$ only depends on $\tau$.
	\end{itemize}
	The two bullet points above and \eqref{eq:YVT_expansion} implies for any fixed unit vector $u\in \mathbb{R}^r$,
	\begin{equation}
	\begin{split}
	& \P\left(\left|u^{\intercal}M^{\intercal}Y_1^{\intercal}Y_1Mu - u^{\intercal}I_ru\right| > x \right)\leq C\exp\left(-c\left(\sigma_r^2(X) + p_1\right)x^2\wedge x\right),
	\end{split} 
	\end{equation}
	for all $x>0$. Here the $C, c$ above only depends on $\tau$. Next, the $\varepsilon$-net argument (Lemma \ref{lm:epsilon_net}) leads to
	\begin{equation}
	\P\left(\left\|M^{\intercal}Y_1^{\intercal}Y_1M - I_r\right\| > x\right) \leq C\exp\left(Cr-c\left(\sigma_r^2(X)+p_1\right)x^2\wedge x\right).
	\end{equation}
	In other words, \eqref{ineq:MVYYVM_probability} holds, which implies \eqref{ineq:sigma(YV)}.
	
	\item In order to prove \eqref{ineq:sigma(Y)}, we use the following fact about best rank-$r$ approximation of $Y$,
	$$\sigma_r(Y) = \max_{\rank(B)\leq r} \|Y - B\| \geq \|Y - Y \cdot [V ~ 0]\| = \sigma_{\max}(YV_{\perp}).$$
	to switch our focus from $\sigma_r(Y)$ to $\sigma_{\max}(YV_{\perp})$. Next, 
	\begin{equation*}
	\begin{split}
	& \sigma_{\max}^2(YV_{\perp}) = \sigma_{\max}(V_{\perp}^{\intercal}Y^{\intercal}YV_{\perp})\leq p_1 + \left\|V_{\perp}^{\intercal} Y^{\intercal} Y V_{\perp} - EV_{\perp}^{\intercal} Y^{\intercal} Y V_{\perp}\right\|.
	\end{split}
	\end{equation*}
	Note that $EV_{\perp}^{\intercal} Y^{\intercal}YV_{\perp} = p_1 I_{p_2-r}$, based on essentially the same procedure as the proof for \eqref{ineq:sigma(YV)}, one can show that
	\begin{equation*}
	\begin{split}
	\P\left(\left\|p_1^{-1}V_{\perp}Y^{\intercal}YV_{\perp} - Ep_1^{-1}V_{\perp}Y^{\intercal}YV_{\perp}\right\| \geq x\right) \leq C\exp\left\{Cp_2 - cp_1\left(x^2\wedge x\right)\right\}.
	\end{split}
	\end{equation*}
	Then we obtain \eqref{ineq:sigma(Y)} by combining the two inequalities above.
	
	\item Finally, we consider $\|\bbP_{YV}YV_\perp\|$. Since
	\begin{equation}\label{ineq:lm_ineq3_part1}
	\begin{split}
	\|\bbP_{YV} YV_{\perp}\| = & \|\bbP_{YVM}YV_{\perp}\|\\
	= & \|(YVM)((YVM)^{\intercal}(YVM))^{-1}(YVM)^{\intercal}YV_{\perp}\|\\
	\overset{\eqref{eq:singular_projection_inverse}}{\leq} & \sigma_{\min}^{-1}(YVM) \|M^{\intercal}V^{\intercal}Y^{\intercal}Y V_{\perp}\|
	\end{split}
	\end{equation}
	We analyze $\sigma_{\min}(YVM)$ and $\|M^{\intercal}V^{\intercal}Y^{\intercal} V_{\perp}\|$ separately below.
	
	Since
	$$\sigma_{\min}^2(YVM) = \sigma_{\min}(M^{\intercal}V^{\intercal}Y^{\intercal}YVM) \geq 1 - \|M^{\intercal}V^{\intercal}Y^{\intercal}YVM - I_r \|,$$
	by \eqref{ineq:MVYYVM_probability}, we know there exists $C, c$ only depending on $\tau$ such that
	$$\P\left(\sigma_{\min}^2(YVM) \geq 1 - x \right) \geq 1 - \exp\left\{Cr - c(\sigma_r^2(X)+p_1)x\wedge x^2\right\}. $$
	Set $x= 1/2$, we could choose $C_{\rm gap}$ large enough, but only depends on $\tau$, such that whenever $\sigma_r^2(X) \geq C_{\rm gap}p_2 \geq C_{\rm gap}r$, $Cr - c(\sigma_r^2(X)+p_1)x\wedge x^2 \leq - \frac{c}{8}(\sigma_r^2(X)+p_1)$.
	Under such setting,
	\begin{equation}\label{ineq:lm_ineq3_part2}
	\P\left(\sigma_{\min}^2(YVM) \geq \frac{1}{2}\right) \geq 1 - C\exp\left\{-c(\sigma_r^2(X) + p_1)\right\}. 
	\end{equation}
	
	For $\|M^{\intercal}V^{\intercal}Y^{\intercal}YV_{\perp}\|$, since $XV_{\perp} = 0$, we have the following decomposition,
	\begin{equation*}
	\begin{split}
	& M^{\intercal}V^{\intercal}Y^{\intercal}Y V_{\perp} = M^{\intercal} V^{\intercal}(X + Z)^{\intercal} (X+Z)V_{\perp}\\
	= & M^{\intercal}V^{\intercal}X^{\intercal}Z V_{\perp} + M^{\intercal}V^{\intercal}Z^{\intercal}Z V_{\perp}. 
	\end{split}
	\end{equation*}
	Follow the similar idea of the proof for \eqref{ineq:sigma(YV)}, we can show for any unit vectors $u\in \mathbb{R}^{r}, v\in \mathbb{R}^{p_2-r}$, 
	\begin{equation*}
	\begin{split}
		& \P\left(\left|u^{\intercal}M^{\intercal}V^{\intercal}X^{\intercal}ZV_{\perp} v\right| \geq x \right) \leq C\exp\left(-cx^2/\|(V_{\perp}v)(u^{\intercal}M^{\intercal}V^{\intercal}X^{\intercal})\|_F^2\right)\\
		\leq & C\exp(-cx^2). 
	\end{split}
	\end{equation*}
	\begin{equation*}
	\begin{split}
	& \P\left(\left|u^{\intercal}M^{\intercal}V^{\intercal} Z^{\intercal}Z V_{\perp} v\right| \geq x\right) = \P\left(\left|u^{\intercal}M^{\intercal}V^{\intercal} (Z^{\intercal}Z - p_1I_{p_2}) V_{\perp} v\right| \geq x\right)\\
	\leq & C\exp(-c\min(x^2, \sqrt{\sigma_r^2 + p_1}x)). 
	\end{split}
	\end{equation*}
	By the $\epsilon$-net argument again (Lemma \ref{lm:denoising}), we have
	\begin{equation}\label{ineq:lm_ineq3_part3}
	\P\left(\left\|M^{\intercal}V^{\intercal}Y^{\intercal}YV_{\perp}\right\| \geq x\right) \leq C\exp\left\{Cp_2 - c\min(x^2, \sqrt{\sigma_r^2+p_1}x)\right\}. 
	\end{equation}
	Combining \eqref{ineq:lm_ineq3_part1}, \eqref{ineq:lm_ineq3_part2} and \eqref{ineq:lm_ineq3_part3}, we obtain \eqref{ineq:P_YV(YV_perp)}.
\end{enumerate}
\end{proof}

\ \par 

\begin{proof}[Proof of Lemma \ref{lm:epsilon_net}] 
	First, based on Lemma 2.5 in \cite{vershynin2011spectral}, there exists $\varepsilon$-nets $W_L$ in $\mathbb{B}^{p_1}$, $W_R$ in $\mathbb{B}^{p_2}$, namely for any $u\in \mathbb{B}^{p_1}$, $v\in \mathbb{B}^{p_2}$, there exists $u_0\in W_L$, $v_0\in W_R$ such that $\|u_0 - u\|_2 \leq \varepsilon$, $\|v_0 - v\|_2\leq \varepsilon$ and $|W_L| \leq (1+2/\varepsilon)^{p_1}, |W_R| \leq (1+2/\varepsilon)^{p_2}$. Especially we choose $\varepsilon = 1/3$. Under the event that
$$Q = \left\{\left| u^{\intercal}Kv\right| \geq t, \forall u\in W_L, v\in W_R\right\}, $$
denote $(u^\ast, v^\ast) = \argmax_{u\in \mathbb{B}^{p_1}, v\in \mathbb{B}^{p_2}} \left|u^{\intercal} K v\right|$, $\alpha = \max_{u\in \mathbb{B}^{p_1}, v\in \mathbb{B}^{p_2}} \left|u^{\intercal} K v\right|$, then $\alpha = \|K\|$. According to the definition of $\frac{1}{3}$-net, there exists $u_0^\ast \in W_L, v_0^\ast \in W_R$ such that $\|u^\ast - u_0^\ast\|_2 \leq 1/3, \|v^\ast - v_0^\ast\|_2 \leq 1/3$. Then, 
\begin{equation*}
\begin{split}
\alpha = & \left|(u^\ast)^{\intercal}K v^\ast\right| \leq \left|(u^\ast_0)^{\intercal}Kv^\ast_0\right| + \left|(u^\ast - u_0)^{\intercal} K v_0^\ast\right| + \left|(u^\ast)^{\intercal}K(v^\ast - v_0)\right| \\
\leq & t + \|u^\ast - u_0\|_2\cdot \|K\| \cdot \|v_0^\ast\|_2 + \|u^\ast\|_2\cdot \|K\| \cdot \|v^\ast - v_0\|_2\leq t + \frac{2}{3} \alpha 
\end{split}
\end{equation*}
Thus, $\alpha \leq 3t$ when $Q$ happens. Finally, since
$$\P(Q) \leq \sum_{u\in W_L}\sum_{v\in W_R} \P\left(\left|u^{\intercal}Kv\right| \geq t\right) \leq 7^{p_1}\cdot 7^{p_2} \cdot \max_{u\in \mathbb{B}^n} \P\left(\left|u^{\intercal}Ku\right| \geq t\right), $$
which finished the proof of this lemma.
\end{proof}

\ \par

\begin{proof}[Proof of Lemma \ref{lm:lower_bound_transformation}] For each representative $P_{\phi} \in co(P_{\phi})$, we suppose $m^{(\phi)}$ is a measure on $\mathcal{P}_{\phi}$ such that
\begin{equation}\label{eq:P_phi}
P_{\phi} = \int_{\mathcal{P}_{\phi}} P_{\theta} dm^{(\phi)}. 
\end{equation}
Thus,
\begin{equation*}
\begin{split}
& \sup_{\theta\in \Theta} \E_{P_{\theta}} \left[d^2(T(\theta), \hat{\phi})\right] = \sup_{\phi \in \Phi} \sup_{\theta \in \mathcal{P}_\phi}\\
= & \E_{P_\theta}\left[d^2(\phi, \hat{\phi})\right] \geq \sup_{\phi\in \Phi} \int_{\mathcal{P}_\phi} \E_{P_\theta} \left[d^2(\phi, \hat\phi)\right]dm^{(\phi)}\\
= & \sup_{\phi\in \Phi} \int_{\mathcal{P}_\phi} \int_{\mathbb{R}^p}\left[d^2(\phi, \hat{\phi})\right]dP_{\theta} dm^{(\phi)} = \sup_{\phi\in \Phi} \int_{\mathbb{R}^p}\left[d^2(\phi, \hat{\phi})\right]\int_{\mathcal{P}_\phi}dP_{\theta}  dm^{(\phi)} \\
\overset{\eqref{eq:P_phi}}{=} & \sup_{\phi\in \Phi} \int_{\mathbb{R}^p}\left[d^2(\phi, \hat{\phi})\right]dP_{\phi} = \sup_{\phi \in \Phi} \E_{P_{\phi}}[d^2(\phi, \hat{\phi})].
\end{split}
\end{equation*}
\end{proof}

\ \par

\begin{proof}[Proof of Lemma \ref{lm:D_KL_upper}]
The direct calculation for $D(\bar{P}_{V, t}|| \bar{P}_{V', t})$ is relatively difficult, thus we detour by introducing the similar density to $\bar{P}_{V, t}$ as follows,
\begin{equation}\label{eq:tilde P_Vt}
\begin{split}
\tilde{P}_{V, t}(Y) = \int_{\mathbb{R}^{p_2\times r}} & \frac{1}{(2\pi)^{p_1p_2/2}}\exp(-\|Y - 2t WV^{\intercal}\|_F^2/2 )\\
& \cdot (\frac{p_1}{2\pi})^{p_1r/2}\exp(-p_1\|W\|_F^2/2) dW.
\end{split}
\end{equation}
We can see $\tilde{P}_{V,t}$ is another mixture of Gaussian distributions, thus it is indeed a density which sums up to 1. Since $V\in \mathbb{O}_{n, r}$, 
\begin{equation}\label{eq:VTV}
V^{\intercal}V = I_r,
\end{equation}
Denote $Y_{i\cdot}$ as the $i$-th row of $Y$. Note that $\tilde{P}_{V, t}$ can be simplified as
\begin{equation}\label{eq:tilde P_Vt expansion}
\begin{split}
\tilde{P}_{V, t} (Y)
\overset{\eqref{eq:VTV}}{=} & \frac{p_1^{p_1r/2}}{(2\pi)^{p_1(p_2+r)/2}} \int_{\mathbb{R}^{n\times r}} \exp\Big(-\frac{1}{2}\tr\Big\{YY^{\intercal} - 2tYVW^{\intercal} - 2tWV^{\intercal}Y^{\intercal}\\
& \quad\quad\quad\quad\quad + 4t^2 WW^{\intercal} + p_1WW^{\intercal} \Big\}\Big)dW\\
= & \frac{p_1^{p_1r/2}}{(2\pi)^{p_1(p_2+r)/2}} \int_{\mathbb{R}^{n\times r}} \exp\Big(-\frac{1}{2}\tr\Big\{Y(I_{p_2} - \frac{4t^2}{4t^2+p_1}VV^{\intercal})Y^{\intercal}\\
& \quad + (4t^2 + p_1)(W - \frac{2t}{4t^2+p_1}YV)(W - \frac{2t}{4t^2+p_1}YV)^{\intercal} \Big\}\Big)dW.\\
= & \frac{\left(p_1/(4t^2+p_1)\right)^{p_1r/2}}{(2\pi)^{p_1p_2/2}} \exp\left(-\frac{1}{2}\sum_{i=1}^{p_1} Y_{i\cdot} (I-\frac{4t^2}{4t^2+p_1}VV^{\intercal}) Y_{i\cdot}^{\intercal}\right).
\end{split}
\end{equation}
From the calculation above we can see $\tilde{P}_{V, t}$ is actually joint normal, i.e. when $Y\sim \tilde{P}_{V, t}$, 
$$Y_{i\cdot} \overset{iid}{\sim} N\left(0, \left(I_{p_2} - \frac{4t^2}{4t^2+p_1} VV^{\intercal}\right)^{-1}\right) = N\left(0, I_{p_2} + \frac{4t^2}{p_1}VV^{\intercal}\right), \quad i=1\cdots, p_1. $$
It is widely known that the KL-divergence between two $p$-dimensional multivariate Gaussians is
\begin{equation}\label{eq:KL_Gaussian}
\begin{split}
& D(N(\mu_0, \Sigma_0)|| N(\mu_1, \Sigma_1))\\
= & \frac{1}{2}\left(\tr\left(\Sigma_0^{-1}\Sigma_1\right) + (\mu_1-\mu_0)^{\intercal}\Sigma_1^{-1}(\mu_1 - \mu_0) - p + \log\left(\frac{\det \Sigma_1}{\det \Sigma_0}\right)\right). 
\end{split}
\end{equation}
We can calculate that for any two $V, V' \in \mathbb{O}_{p_1, r}$,
\begin{equation*}
\begin{split}
& D(\tilde{P}_{V, t}||\tilde{P}_{V', t})\\
 = & \frac{p_1}{2}\left\{\tr\left(\left(I_{p_2} - \frac{4t^2}{4t^2+p_1}VV^{\intercal}\right)\left(I_{p_2} + \frac{4t^2}{p_1}V'(V')^{\intercal}\right)\right) - p_2\right\}\\
= & \frac{p_1}{2}\Big( - \frac{4t^2}{4t^2+p_1}\tr(VV^{\intercal}) + \frac{4t^2}{p_1}\tr(V'(V')^{\intercal})\\
&  - \frac{16t^4}{p_1(4t^2+p_1)}\tr(VV^{\intercal}(V')(V')^{\intercal})\Big)\\
\overset{\eqref{eq:VTV}}{=} & \frac{16t^4}{2(4t^2+p_1)} \left(r - \|V^{\intercal}V' \|_F^2\right)\\
\overset{\text{Lemma \ref{lm:sin_Theta_distance}}}{=} & \frac{16t^4}{2(4t^2+p_1)} \|\sin\Theta(V, V')\|_F^2
\end{split}
\end{equation*}

Next, we show that $\bar P_{V, t}(Y)$ and $\tilde{P}_{V,t}(Y)$ are very close in terms of calculating KL-divergence. To be specific, we show when $C_r$ is large enough but with a uniform choice, there exists a uniform constant $c$ such that
\begin{equation}\label{ineq:barP/tildeP}
1 - 2\exp(-cp_1) \leq \frac{\bar{P}_{V, t}(Y)}{\tilde{P}_{V, t}(Y)} \leq 1 + 2\exp(-cp_1),\quad \forall Y\in \mathbb{R}^{p_1 \times p_2}.
\end{equation}
According to \eqref{eq:bar P_Vt} and \eqref{eq:tilde P_Vt expansion}, we know for any fixed $Y$,
\begin{equation}\label{eq:barP/tildeP}
\begin{split}
& \frac{\bar{P}_{V, t}(Y)}{\tilde{P}_{V, t}(Y)}\\
 = & C_{V, t} \int_{\sigma_{\min}(W)\geq \frac{1}{2}} \exp\left(- \tr(YY^{\intercal} - 2tYVW^{\intercal} -2tWV^{\intercal} Y^{\intercal} + (4t^2+p_1)WW^{\intercal})/2\right)\\
& \cdot \exp\left(\tr(Y(I-\frac{4t^2}{4t^2 + p_1}VV^{\intercal})Y^{\intercal})/2\right)\cdot \left(\frac{4t^2+p_1}{2\pi}\right)^{p_1r/2} dW\\
= & C_{V, t}\int_{\sigma_{\min}(W) \geq \frac{1}{2}} \left(\frac{4t^2+p_1}{2\pi}\right)^{p_1r/2} \exp\left(-(4t^2+p_1) \left\|W - \frac{2t}{4t^2+p_1} YV\right\|_F^2/2\right) dW\\
= & C_{V, t} \P\left(\sigma_{\min}(\tilde{W}) \geq \frac{1}{2}\Bigg| \tilde{W}\in \mathbb{R}^{p_1\times r}, \tilde{W} \sim N\left(\frac{2t}{4t^2+p_1} YV, \frac{I_{(p_1r)}}{4t^2+p_1}\right)\right).
\end{split}
\end{equation}
For fixed $Y\in \mathbb{R}^{p_1\times p_2}$, $YV\in \mathbb{R}^{p_1\times r}$, we can find $Q\in \mathbb{O}_{p_1, p_1-r}$ which is orthogonal to $YV$, i.e. $Q^{\intercal}YV = 0$. Then $Q^{\intercal}\tilde{W} \in \mathbb{R}^{(p_1-r)\times r}$ and $Q^{\intercal}\tilde{W}$ are i.i.d. normal distributed with mean 0 and variance $1 / (4t^2+p_1)$. By standard result in random matrix (e.g. Corollary 5.35 in \cite{vershynin2012introduction}), we have
\begin{equation*}
\begin{split}
& \sigma_{\min}(\tilde{W}) = \sigma_r(\tilde{W}) \geq \sigma_{r}(Q^{\intercal}\tilde{W}) = \frac{1}{\sqrt{4t^2+p_1}}\sigma_r\left(\sqrt{4t^2+p_1}Q^{\intercal}\tilde{W}\right) \\
\geq & \frac{1}{\sqrt{4t^2+p_1}} \left(\sqrt{p_1-r}-\sqrt{r} - x\right)
\end{split}
\end{equation*}
with probability at least $1 - 2\exp(-x^2/2)$. Since $t^2 \leq p_1/4$, $p_1 \geq 16r$, we further set $x = 0.078\sqrt{p_1}$, the inequality above further yields
$$\sigma_{\min}(\tilde{W}) \geq \frac{1}{\sqrt{4t^2+p_1}}(\sqrt{p_1-r}-\sqrt{r} - x) \geq \frac{\sqrt{15p_1/16} - \sqrt{p_1/16} - 0.078\sqrt{p_1}}{\sqrt{2p_1}} \geq \frac{1}{2} $$
with probability at least $1 - \exp(-cp_1)$. 

% $\sqrt{4t^2+p_1}\cdot \tilde{W} \sim N(2t/\sqrt{4t^2+p_1} YV, I_{(p_1r)})$.
%
%By the first part of Lemma \ref{lm:denoising}, there exists uniform constant $C_1>0$ such that
%$$\P\left(\sigma_{\min}\left(\sqrt{4t^2+p_1}\tilde{W}\right) \geq  p_1 - C_1(\sqrt{r}+t)\right) \leq \exp(-t^2).$$
%Given our assumption that $t \leq \frac{p_1}{4}$, we have$\sqrt{4t^2 + p_1} \geq \sqrt{5p_1/4}$. Provided that $p_1 \geq 10C_1r$, we have
%\begin{equation*}
%\begin{split}
%& \P(\sigma_{\min}(\tilde{W} \geq \frac{1}{2})) = \P\left(\sigma_{\min}(\tilde{W}\sqrt{4t^2+p_1}) \geq \frac{\sqrt{5p_1/4}}{2}\right)\\
%\geq & \P\left(\sigma_{\min}(\tilde{W}\sqrt{4t^2+p_1}) \geq \sqrt{p_1} - C_1\left(\sqrt{r} + \frac{\sqrt{p_1}}{10C_1})\right)\right) \geq 1 - \exp\left(-\frac{p_1}{10C_1}\right).
%\end{split}
%\end{equation*}
Thus, for fixed $Y$, $p_1\geq 16r$, 
\begin{equation}\label{ineq:tilde_W}
\P\left(\sigma_{\min}(\tilde{W}) \geq \frac{1}{2}\Bigg| \tilde{W}\in \mathbb{R}^{p_1\times r}, \tilde{W} \sim N\left(\frac{2t}{4t^2+p_1} YV, \frac{I_{(p_1r)}}{4t^2+p_1}\right)\right) \geq 1  -\exp(-cp_1).
\end{equation}

Recall the definition of $C_{V, t}$, we have
$$C_{V, t}^{-1} = \P\left(\sigma_{\min}(W) \geq 1/2\Big| W\in \mathbb{R}^{p_1\times r}, W\overset{iid}{\sim}N(0, 1/p_1)\right).$$ 
Also recall the assumption that $r\leq 16 p_1$, Corollary 5.35 in \cite{vershynin2012introduction} yields
$$\P(\sigma_{\min}(W) < 1/2 ) \leq \exp(-cp_1). $$
Thus, 
\begin{equation}\label{ineq:C_Vt}
1< C_{V, t} < 1+2\exp(-cp_1)
\end{equation}
Combining \eqref{eq:barP/tildeP}, \eqref{ineq:C_Vt} and \eqref{ineq:tilde_W} we have proved \eqref{ineq:barP/tildeP}. 

Finally, we can bound the KL divergence for $\bar{P}_{V, t}$ and $\bar{P}_{V', t}$ based on the previous steps.
\begin{equation*}
\begin{split}
& D(\bar{P}_{V, t}||\bar{P}_{V', t})\\
= & \int_{Y\in \mathbb{R}^{p_1\times p_2}} \bar{P}_{V, t}(Y) \log\left(\frac{\bar P_{V, t}(Y)}{\bar P_{V', t}(Y)}\right) dY\\
= & \int_{Y\in \mathbb{R}^{p_1\times p_2}} \bar{P}_{V, t}(Y) \left[\log\left(\frac{\bar P_{V, t}(Y)}{\tilde P_{V, t}(Y)}\right) + \log\left(\frac{\tilde P_{V, t}(Y)}{\tilde P_{V', t}(Y)}\right) + \log\left(\frac{\tilde P_{V', t}(Y)}{\bar P_{V', t}(Y)}\right)\right] dY\\
\leq & \int_{Y}\bar P_{V, t}(Y) \left(\log\left(\frac{\tilde{P}_{V, t}(Y)}{\tilde{P}_{V', t}(Y)}\right) + 2\log(1 + \exp(-cp_1))\right)dY\\
\leq & C\exp(-cp_1) + \int_{Y} \tilde{P}_{V, t}(Y) \log\left(\frac{\tilde{P}_{V,t}(Y)}{\tilde{P}_{V', t}(Y)}\right)dY\\
& + \int_Y \left|(\tilde{P}_{V, t}(Y) - \bar {P}_{V, t}(Y))\log\left(\frac{\tilde{P}_{V,t}(Y)}{\tilde{P}_{V', t}(Y)}\right)\right|dY\\
\leq & \frac{16t^4}{2(4t^2 + p_1)}\|\sin\Theta(V, V')\|_F^2 + C\exp(-cp_1)\\
& + \exp(-cp_1) \int_{Y} \left|\tilde{P}_{V, t}(Y)\log\left(\frac{\tilde{P}_{V,t}(Y)}{\tilde{P}_{V', t}(Y)}\right)\right|dY
\end{split}
\end{equation*}
For the last term in the formula above, we can calculate accordingly as
\begin{equation*}
\begin{split}
& \int_{Y} \left|\tilde{P}_{V, t}(Y)\log\left(\frac{\tilde{P}_{V,t}(Y)}{\tilde{P}_{V', t}(Y)}\right)\right|dY\\
= & \int_{Y} \tilde{P}_{V, t}(Y) \cdot\left|\frac{4t^2}{4t^2+p_1}\sum_{i=1}^p Y_{i\cdot} \left(VV^{\intercal} - V'(V')^{\intercal}\right)Y_{i\cdot}^{\intercal}/2\right|dY\\
\leq & E\left( \frac{4t^2}{4t^2+p_1}\sum_{i=1}^{p_1} \frac{1}{2}Y_{i\cdot}\left(VV^{\intercal} + V'(V')^{\intercal}\right)Y_{i\cdot}^{\intercal} ~\Bigg| ~ Y_{i\cdot} \sim N\left(0, \left(I_{p_2} + \frac{4t^2}{p_1}VV^{\intercal}\right)\right)\right)\\
= & p_1\cdot \frac{4t^2}{2(4t^2 + p_1)} \tr\left(\left(VV^{\intercal} + V'(V')^{\intercal}\right)\cdot \left(I_{p_2} + \frac{4t^2}{p_1}VV^{\intercal}\right)\right) \\
\leq & p_1\cdot \frac{4t^2}{2(4t^2 + p_1)} \tr\left(2V^{\intercal} \left(I_{p_2} + \frac{4t^2}{p_1} VV^{\intercal}\right)V\right)\\
= & 4t^2r \leq p_1^2.
\end{split}
\end{equation*}
Since $\exp(-cp_1)p_1^2$ is upper bounded by some constant for all $p_1 \geq 1$. To sum up there exists uniform constant $C_{KL}$ such that for all $V, V' \in \mathbb{O}_{p_2, r}$,
\begin{equation*}
D(\bar P_{V, t}|| \bar P_{V', t}) \leq \frac{16t^4}{2(4t^2+p_1)}\|\sin\Theta(V, V')\|_F^2 + C_{KL},
\end{equation*}
which has finished the proof of this lemma.
\end{proof}

\ \par

\begin{proof}[Proof of Lemma \ref{lm:CCA}] 
	First of all, since left and right multiplication for $G_{[1:r, :]}$ does not change the essence of the problem, without loss of generality we can assume that 
$$U_G = \begin{bmatrix}
1 & & \\
& \ddots & \\
& & 1 \\
0 & \cdots & 0
\end{bmatrix} \in  \mathbb{O}_{d, r}.$$ 
In such case, all non-zero entries of $G$ are zero except the top left $r\times r$ block. 

Based on the random matrix theory (Lemma \ref{lm:denoising}), we know 
\begin{equation}\label{ineq:lm_L_1:r}
\P\left(\sigma_{\min}^2(L_{[1:r, :]}) \geq t^2 + p - C\left((dp)^{1\over 2} + d\right)\right) \geq 1 - C\exp(-cd),
\end{equation}
\begin{equation}\label{ineq:lm_L_r+1:n}
\P\left(\left\|L_{[(r+1):n, :]}\right\|^2 \leq n + C\left((pn)^{1\over 2} + p\right) \right) \geq 1 - C\exp(-cp),
\end{equation}
\begin{equation}\label{ineq:lm_L_r+1:p}
\P\left(\left\|L_{[(r+1): d, :]}\right\|^2 \leq Cd \right) \geq 1 - C\exp(-cd).
\end{equation}

\begin{enumerate}
	\item First we consider $\sigma_{\min}(\hat U_{[1:r, :]})$. By Lemma \ref{lm:U_svd} and \eqref{ineq:lm_L_1:r}, \eqref{ineq:lm_L_r+1:n}, we know with probability at least $ 1- C\exp(-cd\wedge p)$.
	\begin{equation*}
	\begin{split}
	\sigma_{\min}^2 (\hat U_{[1:r, :]}) \geq & \frac{\sigma_{\min}^2(L_{[1:r, :]})}{\sigma_{\min}^2(L_{[1:r, :]}) + \sigma_{\max}^2(L_{[(r+1):p, :]})}\\
	\geq & \frac{t^2 + p - C\left((dp)^{1\over 2} + d\right)}{t^2 + p - C\left((dp)^{1\over 2} + d\right) + n + C\left((pn)^{1\over 2} + p\right)}.\\
	\end{split}
	\end{equation*}
	We target on showing under the statement given in the lemma,
	\begin{equation}\label{eq:lm_t^2_geq_p+t^2}
	\frac{t^2 + p - C\left((dp)^{1\over 2} + d\right)}{t^2 + p - C\left((dp)^{1\over 2} + d\right) + n + C\left((pn)^{1\over 2} + p\right)} \geq \frac{p+t^2/2}{t^2+n}. 
	\end{equation}
	The inequality above is implied by
	\begin{equation*}
	\begin{split}
	& \left(t^2+p - C((dp)^{1\over 2} + d)\right)(t^2+n) \geq \left(t^2+n+C\left((pn)^{1\over 2} + p\right)\right)\left(p + t^2/2\right)\\
	\Leftarrow & t^2\left(\frac{n+t^2}{2} - C\left((pn)^{1\over 2} +  p + (dp)^{1\over 2} + d\right)\right)\\
	 &  - C\left(p^{3/2}n^{1\over 2} + p^2 + (dp)^{1\over 2}n + dn\right) \geq 0.
	\end{split}
	\end{equation*}
	In fact, whenever $t^2 > C_{\rm gap}\left((dp)^{1\over 2} + d + p^{3/2}n^{-{1\over 2}}\right)$, $n > C_0 p$, we have
	\begin{equation*}
	\begin{split}
	& t^2\left(\frac{n+t^2}{2} - C\left((pn)^{1\over 2} +  p + (dp)^{1\over 2} + d\right)\right)\\
	& - C\left(p^{3/2}n^{1\over 2} + p^2 + (dp)^{1\over 2}n + dn\right)\\ \geq & t^2\left((C_{\rm gap}/2 - C)\left((dp)^{1\over 2} + d\right) + n\left(\frac{1}{2} - \frac{C}{C_0} - \frac{C}{\sqrt{C_0}}\right) \right) - \frac{C}{C_{\rm gap}}\left(n t^2\right)
	\end{split}
	\end{equation*}
	From the inequality above we can see, there exists large but uniform choices of constants $C_0, C_{\rm gap}>0$ such that the term above is non-negative, additionally implies \eqref{eq:lm_t^2_geq_p+t^2}. In another word, there exists $C_0, C_{\rm gap}>0$, such that whenever \eqref{ineq:lm_CCA_assumption} holds, then
	\begin{equation}\label{ineq:lm_CCA_factor1}
	\P\left( \sigma_{\min}^2\left(\hat{U}_{[1:r, :]}\right) \geq \frac{p+t^2/2}{n + t^2}\right) \geq 1 - C\exp(-cd\wedge p).
	\end{equation}
	
	\item Next we consider $\sigma_{\max}(\hat U_{[(r+1):p, :]})$. Suppose we randomly generate $\tilde{R}\in \mathbb{O}_{n-r}$ as a unitary matrix of $(n-r)$ dimension, which is independent of $L$. Also, $R\in \mathbb{O}_{n-r, p-r}$ as the first $p-r$ columns of $\tilde{R}$. Clearly, 
	$$\begin{bmatrix}
	I_r & \\
	& \tilde{R}^{\intercal}
	\end{bmatrix}\cdot L \quad \text{ and }\quad L \quad \text{have the same distribution.} $$
	This implies
	$$\begin{bmatrix}
	I_r & \\
	& \tilde{R}^{\intercal}
	\end{bmatrix}\cdot \hat{U}, \quad \text{ and }\quad \hat{U} \quad \text{have the same distribution.} $$	
	When we focus on the $[(r+1):d]$-th rows, we get	
	$$R^{\intercal} \hat{U}_{[(r+1):n, :]}\quad \text{and}\quad \hat{U}_{[(r+1):d, :]} \quad \text{have the same distribution.} $$
	Thus, we can turn to consider $R^{\intercal} \hat{U}_{[(r+1):n, :]}$ rather than $\hat{U}_{[(r+1):p, :]}$. Conditioning on first $r$ rows of $\hat{U}$, i.e. $\hat{U}_{[1:r, :]}$, the rest part of $\hat{U}$, i.e. $\hat{U}_{[(r+1):n, :]}$ is a random matrix with spectral norm no more than 1. Applying Lemma \ref{lm:random_projection}, we get for any given $\hat{U}_{[1:r, :]}$ we have the following conditioning probability when $n \geq C_0 d$ for some large $C_0$,
	\begin{equation*}
	\begin{split}
	\P\left(\left\|R^{\intercal} \hat{U}_{[(r+1):n, :]}\right\|^2 \geq \frac{p+C\sqrt{(d-r)p} + C(d-r)}{n-r}\Bigg| \hat{U}_{[1:r, :]}\right) \leq C\exp(-cp).
	\end{split}
	\end{equation*}
	Therefore,
	\begin{equation}
	\P\left(\left\|\hat{U}_{[(r+1):d, :]}\right\|^2 \geq \frac{p+C\sqrt{(d-r)p} + C(d-r)}{n-r}\right) \leq C\exp(-cp).
	\end{equation}
	Next, under essentially the same argument as the proof in Part 1, one can show that there exists $C_{\rm gap}, C_0>0$ such that whenever \eqref{ineq:lm_CCA_assumption} holds, we have
	\begin{equation*}
	\frac{p + C\sqrt{(d-r)p} + C(d-r)}{n-r} \leq \frac{p + t^2/4}{n+t^2},
	\end{equation*}
	additionally we also have
	\begin{equation}\label{ineq:lm_CCA_factor2}
	\P\left(\left\|\hat{U}_{[(r+1):d, :]}\right\|^2 \geq \frac{p+t^2/4}{n+t^2}\right) \leq C\exp(-cp).	
	\end{equation}
	
	\item In this step we consider $\|\hat U_{[(r+1):p, :]}P_{\hat U_{[1:r, :]}}\|$. The idea to proceed is similar to the part for $\|\hat{U}_{[(r+1):p, :]}\|$. Conditioning on $\hat{U}_{[1:r, :]}$, 
	$$\hat{U}_{[(r+1): p, :]}P_{\hat{U}_{[1:r, :]}}\quad \text{and} \quad R^{\intercal}\hat{U}_{[(r+1): n, :]}P_{\hat{U}_{[1:r, :]}} \quad \text{have the same distribution}.$$
	Also, $\|\hat{U}_{[r+1:n, :]}P_{\hat{U}_{[1:r, :]}} \|\leq 1$, $\rank(\hat{U}_{[r+1:n, :]}P_{\hat{U}_{[1:r, :]}}) \leq r$. By Lemma \ref{lm:random_projection}, there exists uniform constants $C, c>0$ such that
	\begin{equation}\label{ineq:lm_CCA_factor3}
	\begin{split}
	& \P\left(\left\|\hat{U}_{[(r+1): p, :]}P_{\hat{U}_{[1:r, :]}} \right\| > C(d/n)^{1\over 2}\right)\\
	= & \P\left(\left\|R^{\intercal}\hat{U}_{[(r+1): n, :]}P_{\hat{U}_{[1:r, :]}} \right\| > C(d/n)^{1\over 2}\right) \leq C\exp(-cd). 
	\end{split}
	\end{equation}
	\item Combine \eqref{ineq:lm_CCA_factor1}, \eqref{ineq:lm_CCA_factor2}, \eqref{ineq:lm_CCA_factor3}, we know there exists $C_{\rm gap}, C_0>0$ such that whenever \eqref{ineq:lm_CCA_assumption} holds, then with probability at least $1 - C\exp(p\wedge d)$,
	$$\sigma_{\min}^2(\hat U_{[1:r, :]}) >  \left\|\hat{U}_{[(r+1):d, :]}\right\|^2 \geq \sigma_{r+1}^2(\hat{U}_{[1:d, :]}). $$
	\begin{equation}
	\begin{split}
	& \frac{\sigma_{\min}(\hat{U}_{[1:r, :]})\cdot \left\|\hat{U}_{[(r+1): p, :]}P_{\hat{U}_{[1:r, :]}} \right\|}{\sigma_{\min}^2(\hat{U}_{[1:r, :]}) - \left\|\hat{U}_{[(r+1):d, :]}\right\|^2} \leq \frac{\sqrt{\frac{p+t^2/4}{n+t^2}} \cdot C(d/n)^{1\over 2}}{\frac{p+t^2/2}{n+t^2} - \frac{p+t^2/4}{n+t^2}}\\
	\leq & \frac{C\sqrt{d(p + t^2)(1 + t^2/n)}}{t^2}.
	\end{split}
	\end{equation} 
	By Proposition \ref{lm:perturbation}, we have finished the proof of Lemma \ref{lm:U_svd}.
\end{enumerate}
\end{proof}

\begin{proof}[Proof of Lemma \ref{lm:U_svd}.] Based on the setting, $U = LV\Sigma^{-1}$. Thus,
\begin{equation*}
\begin{split}
& \sigma_{\max}^2(U_{[\Omega, :]}) = \max_{v\in \mathbb{R}^p} \frac{\left\|U_{[\Omega, :]}v\right\|_2^2}{\|Uv\|_2^2} \leq \max_{v\in \mathbb{R}^p} \frac{\left\|L_{[\Omega, :]}V\Sigma^{-1}v\right\|_2^2}{\left\|L_{[\Omega, :]}V\Sigma^{-1}v\right\|_2^2 + \left\|L_{[\Omega^c, :]}V\Sigma^{-1}v\right\|_2^2}\\
\leq & \max_{v\in \mathbb{R}^p} \frac{\sigma_{\max}^2(L_{[\Omega, :]})\left\|V\Sigma^{-1}v\right\|_2^2}{\sigma_{\max}^2(L_{[\Omega, :]})\left\|V\Sigma^{-1}v\right\|_2^2 + \sigma_{\min}^2(L_{[\Omega^c, :]})\left\|V\Sigma^{-1}v\right\|_2^2}\\ 
= & \frac{\sigma_{\max}^2(L_{[\Omega, :]})}{\sigma_{\max}^2(L_{[\Omega, :]}) + \sigma_{\min}^2(L_{[\Omega^c, :]})}.
\end{split}
\end{equation*}
The other inequality in the lemma can be proved in the same way.
\end{proof}

\ \par

\begin{proof}[Proof of Lemma \ref{lm:random_projection}] Suppose $\alpha = \|X\|$. Since left and right multiply orthogonal matrix to $X$ does not essentially change the problem, without loss of generality we can assume that $X\in \mathbb{R}^{n\times m}$ is diagonal, such that $X = \diag(\sigma_1(X), \sigma_2(X),\cdots, \sigma_p(X), 0, \cdots)$. Clearly $\sigma_i(X) \leq \alpha$. Now
$$\|X^{\intercal}R\| = \left\|\begin{bmatrix}
\sigma_1(X) R_{11} & \cdots & \sigma_1(X) R_{1d}\\
\vdots & & \vdots \\
\sigma_p(X) R_{p1} & \cdots & \sigma_p(X) R_{pd}\\
\end{bmatrix}\right\| \leq \alpha \cdot \left\|R_{[1:p, :]}\right\|. $$
In order to finish the proof, we only need to bound the spectral norm for $\|R_{[1:p, :]}\|$. For any unit vector $v\in \mathbb{R}^{d}$, $Rv$ is randomly distributed on $\mathbb{O}_{n, 1}$ with Haar measure. Thus
\begin{equation}
\|R_{[1:p, :]}v\|_2^2 \text{ has the same distribution as } \frac{x_1^2+\cdots + x_p^2}{x_1^2+\cdots + x_n^2}, \quad x_1,\cdots, x_n\overset{iid}{\sim}N(0, 1).
\end{equation}
By the tail bound for $\chi^2$-distribution, 
$$\P\left(p - \sqrt{2pt} \leq \sum_{k=1}^p x_k^2 \leq p + \sqrt{2pt} + 2t\right) \leq 2\exp(-t),$$
$$ \P\left(n - \sqrt{2nt} \leq \sum_{k=1}^n x_k^2 \leq n + \sqrt{2nt} + 2t\right) \leq 2\exp(-t),$$
which means
\begin{equation*}
\begin{split}
2\exp(-t) \geq & \P\left(v^{\intercal}\left(R_{[1:p, :]}^{\intercal}R_{[1:p, :]} - \frac{p}{n}I_r\right)v \geq \frac{p + \sqrt{2pt} + 2t}{n - \sqrt{2nt}} - \frac{p}{n}\right),\\
\end{split}
\end{equation*}
\begin{equation*}
\begin{split}
2\exp(-t) \geq & \P\left(v^{\intercal}\left(R_{[1:p, :]}^{\intercal}R_{[1:p, :]} -\frac{p}{n} I_r\right)v \leq \frac{p - \sqrt{2pt}}{n + \sqrt{2nt} + 2t} - \frac{p}{n}\right).
\end{split}
\end{equation*}
We set $t = Cd$ for large enough $C>0$ and apply $\varepsilon$-net method (Lemma \ref{lm:epsilon_net}), the following result hold for true.
\begin{equation*}
\begin{split}
 C\exp(-cd) \geq \P\Big( & \left\|R^{\intercal}_{1:d. :}R_{1:d, :} - \frac{d}{n}I_r\right\|\\
 \geq & 3\max\left\{\frac{p + C\sqrt{pd} + Cd}{n - C\sqrt{nd}} - \frac{p}{n},  \frac{p}{n} - \frac{p - C\sqrt{pd}}{n + C\sqrt{nd} + Cd}\right\} \Big).
\end{split}
\end{equation*}
Note that
\begin{equation*}
\begin{split}
& \max\left\{\frac{p + C\sqrt{pd} + Cd}{n - C\sqrt{nd}} - \frac{p}{n},  \frac{p}{n} - \frac{p - C\sqrt{pd}}{n + C\sqrt{nd} + Cd}\right\}\\
\leq & \max\left\{\frac{C\left(\sqrt{pd} + d + p\sqrt{d/n}\right)}{n - C\sqrt{nd}},  \frac{C\left(p\sqrt{d/n} + pd/n + \sqrt{pd}\right)}{n + C\sqrt{nd} + Cd}\right\}.\\
\leq & \max\left\{\frac{C\left(2\sqrt{pd} + d\right)}{n - C\sqrt{nd}},  \frac{C\left(3\sqrt{pd}\right)}{n + C\sqrt{nd} + Cd}\right\}
\end{split}
\end{equation*}
Thus there exists $C_0 >0 $ such that when $n > C_0r$, $n - C\sqrt{nr} > n/2$, and additionally, 
$$\max\left\{\frac{d + C\sqrt{dr} + Cr}{n - C\sqrt{nr}} - \frac{d}{n},  \frac{d}{n} - \frac{d - C\sqrt{dr}}{n + C\sqrt{nr} + Cr}\right\} \leq \frac{C \left((pd)^{1\over 2} + d\right)}{n}. $$
To sum up, we have finished the proof of Lemma \ref{lm:random_projection}. 
\end{proof}

 \end{document}